\documentclass[10pt]{article}

\usepackage{lmodern}          %
\usepackage[T1]{fontenc}
\usepackage[utf8]{inputenc}
\usepackage[letterpaper,margin=0.9in]{geometry}

\usepackage{amsmath}
\usepackage{amssymb}
\usepackage{amsthm}

\usepackage{tikz}
\usetikzlibrary{arrows.meta,calc}

\usepackage{setspace}
\usepackage{microtype}
\usepackage[round,authoryear]{natbib}

\usepackage[colorlinks=true,allcolors=blue]{hyperref}

\theoremstyle{plain}
\newtheorem{theorem}{Theorem}[section]
\newtheorem{proposition}[theorem]{Proposition}
\newtheorem{lemma}[theorem]{Lemma}
\newtheorem{corollary}[theorem]{Corollary}

\theoremstyle{definition}
\newtheorem{definition}[theorem]{Definition}
\newtheorem{assumption}[theorem]{Assumption}

\theoremstyle{remark}
\newtheorem{remark}[theorem]{Remark}

\begin{document}

\title{Forward recursive aggregator systems and the forward
Epstein--Zin recursive paradigm}

\author{%
  Zakaria Bensaid\,$^{*}$ \and Anis Matoussi\,$^{\dagger}$ \and
  Thaleia Zariphopoulou\,$^{\ddagger}$%
}

\date{}

\maketitle

\begingroup
\renewcommand{\thefootnote}{}
\footnotetext{The research of the $^{*}$ and $^{\dagger}$ authors was supported by the
Agence Nationale de la Recherche through the project DREAMeS
(ANR-21-CE46-0002) and by the France 2030 programme through the MIRTE
project (ANR-23-EXMA-0011). Their research was also supported by the Chair
``Risques \'{E}mergents en Assurance'' (RE2A), the Chair ``Impact de la
Transition Climatique en Assurance'' (ITCA), and GRESC ``Gestion des risques
ESG pour le cr\'{e}dit'', under the aegis of the Fondation du Risque, in
partnership with the Risk and Insurance Institute of Le Mans,
MMA-Cov\'{e}a, Groupama, and Natixis, respectively. The funding bodies had no
role in the design of the study, the analysis, the writing of the manuscript,
or the decision to submit the article.}
\endgroup

\begin{center}
\small
$^{*,\dagger}$Le Mans Universit\'{e}, LMM, IRA, F-72000 Le Mans, France\\[0.3em]
$^{\ddagger}$Departments of Mathematics and IROM, The University of Texas at Austin,
Austin, Texas 78712, U.S.A.\\
and Oxford--Man Institute, University of Oxford, Oxford, OX2 6ED, U.K.\\[0.5em]
$^{*}$\texttt{zakaria.bensaid@univ-lemans.fr} \quad
$^{\dagger}$\texttt{anis.matoussi@univ-lemans.fr} \quad
$^{\ddagger}$\texttt{zariphop@math.utexas.edu}
\end{center}

\begin{abstract}
\noindent We introduce forward recursive aggregator systems in
It\^{o}-diffusion markets, bridging the theories of recursive utilities and of forward
performance criteria. The resulting
criteria are constructed forward in time and accommodate arbitrary or rolling horizons. We derive the associated HJB SPDE and sufficient
conditions for consistency and admissibility. We also develop a convex
duality theory based on state price densities and identify the density
generated by the optimal wealth process. For homothetic aggregators, we characterize a
class of forward Epstein--Zin recursive utility processes, derive the optimal
investment and consumption policies, and obtain explicit solutions in the
Black and Scholes and Heston models.
\end{abstract}

\medskip

\noindent \textbf{Keywords:} convex duality \textbar{} Epstein--Zin utilities \textbar{}
forward performance processes \textbar{} forward recursive aggregators \textbar{}
recursive utilities \textbar{} state price densities \textbar{} stochastic
partial differential equations

\medskip

\noindent \textbf{JEL Classification:} C61, D81, G11.

\medskip

\noindent \textbf{Mathematics Subject Classification (2020):} 49N15, 60H15, 91G10,
91G80, 93E20.

\section{Introduction}\label{sec:1}

The paper contributes to the literature of recursive utilities and to
optimal portfolio and consumption models under such stochastic preferences.
We introduce and develop a novel class of recursive utility processes, the \textit{forward}
\textit{recursive utilities} (or forward recursive performance criteria),
which incorporate progressively measurable aggregators,
arbitrary or rolling horizons, and progressively measurable adaptation of market
model dynamics. Thus, our work bridges the areas of recursive preferences
and forward performance processes, and proposes an extended class of
utility processes which contain the existing ones as special cases. In
parallel, the characterization, construction and specification of forward
recursive utilities give rise to new stochastic optimization
problems which are interesting in their own right.

Recursive utilities occupy a central place in asset pricing and in optimal
consumption and portfolio choice. In deterministic settings, they first
appeared in the seminal works of \citet{Koopmans-1960} and
\citet{Lucas-Stokey-1984} in discrete-time models, while their
continuous-time analogue was developed by \citet{Epstein-1987}.
In stochastic environments, recursive utilities of separable kind were
proposed by \citet{Epstein-1983} for discrete time and by
\citet{Uzawa-1968} in continuous time. The general class of non-separable
recursive utilities were subsequently developed by \citet{Epstein-Zin-1989} in discrete-time and \citet{Duffie-Epstein} in continuous models. Since then, recursive utilities have
been developed and incorporated in many areas, including optimal investment and
consumption choice, asset-price equilibrium, optimal growth, insurance, and
pensions. Providing a comprehensive list of references lies
outside the scope of this paper.

To motivate the forward recursive construction, we first briefly review
some key elements and structural
characteristics of their existing classical counterparts. Since we will be working in
It\^{o}-diffusion markets, we only recall results for such processes (we
refer the reader to \citep{Duffie-Epstein} for further details). To this end,
for a given horizon $\left[ 0,T\right] ,$ $T\leq \infty ,$ and a probability
space $\left( \Omega ,\mathcal{F},\mathbb{P}\right) $ endowed with a
Brownian filtration $(\mathcal{F}_{t})_{t\in \left[ 0,T%
\right] },$ a recursive utility is an $\mathcal{F}_{t}-$adapted process,
denoted by $V_{t,T},$ $t\in \left[ 0,T\right] ,$ that measures the
performance of admissible consumption processes $c_{t}$. It is constructed
via an \textit{aggregator pair} $\left( f,m\right) ,$ with $m$ resembling a
certainty equivalent rule and $f$ acting upon the upcoming consumption
stream and its future utility value. Intertemporal consistency requirements
yield the relation 
\begin{equation}
\left. \frac{d}{ds}\,m\! \left( \mathrm{Law}(V_{t+s,T}\,|\, \mathcal{F}%
_{t})\right) \right \vert _{s=0}=-f\left( c_{t},V_{t,T}\right) ,\text{ \ }%
t\in \left[ 0,T\right] .  \label{m-f}
\end{equation}%
When $m$ is smooth at certainty, \citet[p.~361]{Duffie-Epstein}
conjecture that $V_{t,T}$ has a stochastic differential representation of
the form (taking, for exposition purposes, the Brownian motion $W$
to be one-dimensional), 
\begin{equation*}
dV_{t,T}=\mu _{t}dt+\sigma _{t}dW_{t},\text{ \ }t\in \left[ 0,T\right] ,%
\text{ \  \  \ }V_{T,T}=0,
\end{equation*}%
for suitable drift and volatility processes $\mu $ and $\sigma $ satisfying
the compatibility condition 
\begin{equation}
\mu _{t}=-f\left( c_{t},V_{t,T}\right) -\frac{1}{2}A^{\mathrm{DE}}\left( V_{t,T}\right)
\sigma _{t}^{2},\text{ \ }t\in \left[ 0,T\right] ,  \label{miu-intro}
\end{equation}%
with $A^{\mathrm{DE}}(\cdot )$, being non-positive and known as the \textit{%
variance multiplier} of Duffie and Epstein. If such a process $V_{t,T}$ exists, it can be represented as%
\begin{equation}
V_{t,T}=\mathbb{E}\left[ \left. \int_{t}^{T}\left( f\left(
c_{s},V_{s,T}\right) +\frac{1}{2}A^{\mathrm{DE}}\left( V_{s,T}\right) \frac{d}{ds}\left[ V%
\right] _{s,T}\right) ds\right \vert \mathcal{F}_{t}\right] ,\text{ }t\in %
\left[ 0,T\right] .  \label{V-t-T-intro}
\end{equation}

The above may be simplified, working with the so-called \textit{normalized }%
version. Specifically, let $\left( f,m\right) $ and $\left( \bar{f},\bar{m}%
\right) $ be two aggregator pairs such that, for a given smooth function $%
\varphi ,$%
\begin{equation*}
f\left( c,u\right) =\frac{\bar{f}\left[ c,\varphi \left( u\right) \right] }{%
\varphi ^{\prime }\left( u\right) }\text{ \  \  \ and \  \  \ }m\left( \mathrm{%
Law}(\xi )\right) =\varphi ^{-1}\! \left( \bar{m}\left[ \mathrm{Law}(\varphi
(\xi ))\right] \right) ,
\end{equation*}%
where $\xi $ is a random variable. Then, the normalized (but ordinally
equivalent) version of $V_{t,T},$ denoted by $\bar{V}_{t,T},$ is given by
\begin{equation}
\bar{V}_{t,T}=\mathbb{E}\left[ \left. \int_{t}^{T}\bar{f}\left( c_{s},\bar{V}%
_{s,T}\right) ds\right \vert \mathcal{F}_{t}\right] ,\text{\  \ }t\in \left[
0,T\right] .  \label{V-final-inro}
\end{equation}

Formulations \eqref{V-t-T-intro} and \eqref{V-final-inro}\ (original or
normalized) have generated a wide range of interesting problems about the
existence, uniqueness, smoothness, time-consistency and further
characterization (concavity, monotonicity and others) of recursive utility
processes. Furthermore, it turned out that there exists a deep connection
between recursive utilities and backward stochastic differential equations
(BSDE), which prompted the considerable development of many elegant works in
this area of stochastic analysis.

The normalized formulation has important tractability and representation
advantages. For the remainder of this introductory review, we work with the
normalized pair $\left( \bar{f},\bar{m}\right) $ but, to ease the presentation, omit the bars, writing
$(f,m)$. In section~\ref{sec:2}, we first introduce the general forward construction and then the corresponding normalized analogue.

Recursive utilities were subsequently incorporated in stochastic optimization
problems of controlled diffusion processes,
\begin{equation*}
dX_{s}=b(s,X_{s},c_{s})ds+\sigma (s,X_{s},c_{s})dW_{s},\text{ \  \ }s\in %
\left[ t,T\right] ,\text{ \  \ }X_{t}=x>0,
\end{equation*}%
with the related value function process defined as
\begin{equation*}
U_{t,T}=\operatorname*{ess\,sup}_{c}\mathbb{E}\left[ \left. \int_{t}^{T}f\left(
c_{s},V^{c}_{s,T}\right) ds+\xi _{T}\right \vert \mathcal{F}_{t}\right] ,%
\text{ \  \ }t\in \left[ 0,T\right] ,
\end{equation*}%
for a pre-specified aggregator pair $(f,m)$ and a terminal condition $\xi
_{T}.$ Here, $V^{c}_{s,T}$ denotes the recursive utility associated with
$c$, and the essential supremum is over admissible continuations from
the current state $X_t$.

In Markovian environments, with deterministic coefficients and terminal
condition $\xi_T=\xi(X_T)$, we have $U_{t,T}=u(t,X_t)$ almost surely, with $u$ solving
the Hamilton-Jacobi-Bellman (HJB) equation
\begin{equation}
u_{t}(t,x)+\sup_{c}\left( \frac{1}{2}\sigma
^{2}(t,x,c)u_{xx}(t,x)+b(t,x,c)u_{x}(t,x)+f(c,u(t,x))\right) =0,
\text{ \ }t\in \left[ 0,T\right) ,\text{ }x>0,
\text{ \ }u(T,x)=0.
\label{HJB1-intro}
\end{equation}%
The derivatives $u_t$ and $u_x$ denote the first-order partial derivatives
with respect to time and the state variable, respectively, and $u_{xx}$
denotes the second-order partial derivative with respect to the state variable. Under suitable assumptions, the
equation holds in the viscosity sense, and classically when $u$ is
sufficiently smooth.
It is assumed for simplicity that $\xi _{T}=0,$ but the case of non-zero
terminal condition can be incorporated through $u(T,x)=\xi(x)$.
The PDE approach to stochastic differential utility was developed by
\citet{Duffie-Lions}. Similar optimization criteria were subsequently
developed in non-Markovian market environments and the related problems
were studied using BSDE and FBSDE.

More broadly, stochastic recursive utilities have been extensively
incorporated and widely studied in models that allow for both intertemporal
investment and consumption, with an underlying controlled process $X$
satisfying the well known SDE
\begin{equation}
dX_{s}^{\pi ,c}=b_{s}\pi _{s}ds-c_{s}ds+\sigma _{s}\pi _{s}dW_{s},\text{ \ }%
0 \leq t \leq s \leq T,\text{ \ }X_{t}=x>0,\text{\ }  \label{wealth-intro}
\end{equation}%
with control processes $\left( \pi ,c\right) $ satisfying the self-financing
budget equation and certain integrability properties. Here, $\pi_s$ denotes
the amount invested in the risky asset. The coefficients $\left(
b,\sigma \right) $ represent the return drift and volatility of the traded
stock (under the simplified assumption of zero interest rate). The value
function process is then defined as
\begin{equation}
U_{t,T}=\operatorname*{ess\,sup}_{\left( \pi ,c\right) }\mathbb{E}\left[ \left.
\int_{t}^{T}f\left( c_{s},V^{\pi,c}_{s,T}\right) ds+\xi _{T}\right \vert \mathcal{F}%
_{t}\right] ,\text{ \  \ }t\in \left[ 0,T\right] ,
\label{valuefunction-intro}
\end{equation}%
where $V^{\pi,c}_{s,T}$ denotes the recursive utility associated with
the admissible control pair $(\pi,c)$.

In the Markovian case, a HJB equation similar to \eqref{HJB1-intro}\ arises. For
example, if the stock price process is log-normal, with constant coefficients
$(b,\sigma )$ and terminal condition $\xi_T=\xi(X_T)$, we have
$U_{t,T}=u(t,X_t)$ almost surely, where the related HJB\ takes the form
\begin{equation}
\begin{aligned}
&u_{t}(t,x)+\sup_{(\pi,c)}\left( \frac{1}{2}\sigma ^{2}\pi ^{2}u_{xx}(t,x)
+b\pi u_{x}(t,x)-cu_{x}(t,x)+f(c,u(t,x))\right) =0,
\text{ \ }t\in \left[ 0,T\right) ,\text{ }x>0,\\
&u(T,x)=\xi \left( x\right) .
\end{aligned}
\label{HJB2-intro}
\end{equation}%

In general, equations of type \eqref{HJB1-intro} and \eqref{HJB2-intro} are
non-tractable, are fully degenerate, may not have smooth solutions and are
typically analyzed in the weak (viscosity) sense. Closed-form solutions are
obtained only for various homothetic classes, which we revisit in detail
later on.

While recursive utilities play a central role in asset pricing and their
study has and, still, continues to give rise to very interesting
mathematical problems at the intersection of several areas, there are some
modeling elements that may impede their universal applicability. We discuss
these modeling issues next and propose an extended framework to
accommodate some of the related limitations. We choose to do this by working
directly in the context of optimal portfolio and consumption problems in It%
\^{o}-diffusion markets, in \eqref{wealth-intro}\ and \eqref{valuefunction-intro}\ above, and leave the original foundational
construction, as in \eqref{m-f}, \eqref{miu-intro} and \eqref{valuefunction-intro}, for future work.

Inspecting the recursive stochastic optimization model \eqref{valuefunction-intro}, we see that there are three modeling inputs, chosen
at initiation time $t=0$. The first is the aggregator pair $\left(
f,m\right) ,$ the foundational building block of the recursive utilities.
Indeed, this pair is chosen once and for all at initial time, without any
flexibility to change it afterwards. As a result, any evolution of
preferences depends heavily on these initially specified elements. However,
it is often the case that preferences do change over time, and frequently,
after, for example, big market movements, realized losses and relative
performance outcomes.

The second modeling element is the pre-determined investment horizon $\left[
0,T\right] ,$ which is strongly embedded throughout in the recursive
construction. On the other hand, it is also often the case that investment
horizons may not be a priori known and may be adapted to incoming
information. This is particularly crucial if random endowments are
incorporated and their arrival times are not a priori known. The third
modeling element is the choice of the market model, denoted by $\mathcal{M}_{%
\left[ 0,T\right] },$ since the choice of the processes $\left( b,\sigma \right) $ in \eqref{wealth-intro} is made at initial time as well. However, such initial
choice may turn out to be erroneous as more information is acquired in
upcoming times. We add that one may try to remedy the limitation of
pre-specified horizon by working in an infinite horizon setting. This
choice, however, makes the pre-choice of the market model even more
restrictive.

In summary, one may think of the modeling triplet $\mathcal{T=}\left( \left(
f,m\right) ,\left[ 0,T\right] ,\mathcal{M}_{\left[ 0,T\right] }\right) $ as
a static choice at $t=0,$ which results in various restrictions on the
flexibility of the model to incorporate upcoming preferences, horizon and
market model changes. Such issues motivated the last author and Musiela to
introduce in the early 2000s the concept of \textit{forward performance
criteria }(or forward utilities). These are processes defined for all times
and constructed via supermartingale and martingale requirements along
admissible and optimal control processes, respectively. In many ways, they resemble the
classical value function process which inherently obeys the dynamic
programming principle (DPP). However, DPP\ yields optimality backwards in
time and, thus, the pre-specification of triplet $\mathcal{T}$ is
unavoidable. In contrast, forward utilities are defined forward in time,
which allows the elements of $\mathcal{T}$ to be updated as information arrives.

We continue with some key structural and theoretical properties of forward
utility processes, stating them informally to ease the presentation. In It%
\^{o}-diffusion environments, a forward utility process, $U(t,x),$ is
defined such that along an arbitrary admissible policy, say $\alpha ,$ generating
process $X^{\alpha },$ $U(t,X_{t}^{\alpha })$ is a (local) supermartingale
while, along an optimal one, $\alpha ^{\star },$ $U(t,X_{t}^{\alpha ^{\star
}})$ becomes a (local) martingale. Assuming the decomposition
\begin{equation*}
dU(t,x)=b^{U}(t,x)dt+\delta (t,x)^{\top }dW_{t},\text{ }t\geq 0,\text{ }x>0,
\end{equation*}%
for suitable drift and volatility processes $b^{U}(t,x)$ and $\delta (t,x),$
these two properties impose a certain structure on $b^{U}\left( t,x\right) $
for a given forward volatility process $\delta \left( t,x\right) $. We
stress that, contrary to the classical case where the volatility of the
value function process is part of the solution, in the forward setting it is
a modeling input. In other words, embedded in the definition of the forward
utility process is the choice of $\delta (t,x)$. The conditions resulting on 
$b^{U}(t,x)$ essentially lead to an ill-posed SPDE which plays the role of
the classical HJB\ equation \citep[see, e.g.,][]{Musiela-Z-2008}. To date,
general existence, uniqueness and regularity results for such SPDE\ are
lacking.

There are, however, three classes of forward utilities that have been
extensively studied. The first class is the one of time-monotone forward criteria ($\delta
(t,x)\equiv 0$), studied in \citet{musiela-Z-monotone} \citep[see, also, preprint][]{Rogers-Tehranchi}. The second class is in Markovian, stochastic
factor models in which $U(t,x)$ has a finite-dimensional representation, $%
U(t,x)=u(t,x,Y_{1,t},...,Y_{N,t}),$ for some "factor" processes $Y_{i,t},$ $%
i=1,...,N,$ with function $u$ solving an ill-posed, but finite-dimensional,
HJB\ equation. Within the homothetic family of forward utilities, further
reduction leads to tractable problems that have been analyzed using, among
others, the Martin boundary, ergodic BSDE, systems of FBSDE and others.
Thirdly, predictable forward utilities have been analyzed with functional
and non-local equations. Providing a complete bibliography of forward
utilities and related applications, among others in mean field games,
entropic risk measures, optimized certainty equivalent, indifference valuation, risk sharing, pension
fund management, equilibrium models and many others, lies outside the scope
of this paper. We refer the reader to the review article
\citep{Musiela-ReviewArticle} and to the recent volume collections
\citep{Liang-Volume1} and \citep{Liang-Volume2}.

For the definition of \textit{forward recursive utility processes}, we focus for now
on It\^{o}-diffusion market environments with controlled processes $X^{\pi
,c}$ as in \eqref{wealth-intro}. We combine elements from the definition of
forward utilities and structural characteristics of the classical recursive
utilities, and seek a \textit{pair }of random fields, denoted by $U\left(
t,x\right) $ and $F(t,c,u,z),$ together with a volatility process $\delta
(t,x),$ such that the process 
\begin{equation*}
U(t,X_{t}^{\pi ,c})+\int_{0}^{t}F(s,c_{s},U(s,X_{s}^{\pi ,c}),Z_{s}^{\pi
,c})ds,\text{ }t\geq 0,
\end{equation*}
is a (local) supermartingale along an admissible policy $\left( \pi
,c\right) $, where process $Z^{\pi ,c}$ is defined as 
\begin{equation*}
Z_{t}^{\pi ,c}=\delta \left( t,X_{t}^{\pi ,c}\right) +U_{x}(t,X_{t}^{\pi
,c}) X_{t}^{\pi ,c}\sigma _{t}^{\top }\pi _{t},\text{ }t\geq 0,
\end{equation*}%
and along an optimal policy $\left( \pi ^{\star },c^{\star }\right) ,$ the
process 
\begin{equation*}
U(t,X_{t}^{\pi ^{\star },c^{\star }})+\int_{0}^{t}F(s,c_{s}^{\star
},U(s,X_{s}^{\pi ^{\star },c^{\star }}),Z_{s}^{\pi ^{\star },c^{\star }})ds,%
\text{ }t\geq 0,
\end{equation*}%
is a (local) martingale. We will refer to the pair of processes 
$(U,F)$ (or $(U,F;\delta \left( t,x\right) ),$ when specific reference to $%
\delta \left( t,x\right) $ is needed) as a consistent \textit{forward
aggregator system, }with $U(t,x)$ called its \textit{forward recursive
utility process} and $F(t,c,u,z)$ the related \textit{forward
aggregator}. The definition of $Z^{\pi ,c}$ displays explicitly how the
model input $\delta \left( t,x\right) $ enters the forward recursive setting.

Working, in alignment with the normalized classical recursive framework,
with forward aggregators of the form 
\begin{equation*}
F(t,c,u,z)=f\left( t,c,u\right) +\frac{1}{2}A(u)|z|^{2},
\end{equation*}%
for a suitable pair $\left( f,A\right) $ with $A\leq 0$ continuous and $f$
independent of $z$, leads to the (normalized) forward recursive SPDE
\begin{equation}
dU(t,x)=\left( \frac{\left \vert U_{x}(t,x)\theta _{t}+\sigma_{t}^{+}\sigma_{t}\delta
_{x}(t,x)\right \vert ^{2}}{2U_{xx}(t,x)}-\widetilde{F}(t,U_{x}(t,x),U(t,x))%
\right) dt+\delta \left( t,x\right) ^{\top }dW_{t},\quad t\geq0,\ x>0,
\label{SPDE-intro}
\end{equation}%
where $\widetilde{F}$ is the Fenchel transform of $F$ with respect to the consumption and process $\theta $ is the market price of risk.

Here $\sigma _{t}^{+}\sigma _{t}$ denotes the orthogonal projection of $%
\mathbb{R}^{d}$ onto the traded subspace $\mathrm{Im}\, \sigma _{t}^{\top }.$
It enters only through the recursive volatility, which may not lie in this
subspace. The market price of risk is left unchanged, $\theta _{t}=\sigma
_{t}^{+}\sigma _{t}\,\theta _{t},$ $t\geq 0.$

If SPDE~\eqref{SPDE-intro} admits a sufficiently regular solution, the
optimal feedback maps are identified in Theorem~\ref{thm:2-10} and derived in its
proof.

\bigskip

\noindent We also provide a detailed study in the associated dual domain. We associate with the market its family of state
price densities and show that the convex conjugate random field
$\widetilde{U}$ satisfies the Legendre counterpart of the primal Bellman
SPDE. This conjugate equation yields a submartingale inequality for every
state price density together with a dual control for which the dual
criterion becomes a local martingale. Contrary to the time-additive forward utilities studied in \citet{EKHM18} the marginal utility
along the optimal wealth is not itself a state price density, in the recursive setting. A state price density is obtained after
multiplication by a specific discount factor $\kappa ^{\star }$,
namely $Y^{\star }=\kappa ^{\star }U_{x}(\cdot ,X^{\star })$. When $F$ is
convex in its continuation-utility argument, a variational transform in the
spirit of \citet{MX17} yields a global Fenchel bound, which bound we show is attained at the primal optimum and at this same density. 

Finally, we address the passage from candidate optimal feedback policies to
admissible ones. \citet{EKM13} connect the utility SPDE
with two solvable SDEs, and \citet{EKHM18}
develop the corresponding investment and consumption framework. Following
this route, we show that explicit bounds on the model inputs render the
deflated dual SDE globally well-posed, and that the optimal wealth is then
constructed from the inverse marginal field, the two solution families being linked by
the identity $U_{x}(t,X_{t}^{\star }(x))=D_{t}^{\star }(U_{x}(0,x))$.
These results are presented in Theorem~\ref{thm:3-14}, where consistency,
admissibility and the primal--dual relations are also established
under suitable conditions on the model inputs.

Having defined forward recursive utilities and aggregators and examined the
primal and dual problems, we turn to representative cases. Given the popularity of
Epstein--Zin recursive utilities in the classical framework, we focus on
their forward analogue. In particular, we examine forward recursive
aggregator systems of the separable and multiplicative form 
\begin{equation*}
U(t,x)=\Phi _{t}u(x)\quad \text{and}\quad F(t,c,v)=\Psi _{t}f(c,v),\text{ \ }%
t\geq 0,\text{ }x>0,\text{ }c>0,
\end{equation*}%
where
\begin{equation*}
u(x)=\dfrac{x^{1-\gamma }}{1-\gamma },\text{ \ }x>0,
\end{equation*}%
and%
\begin{equation*}
f(c,v)=\frac{1}{1-\frac{1}{\eta }}\! \left( c^{\,1-\frac{1}{\eta }}\left(
(1-\gamma )v\right) ^{1-\frac{1}{\lambda }}-(1-\gamma )v\right) ,\text{ \ }%
c>0,\text{ }(1-\gamma )v>0,
\end{equation*}%
parametrized by constants $\gamma ,\eta $ and $\lambda =(1-\gamma
)/(1-1/\eta )$, and suitable processes $\Phi $ and $\Psi .$ We provide
various characterization results. In particular, we specify $\Phi $ as the
solution of a scalar Bernoulli-type SDE driven by $\Psi $ by looking at an
auxiliary linear SDE solved by the process $\Xi _{t}=\Phi _{t}^{\eta /\lambda }$%
. We also derive the optimal feedback control processes and the optimal
state price density in closed form. We further consider the Heston
stochastic volatility model and analyze it in detail. 

\bigskip

\noindent \textbf{Organization of the paper.} In section~\ref{sec:2}, we introduce the
It\^{o}-diffusion market and the new notion forward recursive aggregator systems. We
derive the unnormalized and normalized versions and the related forward
recursive SPDEs, and state the analogous verification theorems. In
section~\ref{sec:3}, we develop the duality theory for forward recursive utilities.
We study the state price densities and the wealth-conjugate SPDE. Furthermore, we provide a theorem that derives, from explicit bounds on the model inputs, the existence
and uniqueness of the dual and primal SDEs (optimal wealth process and state price density), the admissibility of the
feedback control pair, and the corresponding primal and dual consistency
properties. In section~\ref{sec:4}, we develop the forward analogue of the popular
homothetic Epstein--Zin class. For tractability, we mainly consider the
separable class and solve it explicitly, including constant-coefficient and
Heston stochastic-volatility models. To ease the presentation, we provide
the majority of the proofs in the three Appendices.

\section{Forward recursive aggregator systems and forward recursive utilities}\label{sec:2}

We consider a filtered probability space $\left( \Omega ,%
\mathcal{F},(\mathcal{F}_{t})_{t\geq 0},\mathbb{P}\right) $ supporting a
standard $d$-dimensional Brownian motion $W$, and we assume that the
filtration coincides with the one generated by $W$.

\medskip

The market consists of a riskless bond, taken to be the numeraire, and $n$
risky assets whose discounted prices $S=(S^{1},\ldots ,S^{n})$ are specified
by
\begin{equation} \label{eq:marketstocks}
dS_{t}=\mathrm{diag}(S_{t})\! \left( \mu _{t}\,dt+\sigma _{t}\,dW_{t}\right) ,%
\text{ }t\geq 0,\qquad S_{0}\in (\mathbb{R}_{+}^{\ast })^{n}.
\end{equation}%
The coefficients $\mu _{t}\in \mathbb{R}^{n}$ and $\sigma _{t}\in
\mathbb{R}^{n\times d}$ are $\mathcal{F}_t$-predictable processes and $\sigma _{t}$ has
full row rank $n\leq d$, for $dt\otimes d\mathbb P$-a.e.\ $(t,\omega )$. The market price of
risk is the $\mathbb{R}^{d}$%
-valued process
\begin{equation*}
\theta _{t}=\sigma _{t}^{+}\mu _{t},\text{ }t\geq 0,
\end{equation*}%
where $\sigma _{t}^{+}=\sigma _{t}^{\top }(\sigma _{t}\sigma _{t}^{\top
})^{-1}$ denotes the Moore--Penrose pseudo-inverse of $\sigma _{t}$. In
particular, it holds that $\sigma _{t}\theta _{t}=\mu _{t}$, and $\theta
_{t}\in \mathrm{Im}\, \sigma _{t}^{\top }$. We assume that 
\begin{equation}\label{eq:theta}
    \int_{0}^{T}|\theta _{t}|^{2}\,dt<\infty ~~ \mathbb P\text{-a.s. for each }T>0.
\end{equation}

\medskip \noindent The subspace $\mathrm{Im}\, \sigma _{t}^{\top }\subset
\mathbb{R}^{d}$ contains the directions spanned by the tradable assets. When it
is a strict subspace of $\mathbb{R}^{d}$, the market is incomplete. In what
follows, the matrix $\sigma _{t}^{+}\sigma _{t}$ is the orthogonal projection
of $\mathbb{R}^{d}$ onto $\mathrm{Im}\, \sigma
_{t}^{\top }$, and $I_{d}-\sigma _{t}^{+}\sigma _{t}$ the projection onto its
orthogonal complement $(\mathrm{Im}\, \sigma _{t}^{\top })^{\perp }$. Since
$\theta _{t}\in \mathrm{Im}\,\sigma _{t}^{\top }$, the two projections act on
the market price of risk as
\begin{equation}
\sigma _{t}^{+}\sigma _{t}\, \theta _{t}=\theta _{t}\text{ \  \  \ and \  \  \ }%
\left( I_{d}-\sigma _{t}^{+}\sigma _{t}\right) \theta _{t}=0,\text{ \ }t\geq
0.  \label{eq:projections_theta}
\end{equation}

\medskip

\noindent The wealth process $X^{\pi ,c}$ solves 
\begin{equation}
dX_{t}^{\pi ,c}=X_{t}^{\pi ,c}\, \pi _{t}^{\top }\sigma _{t}\! \left(
dW_{t}+\theta _{t}\,dt\right) -c_{t}\,dt,~t\geq 0, ~ X_0^{\pi, c} = x >0, \label{eq:wealth}
\end{equation}%
where the portfolio process $\pi $, valued in $\mathbb{R}^{n}$, denotes the
\emph{proportions} of wealth allocated to the risky assets, so that the amount
invested is $X_{t}^{\pi ,c}\pi _{t}$. The
consumption stream $c$ takes non-negative values. We also use
the \emph{relative consumption} process $\check{c}=c/X^{\pi ,c}$. All
processes are expressed in units discounted by the numeraire.

\begin{definition}\label{defn:2-1}
A control pair $(\pi ,c)$ is
\emph{admissible} if $\pi $ and $c$ are $\mathcal{F}_{t}$-progressively
measurable processes satisfying $c_t>0$, for $dt\otimes d\mathbb P$-a.e.\ $(t,\omega )$,
\begin{equation*}
\int_{0}^{T}\! \left( c_{t}+|\pi _{t}^{\top }\sigma _{t}|^{2}\right)
\,dt<\infty \quad\text{$\mathbb P$-a.s. for each }T>0,
\end{equation*}%
and the associated SDE \eqref{eq:wealth}\ has a unique strong solution $%
X^{\pi ,c}$ which is strictly positive. We denote by $\mathcal{A}$ the set of
admissible controls.
\end{definition}

\subsection{Forward recursive aggregator systems}\label{sec:2-1}

We introduce the new concepts of forward recursive preferences and their aggregators. The forward utility is represented by an It\^{o}
random field whose evolution is linked to a progressively measurable
aggregator through a martingale optimality principle. The regularity classes
for the random fields used below are recalled in Appendix~\ref{app:A}.

\begin{definition}\label{defn:2-2}
Let $\mathcal{U}\subset \mathbb{R}$ be a non-empty
open interval, and $\mathbb{D}_{U}=[0,\infty )\times \Omega \times (0,\infty
)$ and $\mathbb{D}_{F}^{\mathcal{U}}=[0,\infty )\times \Omega \times
(0,\infty )\times \mathcal{U}\times \mathbb{R}^{d}.$ A pair $\left(
U,F\right) $ of random fields
\begin{equation*}
U:\mathbb{D}_{U}\longrightarrow \mathcal{U}\text{ \  \  \ and \  \  \ }F:%
\mathbb{D}_{F}^{\mathcal{U}}\longrightarrow \mathbb{R},
\end{equation*}%
is called a \emph{forward aggregator system on $\mathcal{U}$} if:

\smallskip

i) for each $x>0$, the process $U(\cdot ,\cdot ,x)$ is $\mathcal{F}_{t}$%
-progressively measurable, and for each $(c,u,z)\in (0,\infty )\times
\mathcal{U}\times \mathbb{R}^{d}$, the process $F(\cdot ,\cdot ,c,u,z)$ is $%
\mathcal{F}_{t}$-progressively measurable,\smallskip

ii) $\mathbb{P}$-almost surely, for every $t\geq 0$, the map $%
x\mapsto U(t,\omega ,x)$ is strictly increasing and strictly
concave,\smallskip

iii) for $dt\otimes d\mathbb{P}$-a.e.\ $(t,\omega )$, the map $c\mapsto
F(t,\omega ,c,u,z)$ is strictly increasing and strictly concave for every $%
(u,z)\in \mathcal{U}\times \mathbb{R}^{d}$, and the map $(u,z)\mapsto
F(t,\omega ,c,u,z)$ is continuous on $\mathcal{U}\times \mathbb{R}^{d}$ for
each $c>0$.
\end{definition}

The properties of $U$ are stated in ii) for a single version of the
field on one $\mathbb{P}$-full set, while those of $F$ are stated in (iii) for $dt\otimes d\mathbb{P}$-a.e.\ $(t,\omega )$, since $F$ is used only inside time integrals.
Progressive measurability in $(t,\omega )$ together with continuity in the
spatial variables gives, by the Carath\'{e}odory measurability theorem, that
$F$ is jointly measurable in $(t,\omega ,c,u,z)$. We suppress the $\omega $-argument and write $U(t,x)$ and $%
F(t,c,u,z)$, except where the dependence on $\omega $ has to be displayed.

\begin{definition}\label{defn:2-3}
Let $(U,F)$ be a forward aggregator system on
$\mathcal U$ and let $U$ be a $\mathcal K_{\mathrm{loc}}^{2,\varepsilon }$%
-semimartingale random field, for some $\varepsilon \in (0,1]$, with local
characteristics $(b,\delta )$ in the sense that
\begin{equation}
dU(t,x)=b(t,x)\,dt+\delta (t,x)^{\top }dW_{t},\text{ }t\geq 0,\;x>0,
\label{eq:U-decomp}
\end{equation}%
for a suitable initial condition
\begin{equation}
U(0,x)=u_{0}(x),\text{ \ }x>0.  \label{eq:initial-condition}
\end{equation}%
\smallskip
\noindent For every admissible pair $(\pi,c)\in\mathcal A$, define
\begin{equation}
Z_{t}^{\pi ,c}=\delta (t,X_{t}^{\pi ,c})
+U_{x}(t,X_{t}^{\pi ,c})\,X_{t}^{\pi ,c}\,\sigma _{t}^{\top }\pi _{t},
\quad t\geq 0.  \label{eq:Z}
\end{equation}%
We stress that the process $Z^{\pi ,c}$ is the diffusion coefficient obtained by composing
the random field with the controlled wealth, and not a state process with an
independently chosen initial value. Its value at zero is determined by the
data, $Z_{0}^{\pi ,c}=\delta (0,x)+U_{x}(0,x)\,x\, \sigma _{0}^{\top }\pi
_{0}$, and no separate condition on $Z_{0}^{\pi ,c}$ is imposed.

\smallskip

\noindent Whenever
\begin{equation}
\int_{0}^{T}\left|F\! \left( s,c_{s},U(s,X_{s}^{\pi ,c}),
Z_{s}^{\pi ,c}\right)\right|ds<\infty
\quad\text{$\mathbb P$-a.s., for every }T>0,  \label{eq:F-integrability}
\end{equation}%
the associated value process is defined by
\begin{equation}
V_t^{\pi,c}=U(t,X_{t}^{\pi ,c})
+\int_{0}^{t}F\! \left( s,c_{s},U(s,X_{s}^{\pi ,c}),
Z_{s}^{\pi ,c}\right)ds,\qquad t\geq0.
\label{eq:value-process}
\end{equation}%
The forward aggregator system $(U,F)$ is said to be \emph{consistent} if
the following two conditions hold:

\smallskip

\noindent i) for every $(\pi ,c)\in \mathcal{A}$ satisfying
\eqref{eq:F-integrability}, the process $V^{\pi,c}$ is a local
$\mathcal{F}_t$-supermartingale,
and

\noindent ii) there exists $(\pi ^{\star },c^{\star })\in \mathcal{A}$
satisfying \eqref{eq:F-integrability} such that the process
$V^{\pi^{\star },c^{\star }}$ is a local $\mathcal{F}_t$-martingale.
\medskip

If $(U,F)$ is a consistent forward aggregator system, then $U$ is called a
\emph{forward recursive utility}, $F$ a \emph{forward
aggregator}, and any pair $(\pi ^{\star },c^{\star })$ satisfying ii) is
called \emph{optimal}.
\end{definition}

\begin{remark}\label{rem:2-4}
As in the existing forward utility literature, the
volatility process $\delta $ in \eqref{eq:U-decomp} is a modeling input, in
contrast to its analogue in the classical (backward) recursive utility
literature where it is part of the solution process.

Furthermore, the process $Z^{\pi ,c}$ in \eqref{eq:Z} is the diffusion coefficient of $%
U(t,X_{t}^{\pi ,c}),$ produced by the It\^{o}--Ventzel formula. It
combines the modeling input $\delta $ and the term
$U_{x}(t,x)X\sigma^{\top}\pi $ that comes from the dynamics of the underlying wealth
controlled process. Its role as the $z$-argument in $F$ is the (forward)
analogue of the $Z$-process in the classical BSDE formulation of recursive
utility.
\end{remark}

\begin{remark}\label{rem:2-5}
In the classical recursive utility literature, one
often imposes additional Lipschitz assumptions on $F$ with respect to $(u,z)$%
, which however fail for the popular Epstein--Zin recursive utilities. Since later on we focus on the forward analogue of the latter, we do not impose any such conditions. In their absence, however, the integrability requirement
\eqref{eq:F-integrability} may restrict the admissible control pairs to which
the criterion applies. This restriction depends on the pair $(U,F)$ itself, in
analogy with the restricted class of strategies in \citet[Remark~3.14]{Kallblad-2016}. The role of \eqref{eq:F-integrability} is the following:

\noindent i) The value process \eqref{eq:value-process} is defined only for
the admissible control pairs that satisfy \eqref{eq:F-integrability}.

\noindent ii) Integrability of the positive part is a consequence of the
Bellman inequality \eqref{eq:HJB-ineq} below. Indeed, writing $%
v_{t}=X_{t}^{\pi ,c}\, \sigma _{t}^{\top }\pi _{t}$, this inequality gives, for $dt\otimes d\mathbb{P}$-a.e.\ $(t,\omega )$,
\begin{equation}
F\! \left( t,c_{t},U(t,X_{t}^{\pi ,c}),Z_{t}^{\pi ,c}\right) \leq
-b(t,X_{t}^{\pi ,c})-U_{x}(t,X_{t}^{\pi ,c})\! \left( v_{t}^{\top }\theta
_{t}-c_{t}\right) -\tfrac{1}{2}U_{xx}(t,X_{t}^{\pi ,c})|v_{t}|^{2}-v_{t}^{%
\top }\delta _{x}(t,X_{t}^{\pi ,c}).  \label{eq:F-plus-bound}
\end{equation}%
Each term on the right-hand side is $\mathbb{P}$-a.s.\ integrable on $[0,T]$
for each $T>0$, by the continuity in $x$ of $b$, $U_{x}$, $U_{xx}$ and $%
\delta _{x}$ along the continuous process $X^{\pi ,c}$, together with the
integrability requirements of Definition~\ref{defn:2-1} and $\int_{0}^{T}|\theta
_{t}|^{2}dt<\infty $. Hence,
\begin{equation*}
\int_0^T F^+\!\left(s,c_s,U(s,X_s^{\pi,c}),Z_s^{\pi,c}\right)ds<\infty
\qquad\text{$\mathbb P$-a.s. for every }T>0.
\end{equation*}

\noindent iii) Integrability of the negative part is therefore the only part
of \eqref{eq:F-integrability} that is not guaranteed by the consistency requirement, and it remains an
admissibility condition on the control pair. Along the optimal pair of
Theorem~\ref{thm:2-6} it holds automatically, as shown in Appendix~\ref{app:A}, since it is equal to locally integrable terms.
\end{remark}

Consistency is understood locally, in line with the local forward criteria
of \citet[Definition~2.1]{Kallblad-2016}. Whenever expectations at a
deterministic horizon are used, we additionally assume that the relevant
stopped family is uniformly integrable, equivalently, of class $(D)$ in the
setting at hand. These are additional assumptions, under which, the local properties become true
(super)martingale properties and the criterion considers the corresponding
admissible control pairs in the expected-utility sense
\citep[cf.][Remark~3.15]{Kallblad-2016}. They are not imposed in the general
results below, which are therefore, considered only in the local sense.

\subsection{The forward recursive HJB SPDE and verification results}\label{sec:2-2}

The martingale optimality principle leads to the following \emph{forward
recursive HJB SPDE},
\begin{equation}
dU(t,x)=b(t,x)\,dt+\delta (t,x)^{\top }dW_{t},\text{ \  \ }t\geq 0,\text{ }x>0,
\label{Forward-Recursive-SPDE}
\end{equation}%
where the drift $b$ satisfies, for $dt\otimes d\mathbb{P}$-a.e.\ $(t,\omega )$ and each $x>0$, the optimality requirement
\begin{equation*}
\begin{aligned}
b(t,x)+\sup_{(\pi ,c)}\Bigl( &U_{x}(t,x)\bigl(x\, \pi ^{\top }\sigma
_{t}\theta _{t}-c\bigr)+\tfrac{1}{2}U_{xx}(t,x)\,\bigl|x\, \pi ^{\top }\sigma
_{t}\bigr|^{2}+x\, \pi ^{\top }\sigma _{t}\, \delta _{x}(t,x)\\
&+F\bigl(t,c,U(t,x),\delta (t,x)+U_{x}(t,x)\,x\, \sigma _{t}^{\top }\pi
\bigr)\Bigr)=0,
\end{aligned}
\end{equation*}%
together with the initial condition \eqref{eq:initial-condition}. The
It\^{o}--Ventzel formula shows that this drift condition is precisely the
local supermartingale condition. Theorem~\ref{thm:2-6} gives the corresponding
verification result.

\medskip \noindent We introduce the following notation. For $t\geq 0$, $%
\omega \in \Omega$, $x>0$, and $\pi \in \mathbb{R}^{n}$, we define
\begin{equation*}
Z(t,\omega ,x,\pi )=\delta (t,\omega ,x)+U_{x}(t,\omega ,x)\,x\, \sigma
_{t}(\omega )^{\top }\pi ,
\end{equation*}%
and 
\begin{align*}
H(t,\omega ,x,\pi ,c)& =b(t,\omega ,x)+U_{x}(t,\omega ,x)\left( x\pi ^{\top
}\sigma _{t}(\omega )\theta _{t}(\omega )-c\right) \\
& \quad +\tfrac{1}{2}U_{xx}(t,\omega ,x)\,|x\pi ^{\top }\sigma _{t}(\omega
)|^{2}+x\pi ^{\top }\sigma _{t}(\omega )\, \delta _{x}(t,\omega ,x) \\
& \quad +F\! \left( t,\omega ,c,U(t,\omega ,x),Z(t,\omega ,x,\pi )\right) .
\end{align*}%
These are definitions and not additional requirements. In this notation, the
drift condition above reads for $dt\otimes d\mathbb{P}$-a.e.\ $(t,\omega )$
and every $x>0,$
\begin{equation*}
\sup_{(\pi ,c)}H(t,x,\pi ,c)=0.
\end{equation*}

\medskip \noindent In the unnormalized setting, the supremum with respect to $\pi$ is
coupled with the supremum with respect to $c$. Indeed, the $z$-argument of $F$ depends on $\pi $ through $Z(t,x,\pi
)$, and the first-order condition in $\pi $ of the supremum defining the
drift condition reads
\begin{equation} \label{eq:foc}
U_{x}(t,x)\,\sigma _{t}\theta _{t}+U_{xx}(t,x)\,x\, \sigma _{t}\sigma
_{t}^{\top }\pi +\sigma _{t}\, \delta _{x}(t,x)+U_{x}(t,x)\, \sigma _{t}\,
F_{z}\! \left( t,c,U(t,x),Z(t,x,\pi )\right) =0.
\end{equation}%
When $F_{z}\neq 0$, the last term depends on $c$ through $F_{z}$, so that the
optimal $\pi $ depends on $c$ and the optimization in $\pi $ cannot be
separated from the optimization in $c$. Without additional assumptions, no further simplification is possible.

\begin{theorem}[Verification]\label{thm:2-6}
Let $(U,F)$ be a
forward aggregator system, where $U$ is a $\mathcal K_{\mathrm{loc}}^{2,\varepsilon }$%
-semimartingale random field for some $\varepsilon \in (0,1]$, with local
characteristics $(b,\delta )$ as in \eqref{eq:U-decomp}. Moreover, assume that:

i) there exist \emph{feedback controls}, that is, maps $\pi ^{\star
}\colon \lbrack 0,+\infty )\times \Omega \times (0,+\infty )\rightarrow
\mathbb{R}^{n}$ and $c^{\star }\colon \lbrack 0,+\infty )\times \Omega \times
(0,+\infty )\rightarrow (0,+\infty )$ that are $\mathcal{F}_{t}$%
-progressively measurable in $(t,\omega )$ for each $x>0$ and Borel
measurable in $x$, such that, for $dt\otimes d\mathbb{P}$%
-a.e.\ $(t,\omega )$ and every $x>0$,
\begin{equation}
H\! \left( t,\omega ,x,\pi ^{\star }(t,\omega ,x),c^{\star }(t,\omega
,x)\right) =0,  \label{eq:HJB-eq}
\end{equation}%
and 
\begin{equation}
H(t,\omega ,x,\pi ,c)\leq 0\qquad \forall \,(\pi ,c)\in \mathbb{R}^{n}\times
(0,+\infty ),  \label{eq:HJB-ineq}
\end{equation}%
and

ii) for each $x>0$, the wealth equation controlled by the feedback controls
$\left( \pi ^{\star }(t,X_{t}^{\star }),c^{\star }(t,X_{t}^{\star })\right) $%
, namely
\begin{equation*}
dX_{t}^{\star }=X_{t}^{\star }\,  \pi ^{\star }(t,X_{t}^{\star
}) ^{\top }\sigma _{t}\! \left( dW_{t}+\theta _{t}\,dt\right)
-c^{\star }(t,X_{t}^{\star })\,dt,\text{ \ }t\geq 0,\quad X_{0}^{\star }=x,
\end{equation*}%
admits a unique strong solution $X^{\star }$ which is strictly positive, with $%
\big(\pi ^{\star }(t,X_{t}^{\star }),\allowbreak c^{\star }(t,X_{t}^{\star
})\big)_{t\geq 0}\in \mathcal{A}$.

Then, $(U,F)$ is a consistent forward aggregator system, and the feedback
control pair $\big(\pi ^{\star }(t,X_{t}^{\star }),\allowbreak c^{\star
}(t,X_{t}^{\star })\big)_{t\geq 0}$ is optimal.
\end{theorem}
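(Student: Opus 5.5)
The plan is to apply the It\^{o}--Ventzel formula to the composition $U(t,X_t^{\pi,c})$ for an arbitrary admissible pair, read off its drift, and compare that drift with the function $H$. Fix $(\pi,c)\in\mathcal A$ satisfying \eqref{eq:F-integrability}. Since $U$ is a $\mathcal K^{2,\varepsilon}_{\mathrm{loc}}$-semimartingale random field with characteristics $(b,\delta)$ and $X^{\pi,c}$ is a continuous semimartingale, the It\^{o}--Ventzel formula (in the version for $\mathcal K^{2,\varepsilon}_{\mathrm{loc}}$ fields recalled in Appendix~\ref{app:A}) gives
\begin{equation*}
dU(t,X_t^{\pi,c})=\Bigl(b+U_x\bigl(X_t\pi_t^\top\sigma_t\theta_t-c_t\bigr)+\tfrac12U_{xx}|X_t\pi_t^\top\sigma_t|^2+X_t\pi_t^\top\sigma_t\,\delta_x\Bigr)(t,X_t)\,dt+\bigl(Z_t^{\pi,c}\bigr)^\top dW_t,
\end{equation*}
where the cross-variation term $\delta_x\cdot d\langle W,X\rangle$ produces the $\delta_x$ contribution. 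The diffusion coefficient is exactly $Z^{\pi,c}$ from \eqref{eq:Z}. Adding $F(t,c_t,U(t,X_t),Z_t^{\pi,c})\,dt$, which is well defined by \eqref{eq:F-integrability}, shows that
\begin{equation*}
dV_t^{\pi,c}=H\bigl(t,X_t^{\pi,c},\pi_t,c_t\bigr)\,dt+\bigl(Z_t^{\pi,c}\bigr)^\top dW_t .
\end{equation*}

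Before using this identity I need to check that every term is well defined. The stochastic integral $\int Z^{\pi,c}\,dW$ must be a local martingale. This follows because $\delta$ and $U_x$ are continuous in $x$ and $X^{\pi,c}$ has continuous paths, so $\int_0^T|\delta(t,X_t)|^2dt<\infty$, while $\int_0^T|U_x(t,X_t)X_t\sigma_t^\top\pi_t|^2dt\le \sup_{[0,T]}|U_xX|^2\int_0^T|\sigma_t^\top\pi_t|^2dt<\infty$ by Definition~\ref{defn:2-1}. The drift terms are pathwise integrable on $[0,T]$ by the same continuity argument together with \eqref{eq:theta} (compare Remark~\ref{rem:2-5}). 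Then \eqref{eq:HJB-ineq}, evaluated at $x=X_t^{\pi,c}$ for $dt\otimes d\mathbb P$-a.e.\ $(t,\omega)$, shows that $V^{\pi,c}$ is a local martingale plus a nonincreasing process. Localizing with stopping times that reduce the stochastic integral, the stopped processes are supermartingales, which proves condition i) of Definition~\ref{defn:2-3}.

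For condition ii), take $X^\star$ from hypothesis ii) and the pair $(\pi_t^\star,c_t^\star)=(\pi^\star(t,X_t^\star),c^\star(t,X_t^\star))\in\mathcal A$. Measurability is not an issue: $\pi^\star,c^\star$ are progressively measurable in $(t,\omega)$ and Borel in $x$, and this property is preserved under composition with the continuous adapted $X^\star$ via the usual Carath\'eodory-type argument. By \eqref{eq:HJB-eq}, $H(t,X^\star_t,\pi^\star_t,c^\star_t)=0$ a.e., so the computation above gives
\begin{equation*}
F\bigl(t,c^\star_t,U(t,X^\star_t),Z^\star_t\bigr)=-\Bigl(b+U_x(\cdots)+\tfrac12U_{xx}(\cdots)+\cdots\Bigr)(t,X^\star_t),
\end{equation*}
which is locally integrable. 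Hence \eqref{eq:F-integrability} holds for this pair, and $dV^\star=(Z^\star)^\top dW$ is a local martingale.

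The main obstacle is the rigorous use of the It\^{o}--Ventzel formula. One has to confirm that the $\mathcal K^{2,\varepsilon}_{\mathrm{loc}}$ regularity, which gives $C^{2}$ in $x$ for $U$ and $C^{1}$ for $\delta$ with H\"older local characteristics, matches the hypotheses of Kunita's theorem. One also has to handle the null-set subtleties: the identity $H(\cdot,X_t,\ldots)\le0$ is used along a random path while the inequality is only stated $dt\otimes d\mathbb P$-a.e. This is resolved because the inequality holds for every $x$ outside a single $dt\otimes d\mathbb P$-null set of $(t,\omega)$.
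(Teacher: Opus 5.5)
Your proposal is correct and follows the paper's proof essentially step by step. The common route is: It\^{o}--Ventzel along $X^{\pi,c}$ to get $dV^{\pi,c}=H\,dt+(Z^{\pi,c})^{\top}dW$, pathwise integrability of the non-aggregator terms on the compact range of $X$, localization together with $H\le 0$ for condition i), and $H=0$ along the feedback pair to recover \eqref{eq:F-integrability} and the local martingale property. There are two small refinements. First, the paper also includes $\int_0^t|H|\,ds\ge n$ in the localizing times, so that the stopped finite-variation part is bounded and the stopped process is integrable. Second, no measurability argument for $(\pi^\star(t,X^\star_t),c^\star(t,X^\star_t))$ is needed, because membership in $\mathcal A$ is part of hypothesis ii); your Carath\'{e}odory argument would in any case require continuity in $x$, not just Borel measurability.
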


\begin{proof}
The proof is given in Appendix~\ref{app:A}.
\end{proof}

\medskip \noindent Following \citet{Duffie-Epstein}, we introduce a normalization under which optimization with respect to investment and consumption
decouples.

\subsection{Normalization and related results}\label{sec:2-3}

We work with forward aggregators of the form
\begin{equation}
F(t,c,u,z)=f(t,c,u)+\tfrac{1}{2}A(u)|z|^{2},\text{ \  \ }t\geq 0,\text{ }c>0,%
\text{ }u\in \mathcal{U},\text{ }z\in \mathbb{R}^{d},
\label{eq:norm-F}
\end{equation}%
where $A:\mathcal{U}\rightarrow (-\infty ,0]$ is continuous and $f$
does not depend on $z$. Here $A$ is introduced directly as the coefficient of
the quadratic volatility term of the forward aggregator. It represents the forward
analogue of the \textit{variance multiplier} introduced in
\citet{Duffie-Epstein}.

\medskip \noindent We collect here the assumptions on the normalization map.

\begin{assumption}\label{asm:2-7}
Fix $u_{\ast }\in \mathcal{U}$ and
let $\varphi $ be twice continuously differentiable on $\mathcal{U}$, with
\begin{equation}
\varphi ^{\prime }>0,\qquad \varphi ^{\prime \prime }(u)=A(u)\varphi
^{\prime }(u),\qquad \varphi (u_{\ast })=0,\qquad \varphi ^{\prime }(u_{\ast
})=1.  \label{phi-cond}
\end{equation}
\end{assumption}

\medskip

The differential equation in \eqref{phi-cond} is precisely what removes the
volatility term under normalization. Since $A$ is non-positive and $%
\varphi ^{\prime }$ is positive, $\varphi $ is strictly increasing and
concave, and $\bar{\mathcal{U}}=\varphi (\mathcal{U})$ is an open interval.
The map $\varphi $ is determined by the differential equation up to a
positive affine transformation $\varphi \mapsto a\varphi +b$, and the last
two conditions in \eqref{phi-cond} fully determine a representative of the class of solutions. Indeed, affine transformations do not affect the induced performance criterion.

\begin{proposition}\label{prop:2-8}
Let $\varphi $ satisfy Assumption~\ref{asm:2-7} and set
\begin{equation*}
\bar{U}=\varphi \circ U,\text{ \  \  \ }\bar{\mathcal{U}}=\varphi (\mathcal{U%
}),
\end{equation*}
and
\begin{equation*}
\bar{F}(t,c,\bar{u})=\varphi ^{\prime }\! \left( \varphi ^{-1}(\bar{u}%
)\right) f\! \left( t,c,\varphi ^{-1}(\bar{u})\right) ,\quad t\geq0,\ c>0,\ \bar{u}%
\in \bar{\mathcal{U}}.
\end{equation*}
Let $(U,F)$ be a consistent forward aggregator system on $\mathcal{U}$ and let $%
(\pi ^{\star },c^{\star })\in \mathcal{A}$ be an optimal control pair. Then,
the following hold:

\smallskip

\noindent i) for every $(\pi ,c)\in \mathcal{A}$ satisfying
\eqref{eq:F-integrability}, the associated value process $\bar{V}^{\pi,c}$ of $(\bar{U},\bar{F}%
)$ is a local $\mathcal{F}_t$-supermartingale, and\smallskip

\noindent ii) the value process $\bar{V}^{\pi^{\star}, c^{\star}}$ is a local $\mathcal{F}_t$-martingale. 

\smallskip

\noindent Hence, $(\bar U,\bar F)$ is a consistent forward aggregator system
on $\bar{\mathcal{U}}$. 

\noindent Conversely, if $(U,F)$ is a forward aggregator
system of form \eqref{eq:norm-F} and $(\bar U,\bar F)$ is consistent on
$\bar{\mathcal{U}}$, then $(U,F)$ is consistent on $\mathcal{U}$, and the
two systems have the same optimal control pairs.
\end{proposition}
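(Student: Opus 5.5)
The plan is to compare the two value processes pathwise through a multiplicative (discount-factor) identity. Fix $(\pi,c)\in\mathcal A$ satisfying \eqref{eq:F-integrability}, write $X=X^{\pi,c}$, $U_t=U(t,X_t)$, $Z_t=Z^{\pi,c}_t$, and $F_t=F(t,c_t,U_t,Z_t)$. By the It\^o--Ventzel formula (available since $U\in\mathcal K^{2,\varepsilon}_{\mathrm{loc}}$), we have $dU_t=\beta_t\,dt+Z_t^\top dW_t$ with
\[
\beta_t=b(t,X_t)+U_x(t,X_t)(X_t\pi_t^\top\sigma_t\theta_t-c_t)+\tfrac12U_{xx}(t,X_t)|X_t\sigma_t^\top\pi_t|^2+X_t\pi_t^\top\sigma_t\delta_x(t,X_t).
\]
Hence $dV^{\pi,c}_t=(\beta_t+F_t)\,dt+Z_t^\top dW_t$, and $\beta_t+F_t=H(t,X_t,\pi_t,c_t)$. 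Since $\bar U=\varphi\circ U$ with $\varphi\in C^2$, the field $\bar U$ is again an It\^o random field, and $\bar U(t,X_t)=\varphi(U_t)$.

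\textbf{Step 1 (It\^o on $\varphi$).} Applying It\^o's formula to $\varphi(U_t)$ gives
\[
d\varphi(U_t)=\varphi'(U_t)\beta_t\,dt+\tfrac12\varphi''(U_t)|Z_t|^2\,dt+\varphi'(U_t)Z_t^\top dW_t .
\]
The diffusion coefficient of $\bar U(t,X_t)$ is $\bar Z_t=\varphi'(U_t)Z_t$, consistent with \eqref{eq:Z} for $(\bar U,\bar\delta)$, where $\bar\delta=\varphi'(U)\delta$. Next, $\bar F(t,c_t,\bar U_t)=\varphi'(U_t)f(t,c_t,U_t)$, and by \eqref{phi-cond} we have $\varphi''=A\varphi'$. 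It follows that
\[
d\bar V^{\pi,c}_t=\varphi'(U_t)\bigl(\beta_t+f(t,c_t,U_t)+\tfrac12A(U_t)|Z_t|^2\bigr)dt+\varphi'(U_t)Z_t^\top dW_t=\varphi'(U_t)\,dV^{\pi,c}_t .
\]
This identity is the heart of the argument: $\bar V$ is the stochastic integral of the strictly positive, continuous (hence locally bounded) process $\varphi'(U_t)$ against $V$.

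\textbf{Step 2 (transfer of local (super)martingale properties).} We first check that \eqref{eq:F-integrability} for $\bar F$ is equivalent to that for $F$. This holds because $\bar F_t=\varphi'(U_t)(F_t-\frac12A(U_t)|Z_t|^2)$, the factor $\varphi'(U_t)$ is pathwise bounded above and away from $0$ on compacts, and $\int_0^T|Z_t|^2dt<\infty$ (continuity of $\delta,U_x$ along $X$, together with admissibility). Now suppose $V$ is a continuous local supermartingale. Its Doob--Meyer decomposition is $V=M-K$ with $K$ nondecreasing, and then $\bar V-\bar V_0=\int\varphi'(U)dM-\int\varphi'(U)dK$ is again a local supermartingale. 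Likewise, the integral against a local martingale remains a local martingale. This proves i) and ii), and hence consistency of $(\bar U,\bar F)$ on $\bar{\mathcal U}$ with the same optimal pair.

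\textbf{Step 3 (converse).} If $(U,F)$ has the form \eqref{eq:norm-F} and $(\bar U,\bar F)$ is consistent, the same identity gives $dV=\frac{1}{\varphi'(U_t)}d\bar V$, and $1/\varphi'(U_t)$ is again positive and locally bounded. Since $\varphi$ is a $C^2$ bijection onto $\bar{\mathcal U}$ with $U=\varphi^{-1}\circ\bar U$, the integrability classes coincide. So local supermartingality and martingality transfer back, and the sets of optimal pairs coincide.

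The main obstacle I expect is Step 2's bookkeeping rather than the algebra. I need to ensure that $\bar U$ belongs to the regularity class used in Definition~\ref{defn:2-3} so that the It\^o--Ventzel composition is legitimate. It is only the composed process that matters, however, and ordinary It\^o applied to $\varphi(U_t)$ suffices for it. I also need the pathwise local boundedness of $\varphi'(U_t)^{\pm1}$, which follows from continuity of $t\mapsto U_t$ with values in the open interval $\mathcal U$, so that the path stays in a compact subinterval on $[0,T]$.
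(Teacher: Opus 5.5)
Your argument is the paper's proof in compact form. The paper's Step~1 writes $V^{\pi,c}=V^{\pi,c}_0+M+D$ and applies It\^o's formula to $\varphi(U(t,X_t))$. It then uses $\varphi''=A\varphi'$ to get $d\bar U(t,X_t)=\varphi'(u_t)z_t^{\top}dW_t-\bar F\,dt+\varphi'(u_t)\,dD_t$, which is exactly your identity $d\bar V^{\pi,c}_t=\varphi'(U_t)\,dV^{\pi,c}_t$. Its Step~2 is your integrability equivalence. Its Step~3 redoes the It\^o computation with $\psi=\varphi^{-1}$ and $\psi''=-A(\psi)(\psi')^{2}$. You instead divide your identity by $\varphi'(U_t)$, which is a slightly cleaner route to the converse and to the equality of optimal pairs.

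\textbf{What is missing (the paper's Step~4).} Consistency in Definition~\ref{defn:2-3} is a property of a forward aggregator system. So to conclude that $(\bar U,\bar F)$ is a consistent forward aggregator system on $\bar{\mathcal U}$, you must also check Definition~\ref{defn:2-2} for $(\bar U,\bar F)$, in particular that $x\mapsto\bar U(t,x)$ is strictly increasing and strictly concave.

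This does not follow from the identities you used. It needs $\varphi$ concave, i.e.\ the sign condition $A\le 0$, which your proof never invokes. With $A\equiv 2$, $u_\ast=0$ and $U(t,x)=\log x$, for instance, one gets $\bar U(t,x)=(x^2-1)/2$, which is convex.

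The fix is one line. For $x_1\neq x_2$ and $a\in(0,1)$, strict concavity of $U(t,\cdot)$, strict monotonicity of $\varphi$ and concavity of $\varphi$ give $\bar U(t,ax_1+(1-a)x_2)>\varphi\bigl(aU(t,x_1)+(1-a)U(t,x_2)\bigr)\geq a\bar U(t,x_1)+(1-a)\bar U(t,x_2)$. Similarly, $\bar F$ is strictly increasing and strictly concave in $c$ and continuous in $\bar u$, because $\varphi'>0$, $f(t,c,u)=F(t,c,u,0)$, and $\varphi^{-1}$ and $\varphi'$ are continuous.

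By contrast, the $\mathcal K^{2,\varepsilon}_{\mathrm{loc}}$ membership of $\bar U$, which you single out as the main obstacle, is not verified in the paper either. So that is not where your proposal falls short of it.
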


\begin{proof}
The proof is given in Appendix~\ref{app:A}.
\end{proof}

\medskip

A forward aggregator system is called \emph{normalized} if its aggregator is
independent of $z$. Proposition~\ref{prop:2-8} allows us to work with the normalized
system when the dependence on $z$ is purely quadratic, and we do so from here
on. We, henceforth, omit the bars from the normalized objects.

To display the resulting separation, write $v=x\,\sigma _{t}^{\top }\pi $ for
the wealth diffusion coefficient of the portfolio generated by $\pi$. For $dt \otimes d
\mathbb{P}$-a.e. $(t,\omega)$ and every $x>0$, the optimality requirement becomes
\begin{equation*}
b(t,x)+\sup_{v\in \mathrm{Im}\, \sigma _{t}^{\top }}\Bigl(\tfrac{1}{2}\,U_{xx}(t,x)\,|v|^{2}+v^{\top }\bigl(U_{x}(t,x)\theta _{t}+\delta _{x}(t,x)\bigr)\Bigr) +\sup_{c>0}\Bigl(F(t,c,U(t,x))-U_{x}(t,x)\,c\Bigr)=0, 
\quad t\geq0,\ x>0,
\end{equation*}%
in which the first supremum involves only the wealth diffusion $v$ and the
second only the consumption rate $c$. The reason is precisely the one identified in \eqref{eq:foc}. Specifically, for a normalized aggregator, $F_{z}\equiv 0$, so the term $%
U_{x}\,\sigma _{t}F_{z}$ disappears from the first-order condition in $\pi $,
and the consumption rate no longer enters the portfolio choice. Conversely,
the portfolio no longer enters the consumption choice because $\pi $ appears
in the criterion only through $v$ and through the $z$-argument of $F$, and
the latter has been removed. The maximizations can then be performed one at a
time, in either order. Since $v$ is constrained to the subspace
$\mathrm{Im}\,\sigma_t^{\top}$, only the projection
$\sigma_t^{+}\sigma_t\delta_x$ contributes to the first supremum. The second
supremum is the Fenchel transform of $F$ in its consumption argument,
\begin{equation} \label{eq:fenchelF}
\widetilde{F}(t,d,u)=\sup_{c>0}\! \left( F(t,c,u)-d\,c\right) ,\text{ }%
d>0,\;u\in \mathcal{U}.
\end{equation}%
\smallskip
Thus, the forward recursive HJB SPDE \eqref{Forward-Recursive-SPDE} becomes
\begin{equation}
dU(t,x)=\Bigl( \frac{|U_{x}(t,x)\theta _{t}+\sigma_{t}^{+}\sigma_{t}\delta
_{x}(t,x)|^{2}}{2\,U_{xx}(t,x)}-\widetilde{F}\bigl(t,U_{x}(t,x),U(t,x)\bigr)%
\Bigr) dt +\delta (t,x)^{\top }dW_{t},  \text{ \  \ }t\geq 0,\text{ }x>0,
\label{eq:SPDE-norm}
\end{equation}%
with initial condition 
$$ U(0,x) = u_0(x), \quad x >0.$$

\medskip

\noindent\textit{Nondegeneracy.} The strict concavity in Definition~\ref{defn:2-2} and
the regularity of $U$ do not by themselves yield a strict sign for the second
derivative at every point. We therefore impose, as a standing assumption for
the classical form of the forward recursive HJB SPDE and for the
duality results of
Section~\ref{sec:3},
\begin{equation}
U_{xx}(t,x)<0,\qquad t\geq 0,\ x>0,\quad \mathbb{P}\text{-a.s.}
\label{eq:nondegeneracy}
\end{equation}
The strict inequality is what allows us to divide by $U_{xx}$, which is done
in the drift of \eqref{eq:SPDE-norm}, in the portfolio feedback \eqref{eq:feedback}
below, and in the inverse marginal computations of Section~\ref{sec:3}. Under regularity conditions on the convex conjugate $\widetilde{U}$ of $U$,
\eqref{eq:nondegeneracy} is equivalent to
\begin{equation*}
\widetilde U_{yy}(t,y)=-\frac{1}{U_{xx}(t,I(t,y))}>0,\qquad y>0,
\end{equation*}
in the notation of \eqref{eq:conjugacy_relations}. We note that condition
\eqref{eq:nondegeneracy} is not part of the martingale definition of
consistency in Definition~\ref{defn:2-3}, and it is not used in Theorem~\ref{thm:2-6}.

\bigskip

\noindent We introduce the following regularity assumptions for the normalized forward aggregator.

\begin{assumption}\label{asm:2-9}
For $dt\otimes d\mathbb{P}$-a.e.\ $%
(t,\omega )$, the map $(c,u)\mapsto F(t,\omega ,c,u)$ is continuously
differentiable on $(0,\infty )\times \mathcal{U}$, its derivatives $F_{c}$
and $F_{u}$ are jointly continuous in $(c,u)$ on $(0,\infty )\times \mathcal{U%
}$, and the Inada conditions in $c$ hold, namely,
\begin{equation*}
\lim_{c\downarrow 0}F_{c}(t,\omega ,c,u)=\infty \text{ \  \  \ and \  \  \ }%
\lim_{c\uparrow \infty }F_{c}(t,\omega ,c,u)=0,\text{ \  \ for every }u\in
\mathcal{U}.
\end{equation*}
\end{assumption}

\medskip

Recall from Definition~\ref{defn:2-2} that map $c\mapsto F(t,c,u)$ is strictly
increasing and strictly concave. Together with Assumption~\ref{asm:2-9}, this gives,
for $dt\otimes d\mathbb{P}$-a.e.\ $(t,\omega )$ and each $(d,u)\in (0,\infty
)\times \mathcal{U}$, a unique interior maximizer of $c\mapsto F(t,c,u)-d\,c$%
, at which $F_{c}=d$, namely the optimizer $F_{c}^{-1}(t,d,u)$.
The Fenchel transform $\widetilde{F}$ in \eqref{eq:fenchelF} is therefore finite and the maximum is attained
at $c=F_{c}^{-1}(t,d,u)$, and for $dt\otimes d\mathbb{P}$-a.e.\ $(t,\omega )$ and each $(d,u)\in (0,\infty
)\times \mathcal{U}$, we have
\begin{equation*}
\widetilde{F}(t,d,u)=F\bigl(t,F_{c}^{-1}(t,d,u),u\bigr)-d\,F_{c}^{-1}(t,d,u).
\end{equation*}%
Moreover, $\widetilde{F}$ is continuously differentiable on $(0,\infty
)\times \mathcal{U}$ and satisfies the envelope identities
\begin{equation}
\widetilde{F}_{d}(t,d,u)=-F_{c}^{-1}(t,d,u)\quad \text{ and } \quad \widetilde{F}%
_{u}(t,d,u)=F_{u}\bigl(t,F_{c}^{-1}(t,d,u),u\bigr),  \label{eq:F-envelope}
\end{equation}%
for all $(d,u)\in (0,\infty )\times \mathcal{U}$ and $dt\otimes d\mathbb{P}$-a.e.\ $(t,\omega )$. The first identity in
\eqref{eq:F-envelope} is the differentiability of a convex conjugate at a
point with a unique maximizer. The second is the envelope theorem. The joint
continuity of $F_{c}$ and $F_{u}$ assumed above, together with the strict
monotonicity of $F_{c}$ in $c$, makes $F_{c}^{-1}$ continuous in $(d,u)$ and
$\widetilde{F}_{d}$, $\widetilde{F}_{u}$ continuous as well.

\begin{theorem}[Verification]\label{thm:2-10}
Let $(U,F)$ be a
normalized forward aggregator system on $\mathcal{U}$ satisfying
Assumption~\ref{asm:2-9}, where $U$ is a $\mathcal K_{\mathrm{loc}}^{2,\varepsilon }$%
-semimartingale random field for some $\varepsilon \in (0,1]$, with local
characteristics $(b,\delta )$ as in \eqref{eq:U-decomp} and satisfying the
nondegeneracy condition \eqref{eq:nondegeneracy}. Assume that the random
field $U$ satisfies the normalized forward recursive HJB SPDE
\eqref{eq:SPDE-norm}. 

\noindent 
Let $\pi^{\star}: [0,\infty) \times \Omega \times (0, \infty) \mapsto \mathbb{R}^n$, and $ c^{\star}: [0,\infty) \times \Omega \times (0, \infty) \mapsto (0,\infty)$ be defined as 
\begin{equation}
\pi ^{\star }(t,x)
 =-\frac{U_{x}(t,x)}{x\,U_{xx}(t,x)}\,(\sigma _{t}^{\top })^{+}
\left( \theta _{t}+\frac{\delta _{x}(t,x)}{U_{x}(t,x)}\right),\qquad t\geq0,\ x>0,
\label{eq:feedback}
\end{equation}
and
\begin{equation}
c^{\star }(t,x)=F_{c}^{-1}\! \left( t,U_{x}(t,x),U(t,x)\right),\qquad t\geq0,\ x>0,
\label{eq:feedback_consumption}
\end{equation}
and assume that the wealth equation controlled by the above feedback controls admits a
unique strong solution $X^{\star }$ which is strictly positive, and that $%
\big(\pi ^{\star }(t,X_{t}^{\star }),\allowbreak c^{\star }(t,X_{t}^{\star
})\big)_{t\geq 0}\in \mathcal{A}$. Then, $(U,F)$ is a consistent forward
aggregator system and the control pair $\big(\pi ^{\star }(t,X_{t}^{\star
}),\allowbreak c^{\star }(t,X_{t}^{\star })\big)_{t\geq 0}$ is optimal.
\end{theorem}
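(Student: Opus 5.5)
The plan is to reduce Theorem~\ref{thm:2-10} to the general verification result, Theorem~\ref{thm:2-6}, by checking its hypotheses i) and ii). Hypothesis ii) is assumed verbatim in the statement: the feedback wealth equation has a unique strictly positive strong solution and the induced pair is in $\mathcal A$. Measurability of the feedback maps also needs a check. $\pi^\star$ in \eqref{eq:feedback} is built from $U_x,U_{xx},\delta_x,\theta,\sigma$, which are progressively measurable in $(t,\omega)$ and continuous in $x$ by the $\mathcal K^{2,\varepsilon}_{\mathrm{loc}}$ regularity; the division by $U_{xx}$ is legitimate by \eqref{eq:nondegeneracy}. $c^\star$ in \eqref{eq:feedback_consumption} is a continuous function, $F_c^{-1}$ being continuous in $(d,u)$ under Assumption~\ref{asm:2-9}, of the progressively measurable and continuous quantities $U_x(t,x),U(t,x)$. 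Hence both maps are progressively measurable in $(t,\omega)$ and Borel in $x$. The substance is therefore hypothesis i): $H\le 0$ everywhere and $H=0$ at $(\pi^\star,c^\star)$.

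To verify i), I will use the normalized structure $F_z\equiv0$, which gives $Z$-independence and splits $H$ into three pieces. Writing $v=x\sigma_t^\top\pi\in\mathrm{Im}\,\sigma_t^\top$,
\[
H(t,x,\pi,c)=b(t,x)+\Bigl(\tfrac12 U_{xx}|v|^2+v^\top(U_x\theta_t+\delta_x)\Bigr)+\Bigl(F(t,c,U)-U_x c\Bigr).
\]
\emph{Consumption term.} By Assumption~\ref{asm:2-9} and strict concavity in $c$, the last bracket is at most $\widetilde F(t,U_x,U)$, with equality exactly at $c=F_c^{-1}(t,U_x,U)=c^\star(t,x)$. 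Here $U_x>0$ by strict monotonicity together with \eqref{eq:nondegeneracy}.

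\emph{Portfolio term.} For $v\in\mathrm{Im}\,\sigma_t^\top$ we have $v^\top\delta_x=v^\top P_t\delta_x$, where $P_t=\sigma_t^+\sigma_t$ is the orthogonal projection; also $P_t\theta_t=\theta_t$ by \eqref{eq:projections_theta}. So the middle bracket is a strictly concave quadratic in $v$ on the subspace, since $U_{xx}<0$. Its unconstrained maximizer $v^\star=-(U_x\theta_t+P_t\delta_x)/U_{xx}$ already lies in $\mathrm{Im}\,\sigma_t^\top$, and the maximal value is $-|U_x\theta_t+P_t\delta_x|^2/(2U_{xx})$. Since $\sigma_t$ has full row rank, $\sigma_t^\top$ is injective and the map $\pi\mapsto x\sigma_t^\top\pi$ is a bijection from $\mathbb R^n$ onto $\mathrm{Im}\,\sigma_t^\top$. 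I then check that $\pi^\star$ in \eqref{eq:feedback} satisfies $x\sigma_t^\top\pi^\star=v^\star$, using $\sigma_t^\top(\sigma_t^\top)^+=P_t$, which gives $\sigma_t^\top(\sigma_t^\top)^+(U_x\theta_t+\delta_x)=U_x\theta_t+P_t\delta_x$.

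\emph{Drift term.} Adding the two bounds gives $H\le b+\bigl(-|U_x\theta+P\delta_x|^2/(2U_{xx})\bigr)+\widetilde F(t,U_x,U)$. By the SPDE \eqref{eq:SPDE-norm} this right-hand side is exactly $0$ for $dt\otimes d\mathbb P$-a.e.\ $(t,\omega)$ and every $x>0$. Equality holds at $(\pi^\star,c^\star)$, which yields \eqref{eq:HJB-eq} and \eqref{eq:HJB-ineq}. Theorem~\ref{thm:2-6} then delivers consistency and optimality.

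The main obstacle I expect is not the algebra but the quantifier ``for a.e.\ $(t,\omega)$ and \emph{every} $x>0$''. The SPDE identifies the drift $b$ as a random field, but a priori only up to null sets that could depend on $x$. I would handle this by fixing a single full-measure set: first for rational $x$, then extending to all $x>0$ by continuity in $x$ of $b$, $U$, $U_x$, $U_{xx}$, $\delta_x$ (from the $\mathcal K^{2,\varepsilon}_{\mathrm{loc}}$ class) and of $\widetilde F$ (from Assumption~\ref{asm:2-9}). A secondary point is that the integrability \eqref{eq:F-integrability} along the optimum is supplied inside Theorem~\ref{thm:2-6}, so it need not be rechecked here.
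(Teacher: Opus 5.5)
Your proposal is correct and follows essentially the same route as the paper. You reduce to Theorem~\ref{thm:2-6}, substitute the SPDE for $b$, and bound the portfolio part; the paper completes the square as $\tfrac12 U_{xx}\bigl|v+\Lambda/U_{xx}\bigr|^2\le 0$ with $\Lambda=U_x\theta_t+\sigma_t^+\sigma_t\delta_x$, which is equivalent to your explicit maximization over $\mathrm{Im}\,\sigma_t^\top$. You then handle the consumption part through the Fenchel inequality with equality at $c^\star$ and check measurability of the feedback maps, again as the paper does. Your extra care with the quantifier ``a.e.\ $(t,\omega)$ and every $x>0$'' (rational $x$ plus continuity) is a harmless refinement that the paper leaves implicit.
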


\begin{proof}
The proof is given in Appendix~\ref{app:A}.
\end{proof}

\medskip

\noindent Explicit conditions on the model inputs guaranteeing the
admissibility of the feedback control pair are provided in Section~\ref{sec:3}
(Assumption~\ref{asm:3-13} and Theorem~\ref{thm:3-14}). They resemble those in Assumption 2.3 in
\citet{LSZ25}, where the risk tolerance $-U_{x}/U_{xx}$ is required to be
bounded above and below away from zero, and the ratio $|\delta _{x}/U_{xx}|$
to be bounded.

\begin{remark}\label{rem:2-11}
We note that in the term $%
\widetilde{F}(t,U_{x}(t,x),U(t,x))$ both the forward recursive utility $%
U(t,x)$ and its marginal $U_{x}(t,x)$ appear. This nonlinear coupling makes
the problem in general non-tractable. However, for specific but rich enough cases, which we
study in Section~\ref{sec:4}, we are able to produce closed form solutions and draw
analogies with their classical counterparts.
\end{remark}

\section{Duality results for normalized forward recursive aggregator systems}\label{sec:3}

In this section, we develop the convex duality theory for
forward recursive aggregator systems. We first introduce the family of state
price densities associated with the market with stock prices as in \eqref{eq:marketstocks} together with the associated
deflated process. We then show that the convex conjugate of a forward
recursive utility solves a \textit{dual forward recursive HJB SPDE}, which is the Legendre
counterpart of the primal one. We establish the corresponding dual
consistency on the family of state price densities and, in turn identify the
state price density selected by the primal optimum, which turns out to be the marginal
utility along the optimal wealth multiplied by the factor that removes its
recursive drift. Finally, we provide a theorem that derives, using
explicit bounds on the model inputs, the existence and uniqueness of the dual
and primal solution families, the admissibility of the feedback control pair, and the
resulting primal and dual consistency properties. This approach
extends the results of \citet{EKM13} to the recursive setting. These results provide the tools used in Section~\ref{sec:4} where we introduce and solve in detail the forward analogue of the classical Epstein--Zin recursive utility maximization problem.

\subsection{State price densities}\label{sec:3-1}

For an admissible control pair $(\pi ,c)\in \mathcal{A}$ and a positive It%
\^{o} process $Y$, the deflated process $H^{\pi ,c}(Y)$ is defined as
\begin{equation}
H_{t}^{\pi ,c}(Y)=X_{t}^{\pi ,c}\,Y_{t}+\int_{0}^{t}Y_{s}\,c_{s}\,ds,\text{
\ }t\geq 0.  \label{eq:budget_process}
\end{equation}

\begin{definition}\label{defn:3-1}
A \emph{state price density} is a
positive It\^{o} process $Y$ such that for every $(\pi ,c)\in \mathcal{A}$, $%
H^{\pi ,c}(Y)$ is a local martingale. We denote by $\mathcal{Y}$ the family
of state price densities.
\end{definition}

\bigskip

\medskip Using the wealth dynamics \eqref{eq:wealth}, $H^{\pi ,c}(Y)$ is a
local martingale for every $(\pi ,c)\in \mathcal{A}$ if and only if $Y$
satisfies, for some initial value $y>0$ and some predictable process $\nu $
with $\nu _{t}\in (\mathrm{Im}\, \sigma _{t}^{\top })^{\perp }$, for $dt\otimes d%
\mathbb{P}$-a.e.\ $(t,\omega )$,
\begin{equation}
dY_{t}^{y,\nu }=Y_{t}^{y,\nu }\bigl(-\theta _{t}+\nu _{t}\bigr)^{\top
}dW_{t},\qquad Y_{0}^{y,\nu }=y,\qquad t\geq0.  \label{eq:SPD}
\end{equation}

\medskip \noindent A dual control $\nu $ is \emph{admissible} if it is a
$\mathcal{F}_{t}$-predictable process satisfying $\nu _{t}\in (\mathrm{Im}%
\, \sigma _{t}^{\top })^{\perp }$, for $dt\otimes d\mathbb{P}$-a.e.\ $(t,\omega )$, and
\begin{equation*}
\int_{0}^{T}\! |\nu _{t}|^{2}
\,dt<\infty \text{ \ $\mathbb P$-a.s., for each }T>0.
\end{equation*}%
We denote the set of admissible dual controls by
$\mathcal A^{\mathrm{dual}}$ and we set
\begin{equation}
\mathcal{Y}(y)=\bigl \{Y^{y,\nu }:\nu \in \mathcal{A}^{\mathrm{dual}}\bigr \}%
,\text{ \ }y>0,\text{ \ and \ }\mathcal{Y}=\bigcup_{y>0}\mathcal{Y}%
(y).  \label{eq:SPD_family}
\end{equation}%
This formulation distinguishes the dual control $\nu $, the equation \eqref{eq:SPD} that
it parametrizes, and the resulting density $Y^{y,\nu }$. When the initial
value plays no role, we write $Y^{\nu }$ for an element of $\mathcal{Y}$.  
The set $\mathcal{Y}$ depends only on the market processes $(\mu
,\sigma )$ through the market price of risk $\theta $ and the projection $%
\sigma _{t}^{+}\sigma _{t}$. Clearly, if the market is complete, $\sigma
_{t}^{+}\sigma _{t}\equiv I_{d}$, $\nu \equiv 0$, and $\mathcal{Y}$ consists of a single state price density, up to a
positive scaling.

\medskip

\subsection{Conjugate forward recursive utility and dual forward recursive SPDE}\label{sec:3-2}

We analyse the conjugate random field $\widetilde{U}$ and derive the equation
that it satisfies. As in the literature of forward utilities
\citep[see, e.g.,][]{EKM13,LSZ25},
we impose additional regularity conditions on the random field $U$ for the dual forward recursive HJB SPDE to be well defined.

\begin{assumption}\label{asm:3-2}
For some $\varepsilon \in (0,1]$,

i) $U$ is a $\mathcal K_{\mathrm{loc}}^{2,\varepsilon }$-semimartingale
random field and, $\mathbb P$-almost surely, the map
$x\mapsto U(t,x)$ is three times continuously differentiable for every
$t\geq0$, with third derivative locally $\varepsilon$-H\"{o}lder on compact
subsets of $(0,\infty)$,

ii) $\mathbb P$-almost surely, for every $t\geq 0$, the marginal
$U_{x}(t,\cdot )$ satisfies the
Inada conditions
\begin{equation*}
\lim_{x\downarrow 0}U_{x}(t,x)=\infty \text{ \ and \ }\lim_{x\uparrow \infty
}U_{x}(t,x)=0,
\end{equation*}%

iii) the inverse marginal field
\begin{equation*}
I(t,\omega ,y)=\bigl(x\mapsto U_{x}(t,\omega ,x)\bigr)^{-1}(y),\qquad t\geq0,\ \omega \in \Omega ,\ y>0,
\end{equation*}%
is a $\mathcal K_{\mathrm{loc}}^{2,\varepsilon }$-semimartingale random field.
\end{assumption}

\medskip

Strict concavity and the Inada conditions in ii) make $U_x(t,\cdot)$ a
bijection from $(0,\infty)$ onto $(0,\infty)$, so that $I(t,y)$ is
well defined for all $y>0$. The regularity of this specific field is assumed
in iii), and is not derived from the dual equation. The It\^{o}--Ventzel
calculations of Appendix~\ref{app:B} use (i) for the field $U$ and its derivative field
$U_{x}$, together with their local characteristics $(b,\delta )$ and
$(b_{x},\delta _{x})$, and (iii) for the field $I$ along which the conjugate
is computed. No It\^{o} decomposition of $\delta $ or of $U_{xxx}$ is
required. This resembles Assumption~2.3 in \citet{LSZ25}, and the
inverse field condition is standard in the It\^{o} random field setting, see
\citep{EKM13}.

\medskip

Next, we define the convex dual
\begin{equation}
\widetilde{U}(t,y)=\sup_{x>0}\left( U(t,x)-x\,y\right)
=U(t,I(t,y))-y\,I(t,y),\quad t\geq0,\ y>0.  \label{eq:fenchel_U}
\end{equation}
The technical conjugacy calculation is given in
Lemma~\ref{lem:3-3} in Appendix~\ref{app:B}. In particular, $\widetilde U$ is a semimartingale
random field with local characteristics
$(\widetilde b,\widetilde\delta)$. The conjugacy identities hold $\mathbb P$-a.s.\ for every $t\geq0$ and $y>0$,
\begin{equation}
I(t,y)=-\widetilde{U}_{y}(t,y),\qquad U_{xx}(t,I(t,y))=-\frac{1}{\widetilde{U%
}_{yy}(t,y)}.
\label{eq:conjugacy_relations}
\end{equation}%
The differential characteristics satisfy, for $dt\otimes d\mathbb P$-a.e.
$(t,\omega)$ and every $y>0$,
\begin{equation}
\widetilde{\delta }(t,y)=\delta (t,I(t,y))\quad \text{and}\quad \widetilde{b}
(t,y)=b(t,I(t,y))-\frac{1}{2}\, \frac{|\delta _{x}(t,I(t,y))|^{2}}{
U_{xx}(t,I(t,y))},
\label{eq:fenchel_diff_rule}
\end{equation}%
and, as a consequence,
\begin{equation}
\delta _{x}(t,I(t,y))=-\frac{\widetilde{\delta }_{y}(t,y)}{\widetilde{U}
_{yy}(t,y)}.  \label{eq:delta_x_via_dual}
\end{equation}

\medskip

\noindent To ease the presentation, we write
\begin{equation}
\hat u(t,y)=\widetilde U(t,y)-y\widetilde U_y(t,y)=U(t,I(t,y)),\qquad t\geq0,\ y>0.
\label{eq:dual_continuation_level}
\end{equation}
In other words, $\hat u(t,y)$ is the primal utility level expressed at the
dual state $y$.

\medskip

\noindent When $(U,F)$ solves the primal SPDE \eqref{eq:SPDE-norm}, the
drift characteristic of $\widetilde U$ is given, for
$dt\otimes d\mathbb P$-a.e.\ $(t,\omega)$ and every $y>0$, by
\begin{equation}
\widetilde{b}(t,y)=-\widetilde{F}\bigl(t,y,\hat u(t,y)\bigr)
-\tfrac{1}{2}\,y^{2}\, \widetilde{U}%
_{yy}(t,y)\,|\theta _{t}|^{2} +y\, \theta _{t}^{\top }\widetilde{\delta }_{y}(t,y)+\frac{%
|(I_{d}-\sigma_{t}^{+}\sigma_{t})\, \widetilde{\delta }_{y}(t,y)|^{2}}{2\,
\widetilde{U}_{yy}(t,y)}.
\label{eq:dual_drift}
\end{equation}%

\begin{proposition}[{Primal recursive SPDE $\implies $ dual recursive SPDE}]\label{prop:3-4}
Let $(U,F)$ satisfy the
normalized primal forward recursive HJB SPDE \eqref{eq:SPDE-norm},
Assumption~\ref{asm:2-9} and Assumption~\ref{asm:3-2}. Then, the convex dual $\widetilde{U}$ solves the
\emph{dual normalized forward recursive HJB SPDE}
\begin{equation}
\begin{aligned} d\widetilde U(t,y) &=\Bigl( -\widetilde
F\bigl(t,y,\hat u(t,y)\bigr)
-\tfrac{1}{2}\,y^{2}\, \widetilde U_{yy}(t,y)\,|\theta_t|^{2}\\ &\quad +y\,
\theta_t^\top \widetilde \delta_y(t,y) +\frac{|(I_{d}-\sigma_{t}^{+}\sigma_{t})\, \widetilde
\delta_y(t,y)|^{2}}{2\, \widetilde U_{yy}(t,y)} \Bigr)\,dt +\widetilde
\delta(t,y)^\top dW_t, \text{ \ }t\geq 0,\text{ }y>0, \end{aligned}  \label{eq:dual_SPDE}
\end{equation}
with initial condition
$$ \widetilde{U}(0,y) = \Tilde{u}_0(y), \quad y >0.$$
\end{proposition}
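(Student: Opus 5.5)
The plan is a direct substitution: take the drift of $\widetilde U$ from Lemma~\ref{lem:3-3}, insert the primal SPDE \eqref{eq:SPDE-norm} evaluated at the point $x=I(t,y)$, and then rewrite every primal quantity in dual variables using the conjugacy relations.

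\emph{Step 1: the conjugate drift.} Under Assumption~\ref{asm:3-2}, Lemma~\ref{lem:3-3} makes $\widetilde U$ a semimartingale random field with characteristics given by \eqref{eq:fenchel_diff_rule}. In particular $\widetilde\delta(t,y)=\delta(t,I(t,y))$ and
\[
\widetilde b(t,y)=b(t,I(t,y))-\frac{|\delta_x(t,I(t,y))|^2}{2U_{xx}(t,I(t,y))}.
\]
This already yields the diffusion part $\widetilde\delta(t,y)^\top dW_t$ of \eqref{eq:dual_SPDE}. It also yields the initial condition $\widetilde U(0,y)=\tilde u_0(y)$, since $\tilde u_0$ is by definition the Fenchel conjugate of $u_0$. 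So it only remains to identify $\widetilde b$ with \eqref{eq:dual_drift}.

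\emph{Step 2: insert the primal drift at $x=I(t,y)$.} Write $P_t=\sigma_t^+\sigma_t$. Since $U_x(t,I(t,y))=y$ and $U(t,I(t,y))=\hat u(t,y)$ by \eqref{eq:dual_continuation_level}, the drift in \eqref{eq:SPDE-norm} gives
\[
b(t,I)=\frac{|y\theta_t+P_t\delta_x(t,I)|^2}{2U_{xx}(t,I)}-\widetilde F\bigl(t,y,\hat u(t,y)\bigr).
\]
Here we use the $dt\otimes d\mathbb P$-a.e.\ identity of \eqref{eq:SPDE-norm} for all $x>0$ simultaneously, and then evaluate at the random point $x=I(t,y)$.

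Next split $|\delta_x|^2=|P_t\delta_x|^2+|(I_d-P_t)\delta_x|^2$ by orthogonality, and use $\theta_t^\top P_t\delta_x=\theta_t^\top\delta_x$, which follows from \eqref{eq:projections_theta}. This gives
\[
\widetilde b(t,y)=-\widetilde F(t,y,\hat u)+\frac{y^2|\theta_t|^2+2y\,\theta_t^\top\delta_x-|(I_d-P_t)\delta_x|^2}{2U_{xx}},
\]
with all primal quantities evaluated at $(t,I(t,y))$.

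\emph{Step 3: translate into dual variables.} By \eqref{eq:conjugacy_relations} we have $1/U_{xx}(t,I)=-\widetilde U_{yy}(t,y)$, and by \eqref{eq:delta_x_via_dual} we have $\delta_x(t,I)=-\widetilde\delta_y/\widetilde U_{yy}$. Substituting term by term:
\begin{itemize}
\item the first term becomes $-\tfrac12 y^2\widetilde U_{yy}|\theta_t|^2$;
\item the cross term becomes $-y\,\widetilde U_{yy}\,\theta_t^\top\delta_x=y\,\theta_t^\top\widetilde\delta_y$;
\item the last term becomes $\tfrac12\widetilde U_{yy}\,|(I_d-P_t)\widetilde\delta_y|^2/\widetilde U_{yy}^2=|(I_d-P_t)\widetilde\delta_y|^2/(2\widetilde U_{yy})$.
\end{itemize}
Together these give exactly \eqref{eq:dual_drift}, and hence \eqref{eq:dual_SPDE}.

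\emph{Main obstacle.} The algebra above is routine once Lemma~\ref{lem:3-3} is available. The delicate point is the passage from the primal equation, which holds $dt\otimes d\mathbb P$-a.e.\ for each $x$, to the composed identity at the random argument $x=I(t,y)$, valid for every $y$ on a single null-set-free event.

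I would handle this in two moves. First, use continuity in $x$ of $b$, $U_x$, $U_{xx}$ and $\delta_x$ (from the $\mathcal K^{2,\varepsilon}_{\mathrm{loc}}$ regularity), together with the continuity of $\widetilde F$ noted after \eqref{eq:F-envelope}. This upgrades the drift identity to one holding simultaneously for all $x>0$ outside a single $dt\otimes d\mathbb P$-null set. Second, use the continuity of $I$ in $y$ to compose along $I(t,y)$.

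The Itô--Ventzel justification for $U(t,I(t,y))-yI(t,y)$ itself rests on Assumption~\ref{asm:3-2}(iii), and I take it as given through Lemma~\ref{lem:3-3}.
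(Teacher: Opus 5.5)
Your proposal is correct and follows the paper's proof essentially line by line. You substitute the primal drift from \eqref{eq:SPDE-norm}, evaluated at $x=I(t,y)$, into the conjugate drift formula \eqref{eq:fenchel_diff_rule}, split $|\delta_x|^2$ with the two projections, and then convert to dual variables using \eqref{eq:conjugacy_relations} and \eqref{eq:delta_x_via_dual}. Your extra remarks on the initial condition, and on using continuity in $x$ so that the a.e.\ drift identity holds for all $x$ at once before composing with $I(t,y)$, are sound details that the paper leaves implicit.
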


\begin{proof}
The proof is given in Appendix~\ref{app:B}.
\end{proof}

\subsection{Dual consistency on \texorpdfstring{$\mathcal{Y}$}{Y}}\label{sec:3-3}

The dual value process introduced below is defined only along those dual
controls for which the running dual aggregator term is integrable. This is not
a property derived from the dual equation but a requirement imposed on the
control, and we state it as such: for $\nu\in\mathcal A^{\mathrm{dual}}$ satisfying
\begin{equation}
\int_0^T\left|\widetilde F\bigl(s,Y_s^\nu,\hat u(s,Y_s^\nu)\bigr)\right|ds
<\infty\quad\text{$\mathbb P$-a.s., for every }T>0,
\label{eq:Ftilde-integrability}
\end{equation}
the associated \emph{dual value process} is well defined and given by
\begin{equation}
\mathcal{Z}_{t}^{\nu }=\widetilde{U}(t,Y_{t}^{\nu })+\int_{0}^{t}\widetilde{%
F}\bigl(s,Y_{s}^{\nu },\hat u(s,Y_s^\nu)\bigr)\,ds,\quad t\geq0.  \label{eq:Zcal_def}
\end{equation}

\begin{definition}\label{defn:3-5}
The conjugate pair
$(\widetilde U,\widetilde F)$ is \emph{dual consistent on $\mathcal Y$} if:

\smallskip

\noindent \textit{(i)} for every $\nu \in \mathcal{A}^{\mathrm{dual}}$
satisfying \eqref{eq:Ftilde-integrability}, the process $\mathcal Z^\nu$ is
a local $\mathcal{F}_t$-submartingale,

and

\noindent \textit{(ii)} there exists a dual control
$\nu^{\mathrm{dual}}\in\mathcal A^{\mathrm{dual}}$ satisfying
\eqref{eq:Ftilde-integrability} for
which $\mathcal Z^{\nu^{\mathrm{dual}}}$ is a local $\mathcal{F}_t$-martingale.

\smallskip

\noindent If the conjugate pair
$(\widetilde U,\widetilde F)$ is \emph{dual consistent on $\mathcal Y$}, we call $\widetilde U$ a \emph{dual forward
recursive utility}, $\widetilde F$ its \emph{dual forward aggregator}, and $\nu^{\mathrm{dual}}$ the \textit{optimal dual control}.
\end{definition}

\begin{remark}[One-sided integrability]\label{rem:3-6}
If
$\widetilde U$ satisfies
\eqref{eq:dual_SPDE}, then
\begin{equation*}
\int_0^T\widetilde F^-
\bigl(s,Y_s^\nu,\hat u(s,Y_s^\nu)\bigr)ds<\infty
\qquad\text{$\mathbb P$-a.s. for every }T>0,
\end{equation*}
as shown in Appendix~\ref{app:B}. Consequently,
\eqref{eq:Ftilde-integrability} may fail only through divergence of
the positive part. We stress that this is a consequence of the dual equation
and not part of Definition~\ref{defn:3-5}.
\end{remark}

\begin{theorem}[Dual verification]\label{thm:3-7}
Let
Assumptions~\ref{asm:2-9} and~\ref{asm:3-2} hold, and suppose that $\widetilde U$ satisfies the
dual forward recursive HJB SPDE \eqref{eq:dual_SPDE}. Define the dual
feedback map
\begin{equation}
\nu_t^{\mathrm{dual}}(y)
=-\frac{(I_d-\sigma_t^+\sigma_t)\widetilde\delta_y(t,y)}
{y\widetilde U_{yy}(t,y)},\qquad t\geq0,\ y>0.
\label{eq:nu_dual_def}
\end{equation}
Assume that the state price density equation \eqref{eq:SPD}, controlled by
$\nu_t=\nu_t^{\mathrm{dual}}(Y_t^{\nu^{\mathrm{dual}}})$, has a unique,
strictly positive, strong solution and that the resulting control belongs to
$\mathcal A^{\mathrm{dual}}$.
Then, the pair $(\widetilde U,\widetilde F)$ is dual consistent on $\mathcal Y$, with dual
optimal control $\nu^{\mathrm{dual}}$ and state price density
$Y^{\nu^{\mathrm{dual}}}$.
\end{theorem}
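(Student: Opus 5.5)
The plan is to apply the It\^{o}--Ventzel formula to $\widetilde U(t,Y_t^{\nu})$ for an arbitrary $\nu\in\mathcal A^{\mathrm{dual}}$ satisfying \eqref{eq:Ftilde-integrability}. Then I will read off the drift of $\mathcal Z^\nu$ and show it is a nonnegative quantity that vanishes exactly when $\nu=\nu^{\mathrm{dual}}(Y)$. The regularity in Assumption~\ref{asm:3-2}, transferred to $\widetilde U$ via Lemma~\ref{lem:3-3}, gives that $\widetilde U$ is a $\mathcal K^{2,\varepsilon}_{\mathrm{loc}}$-semimartingale field with characteristics $(\widetilde b,\widetilde\delta)$. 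This regularity is what the It\^{o}--Ventzel formula requires.

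Since $dY=Y(-\theta+\nu)^\top dW$, the formula gives
\begin{equation*}
d\widetilde U(t,Y_t)=\Bigl(\widetilde b+\tfrac12\widetilde U_{yy}Y^2|\nu-\theta|^2+Y(\nu-\theta)^\top\widetilde\delta_y\Bigr)dt+\Bigl(\widetilde\delta+\widetilde U_{yy}\ldots\Bigr)dW ,
\end{equation*}
with the diffusion part being $\widetilde\delta^\top dW+\widetilde U_y Y(\nu-\theta)^\top dW$. Substituting $\widetilde b$ from \eqref{eq:dual_SPDE}, the $-\widetilde F$ term cancels against the running integral in \eqref{eq:Zcal_def}. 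Because $\nu\perp\theta$, we have $|\nu-\theta|^2=|\nu|^2+|\theta|^2$, so the $|\theta|^2$ terms cancel, and the $-Y\theta^\top\widetilde\delta_y$ term cancels $y\theta^\top\widetilde\delta_y$. Writing $P^\perp=I_d-\sigma^+\sigma$ and using $\nu=P^\perp\nu$, the remaining drift is
\begin{equation*}
\tfrac12\widetilde U_{yy}Y^2|\nu|^2+Y\nu^\top P^\perp\widetilde\delta_y+\frac{|P^\perp\widetilde\delta_y|^2}{2\widetilde U_{yy}}
=\frac{1}{2\widetilde U_{yy}}\bigl|\widetilde U_{yy}Y\nu+P^\perp\widetilde\delta_y\bigr|^2\ge0 .
\end{equation*}
This uses $\widetilde U_{yy}>0$, which follows from \eqref{eq:nondegeneracy} through \eqref{eq:conjugacy_relations}. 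The completed square vanishes iff $\nu=-P^\perp\widetilde\delta_y/(Y\widetilde U_{yy})$, which is \eqref{eq:nu_dual_def}. Hence the drift of $\mathcal Z^\nu$ is nonnegative, so $\mathcal Z^\nu$ equals a local martingale plus a nondecreasing process, and is a local submartingale.

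Two technical points remain, and I expect them to be the main obstacle. First, the stochastic integral and the finite-variation part must be well defined. By \eqref{eq:Ftilde-integrability} and the SPDE, the drift terms are continuous functions of $(t,Y_t)$ times locally square-integrable $\theta,\nu$. Continuity of $\widetilde U_{yy}$, $\widetilde\delta$ and $\widetilde\delta_y$ along the continuous positive path $Y$ gives local boundedness on $[0,T]$, so $\int_0^T|\theta|^2+|\nu|^2\,dt<\infty$ yields integrability. The same argument gives Remark~\ref{rem:3-6}, since $\widetilde F^-$ is dominated by the other drift terms. A localizing sequence of stopping times, taken as exit times of $Y$ from compacts, then makes the local martingale part a true martingale on each stopped interval.

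Second, for the optimal control, the hypotheses give a unique strictly positive strong solution $Y^{\nu^{\mathrm{dual}}}$ with $\nu^{\mathrm{dual}}\in\mathcal A^{\mathrm{dual}}$. Along this solution the drift is identically zero, so $\mathcal Z^{\nu^{\mathrm{dual}}}$ is a local martingale. Here I must still check \eqref{eq:Ftilde-integrability}. I would argue as the paper does in Remark~\ref{rem:2-5}(iii): along this control, $\widetilde F$ equals a combination of locally integrable terms, because the zero-drift identity expresses $\widetilde F(s,Y_s,\hat u(s,Y_s))$ through $\widetilde b$ and quadratic terms in $\theta$ and $\nu$, which are locally integrable. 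It then follows from Definition~\ref{defn:3-5} that $(\widetilde U,\widetilde F)$ is dual consistent on $\mathcal Y$.
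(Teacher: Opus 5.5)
Your proposal is correct and follows the paper's proof essentially step by step. The steps are: It\^{o}--Ventzel along $Y^\nu$, substitution of the dual drift \eqref{eq:dual_drift}, completion of the square using $\nu=(I_d-\sigma_t^+\sigma_t)\nu$ and $\widetilde U_{yy}>0$, and, at $\nu^{\mathrm{dual}}$, reading \eqref{eq:Ftilde-integrability} off the zero-drift identity.

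One small technical correction concerns the localization. Exit times of $Y$ from compacts alone do not make the stochastic integral a true martingale. The reason is that $\theta$, $\nu$ and $\widetilde\delta(\cdot,y)$ are only pathwise square integrable in $t$; they are not bounded, and $\widetilde\delta$ need not be continuous in $t$. So, as the paper does, you should also stop when $\int_0^t|\widetilde Z^\nu_s|^2\,ds$ or the integrated drift reaches $n$. For the same reason, integrability should be justified via the $\mathcal K_{\mathrm{loc}}$ bounds and Cauchy--Schwarz rather than via local boundedness.
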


\begin{proof}
The proof is given in Appendix~\ref{app:B}.
\end{proof}

\medskip \noindent The above dual consistency uses only the wealth conjugate
$\widetilde U$ and the Fenchel transform $\widetilde F$ in the consumption
argument, and it does not require the convexity of $F$ in $u$. The density
$Y^{\nu^{\mathrm{dual}}}$ is the one for which the dual value process
\eqref{eq:Zcal_def} is a local $\mathcal{F}_t$-martingale. It should be
distinguished from the density $Y^{\star }$ generated by the primal optimizer
in Theorem~\ref{thm:3-12}, and we study their relation there.

\medskip

\noindent Next, we introduce the variational representation of
\citet{MX17}. It requires the convexity of the aggregator $F$ in its
utility argument $u$, and it involves an auxiliary process $\alpha $, which
plays the role of a stochastic discount rate. Its role is the following. In
the recursive setting, the marginal utility along the optimal wealth is not
itself a state price density, because the dependence of the aggregator on the
utility level adds a finite variation term to its dynamics. Discounting by
$\alpha $ removes precisely this term and restores the time additive
structure, in which the discounted marginal utility does become a state price
density. Any admissible discount process $\alpha $, combined with any state
price density, produces in this way a Fenchel type upper bound for the primal
criterion. We show in Proposition~\ref{prop:3-10} and Theorem~\ref{thm:3-12} that this bound is
attained, and we identify where: equality holds at the discount process
$\alpha ^{\star }=-F_{u}$, evaluated along the optimal consumption and utility
levels, and at the state price density $Y^{\star }$ generated by the primal
optimizer.

\subsection{Convex duality using a variational transform}\label{sec:3-4}

We establish a link between the convex duality developed above and a
family of time-additive
discounted dual forward utilities $(\widetilde{U},G)$ for investment and consumption,
indexed by an auxiliary process $\alpha $. For an ordinary time-additive
forward utility, the marginal utility evaluated along the optimal wealth is
a state price density. In the present recursive setting, the marginal utility
includes an additional finite variation term and, as we show, it must be
multiplied by the recursive discount factor before a state price density is obtained. This
is why the auxiliary process $\alpha $ is needed.

\begin{assumption}\label{asm:3-8}
In addition to Assumption~\ref{asm:2-9},
we assume that, for $dt\otimes d\mathbb{P}$-a.e.\ $(t,\omega )$ and each $%
c>0 $, the map $u\mapsto F(t,c,u)$ is convex on $\mathcal{U}$.
\end{assumption}

Motivated by the variational representation used by
\citet{MX17} for the classical recursive utility problem, we define,
under Assumption~\ref{asm:3-8},
\begin{equation}
\widehat{F}(t,\omega ,c,\alpha )=\inf_{u\in \mathcal{U}}\left( F(t,\omega
,c,u)+\alpha \,u\right) ,\qquad t\geq0,\ \omega\in\Omega,\ c>0,\ \alpha\in\mathbb R.
\label{eq:utility_variational_transform}
\end{equation}%
We suppress the $\omega $-argument from now on, for notational convenience.
For $dt\otimes d\mathbb P$-a.e.\ $(t,\omega)$ and every
$(c,u)\in(0,\infty)\times\mathcal U$, the definition of the infimum gives
$\widehat{F}(t,c,\alpha )-\alpha u\leq F(t,c,u)$. Since $u\mapsto F(t,c,u)$
is differentiable and convex, equality at $u$ holds for
\begin{equation}
\alpha ^{\star }(t,c,u)=-F_{u}(t,c,u),\qquad t\geq0,\ c>0,\ u\in\mathcal U,  \label{eq:utility_variational_FOC}
\end{equation}%
in which case the infimum in \eqref{eq:utility_variational_transform} is
attained at $u$ itself. Equivalently,
\begin{equation}
F(t,c,u)=\sup_{\alpha \in \mathbb{R}}\bigl \{ \widehat{F}(t
,c,\alpha )-\alpha \,u\bigr \},\qquad t\geq 0,\ c>0,\ u\in \mathcal{U},
\label{eq:utility_variational_identity}
\end{equation}%
and the maximizer is $\alpha ^\star(t,c,u)$.

\medskip \noindent \textbf{Concave case.} When $u\mapsto F(t,c,u)$ is
concave, the pointwise variational identity has a formal analogue obtained by
replacing the infimum in \eqref{eq:utility_variational_transform} by a
supremum and the supremum in \eqref{eq:utility_variational_identity} by an
infimum, as in \citet[Remark~2.10]{MX17}. However, the joint saddle relation
with the consumption transform $G$, defined below in
\eqref{eq:G_alpha_definition}, does not follow from the concavity in $u$
alone. A concave analogue of the results below requires separate
joint-concavity, finiteness, and attainment assumptions. We therefore state
the results of this section only under the convexity condition in
Assumption~\ref{asm:3-8}.

\medskip \noindent To this end, we first introduce the class of discount
processes. A
progressively measurable real valued process $\alpha $ belongs to
$\mathcal{A}^{\mathrm{disc}}$ if, for every $T>0$,
\begin{equation*}
\int_0^T|\alpha_t|\,dt<\infty \text{ \  \ $\mathbb P$-a.s.,}
\qquad \text{and}\qquad
\widehat F(t,c,\alpha_t)>-\infty ,
\end{equation*}
for $dt\otimes d\mathbb P$-a.e.\ $(t,\omega)$ and every $c>0$. For
$\alpha \in \mathcal{A}^{\mathrm{disc}}$, set
\begin{equation}
\kappa _{s,t}^{\alpha }=\exp \Bigl(-\int_{s}^{t}\alpha _{r}\,dr\Bigr),\qquad
0\leq s\leq t.  \label{eq:kappa_alpha_def}
\end{equation}%
For a control pair $(\pi,c)\in\mathcal A$, define the discounted additive
value process associated with the discount $\alpha $,
\begin{equation}
S_{t}^{\alpha ,\pi ,c}=\kappa _{0,t}^{\alpha }\,U(t,X_{t}^{\pi
,c})+\int_{0}^{t}\kappa _{0,s}^{\alpha }\, \widehat{F}(s,c_{s},\alpha
_{s})\,ds,\qquad t\geq0.  \label{eq:discounted_primal_process}
\end{equation}%
We also define the Fenchel transform of $\widehat F$ with respect to the
consumption argument and the deflated density,
\begin{equation}
G(t,d,\alpha )=\sup_{c>0}\bigl( \widehat{F}(t,c,\alpha )-d\,c\bigr)%
,\qquad t\geq0,\ d>0,\ \alpha\in\mathbb R,
\label{eq:G_alpha_definition}
\end{equation}
and
\begin{equation}
D_{t}^{\alpha ,\nu }=\frac{Y_{t}^{\nu }}{\kappa _{0,t}^{\alpha }},\qquad t\geq0.
\label{eq:deflated_density_definition}
\end{equation}
The corresponding dual process is
\begin{equation}  \label{eq:dual_envelope_def}
\mathcal{Z}_t^{\alpha,\nu}
=\kappa_{0,t}^{\alpha}\widetilde U(t,D_t^{\alpha,\nu})
+\int_0^{t}\kappa_{0,s}^{\alpha}
G(s,D_s^{\alpha,\nu},\alpha_s)\,ds,\qquad t\geq0.
\end{equation}

\medskip

\textit{Integrability issue.} The fact that $\alpha $ already belongs to
$\mathcal{A}^{\mathrm{disc}}$ makes the
integrands in \eqref{eq:discounted_primal_process} and
\eqref{eq:dual_envelope_def} finite. Indeed, $\widehat{F}(t,c,\alpha
_{t})>-\infty $ by the definition of $\mathcal{A}^{\mathrm{disc}}$, while
$\widehat{F}(t,c,\alpha _{t})\leq F(t,c,u)+\alpha _{t}u<\infty $, for every
$u\in \mathcal{U}$. Subtracting $d\,c$ from the latter bound and taking the
supremum over $c>0$ gives $G(t,d,\alpha _{t})\leq \widetilde{F}%
(t,d,u)+\alpha _{t}u$, which is finite because $\widetilde{F}$ is, under
Assumption~\ref{asm:2-9}, while $G(t,d,\alpha _{t})\geq \widehat{F}(t,1,\alpha
_{t})-d>-\infty $. Under Assumption~\ref{asm:3-2}, the conjugate
$\widetilde{U}(t,\cdot )$ is in turn real valued on $(0,\infty )$, the
supremum defining the conjugate $\widetilde{U}(t,\cdot )$ being attained at
$I(t,\cdot )$.

\medskip \noindent However, the absolute convergence of the corresponding time
integrals is not, in general, guaranteed. It has to be imposed
separately, for the particular discount process, control
pair and density used. We say that a triple $(\alpha ,(\pi ,c),Y^{\nu })\in
\mathcal{A}^{\mathrm{disc}}\times \mathcal{A}\times \mathcal{Y}$ is
\emph{integrable} if, for every $T>0$,
\begin{equation}
\int_{0}^{T}\kappa _{0,s}^{\alpha }\Bigl( \bigl|\widehat{F}(s,c_{s},\alpha
_{s})\bigr|+\bigl|G(s,D_{s}^{\alpha ,\nu },\alpha _{s})\bigr|\Bigr) \,ds
<\infty \qquad \text{$\mathbb{P}$-a.s.}
\label{eq:envelope-integrability}
\end{equation}%
For an integrable triple, the discounted value process
\eqref{eq:discounted_primal_process} and the dual process
\eqref{eq:dual_envelope_def} are real valued, and so is the budget process
\eqref{eq:budget_process}, since $\int_{0}^{T}c_{s}\,ds<\infty $ by
Definition~\ref{defn:2-1} and $Y^{\nu }$ has continuous paths. All three sides of the
bound below are then finite, and the two Fenchel equality conditions can be
stated as a characterization.

\begin{proposition}[Convex duality bound]\label{prop:3-9}
Let
Assumptions~\ref{asm:2-9} and~\ref{asm:3-2} hold and let $\bigl(\alpha ,(\pi ,c),Y^{\nu }\bigr) \in
\mathcal{A}^{\mathrm{disc}}\times \mathcal{A}\times \mathcal{Y}$
be an integrable triple. Then, for every $t\geq 0$,
\begin{equation}  \label{eq:discounted_fenchel_bound}
S_t^{\alpha,\pi,c}-H_t^{\pi,c}(Y^\nu)
\leq\mathcal Z_t^{\alpha,\nu},
\end{equation}
with equality $\mathbb P$-a.s.\ at time $t$ if and only if
\begin{equation}
D_t^{\alpha,\nu}=U_x\bigl(t,X_t^{\pi,c}\bigr),
\label{eq:fenchel_eq_wealth}
\end{equation}
and, for $ds\otimes d\mathbb P$-a.e.\ $(s,\omega )$ with $s\leq t$,
\begin{equation}
\widehat F(s,c_s,\alpha_s)-D_s^{\alpha,\nu}c_s
=G(s,D_s^{\alpha,\nu},\alpha_s).
\label{eq:fenchel_eq_consumption}
\end{equation}
\end{proposition}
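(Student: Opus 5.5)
The bound is a pathwise statement at a fixed time $t$, so I will prove it by splitting the difference $\mathcal Z_t^{\alpha,\nu}-\bigl(S_t^{\alpha,\pi,c}-H_t^{\pi,c}(Y^\nu)\bigr)$ into a terminal (wealth) term and a running (consumption) term, and showing each is nonnegative by a Fenchel inequality. The key identity is $Y^\nu_s=\kappa_{0,s}^{\alpha}D_s^{\alpha,\nu}$, which follows from \eqref{eq:deflated_density_definition}. Integrability of the triple \eqref{eq:envelope-integrability}, together with $\int_0^t c_s\,ds<\infty$ and continuity of $Y^\nu$, makes every integral below finite, so subtracting them is legitimate.

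Using $Y^\nu=\kappa^\alpha D^{\alpha,\nu}$ and the definitions \eqref{eq:budget_process}, \eqref{eq:discounted_primal_process} and \eqref{eq:dual_envelope_def}, I will write
\begin{equation*}
\mathcal Z_t^{\alpha,\nu}-S_t^{\alpha,\pi,c}+H_t^{\pi,c}(Y^\nu)
=\kappa_{0,t}^{\alpha}\,\Delta^{W}_t+\int_0^t\kappa_{0,s}^{\alpha}\,\Delta^{C}_s\,ds,
\end{equation*}
where the wealth gap and the consumption gap are
\begin{equation*}
\Delta^{W}_t=\widetilde U(t,D_t^{\alpha,\nu})-U(t,X_t^{\pi,c})+X_t^{\pi,c}D_t^{\alpha,\nu},
\qquad
\Delta^{C}_s=G(s,D_s^{\alpha,\nu},\alpha_s)-\widehat F(s,c_s,\alpha_s)+D_s^{\alpha,\nu}c_s .
\end{equation*}

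\textbf{Wealth gap.} By the conjugacy definition \eqref{eq:fenchel_U}, $\widetilde U(t,y)\geq U(t,x)-xy$ for all $x,y>0$, so $\Delta^W_t\geq0$. Under Assumption~\ref{asm:3-2}, $U(t,\cdot)$ is strictly concave and differentiable with $U_x$ a bijection, so the supremum in \eqref{eq:fenchel_U} is attained uniquely at $x=I(t,y)$. Hence $\Delta^W_t=0$ if and only if $X_t^{\pi,c}=I(t,D_t^{\alpha,\nu})$, which is equivalent to \eqref{eq:fenchel_eq_wealth}.

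\textbf{Consumption gap.} By \eqref{eq:G_alpha_definition}, $G(s,d,\alpha)\geq\widehat F(s,c,\alpha)-dc$, so $\Delta^C_s\geq0$ for $ds\otimes d\mathbb P$-a.e. $(s,\omega)$, and equality is exactly \eqref{eq:fenchel_eq_consumption}. Since $\kappa^\alpha>0$, the right-hand side of the decomposition is nonnegative, which gives \eqref{eq:discounted_fenchel_bound}.

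\textbf{Equality case.} This is a sum of a nonnegative random variable and the integral of a nonnegative integrable process with strictly positive weight. Equality a.s. at time $t$ therefore holds if and only if $\Delta^W_t=0$ a.s. and $\Delta^C_s=0$ for $ds\otimes d\mathbb P$-a.e. $s\leq t$. The "if" direction is immediate. For "only if", the terminal term and the integral must vanish separately; the integral of a nonnegative function vanishes only when the integrand is zero almost everywhere.

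The main obstacle is measure-theoretic rather than analytic. First, $\Delta^C$ must be a progressively measurable nonnegative process, so that the a.e. statement makes sense. For this I would use the joint measurability of $F$ noted after Definition~\ref{defn:2-2}, passed to $\widehat F$ and $G$ via the infimum/supremum over a countable dense set, using continuity in $u$ and concavity in $c$. Second, the decomposition requires rearranging finite integrals, which is exactly what the integrability hypothesis provides.
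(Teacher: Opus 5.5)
Your proposal is correct and is essentially the paper's proof. Both arguments use $Y^\nu_s=\kappa^{\alpha}_{0,s}D^{\alpha,\nu}_s$ to reduce the bound to two pieces: the wealth Fenchel inequality for $\widetilde U$ at time $t$, and the integrated consumption Fenchel inequality for $G$. Equality holds iff each gap vanishes, the wealth one iff $D^{\alpha,\nu}_t=U_x(t,X^{\pi,c}_t)$ by strict concavity and differentiability.

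Writing the difference as $\kappa^{\alpha}_{0,t}\Delta^W_t+\int_0^t\kappa^{\alpha}_{0,s}\Delta^C_s\,ds$ is only a repackaging of the paper's ``add the two inequalities.'' Your measurability remark on $\widehat F$ and $G$ fills in a point the paper leaves implicit.
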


\begin{proof}
Fix an integrable triple
$\bigl(\alpha ,(\pi ,c),Y^{\nu }\bigr) \in
\mathcal{A}^{\mathrm{disc}}\times \mathcal{A}\times \mathcal{Y}$ and $t\geq0$. Since
$\kappa _{0,s}^{\alpha }D_{s}^{\alpha ,\nu }=Y_{s}^{\nu }$ for every
$s\geq 0$, the two Fenchel inequalities \eqref{eq:fenchelF} and
\eqref{eq:utility_variational_transform} yield
\begin{equation}
\kappa _{0,t}^{\alpha }\Bigl(U\bigl(t,X_{t}^{\pi ,c}\bigr)-X_{t}^{\pi
,c}D_{t}^{\alpha ,\nu }\Bigr)\leq \kappa _{0,t}^{\alpha }\widetilde{U}%
\bigl(t,D_{t}^{\alpha ,\nu }\bigr),
\label{eq:proof_fenchel_wealth}
\end{equation}%
which is the definition of the wealth conjugate $\widetilde{U}$ at
$D_{t}^{\alpha ,\nu }$, and
\begin{equation}
\kappa _{0,s}^{\alpha }\Bigl(\widehat{F}(s,c_{s},\alpha _{s})-D_{s}^{\alpha
,\nu }c_{s}\Bigr)\leq \kappa _{0,s}^{\alpha }G(s,D_{s}^{\alpha ,\nu },\alpha
_{s}),\qquad s\in \lbrack 0,t],
\label{eq:proof_fenchel_consumption}
\end{equation}%
which is the definition of $G$ at $D_{s}^{\alpha ,\nu }$. Integrating
\eqref{eq:proof_fenchel_consumption} over $[0,t]$ and adding
\eqref{eq:proof_fenchel_wealth} gives
\begin{equation*}
\kappa _{0,t}^{\alpha }U\bigl(t,X_{t}^{\pi ,c}\bigr)+\int_{0}^{t}\kappa
_{0,s}^{\alpha }\widehat{F}(s,c_{s},\alpha _{s})\,ds-X_{t}^{\pi ,c}Y_{t}^{\nu
}-\int_{0}^{t}Y_{s}^{\nu }c_{s}\,ds\leq \kappa _{0,t}^{\alpha }\widetilde{U}%
\bigl(t,D_{t}^{\alpha ,\nu }\bigr)+\int_{0}^{t}\kappa _{0,s}^{\alpha
}G(s,D_{s}^{\alpha ,\nu },\alpha _{s})\,ds,
\end{equation*}%
which yields \eqref{eq:discounted_fenchel_bound}, by
\eqref{eq:budget_process}, \eqref{eq:discounted_primal_process} and
\eqref{eq:dual_envelope_def}.

By \eqref{eq:envelope-integrability}, every term in
\eqref{eq:proof_fenchel_wealth} and in the integral of
\eqref{eq:proof_fenchel_consumption} over $[0,t]$ is finite. Equality in
\eqref{eq:discounted_fenchel_bound} therefore holds if and only if it holds
in \eqref{eq:proof_fenchel_wealth} and, for $ds\otimes d\mathbb{P}$-a.e.\ $(s,\omega )$
with $s\leq t$, in \eqref{eq:proof_fenchel_consumption}.

Equality in \eqref{eq:proof_fenchel_wealth} holds if and only if
$D_{t}^{\alpha ,\nu }=U_{x}(t,X_{t}^{\pi ,c})$, because $x\mapsto U(t,x)$ is
strictly concave and differentiable. Equality in
\eqref{eq:proof_fenchel_consumption} for $ds\otimes d\mathbb{P}$-a.e.\ $(s,\omega )$
with $s\leq t$ holds if and only if $c_{s}$ attains the supremum defining
$G(s,D_{s}^{\alpha ,\nu },\alpha _{s})$, that is, if and only if
$\widehat{F}(s,c_{s},\alpha _{s})-D_{s}^{\alpha ,\nu }c_{s}=G(s,D_{s}^{\alpha
,\nu },\alpha _{s})$. This concludes the proof.
\end{proof}

\medskip

Inequality \eqref{eq:discounted_fenchel_bound} is an upper bound for an
arbitrary discount process $\alpha$ in $\mathcal{A}^{\mathrm{disc}}$. The optimum occurs at $\alpha^\star=-F_u(\cdot,c^\star,U^\star)$ where the Fenchel inequality in $u$ holds, with equality
along the optimal control pair, by \eqref{eq:utility_variational_FOC}.
Consequently, whenever the first integral in
\eqref{eq:envelope-integrability} is finite for every $T>0$, the discounted
process $S^{\alpha^\star,\pi,c}$ is a local supermartingale for every
$(\pi,c)\in\mathcal A$ that also satisfies \eqref{eq:F-integrability}, and it
is a local martingale at the optimum.
Proposition~\ref{prop:3-10} and Theorem~\ref{thm:3-12} make these equalities precise. 

A diagram illustrating the relationship between the various functions introduced
below is presented in Figure~\ref{fig:square}, starting from the primal problem in
the upper left corner and ending at the dual problem in the lower left corner. In
contrast with \citet{MX17}, the two recursive corners are related directly, by the
Legendre transformation in the wealth variable.
 
\begin{figure}[h]
\centering
\begin{tikzpicture}[
  font=\small,
  >={Stealth[length=2.2mm,width=1.7mm]},
  link/.style={->, line width=0.5pt},
  pair/.style={<->, line width=0.5pt, dashed},
  sep/.style={line width=0.4pt, dash pattern=on 0.8pt off 2pt},
  lab/.style={font=\scriptsize, align=center},
  hd/.style={font=\small},
  scale=0.86, every node/.style={transform shape}
]
 
\draw[sep] (4.5,1.55) -- (4.5,0.80);
\draw[sep] (4.5,-0.50) -- (4.5,-4.35);
\draw[sep] (4.5,-5.45) -- (4.5,-6.20);
\draw[sep] (-3.4,-2.5) -- (12.4,-2.5);
 
\node[hd] at (0,1.35)   {Recursive};
\node[hd] at (9,1.35)   {Variational};
\node[hd] at (-2.9,0)   {Primal};
\node[hd] at (-2.9,-5)  {Dual};
 
\node (PR) at (0,0)   {$U(t,x),\ F(t,c,u)$};
\node (PV) at (9,0)   {$U(t,x),\ \widehat F(t,c,\alpha)$};
\node (DR) at (0,-5)  {$\widetilde U(t,y),\ \widetilde F(t,d,u)$};
\node (DV) at (9,-5)  {$\widetilde U(t,y),\ G(t,d,\alpha)$};
 
\draw[link] (PR) -- node[lab, above=2pt]{concave conjugate in $u$\ \ \eqref{eq:utility_variational_identity}} (PV);
\draw[link] (DV) -- node[lab, above=2pt]{convex conjugate in $\alpha$\ \ \eqref{eq:G_saddle}} (DR);
\draw[link] (PV) -- node[lab, right=3pt, pos=0.30]{convex conjugate in $c$\ \ \eqref{eq:G_alpha_definition}} (DV);
 
\draw[pair] (PR) -- node[lab, left=3pt, pos=0.30]{Legendre in $x$\ \ (Prop.~\ref{prop:3-4})\\[1pt]} (DR);
 
\end{tikzpicture}
\caption{Double Fenchel--Legendre transformation.}
\label{fig:square}
\end{figure}
 
\subsection{The state price density generated by the primal optimum}\label{sec:3-5}

We first establish the two equalities in the variational dual bound and then
identify the state price density $Y^\star\in\mathcal{Y}$ generated by
the primal optimum.

For $D>0$, define the feedback discounting map
\begin{equation}  \label{eq:alpha_star_D}
\alpha^\star(t,D) =-F_u\! \left(t,F_c^{-1}(t,D,\hat u(t,D)),\hat
u(t,D)\right),\qquad t\geq0.
\end{equation}

\medskip \noindent A state price density $Y^{\nu }\in \mathcal{Y}$ is said to
\emph{admit a feedback discount} if there exists a progressively measurable
process $\alpha ^{\star }\in \mathcal{A}^{\mathrm{disc}}$ such that
\begin{equation}
D_t=\frac{Y_t^\nu}{\kappa_{0,t}^{\alpha^\star}},
\qquad
\alpha_t^\star
=\alpha^\star(t,D_t),
\qquad t\geq0,
\label{eq:coupled_discount_system}
\end{equation}%
the second identity holding for $dt\otimes d\mathbb{P}$-a.e.\ $(t,\omega )$, and
such that, for every $T>0$,
\begin{equation}  \label{eq:G-integrability}
\int_0^T\kappa_{0,s}^{\alpha^\star}
\bigl|G(s,D_s,\alpha_s^\star)\bigr|\,ds<\infty,
\qquad\text{$\mathbb P$-a.s.}
\end{equation}

\medskip \noindent The first identity in \eqref{eq:coupled_discount_system}
is \eqref{eq:deflated_density_definition} evaluated at $\alpha ^{\star }$, while
the second one evaluates the map \eqref{eq:alpha_star_D} along the deflated
state $D$. Taken together, the two identities constitute a pathwise system
that couples $\alpha ^{\star }$ and $D$. Note that
$D_{0}=Y_{0}^{\nu }=y$, the initial value of the density, since $\kappa
_{0,0}^{\alpha ^{\star }}=1$. Condition \eqref{eq:G-integrability} is
precisely the requirement imposed on the term $G$ in
\eqref{eq:envelope-integrability}. It makes the
finite variation term of \eqref{eq:dual_envelope_def} absolutely convergent,
so that $\mathcal{Z}^{\alpha ^{\star },\nu }$ is a well defined continuous
process. Along the primal optimum, \eqref{eq:G-integrability} is
automatically satisfied, as shown in Step~2 in the proof of Theorem~\ref{thm:3-12}.

\begin{proposition}[Equality in the variational dual bound]\label{prop:3-10}
Let $(U,F)$ satisfy Assumptions~\ref{asm:3-2} and~\ref{asm:3-8}, and
suppose that $\widetilde U$ satisfies the dual forward recursive HJB SPDE
\eqref{eq:dual_SPDE}. Then, the following assertions hold.

\smallskip

\noindent \textit{(i) Pointwise Fenchel equality.} For
$dt\otimes d\mathbb P$-a.e.\ $(t,\omega)$ and every $D>0$, the functions
$\hat u$ and $\alpha^\star$ defined in
\eqref{eq:dual_continuation_level} and \eqref{eq:alpha_star_D} satisfy
\begin{equation}  \label{eq:G_saddle}
G\bigl(t,D,\alpha^\star(t,D)\bigr) =\widetilde F\bigl(t,D,\hat u(t,D)\bigr) %
+\alpha^\star(t,D)\, \hat u(t,D).
\end{equation}

\smallskip

\noindent \textit{(ii) Dynamics of the process $\mathcal{Z}^{\alpha ^{\star
},\nu }$.} Let $Y^{\nu }\in \mathcal{Y}$ admit a feedback discount $\alpha
^{\star }$. Then,
\begin{equation}  \label{eq:deflated_envelope_dynamics}
d\mathcal{Z}_t^{\alpha^\star,\nu} =dM_t^{\nu} +\kappa_{0,t}^{\alpha^\star}\,
\frac{\bigl|(I_{d}-\sigma_{t}^{+}\sigma_{t})\, \widetilde \delta_y(t,D_t)
+D_t\, \widetilde U_{yy}(t,D_t)\, \nu_t\bigr|^{2}}
{2\, \widetilde U_{yy}(t,D_t)}\,dt,
\quad t\geq0,
\end{equation}
where $M^{\nu }$ is a local $\mathcal{F}_{t}$-martingale. In particular,
$\mathcal{Z}^{\alpha ^{\star },\nu }$ is a local $\mathcal{F}_{t}$%
-submartingale, and it is a local $\mathcal{F}_{t}$-martingale if and only if
\begin{equation}  \label{eq:nu_equality_case}
\nu_t=-\frac{(I_{d}-\sigma_{t}^{+}\sigma_{t})\, \widetilde \delta_y(t,D_t)} {D_t\, \widetilde
U_{yy}(t,D_t)},\qquad \text{for }dt\otimes d\mathbb P\text{-a.e. }(t,\omega ).
\end{equation}
\end{proposition}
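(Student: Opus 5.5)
Part (i) is a pointwise Fenchel computation. Fix $(t,\omega)$ in the full-measure set where Assumptions~\ref{asm:2-9} and~\ref{asm:3-8} hold, and fix $D>0$. Write $u=\hat u(t,D)$, $c^\ast=F_c^{-1}(t,D,u)$ and $\alpha=\alpha^\star(t,D)=-F_u(t,c^\ast,u)$.

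The upper bound comes from the \emph{Integrability issue} paragraph. It gives $G(t,D,\alpha)\le \widetilde F(t,D,u)+\alpha u$, which is one inequality in \eqref{eq:G_saddle}.

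For the reverse inequality, I use the convexity of $u'\mapsto F(t,c^\ast,u')+\alpha u'$. Its derivative at $u$ vanishes by the choice of $\alpha$, so $u$ is a minimizer, as in \eqref{eq:utility_variational_FOC}. Hence $\widehat F(t,c^\ast,\alpha)=F(t,c^\ast,u)+\alpha u$. Then
\[
G(t,D,\alpha)\ \ge\ \widehat F(t,c^\ast,\alpha)-Dc^\ast\ =\ F(t,c^\ast,u)-Dc^\ast+\alpha u\ =\ \widetilde F(t,D,u)+\alpha u,
\]
where the last equality uses the attainment of $\widetilde F$ at $c^\ast$. This proves (i). As a by-product, $c^\ast$ attains the supremum defining $G(t,D,\alpha)$.

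For part (ii), I apply the It\^o--Ventzel formula to $\kappa^{\alpha^\star}_{0,t}\widetilde U(t,D_t)$. The deflated density satisfies
\[
dD_t=D_t\alpha^\star_t\,dt+D_t(-\theta_t+\nu_t)^\top dW_t .
\]
Lemma~\ref{lem:3-3} and Assumption~\ref{asm:3-2} give the characteristics $(\widetilde b,\widetilde\delta)$ and the regularity needed for It\^o--Ventzel. Collecting the $dt$ terms in $d\mathcal Z^{\alpha^\star,\nu}_t$, divided by $\kappa_{0,t}^{\alpha^\star}$, gives
\[
-\alpha^\star_t\widetilde U+\widetilde b+\widetilde U_y D_t\alpha^\star_t+\tfrac12 D_t^2\widetilde U_{yy}|\nu_t-\theta_t|^2+D_t(\nu_t-\theta_t)^\top\widetilde\delta_y+G(t,D_t,\alpha^\star_t).
\]
All terms are evaluated at $(t,D_t)$.

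The simplification proceeds in four steps:
\begin{itemize}
\item The $\alpha^\star$-terms combine to $-\alpha^\star_t(\widetilde U-D_t\widetilde U_y)=-\alpha^\star_t\hat u(t,D_t)$.
\item Part (i) turns $G-\alpha^\star\hat u$ into $\widetilde F(t,D_t,\hat u)$. This cancels against the $-\widetilde F$ term in $\widetilde b$ from \eqref{eq:dual_drift}.
\item The same substitution cancels the $-\tfrac12 D^2\widetilde U_{yy}|\theta|^2$ and $D\theta^\top\widetilde\delta_y$ terms of $\widetilde b$.
\item Because $\nu_t\perp\theta_t$, the quadratic reduces to $\tfrac12 D^2\widetilde U_{yy}|\nu|^2$.
\end{itemize}
What remains is
\[
\tfrac12 D^2\widetilde U_{yy}|\nu|^2+D\nu^\top\widetilde\delta_y+\frac{|P^\perp\widetilde\delta_y|^2}{2\widetilde U_{yy}},\qquad P^\perp=I_d-\sigma_t^+\sigma_t .
\]
Since $\nu=P^\perp\nu$, we have $\nu^\top\widetilde\delta_y=\nu^\top P^\perp\widetilde\delta_y$. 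Completing the square then gives exactly $|P^\perp\widetilde\delta_y+D\widetilde U_{yy}\nu|^2/(2\widetilde U_{yy})$, which is \eqref{eq:deflated_envelope_dynamics}.

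This drift is $\ge0$ because $\widetilde U_{yy}>0$. The local martingale part is $M^\nu$, the $dW$ integral, and the stochastic integrals are well defined by local boundedness of the continuous coefficients along $D$ and by $\int_0^T|\theta|^2+|\nu|^2<\infty$. The drift is finite variation because \eqref{eq:G-integrability} holds and the other terms are continuous along $D$. Hence $\mathcal Z^{\alpha^\star,\nu}$ is a local submartingale.

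It is a local martingale iff the drift vanishes $dt\otimes d\mathbb P$-a.e. That happens iff $P^\perp\widetilde\delta_y+D\widetilde U_{yy}\nu=0$, i.e.\ \eqref{eq:nu_equality_case}, using the uniqueness of the Doob--Meyer decomposition for continuous semimartingales.

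The main obstacle is the bookkeeping in the It\^o--Ventzel step, specifically the cross-variation term $D(\nu-\theta)^\top\widetilde\delta_y$ between the field's martingale part and $dD$. One must use \eqref{eq:dual_drift} in its exact form and verify that the $\theta$-terms cancel exactly. A secondary technical point is ensuring that evaluating the $dt\otimes d\mathbb P$-a.e.\ identity (i) along the process $D_t$ is legitimate. I would handle this by noting that (i) holds for every $D>0$ on a full-measure set of $(t,\omega)$, so composition with $D_t$ poses no issue.
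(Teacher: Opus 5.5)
Your proposal is correct and follows the paper's proof essentially step for step. Part (i) is the same two-sided Fenchel argument at $\hat c=F_c^{-1}(t,D,\hat u(t,D))$, and part (ii) is the same It\^o--Ventzel/product-rule computation for $\kappa^{\alpha^\star}_{0,t}\widetilde U(t,D_t)$, with substitution of \eqref{eq:G_saddle} and \eqref{eq:dual_drift}, completion of the square using $\nu_t\perp\theta_t$ and $\nu_t=(I_d-\sigma_t^+\sigma_t)\nu_t$, and the fact that a continuous finite-variation local martingale null at zero vanishes.

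The only loose point is your phrase ``continuous along $D$''. The processes $\widetilde b(t,D_t)$ and $\widetilde\delta_y(t,D_t)$ need not be continuous in $t$. Local integrability of the drift and of the $dW$ integrand should instead come from the $\mathcal K_{\mathrm{loc}}$ random-field bounds on the compact range of $D$, together with $\int_0^T(|\alpha^\star_t|+|\theta_t|^2+|\nu_t|^2)\,dt<\infty$, as the paper does.
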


\begin{proof}
The proof is given in Appendix~\ref{app:B}.
\end{proof}

\subsubsection{Identification of \texorpdfstring{$Y^\star$}{Y*} along the primal optimum}\label{sec:3-5-1}

We now identify the state price density that the primal
optimum generates, together with the dynamics of the marginal utility along
the optimal wealth.

\begin{theorem}[{The state price density generated by the primal optimum}]\label{thm:3-12}
Let Assumption~\ref{asm:3-2} and the hypotheses of
Theorem~\ref{thm:2-10} hold. Let $X^\star$ be the
corresponding wealth process, and let $U^{\star }$ and $c^{\star }$ be the
forward utility and the consumption processes along it,
\begin{equation*}
U_t^\star=U(t,X_t^\star),
\qquad \text{and}\qquad
c_t^\star=c^\star(t,X_t^\star),\quad t\geq0.
\end{equation*}
Introduce the process
\begin{equation}  \label{eq:nu_star_marginal}
\nu_t^{\star}
=\frac{(I_{d}-\sigma_{t}^{+}\sigma_{t})\,
\delta_x(t,X^\star_t)}{U_x(t,X^\star_t)},\quad t\geq0,
\end{equation}
as well as the
processes
\begin{equation}  \label{eq:alpha_kappa_optimal}
\kappa_t^{\star}=\exp \!
\left(\int_0^{t}F_u(s,c_s^{\star},U_s^{\star})\,ds\right)
\qquad \text{and}\qquad
Y_t^{\star}=\kappa_t^{\star}\,U_x(t,X^\star_t),\qquad t\geq0.
\end{equation}
Then, the following assertions hold.

\smallskip

\noindent \textit{(i) Marginal utility dynamics and state price density.}
The process $\nu ^{\star }$ takes values in
$(\operatorname{Im}\sigma_t^\top)^\perp$, and
\begin{equation}  \label{eq:Ux_SDE}
dU_x(t,X^\star_t)
=-F_u(t,c_t^{\star},U_t^{\star})\,U_x(t,X^\star_t)\,dt
+U_x(t,X^\star_t)\,(-\theta_t+\nu^{\star}_t)^\top dW_t,
\quad t\geq0.
\end{equation}
Furthermore, for every $T>0$,
\begin{equation}  \label{eq:marginal_coeff_integrability}
\int_0^T\!\left(
\bigl|F_u(t,c_t^\star,U_t^\star)\bigr|+|\nu_t^\star|^2
\right)dt<\infty ,
\qquad\text{$\mathbb P$-a.s.}
\end{equation}
Process $Y^\star$, defined in \eqref{eq:alpha_kappa_optimal}, is in
$\mathcal Y$ and satisfies
\begin{equation}  \label{eq:optimal_SPD}
dY_t^{\star}=Y_t^{\star}\,(-\theta_t+\nu_t^{\star})^\top dW_t,
\qquad
Y_0^{\star}=U_x(0,x),
\quad t\geq0.
\end{equation}

\smallskip

\noindent \textit{(ii) Equality in the variational dual bound.} Suppose that Assumption~\ref{asm:3-8} also holds and that the discount process
\begin{equation*}
\alpha_t^\star=-F_u(t,c_t^\star,U_t^\star),
\qquad t\geq0,
\end{equation*}
is in $\mathcal{A}^{\mathrm{disc}}$. Let
$D_t^\star=Y_t^\star/\kappa_t^\star=U_x(t,X_t^\star)$. Then, the pair
$(D^{\star },\alpha ^{\star })$ satisfies the feedback relation
\begin{equation*}
\alpha_t^\star=\alpha^\star(t,D_t^\star),
\qquad \text{for }dt\otimes d\mathbb P\text{-a.e. }(t,\omega ),
\end{equation*}
and, consequently, \eqref{eq:coupled_discount_system} is satisfied along the
optimum. This is a conclusion of the theorem and not an additional
requirement. Finally, the process $\nu ^{\star }$ satisfies \eqref{eq:nu_equality_case}, and the corresponding
process $\mathcal{Z}^{\alpha ^{\star },\nu ^{\star }}$ is a local $\mathcal{F}_t$-martingale.
\end{theorem}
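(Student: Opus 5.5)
The plan is to compute the dynamics of $U_x(t,X_t^\star)$ with the It\^o--Ventzel formula applied to the derivative field $U_x$. This field has local characteristics $(b_x,\delta_x)$ by Assumption~\ref{asm:3-2}(i). The key input is that $b_x$ is obtained by differentiating the drift of the normalized SPDE \eqref{eq:SPDE-norm} in $x$.

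Write $v^\star(t,x)=x\sigma_t^\top\pi^\star(t,x)$. By \eqref{eq:feedback},
\[
v^\star=-\bigl(U_x\theta_t+\sigma_t^+\sigma_t\delta_x\bigr)/U_{xx}.
\]
Then the drift of \eqref{eq:SPDE-norm} equals
\[
\max_{v\in\operatorname{Im}\sigma_t^\top}\Bigl(\tfrac12U_{xx}|v|^2+v^\top(U_x\theta_t+\delta_x)\Bigr)\Big|_{\text{negated}}-\widetilde F(t,U_x,U),
\]
with the maximum attained at $v^\star$. I will differentiate in $x$ using the envelope theorem in $v$ together with \eqref{eq:F-envelope} for $\widetilde F$. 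This gives
\[
b_x=-\Bigl(\tfrac12U_{xxx}|v^\star|^2+v^{\star\top}(U_{xx}\theta_t+\delta_{xx})\Bigr)+c^\star U_{xx}-F_u(t,c^\star,U)\,U_x .
\]
Differentiating under the supremum requires $C^3$ regularity in $x$ and the H\"older condition on $U_{xxx}$. I will justify it by differentiating the explicit formula directly rather than appealing to an abstract envelope argument.

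Next comes the It\^o--Ventzel step for $U_x(t,X_t^\star)$, with $dX^\star=v^{\star\top}(dW+\theta dt)-c^\star dt$:
\[
dU_x=\Bigl(b_x+U_{xx}(v^{\star\top}\theta-c^\star)+\tfrac12U_{xxx}|v^\star|^2+v^{\star\top}\delta_{xx}\Bigr)dt+\bigl(\delta_x+U_{xx}v^\star\bigr)^\top dW .
\]
Substituting $b_x$, every term cancels except $-F_uU_x\,dt$. For the martingale part,
\[
\delta_x+U_{xx}v^\star=\delta_x-U_x\theta-\sigma^+\sigma\delta_x=U_x\bigl(-\theta+\nu^\star\bigr),
\]
which is \eqref{eq:Ux_SDE}. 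The fact that $\nu^\star$ lies in $(\operatorname{Im}\sigma_t^\top)^\perp$ is immediate from the projection.

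For the integrability \eqref{eq:marginal_coeff_integrability}, I use that $X^\star$ is continuous and strictly positive. On $[0,T]$ its path lies in a compact subset of $(0,\infty)$, where $U_x$ is bounded below and $\delta_x$, $U$ are continuous. So $\int|\nu^\star|^2<\infty$ follows from the local boundedness of $\delta_x$ in the $\mathcal K^{2,\varepsilon}_{\rm loc}$ class.

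For $F_u(t,c^\star_t,U^\star_t)$ I will use continuity of $F_u$ in $(c,u)$. This alone does not give time-integrability, since $F$ is only measurable in $t$. I therefore expect this to be the main obstacle. What I would try first is to read $F_uU_x$ off the drift identity above: it equals the drift of $U_x(t,X^\star_t)$, which is a sum of terms whose time integrals are finite, namely $b_x$ along the path (a local characteristic of a $\mathcal K_{\rm loc}$ field), $\theta\in L^2$, and the admissibility integrals for $c^\star$ and $v^\star$. Dividing by $U_x$, which is bounded below on compacts, then gives the claim.

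Once $F_u\in L^1_{\rm loc}$, the process $\kappa^\star$ is a continuous finite-variation positive process. The product rule gives $dY^\star=\kappa^\star\,dU_x+U_xF_u\kappa^\star\,dt$, so the drift cancels and \eqref{eq:optimal_SPD} follows. Hence $Y^\star=Y^{U_x(0,x),\nu^\star}\in\mathcal Y$, with $\nu^\star\in\mathcal A^{\rm dual}$.

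For (ii), I take $D^\star=U_x(t,X^\star_t)$. By the conjugacy relations, $I(t,D^\star_t)=X^\star_t$, so $\hat u(t,D_t^\star)=U^\star_t$. Also $F_c^{-1}(t,D^\star_t,U^\star_t)=c^\star_t$ by \eqref{eq:feedback_consumption}, which gives $\alpha^\star(t,D^\star_t)=-F_u(t,c^\star_t,U^\star_t)$.

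Condition \eqref{eq:G-integrability} then comes from \eqref{eq:G_saddle}:
\[
G=\widetilde F(t,D^\star,U^\star)+\alpha^\star U^\star .
\]
Here $\alpha^\star U^\star$ is integrable because $U^\star$ is continuous. The term $\widetilde F$ equals $F(c^\star,U^\star)-D^\star c^\star$, which is integrable by Remark~\ref{rem:2-5}(iii) together with $\int c^\star<\infty$.

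Finally, \eqref{eq:delta_x_via_dual} gives
\[
\frac{(I-\sigma^+\sigma)\widetilde\delta_y}{D^\star\widetilde U_{yy}}=-\frac{(I-\sigma^+\sigma)\delta_x}{U_x}=-\nu^\star ,
\]
so \eqref{eq:nu_equality_case} holds. Proposition~\ref{prop:3-10}(ii) then shows that $\mathcal Z^{\alpha^\star,\nu^\star}$ is a local martingale.
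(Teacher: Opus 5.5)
Your proposal is correct and matches the paper's proof essentially step for step. Lemma~\ref{lem:3-11} in the paper differentiates the Bellman drift via the envelope identities \eqref{eq:F-envelope}, cancels everything in the It\^o--Ventzel drift of $U_x(t,X_t^\star)$ except $-F_uU_x$, and the theorem then gets \eqref{eq:optimal_SPD} by the product rule; part (ii) is handled exactly as you do it, through $\hat u(t,D_t^\star)=U_t^\star$, the saddle identity \eqref{eq:G_saddle} for \eqref{eq:G-integrability}, and \eqref{eq:delta_x_via_dual} for \eqref{eq:nu_equality_case}, while your derivation of $\int_0^T|F_u|\,dt<\infty$ (reading $F_uU_x$ off the It\^o--Ventzel drift as a sum of locally integrable terms and dividing by the pathwise minimum of $U_x$) just spells out what the paper leaves to ``the standing bounds.''
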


\begin{proof}
The proof is given in Appendix~\ref{app:B}.
\end{proof}

\medskip

\noindent In \citet{EKM13}, where there is no aggregator, the marginal utility
along the optimal wealth is itself a state price density. Recursive
aggregation adds the finite variation term $-F_uU_x$ in \eqref{eq:Ux_SDE}. As
a consequence, the following three processes are distinct and must be
distinguished from each other.

\smallskip

\noindent $\bullet $ $U_{x}(t,X_{t}^{\star })$ is the marginal utility along
the optimal wealth.\smallskip

\noindent $\bullet $ $\kappa ^{\star }$ is the factor that removes the
recursive drift of $U_{x}(\cdot ,X^{\star })$.\smallskip

\noindent $\bullet $ $Y_{t}^{\star }=\kappa _{t}^{\star }U_{x}(t,X_{t}^{\star
})$ is the resulting state price density, with \emph{dual control} $\nu _{t}^{\star }$.

\smallskip

\noindent The process $D^{\star }$ is a deflated dual state and should \emph{not} be
confused with the state price density.

\medskip

\noindent There are two optimization problems here and they should \emph{not} be conflated.
On one hand, the dual forward recursive HJB SPDE selects, for each dual state, the
feedback control $\nu ^{\mathrm{dual}}$ in \eqref{eq:nu_dual_def}, for which
the dual value process is a local $\mathcal{F}_t$-martingale. On the other hand, the primal optimum leads to the state price density
$Y^{\star }=\kappa ^{\star }U_{x}(\cdot ,X^{\star })$. The two controls do not
need to agree in a general nonhomothetic system, since $\nu ^{\mathrm{dual}}$ is
evaluated along its own state process. They do agree when the feedback
\eqref{eq:nu_dual_def} is independent of its state argument, as in the
homothetic Epstein--Zin class of Section~\ref{sec:4}, where the identification follows
easily.

\medskip

\subsection{Admissibility and bounds on the model inputs}\label{sec:3-6}

The admissibility of the optimal feedback controls was assumed in the
verification Theorems~\ref{thm:2-10} and~\ref{thm:3-7}.
We now investigate its connection with explicit bounds on the model inputs. We recall that the starting point is \citet[Theorem~4.2]{EKM13}, where
bounds on the orthogonal volatility characteristics connect the forward utility SPDE
with two solvable SDEs. It was developed for investment and consumption by
\citet{EKHM18}. The orthogonal components of
(A1) below reproduce the bounds of \citet[Theorem~4.2]{EKM13}. Our first bound
is imposed on the full vector $\delta_x$ and is, therefore, stronger than its
counterpart therein. Its projected component is needed below to control the
admissibility of the candidate portfolio process.

We now make the following important observation. In the absence of an
aggregator, the marginal utility along the optimal wealth process
is itself a state price density. Here Lemma~\ref{lem:3-11} shows that
$U_x(\cdot,X^\star)$ has an additional drift $-F_uU_x$. The two flows are
therefore connected through the deflated process $D^\star$, while
$Y^\star=\kappa^\star U_x(\cdot,X^\star)$ is recovered only after applying
the recursive discount. This explains the two additional sets of
bounds. Condition (A2) controls the admissibility of the consumption and portfolio,
whereas (A3) controls the drift generated by the recursive discount. Neither
condition is present in \citet{EKM13}.
\smallskip

We distinguish the \emph{portfolio volatility} $\sigma^{\pi,%
\star}_t(x)=\sigma_t^\top \pi^\star(t,x)$ from the \emph{wealth volatility}
\begin{equation}  \label{eq:sigma_star_def}
\sigma^\star(t,x) = x\, \sigma_t^\top \pi^\star(t,x) = -\frac{1}{U_{xx}(t,x)%
}\, \sigma_{t}^{+}\sigma_{t}\bigl(U_x(t,x)\theta_t+\delta_x(t,x)\bigr),\qquad t\geq0,\ x>0.
\end{equation}
The variational discount in feedback form on wealth is
\begin{equation*}
\alpha^\star(t,x)=-F_u\bigl(t,c^\star(t,x),U(t,x)\bigr),\qquad t\geq0,\ x>0,
\end{equation*}
which is the map \eqref{eq:alpha_star_D} evaluated at the marginal utility
level $D=U_x(t,x)$. The
deflated dual coefficients are
\begin{equation*}
\mu^D(t,y)=\alpha^\star \bigl(t,I(t,y)\bigr)\,y, \qquad \text{and}\qquad \sigma^D(t,y)=-y\,
\theta_t+(I_{d}-\sigma_{t}^{+}\sigma_{t})\, \delta_x\bigl(t,I(t,y)\bigr),\qquad t\geq0,\ y>0.
\end{equation*}
Furthermore, the deflated dual process solves 
\begin{equation}  \label{eq:deflated_dual_SDE}
dD_t^\star=\mu^D(t,D_t^\star)\,dt+\sigma^D(t,D_t^\star)^\top \,dW_t ,
\qquad t\geq0,
\end{equation}
which is the conjugate form of the marginal utility SDE \eqref{eq:Ux_SDE}, since $%
D^\star=Y^\star/\kappa^\star=U_x(\cdot,X^\star)$.

\begin{assumption}[Bounds on the model inputs]\label{asm:3-13}
There exist nonnegative adapted processes $%
K^0,K^1,K^2,K^3,K^4$ such that, for every $T>0$,
\begin{equation*}
\int_0^T\! \bigl(|\theta_t|^2+(K_t^1)^2+(K_t^2)^2\bigr)\,dt<\infty,
\qquad
\int_0^T\! (K_t^4)^2\bigl(|\theta_t|+K_t^1\bigr)^2\,dt<\infty,
\qquad
\int_0^T\! \bigl(K_t^0+K_t^3\bigr)\,dt<\infty,
\end{equation*}
$\mathbb{P}$-a.s., and such that, for $dt\otimes d\mathbb{P}$-a.e.\
$(t,\omega)$ and every $x>0$, the following inequalities hold:
\begin{itemize}
\item[(A1)] \textit{Volatility characteristics.}
\begin{equation}  \label{eq:input_bounds_delta}
|\delta_x(t,x)|\le K_t^1\,|U_x(t,x)| \qquad\text{and}\qquad |(I_{d}-\sigma_{t}^{+}\sigma_{t})\,
\delta_{xx}(t,x)|\le K_t^2\,|U_{xx}(t,x)|,
\end{equation}
\item[(A2)] \textit{Consumption and relative risk tolerance.}
\begin{equation}  \label{eq:input_bounds_c}
c^\star(t,x)\le K_t^3\,x \qquad\text{and}\qquad \left|\frac{U_x(t,x)}{x\,U_{xx}(t,x)}%
\right|\le K_t^4,
\end{equation}
\item[(A3)] \textit{Recursive discount.}
The map $x\mapsto\alpha^\star(t,x)$ is continuously differentiable and
\begin{equation}  \label{eq:input_bounds_alpha}
|\alpha^\star(t,x)| +\left|\frac{U_x(t,x)}{U_{xx}(t,x)}\,
\partial_x\alpha^\star(t,x)\right| \le K_t^0.
\end{equation}
\end{itemize}
\end{assumption}

We note that no separate Lipschitz condition is imposed on either the deflated
dual coefficients or the primal feedback coefficients. For the dual coefficients,
the reason is the following. At $x=I(t,y)$, it holds that $U_{x}(t,x)=y$, and
thus the first bound in (A1) gives the linear growth of $\sigma ^{D}$ in $y$.
In turn, the bound on $\alpha ^{\star }$ in (A3) gives the linear growth of
$\mu ^{D}$. Differentiating in $y$ and using $I_{y}(t,y)=1/U_{xx}(t,I(t,y))$,
the bounds (A1) on $(I_{d}-\sigma _{t}^{+}\sigma _{t})\delta _{xx}$ and (A3)
on $\partial _{x}\alpha ^{\star }$ give a Lipschitz bound in $y$ on every
finite horizon, in analogy with \citet[Theorem~4.2(i)]{EKM13}. The
deflated dual equation \eqref{eq:deflated_dual_SDE} has a unique strong
solution, and its strict positivity is a consequence of the multiplicative
form that the equation takes after dividing
the coefficients by $y$. This is presented in detail in Step~1 of the proof.

For the optimal wealth SDE, uniqueness is likewise established rather than
assumed. Step~3 of the proof shows that \emph{every} strictly positive
solution of the feedback wealth equation satisfies the same marginal flow
identity \eqref{eq:flow_identity}, and therefore coincides with the process
constructed from the dual flow through the inverse marginal field. It is this
argument, rather than a Lipschitz estimate on $\sigma ^{\star }$ and $%
c^{\star }$, that yields pathwise uniqueness.

The first bound in (A1) concerns
the full gradient volatility and controls both projections at once: the
orthogonal part $(I_{d}-\sigma _{t}^{+}\sigma _{t})\delta _{x}$ drives the
dual diffusion $\sigma ^{D}$, while the projected part $\sigma _{t}^{+}\sigma
_{t}\delta _{x}$ enters the optimal portfolio through %
\eqref{eq:sigma_star_def}. The bound \eqref{eq:input_bounds_c} on $%
|U_{x}/(xU_{xx})|$ controls the
relative risk tolerance and converts the wealth volatility coefficient bound on $%
\sigma _{t}^{\star }$ into a portfolio volatility coefficient bound on $\sigma _{t}^{\pi
,\star }(x)=\sigma ^{\star }(t,x)/x$, which is what Definition~\ref{defn:2-1} requires.

\bigskip

\medskip \noindent Before stating the result, we introduce the two families of
solutions that the theorem below connects, namely, the family of solutions of
the deflated dual equation \eqref{eq:deflated_dual_SDE}, indexed by their
initial dual value $y>0$, and the family of solutions of the optimal wealth
equation, indexed by their initial wealth $x>0$,
\begin{equation*}
y\longmapsto D_{t}^{\star }(y)\text{ \ and \ }x\longmapsto
X_{t}^{\star }(x),\qquad t\geq 0.
\end{equation*}

\begin{theorem}[{Admissibility and primal--dual consistency}]\label{thm:3-14}
Suppose that $(U,F)$ is a normalized forward aggregator
system satisfying Assumptions~\ref{asm:2-9}, \ref{asm:3-2} and~\ref{asm:3-13} and the nondegeneracy
condition \eqref{eq:nondegeneracy}, and that $U$ satisfies the normalized
forward recursive HJB SPDE \eqref{eq:SPDE-norm}. Then, the following
assertions hold.

\begin{itemize}
\item[\textrm{(1)}] \textit{Connection between the two solution families.} For every $y>0$, the
deflated dual SDE \eqref{eq:deflated_dual_SDE} has a unique strictly positive
strong solution $D^\star(y)$ with initial value $y$, defined and finite for all
$t\geq0$, and nondecreasing in its initial value. For $x>0$, the
process
\begin{equation}
X_t^\star(x)
=I\bigl(t,D_t^\star(U_x(0,x))\bigr),\qquad t\geq 0,
\label{eq:Xstar_inverse_marginal_stmt}
\end{equation}%
is the unique strictly positive strong solution, defined and finite for all
$t\geq 0$, of
\begin{equation}  \label{eq:feedback_wealth_SDE}
dX_t^\star
=\bigl(\sigma^\star(t,X_t^\star)^\top\theta_t
-c^\star(t,X_t^\star)\bigr)\,dt
+\sigma^\star(t,X_t^\star)^\top dW_t,\qquad t\geq0,\qquad X_{0}^{\star }=x,
\end{equation}
and it is nondecreasing in $x$. The two solution families satisfy the identity
\begin{equation}  \label{eq:flow_identity}
U_x\bigl(t,X_t^\star(x)\bigr)
=D_t^\star\bigl(U_x(0,x)\bigr),
\qquad x>0,\ t\geq0.
\end{equation}

\item[\textrm{(2)}] \textit{Primal consistency.} For
every $T>0$,
\begin{equation*}
\int_0^T\!\left(c^\star(t,X_t^\star)
+|\sigma_t^{\pi,\star}(X_t^\star)|^2\right)dt<\infty
\qquad\text{$\mathbb P$-a.s.}
\end{equation*}
Hence, the feedback control pair
$\bigl(\pi^\star(t,X_t^\star),c^\star(t,X_t^\star)\bigr)_{t\geq0}$ is in
$\mathcal A$ and the process
$Y^\star=\kappa^\star U_x(\cdot,X^\star)$ is in $\mathcal Y$. Furthermore, by
Theorem~\ref{thm:2-10}, $(U,F)$ is a consistent normalized forward recursive aggregator
system and this control pair is optimal.

\item[\textrm{(3)}] \textit{Dual consistency.} The conjugate field
$\widetilde U$ satisfies the dual forward recursive HJB SPDE \eqref{eq:dual_SPDE}. For every $y>0$, the equation
\begin{equation*}
dY_t=Y_t\bigl(-\theta_t+\nu_t^{\mathrm{dual}}(Y_t)\bigr)^\top dW_t,
\qquad Y_0=y,\qquad t\geq0,
\end{equation*}
has a unique strictly positive strong solution, and the resulting dual
control $\nu ^{\mathrm{dual}}(Y)$ is also in $\mathcal{A}^{\mathrm{dual}}$.
Hence, $(\widetilde U,\widetilde F)$ is dual consistent on $\mathcal Y$.

\item[\textrm{(4)}] \textit{The two Fenchel equalities.} Suppose that
Assumption~\ref{asm:3-8} also holds and that
$\alpha^\star_t=-F_u(t,c^\star_t,U^\star_t)$, $t\geq0$, is in
$\mathcal{A}^{\mathrm{disc}}$. Then, along the optimal control pair, we have
\begin{equation}  \label{eq:primal_dual_optimum}
Y_t^\star=\kappa_t^\star U_x(t,X_t^\star),\qquad t\geq0,
\end{equation}
and we also have the equalities \eqref{eq:fenchel_eq_wealth} and
\eqref{eq:fenchel_eq_consumption}.
\end{itemize}
\end{theorem}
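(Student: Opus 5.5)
The proof runs through the dual flow first and transfers everything to the primal side through the inverse marginal field $I$, in the spirit of \citet[Theorem~4.2]{EKM13}.

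\textbf{Step 1: the deflated dual SDE.} For the coefficients of \eqref{eq:deflated_dual_SDE}, write $\mu^D(t,y)=y\,\alpha^\star(t,I(t,y))$ and $\sigma^D(t,y)=y\,\bigl(-\theta_t+\bar\nu(t,y)\bigr)$, where
\[
\bar\nu(t,y)=(I_d-\sigma_t^+\sigma_t)\,\delta_x(t,I(t,y))/y .
\]
By (A1) with $U_x(t,I(t,y))=y$, we get $|\bar\nu|\le K^1_t$, and by (A3), $|\alpha^\star|\le K^0_t$.

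For the Lipschitz property, differentiate in $y$ using $I_y=1/U_{xx}(t,I)$. This gives
\[
\partial_y\sigma^D=-\theta_t+(I_d-\sigma_t^+\sigma_t)\,\delta_{xx}/U_{xx}
\quad\text{and}\quad
\partial_y\mu^D=\alpha^\star+y\,\partial_x\alpha^\star/U_{xx}.
\]
Both are bounded by $|\theta_t|+K^2_t$ and $K^0_t$ respectively, by (A1) and (A3). So the coefficients are Lipschitz in $y$ with random, time-integrable constants, in the $L^2$ sense for the diffusion and $L^1$ for the drift. Standard results for SDEs with random Lipschitz coefficients then give existence and pathwise uniqueness of a strong solution with no explosion.

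Positivity comes from the multiplicative form:
\[
D_t=y\exp\Bigl(\int_0^t\bigl(\alpha^\star-\tfrac12|{-\theta}+\bar\nu|^2\bigr)\,ds+\int_0^t(-\theta+\bar\nu)^\top dW_s\Bigr),
\]
where the coefficients are evaluated along $D$. Monotonicity in $y$ follows from the comparison theorem for one-dimensional SDEs with Lipschitz coefficients.

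\textbf{Step 2: construction of the wealth.} Define $X^\star_t(x)=I(t,D^\star_t(U_x(0,x)))$. Since $I$ is a $\mathcal K^{2,\varepsilon}_{\rm loc}$-semimartingale field by Assumption~\ref{asm:3-2}(iii), apply the It\^o--Ventzel formula. The local characteristics of $I$ come from differentiating the identity $U_x(t,I(t,y))=y$ with the It\^o--Ventzel formula; this expresses them through $b_x$, $\delta_x$ and $U_{xx}$, as in Lemma~\ref{lem:3-3}. The field $b_x$ is read off from differentiating the drift of \eqref{eq:SPDE-norm} in $x$. This step uses the envelope identities \eqref{eq:F-envelope}, which give
\[
\partial_x\bigl[\widetilde F(t,U_x,U)\bigr]=-c^\star U_{xx}+F_u(t,c^\star,U)\,U_x .
\]
The expected outcome is that the drift and diffusion of $X^\star$ collapse exactly to $\sigma^{\star\top}\theta-c^\star$ and $\sigma^\star$. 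In this cancellation, the term $\alpha^\star y=-F_u y$ in $\mu^D$ kills the $F_uU_x$ contribution, and $\bar\nu$ combines with the It\^o correction from $\delta_x$ to produce the projected part $\sigma_t^+\sigma_t\delta_x$ in $\sigma^\star$.

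\textbf{Main obstacle: the computation and uniqueness.} This computation is the main obstacle and I would organize it by verifying the converse direction. Take any strictly positive solution $X$ of \eqref{eq:feedback_wealth_SDE}, and use It\^o--Ventzel on $U_x(t,X_t)$, with the decomposition of $U_x$ given by $(b_x,\delta_x)$. The claim is that $U_x(t,X_t)$ solves \eqref{eq:deflated_dual_SDE}; this is the computation of Lemma~\ref{lem:3-11}, carried out along an arbitrary solution rather than only along the optimal one. By uniqueness in Step 1, $U_x(t,X_t)=D^\star_t(U_x(0,x))$, hence $X_t=I(t,D^\star_t)$. This yields both uniqueness of $X^\star$ and the flow identity \eqref{eq:flow_identity}. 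Existence is obtained by running the same identity backwards, or by the direct It\^o--Ventzel computation above. Monotonicity in $x$ follows since $U_x(0,\cdot)$ and $I(t,\cdot)$ are decreasing while $D^\star$ is increasing in $y$. Nonexplosion and positivity are inherited from $D^\star\in(0,\infty)$ together with $I(t,\cdot)$ mapping $(0,\infty)$ onto $(0,\infty)$.

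\textbf{Steps 3--5: consequences.} For assertion (2), (A2) gives
\[
c^\star(t,X_t)\le K^3_tX_t
\quad\text{and}\quad
|\sigma^{\pi,\star}|=|\sigma^\star|/X\le K^4_t\bigl(|\theta_t|+K^1_t\bigr),
\]
using $|\delta_x|\le K^1U_x$ in \eqref{eq:sigma_star_def}. Continuity of $X$ and the integrability hypotheses of Assumption~\ref{asm:3-13} then give admissibility. Theorem~\ref{thm:2-10} yields consistency, and Theorem~\ref{thm:3-12}(i) places $Y^\star$ in $\mathcal Y$.

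For assertion (3), Proposition~\ref{prop:3-4} gives the dual SPDE. By \eqref{eq:delta_x_via_dual}, $\nu^{\rm dual}(t,y)=(I_d-\sigma_t^+\sigma_t)\,\delta_x(t,I)/y$, so $|\nu^{\rm dual}|\le K^1_t$. The SPD equation is therefore again multiplicative with Lipschitz coefficients by (A1), and Theorem~\ref{thm:3-7} applies.

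For assertion (4), invoke Theorem~\ref{thm:3-12}(ii) together with Proposition~\ref{prop:3-9}. Equality \eqref{eq:fenchel_eq_wealth} holds because $D^\star=U_x(\cdot,X^\star)$ by construction. Equality \eqref{eq:fenchel_eq_consumption} follows from \eqref{eq:G_saddle} and $\alpha^\star_t=-F_u$, since $c^\star=F_c^{-1}(t,D^\star,U^\star)$ attains the supremum defining $G$.
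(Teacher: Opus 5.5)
Your proposal is correct and follows the paper's proof essentially step for step: well-posedness and positivity of the deflated dual SDE from (A1) and (A3), then existence of $X^\star$ via the It\^o--Ventzel computation on $I(t,D^\star_t)$, then pathwise uniqueness and the flow identity by rerunning the Lemma~\ref{lem:3-11} computation along an arbitrary positive solution and invoking uniqueness for $D^\star$, and finally admissibility, dual consistency and the Fenchel equalities from (A1)--(A2), Theorem~\ref{thm:2-10}, Theorem~\ref{thm:3-7}, Proposition~\ref{prop:3-9} and Theorem~\ref{thm:3-12}. The only loose phrase is that existence cannot come from ``running the identity backwards''; it is supplied by the direct It\^o--Ventzel computation, which you also offer and which is the paper's Step~2.
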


\begin{proof}
The proof is given in Appendix~\ref{app:B}.
\end{proof}

\begin{corollary}\label{cor:3-15}
Under the assumptions of
Theorem~\ref{thm:3-14}(4), the bound
\eqref{eq:discounted_fenchel_bound} applies to every control pair in
$\mathcal{A}$ and every density in $\mathcal{Y}$ forming an integrable triple
with $\alpha ^{\star }$, and it is attained at
$\bigl((\pi^\star,c^\star),Y^\star\bigr)$.
\end{corollary}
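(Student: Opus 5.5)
The plan is to combine Proposition~\ref{prop:3-9} with the equality statements of Theorem~\ref{thm:3-14}(4). The first claim, that the bound applies, is essentially immediate. Under the hypotheses of Theorem~\ref{thm:3-14}(4), the process $\alpha^\star_t=-F_u(t,c^\star_t,U^\star_t)$ belongs to $\mathcal A^{\mathrm{disc}}$. Assumptions~\ref{asm:2-9} and~\ref{asm:3-2} are in force. Hence, for every $(\pi,c)\in\mathcal A$ and every $Y^\nu\in\mathcal Y$ such that $(\alpha^\star,(\pi,c),Y^\nu)$ is an integrable triple in the sense of \eqref{eq:envelope-integrability}, Proposition~\ref{prop:3-9} gives \eqref{eq:discounted_fenchel_bound} with $\alpha=\alpha^\star$ at every $t\geq0$. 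Nothing beyond checking these hypotheses is required here.

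For attainment, I first verify that $(\alpha^\star,(\pi^\star,c^\star),Y^\star)$ is itself an integrable triple.
\begin{itemize}
\item By Theorem~\ref{thm:3-14}(2), the pair $(\pi^\star,c^\star)$ lies in $\mathcal A$ and $Y^\star$ lies in $\mathcal Y$.
\item The deflated density is $D^{\alpha^\star,\nu^\star}=Y^\star/\kappa^\star=U_x(\cdot,X^\star)=D^\star$. Here I use that $\kappa^{\alpha^\star}_{0,t}=\exp(\int_0^t F_u\,ds)=\kappa^\star_t$, which follows directly from the definitions \eqref{eq:kappa_alpha_def} and \eqref{eq:alpha_kappa_optimal}.
\item The $G$-part of \eqref{eq:envelope-integrability} is \eqref{eq:G-integrability}. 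By the discussion after \eqref{eq:coupled_discount_system} (Step~2 of the proof of Theorem~\ref{thm:3-12}), it holds automatically along the primal optimum, because Theorem~\ref{thm:3-12}(ii) shows that $\alpha^\star_t=\alpha^\star(t,D^\star_t)$.
\item For the $\widehat F$-part, I use the identity $\widehat F(t,c^\star_t,\alpha^\star_t)=F(t,c^\star_t,U^\star_t)+\alpha^\star_tU^\star_t$. This follows from \eqref{eq:utility_variational_FOC}, since the infimum is attained at $u=U^\star_t$. The right-hand side is locally integrable for three reasons: $F(\cdot,c^\star,U^\star)$ is integrable along the optimum (Remark~\ref{rem:2-5}(iii)), $\alpha^\star$ is integrable because it is in $\mathcal A^{\mathrm{disc}}$, and $U^\star$ has continuous paths. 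The factor $\kappa^\star$ is continuous and positive, so it does not affect integrability.
\end{itemize}

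With integrability in hand, I apply the equality characterization of Proposition~\ref{prop:3-9}. Condition \eqref{eq:fenchel_eq_wealth} holds trivially, since $D^\star_t=U_x(t,X^\star_t)$. For condition \eqref{eq:fenchel_eq_consumption}, I combine \eqref{eq:G_saddle} from Proposition~\ref{prop:3-10}(i) at $D=D^\star_s$, where $\hat u(s,D^\star_s)=U^\star_s$ and $\alpha^\star(s,D^\star_s)=\alpha^\star_s$. This gives $G(s,D^\star_s,\alpha^\star_s)=\widetilde F(s,D^\star_s,U^\star_s)+\alpha^\star_sU^\star_s$. Next, $c^\star_s=F_c^{-1}(s,U_x(s,X^\star_s),U^\star_s)$ attains the supremum defining $\widetilde F$, so $\widetilde F(s,D^\star_s,U^\star_s)=F(s,c^\star_s,U^\star_s)-D^\star_sc^\star_s$. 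Substituting this and the identity $\widehat F=F+\alpha^\star U^\star$ from above yields $\widehat F(s,c^\star_s,\alpha^\star_s)-D^\star_sc^\star_s=G(s,D^\star_s,\alpha^\star_s)$, which is exactly \eqref{eq:fenchel_eq_consumption}. These two equalities are also what Theorem~\ref{thm:3-14}(4) asserts, so I can cite that result directly. Equality in \eqref{eq:discounted_fenchel_bound} then holds for every $t$.

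The main obstacle I expect is the integrability bookkeeping for the optimal triple, in particular the $|\widehat F|$ term. Only the finiteness of $F$ along the optimum is proved in the paper, so the argument depends on the identity $\widehat F=F+\alpha^\star u$ at the optimum together with $\alpha^\star\in\mathcal A^{\mathrm{disc}}$. A secondary subtlety is that Proposition~\ref{prop:3-10}(i) holds only for $dt\otimes d\mathbb P$-a.e. $(t,\omega)$ and for all $D$. To evaluate it at the random point $D^\star_s$, I need to choose a single null set that works uniformly in $D$, which is exactly how that proposition is stated.
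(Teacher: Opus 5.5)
Your proposal is correct and follows the paper's route. The paper proves the corollary in one line, by applying Proposition~\ref{prop:3-9} with $\alpha=\alpha^\star$ and obtaining attainment from the two Fenchel equalities of Theorem~\ref{thm:3-14}(4). The integrability check you spell out for the optimal triple (the $G$-part via Step~2 of the proof of Theorem~\ref{thm:3-12}, and the $\widehat F$-part via $\widehat F=F+\alpha^\star U^\star$ along the optimum) is exactly what the paper does in Step~7 of the proof of Theorem~\ref{thm:3-14}.
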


\begin{proof}
The assertion follows easily from
Proposition~\ref{prop:3-9} and Theorem~\ref{thm:3-14}(4).
\end{proof}

\medskip

\noindent We stress that Theorem~\ref{thm:3-14} provides a sufficient construction. The
identity \eqref{eq:flow_identity} is necessary for every sufficiently smooth
admissible optimum, but no general necessity claim is made for
Assumption~\ref{asm:3-13}.

\section{Forward Epstein--Zin recursive aggregator systems}\label{sec:4}

This section develops the homothetic Epstein--Zin class. We start with the linear Uzawa benchmark and subsequently turn to the separable Epstein--Zin construction, their viability condition and optimal controls. The state price density and conjugate equation follow next, before the Merton and Heston examples. The notation would only be introduced when it is first used.
Throughout the section the letter $u$ is reserved for the deterministic
wealth utility $u(x)$ of the multiplicative forms below, and the letter $y$
for the dual state, except in the generic driver notation of Remark~\ref{rem:4-21}. Thus, we denote the continuation-utility argument in
$F$ and $\widetilde{F}$ by $v$. 

\subsection*{Forward Uzawa utilities}

\noindent Let
$\beta:[0,\infty)\to(0,\infty)$ be a deterministic locally integrable
function satisfying $\int_0^T\beta_t\,dt<\infty$, for each $T>0$, and introduce the
normalized aggregator on $%
\mathcal{U}=\mathbb{R}$,
\begin{equation*}
F(t,c,v)=\frac{c^{1-\gamma }}{1-\gamma }-\beta _{t}\,v,\quad c>0,\text{ }%
v\in \mathbb{R},\text{ }\gamma >0,\text{ }\gamma \neq 1.
\end{equation*}%
Its Fenchel transform is
\begin{equation*}
\widetilde{F}(t,d,v)=\frac{\gamma }{1-\gamma }\,d^{\,1-\frac{1}{\gamma }%
}-\beta _{t}\,v,\text{ \ }d>0,\;v\in \mathbb{R},
\end{equation*}%
and the optimal feedback consumption control is $c^{\star
}(t,x)=\left( U_{x}(t,x)\right) ^{-1/\gamma }$, $t\geq0$, $x>0$.

\medskip

\noindent In turn, the normalized forward recursive HJB SPDE \eqref{eq:SPDE-norm} takes the form
\begin{equation}
\begin{aligned}
dU(t,x)=\Bigl( &\frac{|U_{x}(t,x)\theta _{t}+\sigma_{t}^{+}\sigma_{t}%
\delta _{x}(t,x)|^{2}}{2\,U_{xx}(t,x)}-\frac{\gamma }{1-\gamma }\,
\left( U_{x}(t,x)\right) ^{1-\frac{1}{\gamma }}+\beta _{t}\,U(t,x)\Bigr) dt\\
&+\delta (t,x)^{\top }dW_{t}, \quad t\geq 0,\ x>0,
\end{aligned}  \label{SPDE-Uzawa}
\end{equation}%
for a suitable initial condition $U(0,x)=u_{0}(x)$, $x>0$.

The corresponding dual forward SPDE, obtained by substituting
$\widetilde{F}(t,d,v)$ above in \eqref{eq:dual_SPDE}, becomes
\begin{equation}
\begin{aligned}
d\widetilde{U}(t,y)&=\left( -\frac{\gamma }{1-\gamma }\,y^{\,1-\frac{1}{%
\gamma }}+\beta _{t}\bigl(\widetilde{U}(t,y)-y\, \widetilde{U}%
_{y}(t,y)\bigr)-\tfrac{1}{2}\,y^{2}\, \widetilde{U}_{yy}(t,y)\,|\theta
_{t}|^{2}\right. \\
&\left. \quad +y\, \theta _{t}^{\top }\widetilde{\delta }_{y}(t,y)+\frac{%
|(I_{d}-\sigma_{t}^{+}\sigma_{t})\, \widetilde{\delta }_{y}(t,y)|^{2}}{2\, \widetilde{U}_{yy}(t,y)}%
\right) dt+\widetilde{\delta }(t,y)^{\top }dW_{t},  \quad t\geq0, ~~ y > 0,
\end{aligned}
\label{eq:dual_SPDE-Uzawa}
\end{equation}%
 with $\widetilde{U}(0,y)=\widetilde{u}_{0}(y)$, $y>0$, the convex conjugate of the
initial field $u_{0}$ of \eqref{eq:initial-condition}. We note that the
discount model input $\beta$ enters \eqref{eq:dual_SPDE-Uzawa} linearly, through the term $\beta
_{t}(\widetilde{U}-y\, \widetilde{U}_{y}).$ However, SPDE~\eqref{eq:dual_SPDE-Uzawa}
\emph{remains nonlinear} through the unspanned volatility term
$|(I_d-\sigma_t^+\sigma_t)\widetilde\delta_y|^2/(2\widetilde U_{yy})$.
It \emph{becomes linear} in the non volatile case considered next.

\medskip In the special case of zero recursive volatility,
$\delta (t,x)\equiv 0$, $t\geq0$, $x>0$, SPDE~\eqref{SPDE-Uzawa} reduces to the PDE with random
coefficients,
\begin{equation} \label{eq:Uzawa_PDE}
U_t(t,x) - \frac{ U_{x}(t,x)^2 |\theta _{t}|^{2}}{2\,U_{xx}(t,x)%
}+\frac{\gamma }{1-\gamma }\, \left( U_{x}(t,x)\right) ^{1-\frac{1}{\gamma }%
}-\beta _{t}\,U(t,x) = 0, \qquad t\geq0,\ x>0.
\end{equation}

\medskip

\noindent Since the aggregator is affine in $y$, the recursive criterion reduces to a
discounted additive one. We note that $\beta$ being deterministic is not needed for the Bellman
SPDE \eqref{SPDE-Uzawa}, but it is needed for the explicit non volatile representation below.
Indeed, if $\beta$ were merely progressively measurable, the heat kernel
formula would involve its future values which, however, may not be
$\mathcal{F}_{t}$-measurable. Taking conditional expectations would restore
the proper measurability but would, in general, introduce a martingale component.

When $\delta\equiv0$, the model falls within the class of forward investment
and consumption criteria studied by
\citet{Kallblad-2016}. Its convex dual satisfies a linear equation which,
after a suitable time rescaling, takes the form of the ill-posed inhomogeneous heat
equation used below. The resulting criterion decomposes into an
infinite horizon Merton type consumption component and a pure forward investment
component. We stress that this linear structure does not extend to the forward
Epstein--Zin recursive case,
whose dual equation remains nonlinear through the term
$\widetilde U-y\widetilde U_y$.

\subsubsection*{The convex conjugate}

Equation \eqref{eq:Uzawa_PDE} is quasilinear in $U$, but,
as seen in \citet{Kallblad-2016}, can be linearized by passing to the dual domain.
Using the conjugacy identities
\eqref{eq:conjugacy_relations}, the non volatile dual field satisfies
$d\widetilde{U}(t,y)=\widetilde{U}_{t}(t,y)\,dt$. Thus,
SPDE~\eqref{eq:dual_SPDE-Uzawa} with $\widetilde{\delta }(t,y)\equiv 0$,
$t\geq0$, $y>0$, becomes
\begin{equation}
\widetilde{U}_{t}(t,y)+\frac{|\theta _{t}|^{2}}{2}\,y^{2}\,\widetilde{U}%
_{yy}(t,y)-\beta _{t}\bigl(\widetilde{U}(t,y)-y\,\widetilde{U}_{y}(t,y)\bigr)+%
\tfrac{\gamma }{1-\gamma }y^{\,1-\frac{1}{\gamma }}=0, \qquad t\geq0,\ y>0,
\label{eq:Uzawa_dual}
\end{equation}%
which is the analogue of equation (18) in \citet{Kallblad-2016}, but incorporating a discount factor. We stress that equation
\eqref{eq:Uzawa_dual} is
\emph{linear} in $\widetilde{U}$.

\bigskip

Next, using a suitable transformation, we convert the problem to the one studied in \citet{Kallblad-2016}. Specifically, let
\begin{equation}
V(t,x)=e^{-\int_{0}^{t}\beta _{s}\,ds}\,U(t,x),\qquad t\geq0,\ x>0.
\label{eq:Uzawa_discount_change}
\end{equation}%
From \eqref{eq:Uzawa_PDE}, it follows directly that $V$ solves the random
HJB equation in \citet{Kallblad-2016} \emph{without}
zeroth order term, namely,
\begin{equation*}
V_{t}(t,x)
-\frac{|\theta _{t}|^{2}}{2}\,
\frac{V_{x}(t,x)^{2}}{V_{xx}(t,x)} +\frac{\gamma}{1-\gamma }
e^{-\frac{1}{\gamma }\int_{0}^{t}\beta _{s}\,ds}
V_{x}(t,x)^{\,1-\frac{1}{\gamma }}=0,
\qquad t\geq0,\ x>0.
\end{equation*}%
Then, for the convex conjugate, we deduce that
\begin{equation*}
\widetilde{U}(t,y)
=e^{\int_{0}^{t}\beta _{s}\,ds}
\widetilde{V}\bigl(t,e^{-\int_{0}^{t}\beta _{s}\,ds}y\bigr),\qquad t\geq0,\ y>0.
\end{equation*}
The field $\widetilde{V}$ satisfies the same linear dual equation as in
\citet{Kallblad-2016} with a source depending on $(t,y)$ only. Specifically,
\begin{equation}
\widetilde{V}_{t}(t,y)+\frac{|\theta _{t}|^{2}}{2}\,y^{2}\,\widetilde{V}%
_{yy}(t,y)+\,\tfrac{\gamma e^{-\frac{1}{\gamma }\int_{0}^{t}\beta _{s}\,ds}}{1-\gamma }y^{\,1-\frac{1}{\gamma }}=0,\qquad t\geq0,\ y>0;
\label{eq:Uzawa_dual_V}
\end{equation}%
see equation (18) in \citet{Kallblad-2016}.

\subsubsection*{Time rescaling and reduction to the ill-posed inhomogeneous heat equation}

Following the arguments developed in \citet{Kallblad-2016}, we introduce the
time change,
\begin{equation}
M_{t}=\int_{0}^{t}\theta _{s}^{\top }dW_{s}\quad \text{and}\quad
\Theta_{t}=\langle M\rangle _{t}=\int_{0}^{t}|\theta _{s}|^{2}\,ds,\qquad t\geq 0.
\label{eq:Uzawa_timechange}
\end{equation}%
In this section, we work under the following standing assumption.

\begin{assumption}\label{asm:4-1}
There exists a deterministic,
locally integrable function $a:[0,\infty)\to(0,\infty)$ such that
\begin{equation}  \label{eq:Uzawa_deterministic_clock}
|\theta_t|^2=a(t),\qquad \text{and}\qquad
\Theta_{t}=\int_0^t a(s)\,ds\uparrow\infty,\qquad \text{as }t\uparrow\infty.
\end{equation}
\end{assumption}

In other words, the ``traded clock'' is deterministic and strictly increasing, although
the direction of $\theta_t$ may only remain progressively measurable. This
restriction, together with $\beta$ being deterministic, ensures that the future
convolution product displayed below does not make the random coefficients anticipative. In
turn, in analogy to the Definition~3.4 of \citet{Kallblad-2016}, we introduce two auxiliary functions $H,H_{c}:\mathbb{R}%
\times \lbrack 0,\infty )\to (0,\infty )$ defined analogously
through the marginals
\begin{equation}
V_{x}\bigl(t,H(y,t)\bigr)=\exp \Bigl(-y+\tfrac{1}{2}\Theta_{t}\Bigr),\qquad \text{and}\qquad
u_{x}^{c}\bigl(t,H_{c}(y,t)\bigr)=\exp \Bigl(-y+\tfrac{1}{2}\Theta_{t}\Bigr),
\label{eq:Uzawa_Hdef}
\end{equation}%
where $u^{c}(t,c)=e^{-\int_{0}^{t}\beta _{s}\,ds}\,c^{1-\gamma }/(1-\gamma )$
is the discounted felicity utility. We also write $%
H(y,t)=h(y,\Theta_{t})$ and $H_{c}(y,t)=|\theta _{t}|^{2}h_{c}(y,\Theta_{t})$, which
defines the pair $(h,h_{c})$ on the traded clock through the inverse of $\Theta$,
$T(\tau)=\Theta^{-1}_{\tau}$. In turn, the inhomogeneous term generated by the
consumption part of the criterion is explicitly given by
\begin{equation}  \label{eq:Uzawa_hc_explicit}
h_c(y,\tau)
=\frac{1}{a(T(\tau))}
\exp\!\left(
\frac{y-\tau/2-\int_0^{T(\tau)}\beta_s\,ds}{\gamma}
\right),
\qquad y\in\mathbb R,\ \tau\geq0.
\end{equation}
In particular, $h_c$ is deterministic under the above assumptions.
Transformation \eqref{eq:Uzawa_Hdef} turns the operator $\tfrac{1}{%
2}|\theta _{t}|^{2}y^{2}\partial _{yy}$ of \eqref{eq:Uzawa_dual_V} into a
constant coefficient Laplacian, with the shift $\tfrac{1}{2}\Theta _{t}$ absorbing the
accompanying first order drift. Working as in Lemma~3.5 in
\citet{Kallblad-2016}, $(H,H_{c})$ satisfy $H_{t}+\tfrac{1}{2}|\theta
_{t}|^{2}H_{yy}+H_{c}=0$. We obtain that the time changed pair $(h,h_{c})$
satisfies the \emph{constant coefficient} ill-posed inhomogeneous heat equation
\begin{equation}
h_{\tau }(y,\tau )+\tfrac{1}{2}\,h_{yy}(y,\tau )+h_{c}(y,\tau )=0,\qquad
\tau\geq0,\ y\in \mathbb{R},  \label{eq:Uzawa_heat}
\end{equation}%
where $\tau =\Theta_{t}$ is the traded time scale and the subscript $\tau $ denotes
differentiation in \emph{that} clock. It is the transformation
$H(y,t)=h(y,\Theta_{t})$, $%
\partial _{t}H=|\theta _{t}|^{2}h_{\tau }$, that removes the market
coefficient $|\theta _{t}|^{2}$, yielding a diffusion coefficient equal to
$\tfrac{1}{2}$.
Because the Uzawa discount was absorbed into the inhomogeneous term $u^{c}$ in
\eqref{eq:Uzawa_discount_change}, it appears only through $%
h_{c}$ and
\eqref{eq:Uzawa_heat} does not contain the zeroth order term. This is the
ill-posed equation (20) in \citet{Kallblad-2016}.

\subsubsection*{Heat kernel convolution and Widder representation}

Let $k(\tau ,z)=(2\pi \tau )^{-1/2}\exp (-z^{2}/2\tau
)$, $\tau \geq 0$, $z\in \mathbb{R}$, be the fundamental solution of
\eqref{eq:Uzawa_heat} in the homogeneous case, with the convention $
k(z)=k(1,z)$, and let $$(k\star h_{c})(y,\tau )=\int _{\tau }^{\infty }\int
_{-\infty }^{\infty }k(s-\tau ,z-y)\,h_{c}(z,s)\,dz\,ds $$ denote its
space-time convolution with the inhomogeneous consumption term. Under
Assumption~\ref{asm:4-1} and the standing assumption that $\beta $ is deterministic,
the function $h_c$ is the deterministic
function in \eqref{eq:Uzawa_hc_explicit}. Assuming that $k\star h_c$ is finite,
we choose a deterministic positive Borel measure $\varpi$ on $[0,\infty)$
satisfying
\begin{equation*}
\int_0^\infty e^{ry}\varpi(dr)<\infty ,
\qquad\text{for each }y\in\mathbb R.
\end{equation*}
By Theorem~3.7 and Corollary~3.8 in \citet{Kallblad-2016}, the time-changed
dual field admits the representation
\begin{equation}
h(y,\tau )=(k\star h_{c})(y,\tau )+h^{\varpi }(y,\tau ),\qquad \text{with }h^{\varpi
}(y,\tau )\text{ equal to}\quad \int_{0}^{\infty }e^{\,ry-\frac{1}{2}r^{2}\tau }\,\varpi (dr),
\label{eq:Uzawa_Widder}
\end{equation}%
with the term $h^{\varpi }$ being the Widder positive measure representation of the
positive solutions of the homogeneous ill-posed heat equation. We stress that
$h$ is the time-changed inverse marginal field, and not the dual utility itself.
The dual utility is recovered by integrating the identity
$-\widetilde V_y=I$. To make this integration valid also when $\varpi$ has
mass at $r=1$, we introduce
\begin{equation*}
K_r(t,y)=
\begin{cases}
\displaystyle
\frac{y^{1-r}\exp\!\bigl(\tfrac12(r-r^2)\Theta_t\bigr)-1}{r-1},&r\neq1,\\[1.2ex]
\displaystyle-\log y-\tfrac12\Theta_t,&r=1.
\end{cases}
\end{equation*}
Then, direct
differentiation gives
\begin{equation}  \label{eq:Uzawa_Widder_derivative}
-\partial_yK_r(t,y)
=y^{-r}\exp\!\bigl(\tfrac12(r-r^2)\Theta_t\bigr).
\end{equation}
Next, we assume in addition that
\begin{equation}  \label{eq:Uzawa_consumption_finiteness}
J_t = \int_t^\infty
\exp\!\left(
-\frac1\gamma\int_0^s\beta_u\,du
-\frac{\gamma-1}{2\gamma^2}(\Theta_s-\Theta_t)
\right)ds<\infty,
\qquad\text{for each }t\geq0.
\end{equation}
Choosing $K_r$ fixes the integration constant in the dual
variable. We easily deduce that the discounted dual
field $\widetilde V$ decomposes as
\begin{equation}
\begin{aligned}
\widetilde{V}(t,y)&=\int_{t}^{\infty }e^{-\frac{1}{\gamma }\int_{0}^{s}\beta
_{u}\,du}\int _{-\infty }^{\infty }\widetilde{f}\Bigl(y\,e^{\sqrt{\Theta_{s}-\Theta_{t}}%
\,z-\frac{1}{2}(\Theta_{s}-\Theta_{t})}\Bigr)k(z)\,dz\,ds \\
&\quad +\int_{0}^{\infty }K_r(t,y)\,\varpi (dr),
\end{aligned}
\label{eq:Uzawa_decomp}
\end{equation}%
where
$$
\widetilde f(d)=\frac{\gamma}{1-\gamma}d^{1-\frac1\gamma},
\qquad d>0,
$$
is the Fenchel transform of $c^{1-\gamma}/(1-\gamma)$. The first term in
\eqref{eq:Uzawa_decomp} equals $\widetilde f(y)J_t$ and is therefore finite
under \eqref{eq:Uzawa_consumption_finiteness}. The original dual field is then
recovered from
$$
\widetilde U(t,y)
=e^{\int_0^t\beta_s\,ds}
\widetilde V\left(t,e^{-\int_0^t\beta_s\,ds}y\right).
$$
This is the analogue of \citet[formula~(24)]{Kallblad-2016}. The definition
of $K_r$ fixes the sign imposed by the conjugacy convention
\eqref{eq:fenchel_U} and also covers the case $r=1$ by continuity.

Under Assumption~\ref{asm:4-1} and the determinism of $\beta $, the representation is
adapted to the underlying filtration $(\mathcal F_t)_{t\geq0}$.
Its first term is the dual value of the infinite horizon Merton type consumption
problem, whereas the Widder term corresponds to a pure forward investment
criterion determined by the measure $\varpi$. Thus, the criterion separates
into a consumption component and a forward investment component.

We note that $\beta$ being a deterministic process is only a sufficient condition for the appropriate measurability requirements. More generally,
both $h_c$ and $h=k\star h_c+h^\varpi$ must be adapted to the time-changed
filtration. Progressive measurability of $\beta$ alone is insufficient
because the convolution $k\star h_c$ depends on future values of the source.
The regularity, range, and martingale conditions of
\citet[Theorem~3.13]{Kallblad-2016} are also required.

\subsection*{Forward Epstein--Zin recursive aggregators}

We now introduce one of the main contributions in our work, which is the
construction and detailed study of the forward analogue of the celebrated
Epstein--Zin recursive utility.

\medskip

\noindent Let $(\gamma ,\eta ,\lambda )$ satisfy
\begin{equation}
\lambda =\frac{1-\gamma }{1-\frac{1}{\eta }},\text{ \ with }\gamma >0,\gamma
\neq 1,\eta >0,\eta \neq 1,  \label{lambda-Epstein}
\end{equation}%
and set $\mathcal{U}_{\gamma }=(1-\gamma )\mathbb{R}_{+}^{\ast }=\{v\in \mathbb{%
R}:(1-\gamma )v>0\}$, the range on which the powers $((1-\gamma )v)^{1-1/\lambda
}$ are defined in $\mathbb R$. In analogy to \citet{Epstein-Zin-1989}, and
its continuous time formulation in \citet{Duffie-Epstein}, we introduce the
\emph{normalized Epstein--Zin recursive aggregator}
\begin{equation}
f(c,v)=\frac{1}{1-\frac{1}{\eta }}\! \left( c^{\,1-\frac{1}{\eta }}\left(
(1-\gamma )v\right) ^{1-\frac{1}{\lambda }}-(1-\gamma )v\right) ,\quad c>0,%
\text{ }v\in \mathcal{U}_{\gamma }.  \label{eq:EZ_aggregator}
\end{equation}%

\medskip \noindent The model parameters $\gamma $ and $\eta $ play distinct
economic roles. Coefficient $\gamma $ is the relative risk aversion of
the homothetic wealth index, whereas $\eta $ is the \emph{elasticity of
intertemporal substitution} (EIS). We state that the composite parameter
$\lambda $ satisfies
\begin{equation*}
\frac{\eta }{\lambda }=\frac{\eta -1}{1-\gamma }\qquad \text{and}\qquad
\lambda -1=\frac{1-\gamma \eta }{\eta -1}.
\end{equation*}%
Thus, $\lambda =1$ if and only if $\eta =1/\gamma $. This is the additive
CRRA boundary, at which the EIS is constrained to be the reciprocal of risk
aversion. At this boundary,
\begin{equation*}
f(c,v)=\frac{c^{1-\gamma }}{1-\gamma }-v,
\end{equation*}%
so the aggregator becomes additive with respect to both the felicity and
continuation utility. Away from this boundary, Epstein--Zin recursion permits attitudes
toward intertemporal substitution and risk to vary independently \citep[see e.g.][]{KP78,Epstein-Zin-1989,Duffie-Epstein}.

Next, we let $\Psi >0$ be an $\mathcal{F}_{t}$-progressively measurable process
and define
\begin{equation*}
F(t,c,v)=\Psi _{t}\,f(c,v),\text{ \  \ }t\geq 0,\text{ }c>0,\text{ }v\in
\mathcal{U}_{\gamma }.
\end{equation*}%
with $f$ as in (\ref{eq:EZ_aggregator}).

\bigskip

\noindent We note that in the classical Epstein--Zin case, the
multiplier $\Psi $ is a constant, whereas here it is allowed to be a progressively
measurable modeling input. We keep the present notation, as all subsequent
calculations rely on it.

The Fenchel transform of $F$ is 
\begin{equation}
\widetilde{F}(t,d,v)=\frac{d^{\,1-\eta }}{\eta -1}\, \left( (1-\gamma
)v\right) ^{\, \eta \left( 1-\frac{1}{\lambda }\right) }\Psi _{t}^{\eta
}-\lambda \Psi _{t}\,v,\qquad t\geq 0,\text{ }d>0,\text{ }v\in \mathcal{U}%
_{\gamma },  \label{eq:EZ_fenchel}
\end{equation}%
and, in turn, the candidate optimal feedback consumption takes the form
\begin{equation*}
c^{\star }(t,x)=\big((1-\gamma )U(t,x)\big)^{\frac{1-\eta \gamma }{1-\gamma }%
}\! \left( \displaystyle \frac{\Psi _{t}}{U_{x}(t,x)}\right) ^{\! \eta },\qquad t\geq0,\ x>0.
\end{equation*}%
Combining the above with the normalized Bellman SPDE \eqref{eq:SPDE-norm}, we obtain
the (normalized) \textit{forward Epstein--Zin Bellman SPDE},
\begin{equation}
\begin{aligned}
dU(t,x)&=\left( \frac{|U_{x}(t,x)\theta _{t}+\sigma_{t}^{+}\sigma_{t}\delta
_{x}(t,x)|^{2}}{2\,U_{xx}(t,x)} 
-\frac{\Psi _{t}^{\eta }}{\eta -1}\,U_{x}(t,x)^{1-\eta }\bigl(%
(1-\gamma )U(t,x)\bigr)^{\eta \left( 1-\frac{1}{\lambda }\right) }\right. \\
&\left. \quad +\lambda \, \Psi _{t}\,U(t,x)\right) dt+\delta (t,x)^{\top
}dW_{t},  \quad t\geq 0,\ x>0,
\end{aligned}
\label{eq:SPDE-EZ}
\end{equation}%
for a suitable initial condition $U(0,x)=u_{0}(x)$, $x>0$.

Using \eqref{eq:EZ_fenchel} and \eqref{eq:dual_SPDE}, we then obtain the \emph{%
dual forward Epstein--Zin recursive SPDE}. Here $\hat u(t,y)$ from
\eqref{eq:dual_continuation_level} is the primal utility expressed in
the dual state. We have
\begin{equation}
\begin{aligned}
d\widetilde{U}(t,y)&=\left( -\frac{\Psi _{t}^{\eta }}{\eta -1}\,y^{1-\eta }%
\bigl((1-\gamma )\hat u(t,y)\bigr)^{\eta \left( 1-\frac{1}{\lambda }%
\right) }+\lambda \, \Psi _{t}\, \hat u(t,y) -\tfrac{1}{2}\,y^{2}\, \widetilde{U}_{yy}(t,y)\,|\theta
_{t}|^{2}+y\, \theta _{t}^{\top }\widetilde{\delta }_{y}(t,y)\right. \\
&\left. \quad +\frac{|(I_{d}-\sigma_{t}^{+}\sigma_{t})\,\widetilde{\delta }%
_{y}(t,y)|^{2}}{2\, \widetilde{U}_{yy}(t,y)}\right) dt+\widetilde{\delta }%
(t,y)^{\top }dW_{t},  \quad t\geq 0,\ y>0,
\end{aligned}
\label{eq:dual_SPDE-EZ}
\end{equation}%
with $\widetilde{U}(0,y)=\widetilde{u}_{0}(y)$, $y>0$, the convex conjugate of $%
u_{0}$.

Unlike the primal Bellman SPDE \eqref{eq:SPDE-EZ}, the dual equation contains a
nonlinear dependence on $\hat u(t,y)$ and, therefore,
couples $\widetilde{U}$ with $\widetilde{U}_{y}$ in a nonlinear way. Because
of this coupling, a direct linearization of
the dual problem is not, in general, possible. In the next section, we
circumvent this difficulty by working with a special but still rather general
class which allows for closed form solution for the dual problem.

\medskip \noindent The forward separable recursive specification below provides a different
reduction. In turn, factoring out the wealth dependence turns the forward
Epstein--Zin SPDE
into a scalar Bernoulli type equation for $\Phi$. Proposition~\ref{prop:4-2} shows
that the power transformation $\Xi=\Phi^{\eta/\lambda}$ converts this
equation into a linear SDE. Thus, we make the important observation that
conjugacy linearizes the non-volatile Uzawa
problem, whereas homothetic separation and a power transformation linearize
the separable forward Epstein--Zin problem.

\subsection{Separable forward Epstein--Zin recursive aggregator systems}\label{sec:4-1}

We consider forward recursive aggregator pairs $(U,F)$ of both the multiplicative
and separable form 
\begin{equation}
U(t,x)=\Phi _{t}u(x)\text{ \ and \ }F(t,c,v)=\Psi _{t}\,f(c,v),\text{ \  \ }%
t\geq 0,\text{ }x>0,\text{ }c>0,\text{ }v\in \mathcal{U}_{\gamma },
\label{eq:separable_form}
\end{equation}%
with
\begin{equation*}
u(x)=\frac{x^{1-\gamma }}{1-\gamma },\text{ \  \ }x>0,\text{ }\gamma
>0,\gamma \neq 1,
\end{equation*}%
where $\Phi$ is a strictly positive It\^{o} process and $\Psi$ is a strictly
positive $\mathcal F_t$-progressively measurable process. We impose no
semimartingale dynamics on $\Psi$. These processes constitute a modeling
input as follows. Specifically, we allow $\Psi$ to be general under mild integrability
conditions. For process $\Phi$, its volatility process $\delta ^{\Phi }$ is a
modeling input while, as we show, its drift is specified via the martingale
and supermartingale conditions in the definition of forward recursive
aggregators.

\medskip \noindent Because of the homothetic structure, the optimal feedback
policies depend on wealth only through the relative consumption
\begin{equation*}
\check{c}_{t}=\frac{c_{t}}{X_{t}^{\pi ,c}},\text{ \  \ }t\geq 0,
\end{equation*}%
the fraction of wealth consumed per unit time. We, therefore, work with this
quantity throughout the analysis of the separable case.

\medskip \noindent We first construct $\Phi $ explicitly from $\Psi $ and
$\delta ^{\Phi }$, and then characterize consistency and determine the optimal
policies. Corollary~\ref{cor:4-8} verifies the conditions needed to apply the results of Section~\ref{sec:3}.

\subsubsection{The processes \texorpdfstring{$\Phi$ and $\Psi$}{Phi and Psi}}\label{sec:4-1-1}

Let $\delta^\Phi$ be a progressively measurable process, modeling the
volatility of process $\Phi$. Let $\varphi>0$ be an initial multiplier (initial condition of $\Phi$), set
\begin{equation*}
m=\frac{\eta}{\lambda},
\end{equation*}
and define the auxiliary processes
\begin{equation}  \label{eq:EZ_chi}
\chi _{t}=\eta \Psi _{t}+\frac{1-\eta }{2\gamma }\,
\bigl|\theta _{t}+\sigma_{t}^{+}\sigma_{t}\delta _{t}^{\Phi }\bigr|^{2}
+\frac{m(m-1)}{2}|\delta _{t}^{\Phi }|^{2},\qquad t\geq0,
\end{equation}
and
\begin{equation}
\mathcal{E}_{t}=\exp \! \left( \int_{0}^{t}\! \left( \eta \Psi _{s}+\frac{%
1-\eta }{2\gamma }\, \bigl|\theta _{s}+\sigma_{s}^{+}\sigma_{s}\delta _{s}^{\Phi }\bigr|^{2}-%
\frac{\eta }{2\lambda }\,|\delta _{s}^{\Phi }|^{2}\right) ds+\frac{\eta }{%
\lambda }\int_{0}^{t}(\delta _{s}^{\Phi })^{\top }dW_{s}\right) ,\qquad t\geq0.
\label{eq:EZ_stochastic_exponential}
\end{equation}

\medskip \noindent We impose the following assumptions:

\smallskip \noindent \textrm{(V1)} for each $T>0$,
$\int_{0}^{T}\Psi _{s}^{\eta }\,ds<\infty $ $\mathbb P$-a.s.;

\smallskip \noindent \textrm{(V2)} for each $T>0$,
$\int_{0}^{T}(\Psi _{s}+|\delta _{s}^{\Phi }|^{2})ds<\infty $ $\mathbb P$-a.s.;

\smallskip \noindent \textrm{(V3)} it holds that
\begin{equation}  \label{eq:EZ_V3_early}
\mathbb P\!\left(
\varphi ^{\eta /\lambda }
-\int_{0}^{t}\Psi _{u}^{\eta }\mathcal{E}_{u}^{-1}\,du>0
\text{ for every }t\geq0
\right)=1.
\end{equation}

Under \textrm{(V2)} and the standing assumption
\eqref{eq:theta}, the exponent in
\eqref{eq:EZ_stochastic_exponential} is finite on every finite horizon, so
$\mathcal E$ is well defined and strictly positive. We note that no Novikov type condition is
needed because $\mathcal E$ is used only as an integrating factor, and not as a
martingale density. Assumption \textrm{(V1)} makes the integral in
\textrm{(V3)} finite on every interval $[0,T]$, $\mathbb P$-a.s., and \textrm{(V3)} is
the global positivity condition needed for the construction below.

Let process $\Phi $ admit the It\^{o} decomposition
\begin{equation}
d\Phi _{t}=\Phi _{t}\! \left( b_{t}^{\Phi }\,dt+(\delta _{t}^{\Phi })^{\top
}dW_{t}\right) ,\qquad \Phi _{0}=\varphi >0,  \label{eq:SDE_Phi_general}
\end{equation}%
with volatility $\delta ^{\Phi }$ being the modeling input presented earlier, and with drift process $b_{t}^{\Phi }$ given by
\begin{equation}
b_{t}^{\Phi }=\frac{1-\gamma }{1-\eta }\, \frac{\Psi _{t}^{\eta }}{\Phi
_{t}^{\, \eta /\lambda }}-\frac{1-\gamma }{2\gamma }\, \bigl|\theta
_{t}+\sigma_{t}^{+}\sigma_{t}\delta _{t}^{\Phi }\bigr|^{2}+\lambda \Psi _{t},\text{ \  \ }t\geq 0,
\label{eq:EZ_bphi}
\end{equation}%
Then, the It\^{o} decomposition of $\Phi $ is
\begin{equation}
d\Phi _{t}=\Phi _{t}\Bigl( \frac{1-\gamma }{1-\eta }\, \frac{\Psi
_{t}^{\eta }}{\Phi _{t}^{\, \eta /\lambda }}-\frac{1-\gamma }{2\gamma }\,%
\bigl|\theta _{t}+\sigma_{t}^{+}\sigma_{t}\delta _{t}^{\Phi }\bigr|^{2}+\lambda \Psi
_{t}\Bigr) dt +\Phi _{t}(\delta _{t}^{\Phi })^{\top }dW_{t},\quad \Phi
_{0}=\varphi >0,\qquad t\geq0.  \label{eq:EZ_SDE_Phi}
\end{equation}%
As anticipated, the form of drift in \eqref{eq:EZ_bphi} is not postulated
but derived. Theorem~\ref{thm:4-4} shows that it is exactly the condition under which
the separable pair $(U,F)$ constitutes a forward recursive aggregator system.

\medskip

\noindent The next result linearizes \eqref{eq:EZ_SDE_Phi} and gives $\Phi $ in closed
form.

\begin{proposition}\label{prop:4-2}
Let $\Phi $ solve \eqref{eq:EZ_SDE_Phi} and set
$\Xi _{t}=\Phi _{t}^{m},$ $t\geq 0.$ Then,

\noindent i) if $\Phi$ is a strictly positive solution of
\eqref{eq:EZ_SDE_Phi}, then, with $m=\eta/\lambda $ as above, $\Xi$ is a
strictly positive solution of the
linear SDE
\begin{equation}
d\Xi _{t}=\Big(-\Psi _{t}^{\eta }+\chi _{t}\, \Xi _{t}\Big)dt+\frac{\eta }{%
\lambda }\, \Xi _{t}(\delta _{t}^{\Phi })^{\top }dW_{t},\text{ \  \ }t\geq 0,
\label{eq:EZ_linear_Y}
\end{equation}%
with $\Xi _{0}=\varphi ^{m}$ and $\chi$ given by \eqref{eq:EZ_chi}.

Conversely, a solution $\Xi$ of \eqref{eq:EZ_linear_Y} produces the
solution $\Phi=\Xi^{1/m}$ of \eqref{eq:EZ_SDE_Phi} only on a stochastic
interval on which $\Xi$ is strictly positive.

\noindent ii) with $\mathcal E$ as in
\eqref{eq:EZ_stochastic_exponential}, the process $\Phi $ is given by
\begin{equation}
\Phi _{t}=\left( \mathcal{E}_{t}\! \left( \varphi ^{\eta /\lambda
}-\int_{0}^{t}\Psi _{u}^{\eta }\, \mathcal{E}_{u}^{-1}\,du\right) \right)
^{\! \lambda /\eta },\qquad 0\leq t<\tau,  \label{eq:EZ_Phi_explicit}
\end{equation}
where
\begin{equation}  \label{eq:EZ_positive_lifetime}
\tau=\inf\left\{t\geq0:
\varphi^{\eta/\lambda}
-\int_0^t\Psi_u^\eta\mathcal E_u^{-1}\,du\leq0
\right\}.
\end{equation}
\end{proposition}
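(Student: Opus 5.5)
The plan is a direct It\^{o} computation for part i), followed by a variation-of-constants argument for the linear SDE in part ii), with the integrating factor $\mathcal E$ of \eqref{eq:EZ_stochastic_exponential}.

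\emph{Step 1 (part i, forward direction).} Let $\Phi>0$ solve \eqref{eq:EZ_SDE_Phi}. Since $x\mapsto x^{m}$ is smooth on $(0,\infty)$ for every real $m$, It\^{o}'s formula gives
\begin{equation*}
d\Xi_t=\Xi_t\Bigl(m\,b_t^{\Phi}+\tfrac12 m(m-1)|\delta_t^{\Phi}|^2\Bigr)dt+m\,\Xi_t(\delta_t^{\Phi})^{\top}dW_t .
\end{equation*}
We then substitute \eqref{eq:EZ_bphi} and use the parameter identity $m=\eta/\lambda=(\eta-1)/(1-\gamma)$ from \eqref{lambda-Epstein}. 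The three drift terms simplify as follows.
\begin{itemize}
\item The first term of $m\,b^{\Phi}\Xi$ is $m\tfrac{1-\gamma}{1-\eta}\Psi^{\eta}\Phi^{-m}\Xi=-\Psi^{\eta}$, since $m(1-\gamma)/(1-\eta)=-1$ and $\Phi^{-m}\Xi=1$.
\item The second term is $-m\tfrac{1-\gamma}{2\gamma}|\theta+\sigma^{+}\sigma\delta^{\Phi}|^2\,\Xi=\tfrac{1-\eta}{2\gamma}|\theta+\sigma^{+}\sigma\delta^{\Phi}|^2\,\Xi$.
\item The third term is $m\lambda\Psi\,\Xi=\eta\Psi\,\Xi$.
\end{itemize}
Adding the It\^{o} correction $\tfrac12 m(m-1)|\delta^{\Phi}|^2\Xi$, the drift equals $-\Psi^{\eta}+\chi\,\Xi$ with $\chi$ as in \eqref{eq:EZ_chi}. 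This yields \eqref{eq:EZ_linear_Y} with $\Xi_0=\varphi^{m}$, and $\Xi>0$ is immediate.

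\emph{Step 2 (part i, converse).} Let $\Xi$ solve \eqref{eq:EZ_linear_Y}. On any stochastic interval on which $\Xi>0$, the map $x\mapsto x^{1/m}$ is smooth there, so the same computation run in reverse applies to $\Phi=\Xi^{1/m}$ and recovers \eqref{eq:EZ_SDE_Phi}. Outside such an interval the power is undefined, which is why the statement is restricted to it.

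\emph{Step 3 (part ii).} First I verify that $\mathcal E$ is well defined: its exponent is finite on each $[0,T]$ by (V2) and \eqref{eq:theta}. Applying It\^{o}'s formula to $\mathcal E$ gives
\begin{equation*}
d\mathcal E_t=\mathcal E_t\bigl(\chi_t\,dt+m(\delta_t^{\Phi})^{\top}dW_t\bigr),
\end{equation*}
since the drift $-\tfrac m2|\delta^{\Phi}|^2$ plus the correction $\tfrac{m^2}{2}|\delta^{\Phi}|^2$ equals $\tfrac{m(m-1)}2|\delta^{\Phi}|^2$. It follows that
\begin{equation*}
d\mathcal E^{-1}_t=\mathcal E^{-1}_t\bigl((-\chi_t+m^2|\delta_t^{\Phi}|^2)\,dt-m(\delta_t^{\Phi})^{\top}dW_t\bigr).
\end{equation*}
Next I apply the product rule to $\Xi\,\mathcal E^{-1}$. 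The $\chi$-terms cancel, and the covariation term $-m^2|\delta^{\Phi}|^2\Xi\mathcal E^{-1}$ cancels the $m^2$ drift term. The martingale parts cancel as well, so that
\begin{equation*}
d(\Xi_t\mathcal E_t^{-1})=-\Psi_t^{\eta}\mathcal E_t^{-1}\,dt .
\end{equation*}
The integrand is locally integrable by (V1) and the continuity of $\mathcal E^{-1}$. Integrating gives $\Xi_t=\mathcal E_t\bigl(\varphi^{m}-\int_0^t\Psi_u^{\eta}\mathcal E_u^{-1}du\bigr)$. Because this argument applies to any solution of the linear equation, it also yields uniqueness.

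This $\Xi$ is strictly positive exactly on $[0,\tau)$, with $\tau$ as in \eqref{eq:EZ_positive_lifetime}. On that interval, Step 2 gives \eqref{eq:EZ_Phi_explicit} with exponent $1/m=\lambda/\eta$. Conversely, any strictly positive solution $\Phi$ yields, by Step 1, a solution of the linear SDE, which must coincide with the formula above. Hence $\Phi$ takes the stated form on $[0,\tau)$.

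The only delicate step is the parameter bookkeeping in Step 1, in particular checking $m(1-\gamma)/(1-\eta)=-1$ and $m\lambda=\eta$; these two identities make the equation linear. The rest is routine.
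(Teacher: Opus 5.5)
Your proposal is correct and follows the paper's proof essentially step for step. It uses the same It\^{o} computation for $\Xi=\Phi^{m}$ with the identities $m(1-\gamma)/(1-\eta)=-1$ and $m\lambda=\eta$, and the same integrating-factor argument giving $d(\mathcal E_t^{-1}\Xi_t)=-\Psi_t^{\eta}\mathcal E_t^{-1}\,dt$; your remarks on uniqueness of the linear equation and on positivity holding exactly on $[0,\tau)$ just make explicit what the paper leaves implicit.
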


\begin{proof}
We first derive the SDE satisfied by
$\Xi $, and, in turn, solve it explicitly.

\noindent \textit{Step 1. Derivation of SDE for $\Xi $.}
Recalling that $m=\eta /\lambda $ and $\Xi _{t}=\Phi _{t}^{m}$,
It\^{o}'s formula gives
\begin{equation}
\begin{aligned}
d\Xi _{t}
&=m\Phi _{t}^{m-1}\,d\Phi _{t}
+\frac{m(m-1)}{2}\Phi _{t}^{m-2}\,d\langle \Phi \rangle _{t}\\
&=m\Xi _{t}\frac{d\Phi _{t}}{\Phi _{t}}
+\frac{m(m-1)}{2}\Xi _{t}|\delta _{t}^{\Phi }|^{2}\,dt.
\end{aligned}
\label{eq:proof_Xi_Ito}
\end{equation}
From \eqref{eq:EZ_SDE_Phi},
\begin{equation*}
\frac{d\Phi _{t}}{\Phi _{t}}
=\left(\frac{1-\gamma }{1-\eta }\Psi _{t}^{\eta }
\Phi _{t}^{-m}
-\frac{1-\gamma }{2\gamma }
|\theta _{t}+\sigma _{t}^{+}\sigma _{t}\delta _{t}^{\Phi }|^{2}
+\lambda \Psi _{t}\right)dt
+(\delta _{t}^{\Phi })^{\top }dW_{t},
\end{equation*}%
which in view of \eqref{eq:proof_Xi_Ito} yields
\begin{equation}
\begin{aligned}
d\Xi _{t}
&=m\frac{1-\gamma }{1-\eta }\Psi _{t}^{\eta }
\Xi _{t}\Phi _{t}^{-m}\,dt\\
&\quad+\Xi _{t}\left(
-m\frac{1-\gamma }{2\gamma }
|\theta _{t}+\sigma _{t}^{+}\sigma _{t}\delta _{t}^{\Phi }|^{2}
+m\lambda \Psi _{t}
+\frac{m(m-1)}{2}|\delta _{t}^{\Phi }|^{2}
\right)dt\\
&\quad+m\Xi _{t}(\delta _{t}^{\Phi })^{\top }dW_{t}.
\end{aligned}
\label{eq:proof_Xi_before_simplification}
\end{equation}
Using that
$\Xi _{t}\Phi _{t}^{-m}=1$ and the identities
\begin{equation*}
m\frac{1-\gamma }{1-\eta }=-1,\qquad
m\lambda =\eta ,\qquad
-m(1-\gamma )=1-\eta ,
\end{equation*}%
\eqref{eq:proof_Xi_before_simplification} gives
\begin{equation*}
d\Xi _{t}=\bigl(-\Psi _{t}^{\eta }+\chi _{t}\Xi _{t}\bigr)dt
+\frac{\eta }{\lambda }\Xi _{t}(\delta _{t}^{\Phi })^{\top }dW_{t},
\end{equation*}%
where $\chi _{t}$ is as in \eqref{eq:EZ_chi}. This proves
\eqref{eq:EZ_linear_Y}. The converse direction follows easily, provided
$\Xi $ remains strictly positive.

\noindent \textit{Step 2. Explicit solution of the linear equation.}
The process $\mathcal{E}$ in \eqref{eq:EZ_stochastic_exponential} satisfies
\begin{equation*}
\frac{d\mathcal{E}_{t}}{\mathcal{E}_{t}}
=\chi _{t}\,dt+m(\delta _{t}^{\Phi })^{\top }dW_{t},\qquad
\mathcal{E}_{0}=1,
\end{equation*}%
and, therefore,
\begin{equation*}
d\mathcal{E}_{t}^{-1}
=\mathcal{E}_{t}^{-1}
\left((-\chi _{t}+m^{2}|\delta _{t}^{\Phi }|^{2})dt
-m(\delta _{t}^{\Phi })^{\top }dW_{t}\right).
\end{equation*}%
Thus,
\begin{align*}
d(\mathcal E_t^{-1}\Xi_t)
&=\mathcal E_t^{-1}d\Xi_t+\Xi_t d\mathcal E_t^{-1}
+d\langle\mathcal E^{-1},\Xi\rangle_t\\
&=-\mathcal E_t^{-1}\Psi_t^\eta\,dt.
\end{align*}
Using that $\Xi _{0}=\varphi ^{\eta /\lambda }$, we derive
\begin{equation*}
\Xi _{t}=\mathcal{E}_{t}
\left(\varphi ^{\eta /\lambda }
-\int_{0}^{t}\Psi _{u}^{\eta }\mathcal{E}_{u}^{-1}\,du\right).
\end{equation*}%
Finally, since $\Xi_t>0$ for all $0\leq t < \tau$, we conclude that
$$
\Phi_t=\Xi_t^{\lambda/\eta}, \qquad 0\leq t < \tau,\quad \mathbb{P}\text{-a.s.},
$$
which yields \eqref{eq:EZ_Phi_explicit}.
\end{proof}

\medskip \noindent We note that the squared norms in \eqref{eq:EZ_bphi}
and \eqref{eq:EZ_linear_Y} have distinct origins. The first comes from the
Hamiltonian and uses the projected component $\theta _{t}+\sigma_{t}^{+}\sigma_{t}\delta
_{t}^{\Phi }=\sigma_{t}^{+}\sigma_{t}(\theta _{t}+\delta _{t}^{\Phi })$, by \eqref{eq:projections_theta}. The second is the quadratic-variation correction from It%
\^{o}'s formula applied to $\Xi =\Phi ^{m}$ and uses $|\delta
_{t}^{\Phi }|^{2}.$

\bigskip

\medskip \noindent Under \textrm{(V1)}--\textrm{(V3)}, the process $\Phi$
given by \eqref{eq:EZ_Phi_explicit} is well defined and strictly positive
for all $t\geq0$. Hence, $U(t,\cdot)$ takes values in
$\mathcal U_\gamma$, and the pair $(\Phi,\Psi)$ is globally viable.

\subsubsection{Viability and the classical infinite horizon parameter regimes}\label{sec:4-1-2}

The fact that, in the forward setting, the resolution of the problem is treated for every
$\lambda\neq0$ deserves some clarification.
In the classical
infinite horizon Epstein--Zin case, different ranges of the composite
parameter $\lambda$ require different notions of solution and different
techniques. These difficulties do not disappear in the forward
formulation. Rather, they reappear as a global viability requirement, namely condition~(V3).

In this subsection we keep assumptions \textrm{(V1)} and \textrm{(V2)} and treat
\textrm{(V3)} as the condition to be analyzed. Indeed, the trajectories violating \textrm{(V3)} are
included only to identify the range of global viability.

To compare notations with the existing literature, let $\mathsf{R}$ denote the
relative risk aversion and let $\mathsf{S}$
denote the elasticity of intertemporal complementarity, which is the
reciprocal of the EIS. We use the sans serif font for these two parameters in
order to avoid any confusion with the stock price processes $S$ of
\eqref{eq:marketstocks}. The key parameter used in
\citet{HerdegenHobsonJeromeI,HerdegenHobsonJeromeII,HerdegenHobsonJeromeProper}
is
\begin{equation*}
\vartheta=\frac{1-\mathsf{R}}{1-\mathsf{S}}.
\end{equation*}
With $\mathsf{R}=\gamma$ and $\mathsf{S}=1/\eta$, this parameter is exactly the $\lambda$ herein.
Away from the additive boundary $\lambda=1$, the literature on the classical infinite horizon theory distinguishes three regimes. For
$0<\lambda<1$, a utility process is associated uniquely
with a broad class of consumption streams and in the generalized
formulation of \citet{HerdegenHobsonJeromeII} with every nonnegative
consumption stream. \citet{BayraktarLawless2026}
recently extended the infinite horizon analysis in this range to incomplete
markets with stochastic investment opportunities. They give a
variational characterization of the value function and include stochastic
volatility examples. Their analysis is organized by the range of the parameter
$\lambda$, and the difficulties differ across its range. If $\lambda>1$,
uniqueness fails in general, and the economically relevant optimal consumption process is selected
through the notion of properness. When $\lambda<0$, the risk aversion
parameter and the elasticity of intertemporal complementarity lie on opposite
sides of one. Hence, the usual infinite horizon utility index may display the
``utility bubble'' phenomenon emphasized in \citet{HerdegenHobsonJeromeI}. In
the empirically important case $\gamma>1$ and $\eta>1$, \citet{Shigeta2026} showed that the economic sign problem disappears after an
order-equivalent transformation. Existence, however, still requires a suitable
admissible class of consumption streams. The boundary value $\lambda=1$
corresponds to the additive CRRA aggregator.

In the forward setting developed in this section, the range of parameters for
which the variational duality of Section~\ref{sec:3-4} is available should be
distinguished from the range for which the primal forward construction is
available. Appendix~\ref{app:C} shows that the forward recursive
aggregator is convex in its continuation value when $\gamma\eta\geq1$ and
concave if $\gamma\eta\leq1$. The variational results of Section~\ref{sec:3-4} apply
under the convexity condition of Assumption~\ref{asm:3-8}. In the concave case, a
corresponding variational formulation requires the additional conditions
described there and is not used in the primal construction below. When
$\gamma\eta=1$ the aggregator is affine in its continuation value. Hence, the
primal forward construction developed above covers every $\lambda\neq0$,
subject to viability.

The forward formulation reverses the direction in which the preference
structure is specified. Instead of a terminal condition, it prescribes the
initial field
\begin{equation*}
U(0,x)=\varphi \,u_{\gamma }(x),\qquad u_{\gamma }(x)=\frac{x^{1-\gamma }}{%
1-\gamma },\quad x>0,
\end{equation*}%
where $u_{\gamma }$ is the wealth utility $u$ of \eqref{eq:separable_form},
with the exponent displayed for later comparison,
and propagates this field forward. In contrast, the classical lifetime
formulation must select a solution of a backward recursion without a terminal
condition at infinity. Once $(\Psi,\delta^\Phi)$ and $\varphi$ are fixed, the linear
equation for $\Xi=\Phi^{\eta/\lambda}$ has a unique solution up to its first
exit from $(0,\infty)$. This new forward formulation clarifies why the same requirement covers all cases $\lambda\neq0$. However, it does not imply that
the classical infinite horizon existence questions, and the question of which
solution of the backward recursion is to be singled out, have been resolved
for every parameter range. 

There also exists an exact link with the order-equivalent coordinate used in
\citet{Shigeta2026}, as we discuss next. To this end, let
\begin{equation*}
u_{1/\eta}(x)=\frac{x^{1-1/\eta}}{1-1/\eta},\qquad x>0,
\end{equation*}
and consider the increasing transformation
$u_{1/\eta}\circ u_\gamma^{-1}$ to the homothetic forward field. Since
\begin{equation*}
u_\gamma^{-1}\bigl(U(t,x)\bigr)
=\Phi_t^{1/(1-\gamma)}x,\qquad t\geq0,\ x>0,
\end{equation*}
we obtain
\begin{equation}
\bigl(u_{1/\eta}\circ u_\gamma^{-1}\bigr)
\bigl(U(t,x)\bigr)
=\Phi_t^{(1-1/\eta)/(1-\gamma)}u_{1/\eta}(x)
=\Phi_t^{1/\lambda}u_{1/\eta}(x),\qquad t\geq0,\ x>0.
\label{eq:EZ_order_transform_forward}
\end{equation}
Consequently,
\begin{equation*}
\Xi_t=\Phi_t^{\eta/\lambda}
=\bigl(\Phi_t^{1/\lambda}\bigr)^\eta,\qquad t\geq0.
\end{equation*}
The Bernoulli variable used in Proposition~\ref{prop:4-2} is therefore the
$\eta$-th power of the multiplier of the order-equivalent utility index.
Thus, the linearization and the order transformation are two expressions of the
same change of coordinates. Specifically, when $\lambda<0$, if $\Xi$ vanishes in finite time, the
transformed multiplier vanishes, whereas the original multiplier $\Phi=\Xi^{\lambda/\eta}$ diverges.

\paragraph{A ``preference budget'' interpretation of (V3)}
Recall $m=\eta/\lambda$ and set
\begin{equation}
\mathfrak a_t=\Psi_t^\eta\mathcal E_t^{-1},\qquad\text{and}\qquad
B_t=\varphi^m-\int_0^t \mathfrak a_s\,ds,\qquad t\geq0.
\label{eq:EZ_budget_def}
\end{equation}
Then, $\mathfrak a>0$, and Proposition~\ref{prop:4-2} gives
\begin{equation}
\mathcal E_t^{-1}\Phi_t^m=B_t,\qquad t\geq0,
\qquad
dB_t=-\mathfrak a_t\,dt,\qquad
\check c_t^\star=\frac{\mathfrak a_t}{B_t}
=-\frac{d}{dt}\log B_t,\qquad \text{for }dt\otimes d\mathbb P\text{-a.e. }(t,\omega).
\label{eq:EZ_budget_identity}
\end{equation}
Process $B$ is decreasing while
$\mathfrak a$ is the instantaneous rate at which it decreases. This identity
supports a ``preference budget'' interpretation: $B_t$ represents the part of the
initial preference budget $\varphi ^{m}$ that has not yet been consumed by the
aggregator. Condition~(V3) is then exactly the pathwise solvency constraint
\begin{equation}
\int_0^t \mathfrak a_s\,ds<\varphi^m,
\quad\text{for each }t\geq0.
\label{eq:EZ_budget_V3}
\end{equation}

Let
\begin{equation*}
C_\infty=\int_0^\infty \mathfrak a_s\,ds,
\qquad\text{and}\qquad
B_\infty=\varphi^m-C_\infty.
\end{equation*}
There are three possibilities.

\begin{itemize}
\item If $C_\infty>\varphi^m$, then $B$ reaches zero at a finite time. At
that time $\Phi$ exits from its admissible domain and
$\check c^\star$ explodes, and, hence, the forward system is not globally viable. This, however, is not possible under condition (V3).

\item If $C_\infty=\varphi^m$, then $B_t>0$, $t\geq0$ and
$B_t\downarrow0$ as $t\to\infty$. In this critical case,
\begin{equation*}
B_t=\int_t^\infty \mathfrak a_s\,ds,\qquad\text{and}\qquad
\int_0^\infty\check c_t^\star\,dt=\infty.
\end{equation*}
The initial budget is completely ``used'' at infinity.

\item If $C_\infty<\varphi^m$, then
$B_\infty>0$ and a positive recursive preference budget is never used. In
this case,
\begin{equation*}
\int_0^\infty\check c_t^\star\,dt
=\log\frac{\varphi^m}{B_\infty}<\infty.
\end{equation*}
The residual amount $B_\infty$ is a forward preference component which is not
generated by future aggregator intensity.
\end{itemize}

The above trichotomy also makes the role of the initial multiplier $\varphi $
precise. Since $m=\eta/\lambda$ has the same sign as $\lambda$, the solvency inequality
$C_\infty\leq\varphi^m$ changes direction when it is expressed in terms of
$\varphi$. If $\lambda>0$, a larger $\varphi$ provides a larger budget, while if
$\lambda<0$, a larger $\varphi$ provides a smaller budget. In particular,
when $\gamma>1$ and $\eta>1$, then $\lambda<0$ and
\begin{equation*}
\varphi^{\eta/\lambda}
=\varphi^{-|\eta/\lambda|}.
\end{equation*}
A larger multiplier makes $U(0,x)=\varphi u_\gamma(x)$ more negative, and
increases the initial consumption-to-wealth ratio
\begin{equation*}
\check c_0^\star
=\Psi_0^\eta\varphi^{-\eta/\lambda},
\end{equation*}
and reduces the amount of future preference intensity that the system can
sustain. In this regime, $\varphi^{\eta/\lambda}$ may be interpreted as an
initial ``preference budget''.

\paragraph{Finite-time viability versus transversality}
Condition~(V3) is sufficient for the forward random field and the feedback controls to be
well defined on every finite horizon. However, it is not, by itself, a classical
transversality condition. To see this, let $\tau_n$ be a
localizing sequence for the criterion process and let
$Z_s^\star=Z_s^{\pi^\star,c^\star}$, $s\geq0$. Along the optimal control pair, the stopped
local martingale property gives, for $0\leq t\leq T$,
\begin{align}
&U(t\wedge\tau_n,X_{t\wedge\tau_n}^\star)\notag\\
&\quad=
\mathbb E_{t\wedge\tau_n}\left[
U(T\wedge\tau_n,X_{T\wedge\tau_n}^\star)
+\int_{t\wedge\tau_n}^{T\wedge\tau_n}
F\bigl(s,c_s^\star,U(s,X_s^\star)\bigr)\,ds
\right].
\label{eq:EZ_projective_stopped}
\end{align}
For every $T>0$, the martingale optimality principle yields a finite horizon
recursive problem with terminal field $U(T,\cdot)$. For an arbitrary
admissible control pair, equality is replaced by an inequality. Removing localization
requires class-$D$ or uniform integrability type conditions. An infinite horizon
interpretation also requires a transversality condition, which for
$\lambda<0$ is the transformed condition used in \citet{Shigeta2026}. In the
Merton model, $B_\infty=0$ selects the stationary solution, whereas
$B_\infty>0$ yields a nonstationary forward solution. The condition
$B_\infty=0$ expresses exhaustion of the initial preference budget but does
not by itself imply transversality, which also depends on $\mathcal E$, the
optimal wealth, as well as the recursive discount.

\subsubsection{Characterization and optimal policies}\label{sec:4-1-3}

We first derive the pointwise properties of the aggregator and the resulting
optimal relative consumption.

\begin{proposition}[Aggregator and optimal consumption feedback]\label{prop:4-3}
The forward Epstein--Zin recursive aggregator has the following properties.

\smallskip

\noindent \textit{(i)} For every fixed $(t,\omega,v)$ with
$v\in\mathcal U_\gamma$, the map $c\mapsto F(t,\omega,c,v)$ is strictly
increasing, strictly concave, and satisfies the Inada conditions. Hence,
Assumption~\ref{asm:2-9} holds, and its Fenchel transform is \eqref{eq:EZ_fenchel}.

\smallskip

\noindent \textit{(ii)} Under the separable form
\eqref{eq:separable_form}, the optimal feedback consumption is proportional to wealth
\begin{equation}
\check{c}_{t}^{\star }=\frac{c^{\star }(t,x)}{x}=\frac{\Psi _{t}^{\eta }}{%
\Phi _{t}^{\, \eta /\lambda }},\qquad t\geq 0,\ x>0.
\label{eq:EZ_relative_consumption}
\end{equation}
\end{proposition}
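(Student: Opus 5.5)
The plan is to verify both parts by direct computation. Part (i) uses the explicit form \eqref{eq:EZ_aggregator} of $f$ together with $F=\Psi_t f$. Part (ii) substitutes the separable form \eqref{eq:separable_form} into the candidate feedback $c^\star(t,x)=F_c^{-1}(t,U_x(t,x),U(t,x))$ displayed before \eqref{eq:SPDE-EZ}. No earlier deep result is needed beyond the general discussion following Assumption~\ref{asm:2-9}: there, strict monotonicity, strict concavity and the Inada conditions in $c$ give a unique interior maximizer $F_c^{-1}$ and a finite Fenchel transform.

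For (i), fix $(t,\omega,v)$ with $v\in\mathcal U_\gamma$ and write $K=((1-\gamma)v)^{1-1/\lambda}>0$, which is well defined since $(1-\gamma)v>0$. Differentiating gives
\begin{equation*}
F_c(t,c,v)=\Psi_t K c^{-1/\eta}>0,\qquad F_{cc}(t,c,v)=-\tfrac{1}{\eta}\Psi_t K c^{-1/\eta-1}<0.
\end{equation*}
Since $\Psi_t>0$ and $\eta>0$, the map $c\mapsto F$ is strictly increasing and strictly concave, and $c^{-1/\eta}$ tends to $\infty$ as $c\downarrow0$ and to $0$ as $c\uparrow\infty$, which gives the Inada conditions. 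Joint continuity of $F_c$ and $F_v$ in $(c,v)$ on $(0,\infty)\times\mathcal U_\gamma$ is clear from the power form, so Assumption~\ref{asm:2-9} holds. The first-order condition $F_c=d$ gives $c=(\Psi_tK/d)^\eta$. At this point $\Psi_tKc^{1-1/\eta}=dc$, and $(1-\gamma)/(1-1/\eta)=\lambda$, so
\begin{equation*}
F-dc=dc\Bigl(\tfrac{1}{1-1/\eta}-1\Bigr)-\lambda\Psi_t v=\tfrac{dc}{\eta-1}-\lambda\Psi_tv .
\end{equation*}
Substituting $dc=d^{1-\eta}\Psi_t^\eta K^\eta$ and $K^\eta=((1-\gamma)v)^{\eta(1-1/\lambda)}$ yields exactly \eqref{eq:EZ_fenchel}. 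The same computation confirms the stated feedback $c^\star(t,x)=((1-\gamma)U)^{(1-\eta\gamma)/(1-\gamma)}(\Psi_t/U_x)^\eta$, using $\eta(1-1/\lambda)=(1-\eta\gamma)/(1-\gamma)$.

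For (ii), insert $U=\Phi_t x^{1-\gamma}/(1-\gamma)$, so that $U_x=\Phi_t x^{-\gamma}$ and $(1-\gamma)U=\Phi_tx^{1-\gamma}>0$. Here $\Phi_t>0$ by (V1)--(V3) and Proposition~\ref{prop:4-2}, so $U(t,x)\in\mathcal U_\gamma$ and the formula applies. Collecting powers, the exponent of $x$ is $(1-\eta\gamma)+\gamma\eta=1$. The exponent of $\Phi_t$ is
\begin{equation*}
\frac{1-\eta\gamma}{1-\gamma}-\eta=\frac{1-\eta}{1-\gamma}=-\frac{\eta}{\lambda},
\end{equation*}
using the identity $\eta/\lambda=(\eta-1)/(1-\gamma)$ recorded after \eqref{lambda-Epstein}. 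This gives \eqref{eq:EZ_relative_consumption}.

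The only point requiring care, and so the main obstacle, is this exponent bookkeeping with the identities relating $\gamma$, $\eta$ and $\lambda$, together with checking that all fractional powers are taken of positive quantities. Positivity is guaranteed by $(1-\gamma)v>0$ on $\mathcal U_\gamma$ and by $\Phi>0$.
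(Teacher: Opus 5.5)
Your proof is correct and follows the same route as the paper: compute $F_c$ and $F_{cc}$ from the power form, solve $F_c=d$ for the maximizer to obtain \eqref{eq:EZ_fenchel}, then substitute $U_x=\Phi_tx^{-\gamma}$ and $(1-\gamma)U=\Phi_tx^{1-\gamma}$ and collect exponents (you use $\eta/\lambda=(\eta-1)/(1-\gamma)$ where the paper uses $(1-\gamma)(1-1/\lambda)+\gamma=1/\eta$, which is purely cosmetic). One small correction: strict positivity of $\Phi$ is already part of the separable-form hypothesis \eqref{eq:separable_form}, so you should not invoke (V1)--(V3) and Proposition~\ref{prop:4-2} for it, since the proposition does not assume them.
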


\begin{proof}
Fix $t\geq 0$, $\omega \in \Omega $, and
$v\in \mathcal{U}_{\gamma }$. Since $(1-\gamma )v>0$, differentiation gives
\begin{equation*}
F_{c}(t,c,v)
=\Psi _{t}c^{-1/\eta }\bigl((1-\gamma )v\bigr)^{1-1/\lambda }
\end{equation*}%
and
\begin{equation*}
F_{cc}(t,c,v)
=-\frac{\Psi _{t}}{\eta }c^{-1/\eta -1}
\bigl((1-\gamma )v\bigr)^{1-1/\lambda }.
\end{equation*}%
The first derivative is strictly positive while the second one is
strictly negative. Moreover,
\begin{equation*}
\lim_{c\downarrow 0}F_{c}(t,c,v)=\infty ,\qquad\text{and}\qquad
\lim_{c\uparrow \infty }F_{c}(t,c,v)=0,\qquad t\geq0.
\end{equation*}%
The formulas in Appendix~\ref{app:C} show that $F_c$ and $F_u$ are continuous on
$(0,\infty)\times\mathcal U_\gamma$. These properties prove all assertions
concerning the map $c\mapsto F(t,\omega ,c,v)$ and verify the remaining
regularity in Assumption~\ref{asm:2-9}.

Let $d>0$. The unique maximizer in the definition of
$\widetilde{F}(t,d,v)$ solves $F_{c}(t,c,v)=d$. Therefore,
\begin{equation}
c=\left(\frac{\Psi _{t}
\bigl((1-\gamma )v\bigr)^{1-1/\lambda }}{d}\right)^{\eta },\qquad t\geq0.
\label{eq:proof_EZ_consumption_maximizer}
\end{equation}
Substituting \eqref{eq:proof_EZ_consumption_maximizer} into
$F(t,c,v)-dc$ gives
\begin{equation*}
\widetilde{F}(t,d,v)
=\frac{d^{1-\eta }}{\eta -1}
\bigl((1-\gamma )v\bigr)^{\eta (1-1/\lambda )}\Psi _{t}^{\eta }
-\lambda \Psi _{t}v,\qquad t\geq0,
\end{equation*}%
and \eqref{eq:EZ_fenchel} follows.

Next, we evaluate the maximizer at
\begin{equation*}
d=U_{x}(t,x)=\Phi _{t}x^{-\gamma },\qquad \text{and}\qquad
(1-\gamma )U(t,x)=\Phi _{t}x^{1-\gamma },\qquad t\geq0,\ x>0.
\end{equation*}%
Equation \eqref{eq:proof_EZ_consumption_maximizer} yields
\begin{equation*}
c^{\star }(t,x)
=\left(
\frac{\Psi _{t}
(\Phi _{t}x^{1-\gamma })^{1-1/\lambda }}
{\Phi _{t}x^{-\gamma }}
\right)^{\eta }=\Psi _{t}^{\eta }\Phi _{t}^{-\eta /\lambda }
x^{\eta ((1-\gamma )(1-1/\lambda )+\gamma )},\qquad t\geq0,\ x>0.
\end{equation*}%
The definition of $\lambda $ implies
\begin{equation*}
(1-\gamma )(1-1/\lambda )+\gamma =\frac{1}{\eta }.
\end{equation*}%
Consequently,
\begin{equation*}
c^{\star }(t,x)=x\frac{\Psi _{t}^{\eta }}
{\Phi _{t}^{\eta /\lambda }},\qquad t\geq0,\ x>0,
\end{equation*}%
which proves \eqref{eq:EZ_relative_consumption}.
\end{proof}

\begin{theorem}[Separable characterization]\label{thm:4-4}
Let
$(U,F)$ have the separable form \eqref{eq:separable_form}, and $\Phi$
have dynamics as in \eqref{eq:SDE_Phi_general}.

\smallskip \noindent \textit{(i)} The field $U(t,x)=\Phi_tu(x)$ solves the
forward Epstein--Zin recursive SPDE if and only if the drift of $\Phi$ is
\eqref{eq:EZ_bphi}. In this case, the candidate feedback control processes are
\begin{equation}
\pi _{t}^{\star }=\frac{1}{\gamma }(\sigma _{t}\sigma _{t}^{\top })^{-1}
\left( \mu _{t}+\sigma _{t}\delta _{t}^{\Phi }\right),\qquad \text{and}\qquad
\check{c}_{t}^{\star }=\frac{\Psi _{t}^{\eta }}{\Phi _{t}^{\eta/\lambda }},\qquad t\geq0.
\label{eq:EZ_optimal_controls}
\end{equation}

\smallskip \noindent \textit{(ii)} If the drift condition in part~(i) and
(V1)--(V3) hold, then $(U,F)$ is a
normalized forward recursive aggregator system on $\mathcal{U}_{\gamma }$,
the feedback control pair $\bigl(\pi ^{\star },\check{c}^{\star }X^{\star }\bigr)$
is admissible and optimal, and the optimal wealth process is
\begin{equation}
X_{t}^{\star }=x\, \exp \! \left( \int_{0}^{t}\! \left( (\pi _{s}^{\star
})^{\top }\sigma _{s}\theta _{s}-\check{c}_{s}^{\star }-\tfrac{1}{2}|(\pi
_{s}^{\star })^{\top }\sigma _{s}|^{2}\right) ds+\int_{0}^{t}(\pi _{s}^{\star
})^{\top }\sigma _{s}\,dW_{s}\right) ,\text{ \  \ }t\geq 0.
\label{eq:EZ_optimal_wealth}
\end{equation}
\end{theorem}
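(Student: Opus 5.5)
Our plan is to substitute the separable ansatz into the normalized SPDE \eqref{eq:SPDE-norm}, which for this aggregator is \eqref{eq:SPDE-EZ}, and then apply the verification Theorem~\ref{thm:2-10}.

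\emph{Part (i).} With $U(t,x)=\Phi_t u(x)$ we have
\[
U_x=\Phi_t x^{-\gamma},\qquad U_{xx}=-\gamma\Phi_t x^{-\gamma-1},\qquad \delta(t,x)=\Phi_t u(x)\delta^\Phi_t,\qquad \delta_x=\Phi_t x^{-\gamma}\delta^\Phi_t,
\]
and the drift of $U$ is $\Phi_t b^\Phi_t u(x)$. We substitute these into each drift term of \eqref{eq:SPDE-EZ} and factor out $\Phi_t x^{1-\gamma}/(1-\gamma)$.

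The Hamiltonian term becomes
\[
\frac{\Phi_t^2x^{-2\gamma}\,|\theta_t+\sigma_t^+\sigma_t\delta^\Phi_t|^2}{-2\gamma\Phi_t x^{-\gamma-1}}=-\frac{1-\gamma}{2\gamma}\,|\theta_t+\sigma_t^+\sigma_t\delta^\Phi_t|^2\,\Phi_t u(x).
\]
For the consumption term we use $(1-\gamma)U=\Phi_t x^{1-\gamma}$ and the exponent identity $(1-\gamma)(1-1/\lambda)+\gamma=1/\eta$ from the proof of Proposition~\ref{prop:4-3}. The Fenchel term then equals
\[
\frac{\Psi_t^\eta}{\eta-1}\,\Phi_t^{1-\eta/\lambda}\,x^{1-\gamma}.
\]
To check the power of $\Phi$: it is $(1-\eta)+\eta(1-1/\lambda)=1-\eta/\lambda$. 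The power of $x$ is $-\gamma(1-\eta)+(1-\gamma)\eta(1-1/\lambda)=1-\gamma$, again by the same identity. Writing this as $\frac{1-\gamma}{1-\eta}\Psi_t^\eta\Phi_t^{-\eta/\lambda}\cdot\Phi_t u(x)$ and adding the term $\lambda\Psi_t\Phi_t u(x)$, every drift term is a multiple of $u(x)$. Since $u(x)\neq0$ and the coefficients do not depend on $x$, matching drifts is equivalent to \eqref{eq:EZ_bphi}. This gives the ``if and only if''. The feedback formulas follow from \eqref{eq:feedback}:
\[
-\frac{U_x}{xU_{xx}}=\frac1\gamma,\qquad \frac{\delta_x}{U_x}=\delta^\Phi_t,\qquad (\sigma_t^\top)^+=(\sigma_t\sigma_t^\top)^{-1}\sigma_t,\qquad \sigma_t\theta_t=\mu_t,
\]
which yield $\pi^\star$. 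The consumption $\check c^\star$ is Proposition~\ref{prop:4-3}(ii).

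\emph{Part (ii).} Under (V1)--(V3), Proposition~\ref{prop:4-2} shows that $\Phi$ is a well-defined, strictly positive It\^o process on $[0,\infty)$. So $U$ takes values in $\mathcal U_\gamma$ and has the required regularity: it is smooth in $x$ with continuous coefficients, hence $\mathcal K^{2,\varepsilon}_{\rm loc}$. Strict monotonicity and concavity in $x$, and the nondegeneracy condition \eqref{eq:nondegeneracy}, are immediate. Assumption~\ref{asm:2-9} holds by Proposition~\ref{prop:4-3}(i).

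Because both feedbacks are proportional to $x$, the feedback wealth equation is linear, $dX=X(\pi^{\star\top}\sigma_t(dW+\theta\,dt)-\check c^\star dt)$. Its unique strong solution is the stochastic exponential \eqref{eq:EZ_optimal_wealth}, which is strictly positive. Admissibility then reduces to showing that, for every $T>0$, almost surely
\[
\int_0^T\bigl(\check c^\star_tX^\star_t+|\sigma_t^\top\pi^\star_t|^2\bigr)\,dt<\infty .
\]
We control $|\sigma_t^\top\pi^\star_t|\le\gamma^{-1}(|\theta_t|+|\delta^\Phi_t|)$ using \eqref{eq:theta} and (V2). For consumption, $\check c^\star=\Psi^\eta\Phi^{-\eta/\lambda}=\Psi^\eta\Xi^{-1}$. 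Since $\Xi$ is continuous and strictly positive, it is pathwise bounded below on $[0,T]$, and (V1) gives integrability; $X^\star$ is continuous and hence bounded on $[0,T]$. Theorem~\ref{thm:2-10} then yields consistency and optimality.

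The \textbf{main obstacle} is the exponent bookkeeping in the Fenchel term, which collapses to $\Phi^{-\eta/\lambda}$ only through the $\lambda$-identities. Also delicate is the pathwise lower bound on $\Xi$, which is exactly where (V3) enters.
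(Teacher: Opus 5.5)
Your proposal is correct. Part (i) is the paper's Steps 1--2. The only slip is verbal: $\frac{1-\gamma}{1-\eta}\Psi_t^\eta\Phi_t^{-\eta/\lambda}\Phi_t u(x)$ equals $-\frac{\Psi_t^\eta}{\eta-1}\Phi_t^{1-\eta/\lambda}x^{1-\gamma}$. That is the contribution of $-\widetilde F$ to the drift, not the Fenchel term itself.

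Part (ii) takes a genuinely different route. The paper first uses Corollary~\ref{cor:4-8} to verify the model-input bounds of Assumption~\ref{asm:3-13}. It then invokes Theorem~\ref{thm:3-14}, where the optimal wealth is built as $I\bigl(t,D^\star_t(U_x(0,x))\bigr)$ from the deflated dual SDE and uniqueness comes from the flow identity. Only afterwards does it solve the wealth equation explicitly. It bounds consumption through the budget identity $\int_0^T\check c^\star_t\,dt=\log(\varphi^{\eta/\lambda}/B_T)$.

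You instead use that both feedbacks are linear in $x$. The closed-loop wealth equation is then a linear SDE whose unique, strictly positive solution is the stochastic exponential \eqref{eq:EZ_optimal_wealth}. Admissibility follows from three facts:
\begin{itemize}
\item $|\sigma_t^\top\pi^\star_t|\le\gamma^{-1}(|\theta_t|+|\delta^\Phi_t|)$ together with \eqref{eq:theta} and (V2);
\item the continuous process $\Xi=\Phi^{\eta/\lambda}$, which is strictly positive under (V3), has a positive pathwise lower bound on $[0,T]$;
\item (V1) then makes $\int_0^T\check c^\star_t\,dt$ finite.
\end{itemize}
With these, Theorem~\ref{thm:2-10} applies directly.

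Your route is more elementary and self-contained. It never needs Assumption~\ref{asm:3-2} (third derivatives, regularity of the inverse marginal field $I$) or the dual-SDE machinery. The paper's route buys the extra primal--dual information of Theorem~\ref{thm:3-14}: the flow identity $U_x(t,X^\star_t)=D^\star_t(U_x(0,x))$, $Y^\star\in\mathcal Y$, and dual consistency. These are used later in Corollary~\ref{cor:4-10} and Section~\ref{sec:4-3}, and they show that the Epstein--Zin class fits the general bounds of Section~\ref{sec:3}.
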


\begin{proof}
We prove the two assertions in three
steps.

\noindent \textit{Step 1. Reduction of the Bellman SPDE to the equation for
$\Phi $.} For every $t\geq 0$ and $x>0$,
\begin{equation*}
U(t,x)=\Phi _{t}\frac{x^{1-\gamma }}{1-\gamma }.
\end{equation*}%
Differentiation with respect to $x$ gives
\begin{equation}
U_{x}(t,x)=\Phi _{t}x^{-\gamma },\qquad\text{and}\qquad
U_{xx}(t,x)=-\gamma \Phi _{t}x^{-\gamma -1},\qquad t\geq0,\ x>0.
\label{eq:proof_homothetic_derivatives}
\end{equation}
For a fixed $x>0$, the identity
$dU(t,x)=u(x)d\Phi _{t}$ and the decomposition
\eqref{eq:SDE_Phi_general} give
\begin{equation*}
dU(t,x)=U(t,x)b_{t}^{\Phi }\,dt
+U(t,x)(\delta _{t}^{\Phi })^{\top }dW_{t},\qquad t\geq0.
\end{equation*}%
Therefore,
\begin{equation}
b(t,x)=U(t,x)b_{t}^{\Phi },\qquad
\delta (t,x)=U(t,x)\delta _{t}^{\Phi },\qquad
\delta _{x}(t,x)=U_{x}(t,x)\delta _{t}^{\Phi }, \quad t \geq 0, ~ x>0.
\label{eq:proof_homothetic_characteristics}
\end{equation}

We compute the first nonlinear term in SPDE \eqref{eq:SPDE-EZ}. Equations
\eqref{eq:proof_homothetic_derivatives} and
\eqref{eq:proof_homothetic_characteristics} imply
\begin{equation*}
U_{x}(t,x)\theta _{t}
+\sigma _{t}^{+}\sigma _{t}\delta _{x}(t,x)
=\Phi _{t}x^{-\gamma }
\bigl(\theta _{t}+\sigma _{t}^{+}\sigma _{t}\delta _{t}^{\Phi }\bigr), \quad t \geq 0, ~ x>0.
\end{equation*}%
Therefore,
\begin{align}
&\frac{|U_{x}(t,x)\theta _{t}
+\sigma _{t}^{+}\sigma _{t}\delta _{x}(t,x)|^{2}}
{2U_{xx}(t,x)}\nonumber\\
&\quad=-\frac{\Phi _{t}x^{1-\gamma }}{2\gamma }
|\theta _{t}+\sigma _{t}^{+}\sigma _{t}\delta _{t}^{\Phi }|^{2}\nonumber\\
&\quad=-\frac{1-\gamma }{2\gamma }
|\theta _{t}+\sigma _{t}^{+}\sigma _{t}\delta _{t}^{\Phi }|^{2}
U(t,x), \quad t \geq 0, ~ x>0,
\label{eq:proof_homothetic_portfolio_term}
\end{align}
where in the last equality we used
$\Phi _{t}x^{1-\gamma }=(1-\gamma )U(t,x)$.

For the second nonlinear term we work as follows. From
\eqref{eq:proof_homothetic_derivatives},
\begin{align*}
&U_{x}(t,x)^{1-\eta }
\bigl((1-\gamma )U(t,x)\bigr)^{\eta (1-1/\lambda )}\\
&\quad=(\Phi _{t}x^{-\gamma })^{1-\eta }
(\Phi _{t}x^{1-\gamma })^{\eta (1-1/\lambda )}\\
&\quad=\Phi _{t}^{1-\eta /\lambda }x^{1-\gamma }\\
&\quad=(1-\gamma )\Phi _{t}^{-\eta /\lambda }U(t,x), \quad t \geq 0, ~ x>0.
\end{align*}%
The third equality follows from the definition of $\lambda $, which gives
\begin{equation*}
-\gamma (1-\eta )+\eta (1-\gamma )(1-1/\lambda )=1-\gamma .
\end{equation*}%
It, then, follows that
\begin{align}
&-\frac{\Psi _{t}^{\eta }}{\eta -1}
U_{x}(t,x)^{1-\eta }
\bigl((1-\gamma )U(t,x)\bigr)^{\eta (1-1/\lambda )}\nonumber\\
&\quad=\frac{1-\gamma }{1-\eta }
\frac{\Psi _{t}^{\eta }}{\Phi _{t}^{\eta /\lambda }}U(t,x), \quad t \geq 0, ~ x>0.
\label{eq:proof_homothetic_consumption_term}
\end{align}

\noindent Substituting \eqref{eq:proof_homothetic_portfolio_term} and
\eqref{eq:proof_homothetic_consumption_term} into
\eqref{eq:SPDE-EZ}, and comparing the resulting drift with
$b(t,x)=U(t,x)b_{t}^{\Phi }$, gives
\begin{equation*}
U(t,x)b_{t}^{\Phi }
=U(t,x)\left(
\frac{1-\gamma }{1-\eta }
\frac{\Psi _{t}^{\eta }}{\Phi _{t}^{\eta /\lambda }}
-\frac{1-\gamma }{2\gamma }
|\theta _{t}+\sigma _{t}^{+}\sigma _{t}\delta _{t}^{\Phi }|^{2}
+\lambda \Psi _{t}\right),\qquad t\geq0,\ x>0.
\end{equation*}%
Using that $U(t,x)\neq 0$, dividing by $U(t,x)$ gives
\eqref{eq:EZ_bphi}. Every computation above is reversible and we easily deduce the
equivalence in part (i).

\noindent \textit{Step 2. Computation of the optimal feedback controls.}
The two ratios that enter the candidate optimal portfolio feedback control are
\begin{equation*}
-\frac{U_{x}(t,x)}{xU_{xx}(t,x)}=\frac{1}{\gamma }\qquad \text{and}\qquad
\frac{\delta _{x}(t,x)}{U_{x}(t,x)}=\delta _{t}^{\Phi },\qquad t\geq0,\ x>0,
\end{equation*}%
where we used \eqref{eq:proof_homothetic_derivatives} and
\eqref{eq:proof_homothetic_characteristics}. From
$(\sigma _{t}^{\top })^{+}=(\sigma _{t}\sigma _{t}^{\top })^{-1}\sigma _{t}$
and \eqref{eq:feedback} we obtain
\begin{equation*}
\pi ^{\star }(t,x)
=\frac{1}{\gamma }
(\sigma _{t}\sigma _{t}^{\top })^{-1}\sigma _{t}
(\theta _{t}+\delta _{t}^{\Phi })=\frac{1}{\gamma }
(\sigma _{t}\sigma _{t}^{\top })^{-1}
(\mu _{t}+\sigma _{t}\delta _{t}^{\Phi }),\qquad t\geq0,\ x>0.
\end{equation*}%
The second equality follows from the fact that $\sigma _{t}\theta _{t}=\mu _{t}$, $t\geq0$.
The right-hand side is independent of $x$, and it is precisely the portfolio stated in
\eqref{eq:EZ_optimal_controls}. Proposition~\ref{prop:4-3} gives
\begin{equation*}
c^{\star }(t,x)=x\frac{\Psi _{t}^{\eta }}
{\Phi _{t}^{\eta /\lambda }},\qquad t\geq0,\ x>0,
\end{equation*}%
and, hence,
\begin{equation*}
\check{c}_{t}^{\star }=\frac{c^{\star }(t,x)}{x}
=\frac{\Psi _{t}^{\eta }}{\Phi _{t}^{\eta /\lambda }},\qquad t\geq0,\ x>0,
\end{equation*}%
which proves the second formula in \eqref{eq:EZ_optimal_controls}.

\noindent \textit{Step 3. Consistency, admissibility, and optimal wealth.}
Proposition~\ref{prop:4-3} proves Assumption~\ref{asm:2-9}. Formulas
\eqref{eq:proof_homothetic_derivatives} and
\eqref{eq:proof_homothetic_characteristics}, together with the inverse marginal
\begin{equation*}
I(t,y)=\left(\frac{\Phi _{t}}{y}\right)^{1/\gamma },\qquad y>0,
\end{equation*}%
yield the required spatial regularity and the Inada conditions. Since $\Phi$
is a strictly positive It\^{o} process, the homothetic forms of $U$ and $I$
inherit the random-field regularity of Assumption~\ref{asm:3-2}. Conditions (V1) and
(V2) provide the required local integrability of their characteristics. Under
(V1)--(V3), Corollary~\ref{cor:4-8} verifies every bound in Assumption~\ref{asm:3-13}, including
the integrability condition for which $K^{4}=1/\gamma $. Therefore,
Theorem~\ref{thm:3-14} implies that $(U,F)$ is a normalized forward recursive
aggregator system and that the feedback control pair is admissible and optimal.

It remains to solve the optimal wealth equation. Since
$c_{t}^{\star }=X_{t}^{\star }\check{c}_{t}^{\star }$, equation
\eqref{eq:wealth} becomes
\begin{equation*}
\frac{dX_{t}^{\star }}{X_{t}^{\star }}
=\left((\pi _{t}^{\star })^{\top }\sigma _{t}\theta _{t}
-\check{c}_{t}^{\star }\right)dt
+(\pi _{t}^{\star })^{\top }\sigma _{t}\,dW_{t},\qquad t\geq0.
\end{equation*}%
Applying It\^{o}'s formula gives
\begin{equation*}
d\log X_{t}^{\star }
=\left((\pi _{t}^{\star })^{\top }\sigma _{t}\theta _{t}
-\check{c}_{t}^{\star }
-\frac{1}{2}|(\pi _{t}^{\star })^{\top }\sigma _{t}|^{2}\right)dt
+(\pi _{t}^{\star })^{\top }\sigma _{t}\,dW_{t},\qquad t\geq0.
\end{equation*}%
and \eqref{eq:EZ_optimal_wealth} follows.
\end{proof}

\medskip \noindent Combining Theorem~\ref{thm:4-4} with the explicit form of $\Phi $
obtained in Proposition~\ref{prop:4-2} yields the optimal consumption in closed form.

\begin{corollary}\label{cor:4-5}
Assume that the drift condition %
\eqref{eq:EZ_bphi} and (V1)--(V3) hold. Then, the optimal relative
consumption rate is given by
\begin{equation*}
\check{c}_{t}^{\star }=\Psi _{t}^{\eta }\, \mathcal{E}_{t}^{-1}\! \left(
\varphi ^{\eta /\lambda }-\int_{0}^{t}\Psi _{u}^{\eta }\, \mathcal{E}%
_{u}^{-1}\,du\right) ^{\!-1},\text{ \  \ }t\geq 0,
\end{equation*}%
and the optimal consumption process is $c_{t}^{\star }=X_{t}^{\star }\,%
\check{c}_{t}^{\star }.$ In particular, (V3) is the one that guarantees that $\check{c}^{\star }$ is
finite and positive.
\end{corollary}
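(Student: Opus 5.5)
The plan is to combine the feedback formula of Proposition~\ref{prop:4-3}(ii), equivalently the second identity in \eqref{eq:EZ_optimal_controls} of Theorem~\ref{thm:4-4}, with the explicit representation of $\Phi$ in Proposition~\ref{prop:4-2}(ii). First, under the drift condition \eqref{eq:EZ_bphi}, Theorem~\ref{thm:4-4}(i) gives $\check c_t^\star=\Psi_t^\eta\Phi_t^{-\eta/\lambda}$ for $t\geq0$. Second, Proposition~\ref{prop:4-2} gives $\Phi_t^{\eta/\lambda}=\Xi_t=\mathcal E_t\bigl(\varphi^{\eta/\lambda}-\int_0^t\Psi_u^\eta\mathcal E_u^{-1}du\bigr)$ for $t<\tau$.

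To pass from $t<\tau$ to all $t\geq0$, I will use (V3). It states that $\mathbb P$-a.s. the bracket is strictly positive for every $t\geq0$. Hence $\tau=\infty$ almost surely, and the representation \eqref{eq:EZ_Phi_explicit} holds globally. Here (V1) guarantees that the integral is finite on compacts, and (V2) together with \eqref{eq:theta} guarantees that $\mathcal E$ is finite and strictly positive.

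Substituting gives
\begin{equation*}
\check c_t^\star=\Psi_t^\eta\,\mathcal E_t^{-1}\Bigl(\varphi^{\eta/\lambda}-\int_0^t\Psi_u^\eta\mathcal E_u^{-1}du\Bigr)^{-1}.
\end{equation*}
The identity $c_t^\star=X_t^\star\check c_t^\star$ then follows from Theorem~\ref{thm:4-4}(ii), where admissibility and optimality are already established under (V1)--(V3).

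For the final remark, positivity and finiteness follow because $\Psi>0$, $0<\mathcal E<\infty$, and the bracket is strictly positive and finite exactly when (V3) holds. If (V3) failed, the bracket would reach $0$ at $\tau<\infty$, and $\check c^\star$ would blow up there, consistent with the budget interpretation \eqref{eq:EZ_budget_identity}.

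The only delicate point is the step from a statement valid on the stochastic interval $[0,\tau)$ to a global one, including the identification of $\Phi$ with $\Xi^{\lambda/\eta}$. I expect this to be immediate once (V3) is read as $\tau=\infty$ a.s. I would state that step explicitly.
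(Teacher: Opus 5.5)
Your proposal is correct and follows the paper's own route. The paper gives no separate proof; it simply combines the feedback $\check c_t^\star=\Psi_t^\eta\Phi_t^{-\eta/\lambda}$ from Theorem~\ref{thm:4-4}(i) (equivalently Proposition~\ref{prop:4-3}(ii)) with the explicit representation in Proposition~\ref{prop:4-2}(ii), and (V3) is exactly the statement that $\tau=\infty$ almost surely.

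One point worth stating explicitly is that the proof of Theorem~\ref{thm:4-4}(ii) goes through Corollary~\ref{cor:4-8}, which itself uses the identity $\check c_t^\star=-\frac{d}{dt}\log B_t$ from this corollary. You therefore avoid any circularity by deriving the formula from part (i) and Proposition~\ref{prop:4-2} alone, as you already do. Part (ii) should be invoked only to identify $c^\star=X^\star\check c^\star$ as the optimal consumption.
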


\begin{remark}\label{rem:4-6}
The optimal portfolio $\pi ^{\star }$
in \eqref{eq:EZ_optimal_controls} has the same structure as the one in the classical
homothetic setting, but depends on the volatility $\delta ^{\Phi }$. We remind
the reader that the latter is an exogenous modeling input rather than a
quantity determined by the solution of the problem in a fixed
horizon. The optimal relative consumption $\check{c}^{\star }$ depends
nonlinearly on both $\Phi $ and $\Psi .$
\end{remark}

\begin{remark}\label{rem:4-7}
Theorem~\ref{thm:4-4} reduces the forward
recursive problem to the one-dimensional equation \eqref{eq:EZ_SDE_Phi} for $%
\Phi $ and to the viability conditions (V1)--(V3). Indeed, the strict
positivity of $\Phi $ ensures that $U(t,\cdot )\in \mathcal{U}_{\gamma },$
while (V1) and (V2) yield the integrability of the model inputs and, through
Corollary~\ref{cor:4-8} below, the admissibility of the feedback control pair. These are the
properties established in Section~\ref{sec:3}, for general setting.
\end{remark}

\medskip \noindent The final step is to verify the conditions of Section~\ref{sec:3}
directly from the model processes $(\Psi ,\delta ^{\Phi })$. No integrability
beyond (V1)--(V3) is required.

\begin{corollary}\label{cor:4-8}
Under the separable
representation \eqref{eq:separable_form} with the drift condition
\eqref{eq:EZ_bphi} and (V1)--(V3), Assumption~\ref{asm:3-13} holds and
Theorem~\ref{thm:3-14} applies. In particular, the feedback control pair in
\eqref{eq:EZ_optimal_controls} is admissible.
\end{corollary}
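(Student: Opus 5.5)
The plan is to check the bounds (A1)--(A3) of Assumption~\ref{asm:3-13} one at a time. In the separable form \eqref{eq:separable_form}, every quantity appearing in them is explicit, so I will pick the processes $K^0,\dots,K^4$ directly from $(\theta,\Psi,\delta^\Phi,\Phi)$. The integrability requirements will then reduce to (V1)--(V3) and \eqref{eq:theta}. The remaining hypotheses of Theorem~\ref{thm:3-14} are taken as already verified in Step~3 of the proof of Theorem~\ref{thm:4-4}: Assumptions~\ref{asm:2-9} and~\ref{asm:3-2}, nondegeneracy, and the SPDE \eqref{eq:SPDE-norm}. Once Assumption~\ref{asm:3-13} holds, Theorem~\ref{thm:3-14}(2) gives admissibility of the feedback pair.

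\emph{Bounds (A1) and (A2).} From \eqref{eq:proof_homothetic_characteristics} we have $\delta_x=U_x\delta^\Phi_t$. Differentiating once more in $x$ gives $\delta_{xx}=U_{xx}\delta^\Phi_t$. Hence (A1) holds with $K^1_t=K^2_t=|\delta^\Phi_t|$, and both are square integrable on $[0,T]$ by (V2). From \eqref{eq:proof_homothetic_derivatives}, $|U_x/(xU_{xx})|=1/\gamma$, so I take $K^4=1/\gamma$. Then $\int_0^T(K^4)^2(|\theta_t|+K^1_t)^2dt$ is finite by \eqref{eq:theta} and (V2). By Proposition~\ref{prop:4-3}, $c^\star(t,x)=x\,\check c^\star_t$, so I take $K^3_t=\check c^\star_t=\Psi_t^\eta\Phi_t^{-\eta/\lambda}=\Psi_t^\eta\,\Xi_t^{-1}$.

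\emph{Bound (A3).} Next I compute $\alpha^\star(t,x)=-F_u(t,c^\star(t,x),U(t,x))$ explicitly. From \eqref{eq:EZ_aggregator},
\[
F_u=\frac{\Psi_t(1-\gamma)}{1-1/\eta}\Bigl((1-\tfrac1\lambda)c^{1-1/\eta}((1-\gamma)v)^{-1/\lambda}-1\Bigr).
\]
I substitute $c=x\check c^\star_t$ and $(1-\gamma)v=\Phi_tx^{1-\gamma}$, and use $(1-\gamma)/\lambda=1-1/\eta$. The powers of $x$ then cancel, and $\check c^{\star\,1-1/\eta}\Phi^{-1/\lambda}=\Psi^{\eta-1}\Phi^{-\eta/\lambda}$. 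This yields
\[
\alpha^\star(t,x)=(\lambda-1)\Psi_t^\eta\Phi_t^{-\eta/\lambda}-\lambda\Psi_t ,
\]
which does not depend on $x$. So $x\mapsto\alpha^\star(t,x)$ is trivially $C^1$ with $\partial_x\alpha^\star=0$. Therefore (A3) holds with $K^0_t=|\lambda|\Psi_t+|\lambda-1|\check c^\star_t$.

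\emph{Main obstacle: local integrability of $K^3$ and $K^0$.} It remains to show $\int_0^T(K^0_t+K^3_t)\,dt<\infty$. The $\Psi$ part is handled by (V2), so everything reduces to $\int_0^T\Psi_t^\eta\,\Xi_t^{-1}dt<\infty$. For this I use Proposition~\ref{prop:4-2}, which gives $\Xi_t=\mathcal E_t B_t$ with $B$ as in \eqref{eq:EZ_budget_def}. First, $\mathcal E$ is continuous and strictly positive by (V2) and \eqref{eq:theta}. Second, $B$ is continuous and strictly positive for every $t$ by (V1) and (V3). Hence $t\mapsto\Xi_t$ is continuous and strictly positive on the compact interval $[0,T]$, so pathwise $\inf_{[0,T]}\Xi>0$. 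This gives $\int_0^T\Psi^\eta_t\Xi_t^{-1}dt\le(\inf_{[0,T]}\Xi)^{-1}\int_0^T\Psi^\eta_t\,dt<\infty$ by (V1). This is the only place where (V3) is really needed: without it, $\Xi$ may hit zero and $\check c^\star$ would fail to be integrable. With all bounds verified, Theorem~\ref{thm:3-14} applies and yields the admissibility of the feedback pair \eqref{eq:EZ_optimal_controls}.
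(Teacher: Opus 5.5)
Your proposal is correct and follows the paper's proof. You make the same choices $K^1=K^2=|\delta^\Phi_t|$, $K^4=1/\gamma$, $K^3=\check c^\star_t$, and a bound $K^0\geq|\alpha^\star_t|$ with $\partial_x\alpha^\star\equiv0$; the only variation is that the paper integrates $\check c^\star_t=-\frac{d}{dt}\log B_t$ exactly, obtaining $\int_0^T\check c^\star_t\,dt=\log(\varphi^{\eta/\lambda}/B_T)$, whereas you bound $\Xi=\mathcal E B$ from below on $[0,T]$ and then use (V1).

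There is one small sign slip: the expression you display for $\alpha^\star$ is actually $F_u$, and the correct formula is $\alpha^\star_t=-F_u=\lambda\Psi_t-(\lambda-1)\Psi_t^\eta\Phi_t^{-\eta/\lambda}$. This is harmless here, since only $|\alpha^\star|$ and $\partial_x\alpha^\star=0$ enter (A3).
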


\begin{proof}
Using that
\begin{equation*}
U_{x}(t,x)=x^{-\gamma }\Phi _{t},\qquad U_{xx}(t,x)=-\gamma \,x^{-\gamma
-1}\Phi _{t},\qquad \delta _{x}(t,x)=U_{x}(t,x)\, \delta _{t}^{\Phi },\qquad t\geq0,
\end{equation*}%
the bounds in Assumption~\ref{asm:3-13} hold with
\begin{equation*}
K_{t}^{1}=K_{t}^{2}=|\delta _{t}^{\Phi }|,\qquad K_{t}^{3}=\check{c}%
_{t}^{\star }=\frac{\Psi _{t}^{\eta }}{\Phi _{t}^{\, \eta /\lambda }},\qquad
K_{t}^{4}=\frac{1}{\gamma },\qquad K_{t}^{0}=|\alpha _{t}^{\star }|,\qquad t\geq0,
\end{equation*}%
where the discount $\alpha^{\star}$ satisfies
\begin{equation*}
\alpha _{t}^{\star }=-F_{u}(t,c_{t}^{\star },U_{t}^{\star })=\lambda \Psi
_{t}-(\lambda -1)\, \frac{\Psi _{t}^{\eta }}{\Phi _{t}^{\, \eta /\lambda }},
\qquad t\geq0,
\end{equation*}%
We first note that $\partial_x\alpha^\star\equiv0$. Moreover, the optimal
feedback controls are linear in the state and are given by
\begin{equation*}
\sigma^\star(t,x)
=
\frac{x}{\gamma}\sigma_t^+\sigma_t
\bigl(\theta_t+\delta_t^\Phi\bigr),
\qquad
c^\star(t,x)=\check c_t^\star x,
\qquad t\geq0,\ x>0.
\end{equation*}
It remains to check the corresponding integrability conditions. Let $T>0$.
By the standing market condition \eqref{eq:theta} on the market price of
risk and \textrm{(V2)},
\begin{equation*}
\int_0^T
\bigl(
|\theta_t|^2+|\delta_t^\Phi|^2+\Psi_t
\bigr)\,dt
<\infty,
\qquad \mathbb P\text{-a.s.}
\end{equation*}
Hence,
\begin{equation*}
\int_0^T (K_t^4)^2
\bigl(|\theta_t|+K_t^1\bigr)^2\,dt
\leq
\frac{2}{\gamma^2}
\int_0^T
\bigl(
|\theta_t|^2+|\delta_t^\Phi|^2
\bigr)\,dt
<\infty.
\end{equation*}

We now consider the feedback consumption control. To this end, set
\begin{equation*}
B_t
=
\varphi^{\eta/\lambda}
-
\int_0^t
\Psi_u^\eta\mathcal E_u^{-1}\,du,\qquad t\geq0,
\end{equation*}
which is the quantity multiplying $\mathcal E_t$ in the explicit formula
\eqref{eq:EZ_Phi_explicit} for $\Phi $. By
Corollary~\ref{cor:4-5},
\begin{equation*}
\check c_t^\star
=
-\frac{d}{dt}\log B_t,\qquad \text{for }dt\otimes d\mathbb P\text{-a.e. }(t,\omega).
\end{equation*}
Therefore,
\begin{equation*}
\int_0^T \check c_t^\star\,dt
=
\log\left(
\frac{\varphi^{\eta/\lambda}}{B_T}
\right)
<\infty,
\qquad \mathbb P\text{-a.s.},
\end{equation*}
where the last inequality follows from \textrm{(V3)}. Finally,
\begin{equation*}
\int_0^T |\alpha_t^\star|\,dt
\leq
|\lambda|\int_0^T\Psi_t\,dt
+
|\lambda-1|
\int_0^T\check c_t^\star\,dt
<\infty,\qquad \text{for each }T\geq0.
\end{equation*}
Thus, the required integrability conditions hold on every finite horizon.
\end{proof}

\subsubsection{Zero-volatility separable forward Epstein--Zin utilities}\label{sec:4-1-4}

When $\delta ^{\Phi }\equiv 0$, the process $\Phi$ has no martingale part,
although it does not need to be monotone without further assumptions. The stochastic exponential %
\eqref{eq:EZ_stochastic_exponential} is of finite variation,
\begin{equation*}
\mathcal{E}_{t}=\exp \! \left( \int_{0}^{t}\! \left( \eta \Psi _{s}+%
\displaystyle \frac{1-\eta }{2\gamma }\,|\theta _{s}|^{2}\right) ds\right) ,%
\text{ \ }t\geq 0,
\end{equation*}%
and \eqref{eq:EZ_Phi_explicit} gives
\begin{equation}
\Phi _{t}=\left( \mathcal{E}_{t}\! \left( \varphi ^{\eta /\lambda
}-\int_{0}^{t}\Psi _{u}^{\eta }\, \mathcal{E}_{u}^{-1}\,du\right) \right)
^{\! \lambda /\eta },\text{ \  \ }t\geq 0.  \label{eq:EZ_Phi_time_monotone}
\end{equation}%
Direct calculations yield the optimal policies and their associated wealth process in closed
form. Specifically, using that $\delta ^{\Phi }\equiv 0$, the optimal portfolio becomes
\begin{equation*}
\pi _{t}^{\star }=\frac{1}{\gamma }\,(\sigma _{t}\sigma _{t}^{\top })^{-1}\mu
_{t},\text{ \  \ }t\geq 0,
\end{equation*}%
and the optimal consumption
\begin{equation*}
c_{t}^{\star }=\frac{\mathcal{E}_{t}^{-1}\, \Psi _{t}^{\eta }}{\varphi ^{\eta
/\lambda }-\int_{0}^{t}\Psi _{u}^{\eta }\, \mathcal{E}_{u}^{-1}\,du}%
\,X_{t}^{\star },\text{ \  \ }t\geq 0.
\end{equation*}%
We easily deduce that the related optimal wealth process is
\begin{equation*}
X_{t}^{\star }=x\, \exp \! \left( \int_{0}^{t}\! \left( (\pi _{s}^{\star
})^{\top }\sigma _{s}\theta _{s}-\check{c}_{s}^{\star }-\tfrac{1}{2}|(\pi
_{s}^{\star })^{\top }\sigma _{s}|^{2}\right) ds+\int_{0}^{t}(\pi _{s}^{\star
})^{\top }\sigma _{s}\,dW_{s}\right) ,\text{ \  \ }t\geq 0.
\end{equation*}

\begin{remark}\label{rem:4-9}
The time monotone specification in the forward
Epstein--Zin recursive class we are developing is
the natural forward analogue of the well established time monotone homothetic
constructions in the forward performance literature. The optimal
consumption-to-wealth ratio is the ratio of $\Psi _{t}^{\eta }$ to the
integrated factor $\mathcal{E}_{t}(\varphi ^{\eta /\lambda
}-\int_{0}^{t}\Psi _{u}^{\eta }\mathcal{E}_{u}^{-1}\,du).$
\end{remark}

\medskip \noindent When $\delta ^{\Phi }\equiv 0$ the hedging part is eliminated. The portfolio is then governed by
risk aversion alone through the multiplier $1/\gamma $, while the EIS
affects the timing of consumption through the evolution of $\Phi $ and the
ratio $\Psi ^{\eta }/\Phi ^{\eta /\lambda }$. This is the setting in which
the separation between risky investment and intertemporal substitution is
most transparent.

\subsection{State price density}\label{sec:4-2}

\begin{corollary}\label{cor:4-10}
Under the assumptions of
Theorem~\ref{thm:4-4}(ii), the recursive discount rate satisfies
\begin{equation}
\begin{aligned}
F_{u}(t,c_{t}^{\star },U_{t}^{\star })
&=\widetilde{F}_{u}\bigl(t,U_{x}(t,X_{t}^{\star }),U(t,X_{t}^{\star })\bigr)\\
&=(\lambda -1)\check{c}_{t}^{\star }-\lambda\Psi_t\\
&=(\lambda -1)\frac{\Psi_t^\eta}{\Phi_t^{\eta/\lambda}}-\lambda\Psi_t,\qquad t\geq0.
\end{aligned}
\label{eq:EZ_Ftildey}
\end{equation}%
The state price density \eqref{eq:alpha_kappa_optimal} generated by the
primal optimum becomes 
\begin{equation}
Y_{t}^{\star }=U_{x}(t,X_{t}^{\star })\, \exp \! \left( -\int_{0}^{t}\left(
\lambda \Psi _{s}+(1-\lambda )\, \displaystyle \frac{\Psi _{s}^{\eta }}{\Phi
_{s}^{\eta /\lambda }}\right) ds\right) ,\qquad t\geq0.  \label{eq:EZ_pricing_identity}
\end{equation}
\end{corollary}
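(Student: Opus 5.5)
The plan is a direct computation: differentiate the Epstein--Zin aggregator in its continuation argument, evaluate it along the optimum using the homothetic identities of Theorem~\ref{thm:4-4}, and substitute the result into the formula \eqref{eq:alpha_kappa_optimal} of Theorem~\ref{thm:3-12}.

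\emph{Computing $F_u$.} From \eqref{eq:EZ_aggregator},
\[
F_u(t,c,v)=\frac{\Psi_t(1-\gamma)}{1-\frac1\eta}\Bigl((1-\tfrac1\lambda)c^{1-\frac1\eta}\bigl((1-\gamma)v\bigr)^{-\frac1\lambda}-1\Bigr)
=\Psi_t\Bigl((\lambda-1)\,c^{1-\frac1\eta}\bigl((1-\gamma)v\bigr)^{-\frac1\lambda}-\lambda\Bigr),
\]
where I use $(1-\gamma)/(1-1/\eta)=\lambda$.

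\emph{First equality in \eqref{eq:EZ_Ftildey}.} I would invoke the envelope identity \eqref{eq:F-envelope}: $\widetilde F_u(t,d,v)=F_u\bigl(t,F_c^{-1}(t,d,v),v\bigr)$. Take $d=U_x(t,X_t^\star)$ and $v=U(t,X_t^\star)$. By \eqref{eq:feedback_consumption}, $F_c^{-1}(t,d,v)=c^\star_t$, so the two expressions coincide. As a cross-check, one can differentiate \eqref{eq:EZ_fenchel} in $v$ directly and substitute.

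\emph{Evaluating along the optimum.} Use $c^\star_t=\check c^\star_t X^\star_t$ together with $(1-\gamma)U^\star_t=\Phi_t(X^\star_t)^{1-\gamma}$. Then
\[
c^{1-\frac1\eta}\bigl((1-\gamma)v\bigr)^{-\frac1\lambda}
=(\check c^\star_t)^{1-\frac1\eta}\,\Phi_t^{-\frac1\lambda}\,(X^\star_t)^{1-\frac1\eta-\frac{1-\gamma}{\lambda}}.
\]
The wealth exponent vanishes, because $(1-\gamma)/\lambda=1-1/\eta$. Substituting $\check c^\star_t=\Psi_t^\eta\Phi_t^{-\eta/\lambda}$ from \eqref{eq:EZ_relative_consumption}, the power of $\Psi_t$ is $\eta-1$ and the power of $\Phi_t$ is $-(\eta-1)/\lambda-1/\lambda=-\eta/\lambda$. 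Multiplying by $\Psi_t$ therefore gives $\Psi_t\cdot(\text{bracket})=\check c^\star_t$. Hence $F_u=(\lambda-1)\check c^\star_t-\lambda\Psi_t$, which establishes the remaining two forms in \eqref{eq:EZ_Ftildey}.

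\emph{Formula for $Y^\star$.} Substituting into $\kappa^\star_t=\exp\bigl(\int_0^tF_u\,ds\bigr)$ gives $\kappa^\star_t=\exp\bigl(-\int_0^t(\lambda\Psi_s+(1-\lambda)\check c^\star_s)\,ds\bigr)$. The integral is finite because $\Psi$ is locally integrable by (V2) and $\check c^\star$ is locally integrable by Corollary~\ref{cor:4-8}. Theorem~\ref{thm:3-12} then gives $Y^\star_t=\kappa^\star_tU_x(t,X_t^\star)$, which is \eqref{eq:EZ_pricing_identity}; its hypotheses are supplied by Theorem~\ref{thm:4-4}(ii) and Corollary~\ref{cor:4-8}.

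The main obstacle is keeping the exponent bookkeeping correct, namely checking that the wealth power vanishes and that the $\Psi$ and $\Phi$ powers collapse exactly to $\check c^\star_t$. The rest is direct substitution into results already established.
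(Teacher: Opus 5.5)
Your proposal is correct and follows the same route as the paper. The paper gives no separate proof of this corollary, but it relies on exactly your ingredients: the formula $f_u=(\lambda-1)c^{1-1/\eta}\bigl((1-\gamma)v\bigr)^{-1/\lambda}-\lambda$ from Appendix~\ref{app:C}, the envelope identity \eqref{eq:F-envelope} at $c=c^\star_t$, the homothetic reduction to $\check c^\star_t$ via Proposition~\ref{prop:4-3} (the same expression for $\alpha^\star=-F_u$ appears in the proof of Corollary~\ref{cor:4-8}), and substitution into $\kappa^\star$ in \eqref{eq:alpha_kappa_optimal}, with your exponent bookkeeping and integrability justification both sound.
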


\begin{remark}\label{rem:4-11}
In \eqref{eq:EZ_pricing_identity},
the exponent of the discount factor separates into the $\Psi$-driven
time-discount term $-\lambda \Psi $ and the correction term
$(\lambda-1)\check{c}^\star$. The latter vanishes in the
additive case $\lambda=1$ but, otherwise, depends on the optimal
consumption-to-wealth ratio. The sign of the correction follows that of $%
\lambda-1$. Since $\lambda =1$ is equivalent to $\eta =1/\gamma $, this
correction is precisely how the separation between the EIS and risk aversion
is reflected in the pricing kernel. When $\lambda=1$, it disappears, and the state price
density reduces to the marginal utility discounted only by the $\Psi $-term. Away from
this case, the optimal consumption also enters the recursive discount. In the homothetic representation, we have $U_{x}(t,X_{t}^{\star })=\Phi _{t}%
(X_{t}^{\star })^{-\gamma }$, and, thus, $\gamma $ governs the direct exposure of
marginal utility to wealth, whereas $\eta $ affects the consumption
policy and the recursive discount. 
\end{remark}

\begin{remark}[Comparison with Matoussi and Xing]\label{rem:4-12}
The identity \eqref{eq:alpha_kappa_optimal} has
the same structural form as the utility-gradient formula of
\citet{MX17} for the classical (backward) Epstein--Zin problem in a finite
horizon $[0,T]$. In their notation,
in which $f$ denotes the aggregator itself and therefore plays the role of
$F$ herein, the state price density selected by optimality satisfies
\citep[cf.][Corollary~3.7, Equation~(3.11)]{MX17}
\begin{equation*}
D_{t}^{\star }\; \propto \;f_{c}(c_{t}^{\star
},U_{t}^{c^{\star }})\exp \! \left( \int_{0}^{t}f_{u}(c_{s}^{\star
},U_{s}^{c^{\star }})\,ds\right) ,\qquad t\in[0,T].
\end{equation*}%
At the optimum, the first order condition gives $f_{c}(c^{\star },U^{\star
})=U_{x}$ while the envelope identity yields $f_{u}(c^{\star },U^{\star
})=F_{u}=\widetilde{F}_{u}$. Thus, in both formulas, the state price density is given by the marginal utility multiplied by the exponential of the derivative of the aggregator with respect to $u$.

The backward and forward pricing kernels are nonetheless different objects.
Specifically, in the backward problem the discount factor $\exp (\int f_{u}\,ds)$
is evaluated along the solution of a BSDE pinned by a given, specific terminal
condition. In contrast, in the forward setting, it is evaluated along the
forward SDE \eqref{eq:EZ_SDE_Phi} for $
\Phi $, whose drift is determined by the market itself through self-generation. 
\end{remark}

\subsection{Duality in forward Epstein--Zin recursive aggregator systems}\label{sec:4-3}

The homothetic structure of $U(t,x)$ reduces the argument $\widetilde{U%
}(t,y)-y\, \widetilde{U}_{y}(t,y)$ inside $\widetilde{F}$ to a scalar
multiple of $\widetilde{U}$. Then, the dual value process of Section~\ref{sec:3-3}
becomes a self-contained Epstein--Zin recursive problem in
$\widetilde{U}$, with an explicit dual aggregator $g$ and the
dual-consistency property of Definition~\ref{defn:3-5}.

\begin{proposition}[forward Epstein--Zin dual factor]\label{prop:4-13}
Let
$U(t,x)=\Phi _{t}\,u(x)$, $x>0$, $t\geq0$, be as in \eqref{eq:separable_form}, with $\Phi $ as
in \eqref{eq:SDE_Phi_general} and with drift $b^\Phi$ satisfying
\eqref{eq:EZ_bphi}, or equivalently, let $U$ solve \eqref{eq:SPDE-EZ}. Set
$\widetilde\Phi_t=\Phi_t^{1/\gamma}$, $t\geq0$.

\smallskip

\noindent \textit{(i) Conjugate field.} The forward recursive convex conjugate
is of separable form,
\begin{equation}
\widetilde{U}(t,y)=\frac{\gamma}{1-\gamma}
\widetilde\Phi_t y^{(\gamma-1)/\gamma},\qquad t\geq0,\ y>0.
\label{eq:homothetic_dual_form}
\end{equation}%
Moreover, \eqref{eq:dual_continuation_level} becomes
\begin{equation}
\hat u(t,y)=\frac{1}{\gamma}\widetilde U(t,y),\qquad t\geq0,\ y>0.
\label{eq:homothetic_identity}
\end{equation}%

\smallskip

\noindent \textit{(ii) Dual aggregator.} The dual forward Epstein--Zin equation
depends on $\widetilde U$ only, with generator
\begin{equation}
g(t,y,v)
=\widetilde{F}\! \left( t,y,\frac{v}{\gamma }\right)=\frac{y^{1-\eta }}{\eta -1}
\left( \frac{1-\gamma }{\gamma }v\right) ^{\eta(1-1/\lambda )}
\Psi _{t}^{\eta }-\frac{\lambda }{\gamma }\Psi _{t}v,\qquad t\geq0,\ y>0,\ (1-\gamma)v>0.
\label{eq:EZ_dual_aggregator}
\end{equation}%

\smallskip

\noindent \textit{(iii) Dual-factor dynamics.} The factor process
$\widetilde\Phi$ satisfies
\begin{equation}
d\widetilde{\Phi }_{t}=\widetilde{\Phi }_{t}\bigl(b_{t}^{\widetilde{\Phi }%
}\,dt+(\delta _{t}^{\widetilde{\Phi }})^{\top }dW_{t}\bigr),\qquad \delta
_{t}^{\widetilde{\Phi }}=\frac{1}{\gamma }\, \delta _{t}^{\Phi },\qquad t\geq0,
\label{eq:dual_factor_SDE}
\end{equation}%
with 
\begin{equation}
b_{t}^{\widetilde{\Phi }}
=\frac{\lambda }{\gamma }\Psi _{t}
-\frac{1-\gamma }{\gamma (\eta -1)}
\Psi _{t}^{\eta }\widetilde{\Phi }_{t}^{-\gamma \eta/\lambda } -\frac{1-\gamma }{2\gamma ^{2}}|\theta _{t}|^{2}
-\frac{1-\gamma }{\gamma }
\theta _{t}^{\top }\delta _{t}^{\widetilde{\Phi }} +\frac{1-\gamma }{2}
|(I_{d}-\sigma_{t}^{+}\sigma_{t})\delta _{t}^{\widetilde{\Phi }}|^{2},\qquad t\geq0.
\label{eq:dual_factor_drift}
\end{equation}
\end{proposition}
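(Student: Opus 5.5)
The proof is a direct computation in three parts: the conjugate in closed form, substitution into $\widetilde F$, and It\^{o}'s formula for a power of $\Phi$.

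\emph{Part (i).} From $U_x(t,x)=\Phi_t x^{-\gamma}$ the inverse marginal is $I(t,y)=(\Phi_t/y)^{1/\gamma}$. Substituting this into \eqref{eq:fenchel_U} gives
\begin{equation*}
\widetilde U(t,y)=\frac{\Phi_t}{1-\gamma}\,(\Phi_t/y)^{(1-\gamma)/\gamma}-y(\Phi_t/y)^{1/\gamma}
=\Phi_t^{1/\gamma}y^{(\gamma-1)/\gamma}\Bigl(\frac{1}{1-\gamma}-1\Bigr),
\end{equation*}
which is \eqref{eq:homothetic_dual_form} with $\widetilde\Phi=\Phi^{1/\gamma}$. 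For \eqref{eq:homothetic_identity}, I will compute $\widetilde U-y\widetilde U_y$. Since $y\widetilde U_y=\frac{\gamma-1}{\gamma}\widetilde U$, we get $\hat u=\frac1\gamma\widetilde U$. As a consistency check, $U(t,I(t,y))=\frac{1}{1-\gamma}\widetilde\Phi_t y^{(\gamma-1)/\gamma}$, which agrees.

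\emph{Part (ii).} This is a substitution. I insert $v/\gamma$ for the continuation argument in \eqref{eq:EZ_fenchel}; this produces $((1-\gamma)v/\gamma)^{\eta(1-1/\lambda)}$ and $-\lambda\Psi_t v/\gamma$. Then \eqref{eq:dual_SPDE-EZ} depends on $\widetilde U$ only through $g(t,y,\widetilde U(t,y))$ together with the linear terms in $\widetilde U_{yy}$ and $\widetilde\delta_y$. The sign condition $(1-\gamma)v>0$ holds for $v=\widetilde U$ because $\frac{\gamma(1-\gamma)}{1-\gamma}>0$ after multiplying by $(1-\gamma)$. Indeed, $(1-\gamma)\widetilde U=\gamma\widetilde\Phi y^{(\gamma-1)/\gamma}>0$.

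\emph{Part (iii).} I apply It\^{o}'s formula to $\widetilde\Phi=\Phi^{1/\gamma}$ using \eqref{eq:EZ_SDE_Phi}. The volatility is $\frac1\gamma\delta^\Phi$. The drift is
\begin{equation*}
\frac1\gamma b^\Phi+\frac{1}{2\gamma}\Bigl(\frac1\gamma-1\Bigr)|\delta^\Phi|^2
=\frac1\gamma b^\Phi-\frac{1-\gamma}{2}\,\bigl|\delta^{\widetilde\Phi}\bigr|^2 .
\end{equation*}
I then substitute $b^\Phi$ from \eqref{eq:EZ_bphi}. The term $\frac{1-\gamma}{\gamma(1-\eta)}\Psi^\eta\Phi^{-\eta/\lambda}$ becomes $-\frac{1-\gamma}{\gamma(\eta-1)}\Psi^\eta\widetilde\Phi^{-\gamma\eta/\lambda}$, and $\frac{\lambda}{\gamma}\Psi$ appears directly.

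The step I expect to be the main obstacle is the bookkeeping of the quadratic terms. One has to expand $-\frac{1-\gamma}{2\gamma^2}|\theta+\sigma^+\sigma\delta^\Phi|^2$ using $\theta\in\mathrm{Im}\,\sigma^\top$ from \eqref{eq:projections_theta}. This gives
\begin{equation*}
-\frac{1-\gamma}{2\gamma^2}|\theta|^2-\frac{1-\gamma}{\gamma}\theta^\top\delta^{\widetilde\Phi}-\frac{1-\gamma}{2}|\sigma^+\sigma\delta^{\widetilde\Phi}|^2 .
\end{equation*}
Combining with the It\^{o} correction, the last piece becomes
\begin{equation*}
-\frac{1-\gamma}{2}|\sigma^+\sigma\delta^{\widetilde\Phi}|^2-\frac{1-\gamma}{2}|\delta^{\widetilde\Phi}|^2 .
\end{equation*}
This does \emph{not} immediately reproduce $+\frac{1-\gamma}{2}|(I_d-\sigma^+\sigma)\delta^{\widetilde\Phi}|^2$. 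So I will recheck it against the orthogonal decomposition $|\delta|^2=|\sigma^+\sigma\delta|^2+|(I_d-\sigma^+\sigma)\delta|^2$ and the sign of the It\^{o} correction for $x^{1/\gamma}$, which is $\frac{1}{2\gamma}(\frac1\gamma-1)=\frac{1-\gamma}{2\gamma^2}$ times $|\delta^\Phi|^2$. This equals $+\frac{1-\gamma}{2}|\delta^{\widetilde\Phi}|^2$, so I had the sign wrong above. With the corrected sign, the total quadratic part is
\begin{equation*}
\frac{1-\gamma}{2}\Bigl(|\delta^{\widetilde\Phi}|^2-|\sigma^+\sigma\delta^{\widetilde\Phi}|^2\Bigr)=\frac{1-\gamma}{2}\bigl|(I_d-\sigma^+\sigma)\delta^{\widetilde\Phi}\bigr|^2,
\end{equation*}
matching \eqref{eq:dual_factor_drift}.

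As a cross-check, I will verify that plugging \eqref{eq:homothetic_dual_form} into \eqref{eq:dual_SPDE-EZ} is consistent with this drift. I will use Proposition~\ref{prop:3-4} together with $\widetilde\delta=\delta(t,I)=U(t,I)\delta^\Phi$.
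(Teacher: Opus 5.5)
Your proposal is correct and follows the paper's proof essentially step by step. It uses the closed-form conjugate via $I(t,y)=(\Phi_t/y)^{1/\gamma}$, then substitutes the identity $\hat u=\widetilde U/\gamma$ into $\widetilde F$, and finally applies It\^{o}'s formula to $\Phi^{1/\gamma}$, expanding the square with $\sigma_t^{+}\sigma_t\theta_t=\theta_t$ and $|\delta|^2=|\sigma_t^{+}\sigma_t\delta|^2+|(I_d-\sigma_t^{+}\sigma_t)\delta|^2$. In the final write-up, delete the intermediate line with the wrong sign of the It\^{o} correction; the correct term $+\frac{1-\gamma}{2}|\delta_t^{\widetilde\Phi}|^2$ is exactly what produces the orthogonal-projection term in \eqref{eq:dual_factor_drift}.
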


\begin{proof}
Fix $t\geq 0$ and $y>0$. The function that is maximized in the definition of
$\widetilde{U}(t,y)$ is
\begin{equation*}
x\longmapsto \Phi _{t}\frac{x^{1-\gamma }}{1-\gamma }-xy,\qquad x>0.
\end{equation*}%
Since it is strictly concave, its unique maximizer is
\begin{equation*}
I(t,y)=\left(\frac{\Phi _{t}}{y}\right)^{1/\gamma },\qquad t\geq0,\ y>0.
\end{equation*}%
Thus,
\begin{align*}
\widetilde{U}(t,y)
&=\Phi _{t}\frac{I(t,y)^{1-\gamma }}{1-\gamma }-yI(t,y)\\
&=\frac{\Phi _{t}^{1/\gamma }y^{(\gamma -1)/\gamma }}{1-\gamma }
-\Phi _{t}^{1/\gamma }y^{(\gamma -1)/\gamma }\\
&=\frac{\gamma }{1-\gamma }
\Phi _{t}^{1/\gamma }y^{(\gamma -1)/\gamma },\qquad t\geq0,\ y>0,
\end{align*}%
and \eqref{eq:homothetic_dual_form} follows. Differentiating the last
expression with respect to $y$ gives
\begin{equation*}
\widetilde{U}_{y}(t,y)
=-\Phi _{t}^{1/\gamma }y^{-1/\gamma },\qquad t\geq0,\ y>0,
\end{equation*}%
and, thus,
\begin{align*}
\widetilde{U}(t,y)-y\widetilde{U}_{y}(t,y)
&=\left(\frac{\gamma }{1-\gamma }+1\right)
\Phi _{t}^{1/\gamma }y^{(\gamma -1)/\gamma }\\
&=\frac{1}{\gamma }\widetilde{U}(t,y),\qquad y>0,\ t\geq0,
\end{align*}%
and \eqref{eq:homothetic_identity} follows. Substituting
\begin{equation*}
\hat u(t,y)=\widetilde{U}(t,y)-y\widetilde{U}_{y}(t,y)
=\frac{1}{\gamma }\widetilde{U}(t,y),\qquad t\geq0,\ y>0,
\end{equation*}%
into \eqref{eq:EZ_fenchel} gives, for every $v$ in the range of
$\widetilde{U}$,
\begin{align*}
g(t,y,v)
&=\widetilde{F}\left(t,y,\frac{v}{\gamma }\right)\\
&=\frac{y^{1-\eta }}{\eta -1}
\left(\frac{1-\gamma }{\gamma }v\right)^{\eta (1-1/\lambda )}
\Psi _{t}^{\eta }
-\frac{\lambda }{\gamma }\Psi _{t}v,\qquad t\geq0,\ y>0,
\end{align*}%
and \eqref{eq:EZ_dual_aggregator} follows.

It remains to compute the dynamics of
$\widetilde{\Phi }_{t}=\Phi _{t}^{1/\gamma }$. It\^{o}'s formula gives
\begin{equation}
\frac{d\widetilde{\Phi }_{t}}{\widetilde{\Phi }_{t}}
=\left(\frac{b_{t}^{\Phi }}{\gamma }
+\frac{1}{2\gamma }\left(\frac{1}{\gamma }-1\right)
|\delta _{t}^{\Phi }|^{2}\right)dt
+\frac{1}{\gamma }(\delta _{t}^{\Phi })^{\top }dW_{t},
\label{eq:proof_dual_factor_Ito}
\end{equation}
and, thus,
\begin{equation*}
\delta _{t}^{\widetilde{\Phi }}=\frac{1}{\gamma }
\delta _{t}^{\Phi },\qquad t\geq0.
\end{equation*}%
To identify the drift, we substitute \eqref{eq:EZ_bphi} into
\eqref{eq:proof_dual_factor_Ito}, and we use
\begin{equation*}
\Phi _{t}^{-\eta /\lambda }
=\widetilde{\Phi }_{t}^{-\gamma \eta /\lambda }\qquad \text{and}\qquad
\delta _{t}^{\Phi }=\gamma \delta _{t}^{\widetilde{\Phi }},\qquad t\geq0,
\end{equation*}%
together with \eqref{eq:projections_theta}. Expanding
the square gives
\begin{align*}
&-\frac{1-\gamma }{2\gamma ^{2}}
|\theta _{t}+\gamma \sigma _{t}^{+}\sigma _{t}
\delta _{t}^{\widetilde{\Phi }}|^{2}
+\frac{1-\gamma }{2}|\delta _{t}^{\widetilde{\Phi }}|^{2}\\
&\quad=-\frac{1-\gamma }{2\gamma ^{2}}|\theta _{t}|^{2}
-\frac{1-\gamma }{\gamma }
\theta _{t}^{\top }\delta _{t}^{\widetilde{\Phi }}
+\frac{1-\gamma }{2}
|(I_{d}-\sigma _{t}^{+}\sigma _{t})
\delta _{t}^{\widetilde{\Phi }}|^{2},\qquad t\geq0.
\end{align*}%
Combining this identity with the remaining terms and
\eqref{eq:proof_dual_factor_Ito} yields
\eqref{eq:dual_factor_drift}, and the drift of $\widetilde\Phi $ is fully
specified.
\end{proof}

\begin{remark}\label{rem:4-14}
The generator $g(t,y,v)=\widetilde{F%
}(t,y,v/\gamma )$ in \eqref{eq:EZ_dual_aggregator} is the forward
self-generating analogue of the finite-horizon dual recursive generator of
\citet{MX17}. The difference is that here the dual factor $%
\widetilde{\Phi }$ evolves forward in time through \eqref{eq:dual_factor_SDE}
and \eqref{eq:dual_factor_drift}, while the analogous object in \citet{MX17}
is fully specified by an exogenous terminal condition. The state price density generated by the primal optimum in
Theorem~\ref{thm:3-12} is $%
Y_{t}^{\star }=\widetilde{\Phi }_{t}^{\, \gamma }\, \kappa _{t}^{\star
}\,(X_{t}^{\star })^{-\gamma }=\Phi _{t}\, \kappa _{t}^{\star
}\,u_{x}(X_{t}^{\star })$, $t\geq0$. In this homothetic class, the feedback control
\eqref{eq:nu_dual_def} is independent of $y$, and the density
$Y^{\nu^{\mathrm{dual}}}$ of Theorem~\ref{thm:3-7}, with an exogenous initial condition
$Y_0^{\nu^{\mathrm{dual}}} = U_x(0,x)$, coincides with $Y^\star$.
\end{remark}

\begin{remark}[Bernoulli variable under duality]\label{rem:4-15}
The Bernoulli variable is unchanged by conjugacy, in that
$\widetilde \Phi_t^{\gamma\eta/\lambda}
=\Phi_t^{\eta/\lambda}=\Xi_t$, $t\geq0$. Thus, the primal and dual reductions use the
same linear state variable.
\end{remark}

\subsection{The Black and Scholes market: explicit closed form solutions}\label{sec:4-4}

Consider a market with constant coefficients: $\mu \in \mathbb{R}^{n}$ and $%
\sigma \in \mathbb{R}^{n\times d}$ of full rank. The market price of risk is
then constant $\bar{\theta}$, given by
\begin{equation*}
\bar{\theta}=\sigma ^{\top }(\sigma \sigma ^{\top })^{-1}\mu \in
\mathbb{R}^{d},\qquad \theta _{t}\equiv \bar{\theta},\qquad t\geq0.
\end{equation*}

\bigskip \noindent Fix a constant $\bar{\Psi}>0$ and set $\Psi _{t}\equiv
\bar{\Psi}$ and $\delta _{t}^{\Phi }\equiv \bar{\delta}^{\Phi }=0$, $t\geq 0$.

\noindent Furthermore, let the constant
\begin{equation*}
\bar\chi =\eta \bar{\Psi}+\displaystyle \frac{1-\eta }{2\gamma }\,|\bar{\theta}%
|^{2},
\end{equation*}%
(see
Proposition~\ref{prop:4-2}). Then,
representation \eqref{eq:EZ_Phi_time_monotone} takes, on the positive lifetime
interval $0\leq t<\tau$ of \eqref{eq:EZ_positive_lifetime}, the following form:

i) if $\bar\chi \neq 0$,  
\begin{equation}
\Phi _{t}=\left( e^{\bar\chi t}\varphi ^{\eta /\lambda }-\displaystyle \frac{%
\bar{\Psi}^{\eta }}{\bar\chi }\big(e^{\bar\chi t}-1\big)\right) ^{\! \lambda /\eta
},\qquad 0\leq t<\tau,  \label{eq:Merton_Phi}
\end{equation}%

ii) if $\bar\chi =0$,%
\begin{equation*}
\Phi _{t}=\big(\varphi ^{\eta /\lambda }-\bar{\Psi}^{\eta }t\big)^{\lambda
/\eta },\qquad 0\leq t<\tau.
\end{equation*}

\bigskip In the case where $\bar\chi >0$ and $\varphi ^{\eta
/\lambda }\geq \bar{\Psi}^{\eta }/\bar\chi $, the optimal
investment and relative consumption controls are
\begin{equation}
\pi _{t}^{\star }=\displaystyle \frac{1}{\gamma }\,(\sigma \sigma ^{\top })^{-1}\mu \qquad \text{and}\qquad \check{c}_{t}^{\star }=\displaystyle \frac{\bar{\Psi}%
^{\eta }}{e^{\bar\chi t}\varphi ^{\eta /\lambda }+\tfrac{\bar{\Psi}^{\eta }}{%
\bar\chi }\,(1-e^{\bar\chi t})},\qquad t\geq0.  \label{eq:Merton_feedback}
\end{equation}
We give more information on the origin of the two inequalities below.

\begin{remark}\label{rem:4-16}
The optimal portfolio process $\pi ^{\star }$ in %
\eqref{eq:Merton_feedback} is constant and resembles the classical Merton
proportion determined by the relative risk aversion $\gamma $ and the market
price of risk $\bar{\theta}$. In contrast, the optimal consumption rate is
not constant, but rather a deterministic function that depends on the
various parameters $\lambda ,\eta ,\varphi ,\bar{\Psi}$ and $\bar\chi $. We
stress, however, that the forward and the backward Merton problems do not
coincide for an arbitrary value of the initial multiplier. The critical
initial value identified below removes the residual forward component and
produces the stationary Merton candidate. Other viable
initial values generate non stationary forward consumption paths.
This benchmark gives the clearest direct separation of
the two Epstein--Zin preference parameters. Specifically, risk aversion determines the
risky allocation through the factor $1/\gamma $, whereas the EIS affects the
consumption path through $\eta $ and the composite parameter $\lambda $.
The coefficient $\bar\chi $ defined above is not an additional preference
parameter and should not be confused with the positive Uzawa discount
$\beta$ used earlier.
Indeed, it is the growth coefficient of the linearized factor $\Xi =\Phi ^{\eta
/\lambda }$. It aggregates the consumption input, investment opportunities,
risk aversion, and the EIS. Consequently, even in this simple market setting, the
optimal portfolio exhibits a transparent, direct dependence on $\gamma $, while the level
and dynamics of the optimal consumption process reflect both parameters through the
forward consistency condition.
\end{remark}

\subsubsection{Infinite horizon viability of the pair \texorpdfstring{$(\Phi,\Psi)$}{(Phi,Psi)}}\label{sec:4-4-1}

\noindent Using the notations of Section~\ref{sec:4-1}, we have
\begin{equation}
C_\infty=\int_0^\infty \mathfrak a_t\,dt
=\begin{cases}
\displaystyle \frac{\bar\Psi^\eta}{\bar\chi},&\bar\chi>0,\\
+\infty,&\bar\chi\leq0.
\end{cases}
\label{eq:Merton_total_preference_cost}
\end{equation}
It follows that (V3) holds on $[0,\infty)$ if and only if
\begin{equation}
\bar\chi>0
\quad\text{and}\quad
\varphi^{\eta/\lambda}\geq
\frac{\bar\Psi^\eta}{\bar\chi}.
\label{eq:Merton_global_viability}
\end{equation}

\noindent The three budget regimes of Section~\ref{sec:4-1} now have an explicit form. Specifically, if
\begin{equation*}
\varphi^{\eta/\lambda}<
\frac{\bar\Psi^\eta}{\bar\chi},
\end{equation*}
then $\Xi_t$ reaches zero in finite time. Consequently, $\Phi_t$ decreases
to zero if $\lambda>0$ and increases to $+\infty$ if $\lambda<0$. If, on the other hand,
\begin{equation}
\varphi^{\eta/\lambda}=
\frac{\bar\Psi^\eta}{\bar\chi},
\label{eq:Merton_stationary_selection}
\end{equation}
then
\begin{equation*}
B_t=\frac{\bar\Psi^\eta}{\bar\chi}e^{-\bar\chi t}
\qquad \text{and}\qquad
\Xi_t=\frac{\bar\Psi^\eta}{\bar\chi},\qquad t\geq0,
\end{equation*}
and therefore
\begin{equation*}
\Phi_t=
\left(\frac{\bar\Psi^\eta}{\bar\chi}\right)^{\lambda/\eta},
\qquad
\check c_t^\star=\bar\chi,\qquad t\geq0.
\end{equation*}
The factor process $\Phi $ and the optimal relative consumption process are
both stationary. Finally, if
\begin{equation*}
\varphi^{\eta/\lambda}>
\frac{\bar\Psi^\eta}{\bar\chi},
\end{equation*}
then
\begin{equation*}
B_\infty
=\varphi^{\eta/\lambda}
-\frac{\bar\Psi^\eta}{\bar\chi}>0.
\end{equation*}
In this case, the forward aggregator is globally viable, but it retains a residual forward preference
component.
$\check c_t^\star$ decreases to zero and the factor $\Phi_t$ is
nonstationary.

As noted in the general case above, the dependence on the initial multiplier changes with the sign of
$\lambda$. If $\lambda>0$, condition
\eqref{eq:Merton_global_viability} imposes a lower bound on $\varphi$, while, if
$\lambda<0$, it imposes an upper bound. Thus, in the range
$\gamma>1$, $\eta>1$, and $\lambda<0$ studied by
\citet{Shigeta2026}, increasing $\varphi$ moves the forward system toward
a finite time preference insolvency rather than away from such an insolvency.

\subsubsection{Relation with the classical (backward) Merton solution}\label{sec:4-4-2}

The optimal portfolio in \eqref{eq:Merton_feedback} resembles the classical
Merton proportion. The forward construction allows every initial multiplier
satisfying \eqref{eq:Merton_global_viability}, whereas the critical value
\eqref{eq:Merton_stationary_selection} yields the stationary Merton
candidate. At this value, $\Phi$ and
$\check c^\star$ are constant and $B_\infty=0$. A strict inequality in
\eqref{eq:Merton_global_viability}, namely, a strictly positive residual
preference budget, yields a
nonstationary forward solution with $B_\infty>0$. Thus, condition~(V3)
imposes initial viability rather than terminal selection and is not itself a
transversality condition.

\subsubsection{The role of the volatility \texorpdfstring{$\delta ^{\Phi }$}{delta^Phi}}\label{sec:4-4-3}

While $(\mu,\sigma,\bar\theta)$ are constant market parameters, the modeling input $%
\delta^\Phi_t\not \equiv0$, $t\geq0$, turns the preference factor $\Phi$ into a random process and, as a result,
the optimal portfolio obtains the hedging correction $\pi^\star_t-\frac{1}{%
\gamma}(\sigma \sigma^{\top})^{-1}\mu =\frac{1}{\gamma}(\sigma
\sigma^{\top})^{-1}\sigma \, \delta^\Phi_t$, $t\geq0$, through the
$\sigma_{t}^{+}\sigma_{t}\delta_x$ term in
the Hamiltonian. The Heston model below provides one of many possible ways to specify the modeling input $\delta^\Phi$ based on a particular market structure. Specifically, setting $\Phi_t=\bar\varphi(\Gamma_t)$ yields a non-trivial
$\delta_t^\Phi$ (as it follows from It\^{o}'s formula) and, in turn, a
state-dependent hedging component in the optimal portfolio process.

\subsection{Heston stochastic volatility}\label{sec:4-5}

We conclude with an incomplete market model with Heston type stochastic
volatility. To this end, we let $W=(W^{S},W^{\perp })$ be an $(n+1)$%
-dimensional Brownian motion on a filtered probability space $\left( \Omega
,\mathcal{F},(\mathcal{F}_{t})_{t\geq 0},\mathbb{P}\right) $.

The variance factor $\Gamma $ follows the Cox--Ingersoll--Ross SDE 
\begin{equation}
d\Gamma _{t}=b(\bar{\kappa}-\Gamma _{t})\,dt+a\sqrt{\Gamma _{t}}\bigl(\rho ^{\top }dW_{t}^{S}+\sqrt{1-|\rho |^{2}}\,dW_{t}^{\perp }\bigr),\qquad t\geq0,\qquad \Gamma _{0}>0,  \label{eq:Heston_CIR}
\end{equation}%
with $b,\bar{\kappa},a>0$, $\rho \in \mathbb{R}^{n}$, $|\rho |\leq 1$, satisfying the Feller condition 
\begin{equation}
2b\bar{\kappa}\geq a^{2},  \label{Feller}
\end{equation}
where $\Gamma_0$ is deterministic.

The price processes of the risky assets $S=\left( S_{1},...,S_{n}\right) $ satisfy 
\begin{equation}
dS_{t}=\mathrm{diag}(S_{t})\bigl(\sigma _{0}\,q\, \Gamma _{t}\,dt+\sqrt{%
\Gamma _{t}}\, \sigma _{0}\,dW_{t}^{S}\bigr),  \label{eq:Heston_S}
\end{equation}%
with $\sigma _{0}\in \mathbb{R}^{n\times n}$ being invertible and $q\in \mathbb{R}%
^{n}$. 

The asset volatility matrix is then given by $\sigma _{t}=\sqrt{\Gamma _{t}}%
\,(\sigma _{0},0)\in \mathbb{R}^{n\times (n+1)}$, the market price of risk $%
\theta $ satisfies 
\begin{equation}
\theta _{t}=\sigma _{t}^{\top }(\sigma _{t}\sigma _{t}^{\top })^{-1}\mu _{t}=%
\sqrt{\Gamma _{t}}\, \bar{\theta},\qquad \bar{\theta}=(q,0)^{\top }\in 
\mathbb{R}^{n+1},\qquad |\theta _{t}|^{2}=\Gamma _{t}\,|q|^{2},
\label{eq:Heston_theta}
\end{equation}%
and the traded subspace projection is $\sigma _{t}^{+}\sigma _{t}=\mathrm{diag%
}(I_{n},0)$, so that $I_{n+1}-\sigma _{t}^{+}\sigma _{t}$ projects onto the
unspanned direction $W^{\perp }$.

We choose a twice continuously differentiable function $\bar{\varphi}%
:(0,\infty )\rightarrow (0,\infty ),$ to be determined below, and a constant
$\bar{\Psi}>0,$ and introduce
\begin{equation}
U(t,x)=\bar{\varphi}(\Gamma _{t})u(x)\qquad \text{and}\qquad F(t,c,u)=\bar{\Psi}%
\,f(c,u),\qquad t\geq0,\ x>0,  \label{eq:Heston_separable_stationary}
\end{equation}%
with $u(x)=x^{1-\gamma }/(1-\gamma )$ and $f$ as in \eqref{eq:EZ_aggregator}.

A direct application of It\^{o}'s formula gives 
\begin{equation}
\begin{aligned}
b_{t}^{\Phi }
&=\frac{b(\bar{\kappa}-\Gamma _{t})\bar{\varphi}^{\prime}(\Gamma _{t})
+\tfrac{1}{2}a^{2}\Gamma _{t}\bar{\varphi}^{\prime \prime}(\Gamma _{t})}
{\bar{\varphi}(\Gamma _{t})},\qquad \text{and}\\
\delta _{t}^{\Phi }
&=\frac{a\sqrt{\Gamma _{t}}\, \bar{\varphi}^{\prime }(\Gamma _{t})}
{\bar{\varphi}(\Gamma _{t})}
\bigl(\rho ,\sqrt{1-|\rho |^{2}}\bigr)^{\top },\qquad t\geq0.
\end{aligned}
\label{eq:Heston_bphi_deltaphi}
\end{equation}%
Substituting \eqref{eq:Heston_bphi_deltaphi} and \eqref{eq:Heston_theta}
into the scalar drift condition \eqref{eq:EZ_bphi} of Theorem~\ref{thm:4-4} and using
\begin{equation*}
|\sigma_{t}^{+}\sigma_{t}\delta _{t}^{\Phi }|^{2}
=\left(\frac{a\bar{\varphi}^{\prime }(\Gamma_t)}
{\bar{\varphi}(\Gamma_t)}\right)^{2}
\Gamma _{t}|\rho |^{2},\qquad t\geq0,
\end{equation*}
we eliminate the common factor $\bar{\varphi}(\Gamma _{t})$ and obtain the
stationary singular ODE
\begin{equation}
\begin{aligned} &\frac{a^2 x}{2}\, \bar \varphi''(x) +\Bigl(b(\bar \kappa-x)
+\tfrac{(1-\gamma)\,a\,q^\top \rho}{\gamma}\,x\Bigr)\bar \varphi'(x)
+\frac{(1-\gamma)a^2|\rho|^2 x}{2\gamma}\, \frac{\bar \varphi'(x)^2}{\bar
\varphi(x)}\\ &\hspace{2cm}
+\Bigl(\tfrac{(1-\gamma)|q|^2}{2\gamma}\,x-\lambda \bar \Psi \Bigr)\bar
\varphi(x) \;=\; \frac{1-\gamma}{1-\eta}\, \bar \Psi^\eta \, \bar
\varphi(x)^{1-\eta/\lambda}. \end{aligned}  \label{eq:Heston_ODE_phi}
\end{equation}%
The substitution $\bar{h}=\bar{\varphi}^{\, \alpha _{\rho }}$ with $\alpha
_{\rho }=1+(1-\gamma )|\rho |^{2}/\gamma $ transforms %
\eqref{eq:Heston_ODE_phi} into the semilinear ODE 
\begin{equation}
\begin{aligned}
&\frac{a^{2}x}{2}\, \bar{h}^{\prime \prime }(x)+\Bigl(b(\bar{\kappa}-x)+\tfrac{%
(1-\gamma )\,a\,q^{\top }\rho }{\gamma }\,x\Bigr)\bar{h}^{\prime }(x)+\alpha
_{\rho }\Bigl(\tfrac{(1-\gamma )|q|^{2}}{2\gamma }\,x-\lambda \bar{\Psi}%
\Bigr)\bar{h}(x) =\frac{\alpha _{\rho }(1-\gamma )}{1-\eta }\, \bar{\Psi}^{\eta }\,
\bar{h}(x)^{1-\eta /(\alpha _{\rho }\lambda )}.
\end{aligned}
\label{eq:Heston_ODE_h}
\end{equation}

\noindent The affine regime below is inspired by the analysis of
\citet{SeiferlingSeifried2016}.

\begin{assumption}[Affine forward Epstein--Zin regime]\label{asm:4-17}
The model parameters satisfy $\gamma >1,$ $q\neq 0,$
\begin{equation}
\frac{\eta }{\lambda }=\alpha _{\rho }=\frac{\gamma +(1-\gamma )|\rho |^{2}}{%
\gamma },  \label{eq:Heston_affine_link}
\end{equation}%
\begin{equation}
\widehat{\kappa }=b-\frac{(1-\gamma )\,a\,q^{\top }\rho }{\gamma }>0,\qquad
\widehat{c}=\alpha _{\rho }\, \frac{(1-\gamma )\,|q|^{2}}{2\gamma },
\label{eq:Heston_affine_struct}
\end{equation}%
and the Feller condition \eqref{Feller}.
\end{assumption}

\noindent Assumption~\ref{asm:4-17} identifies an affine solvability regime for the
stationary ODE \eqref{eq:Heston_ODE_phi}. Note that we make no claim that it represents all possible solutions for this problem. Since $\gamma >1$ and $|\rho
|\leq 1$, one has $\alpha _{\rho }\in (0,1]$, and %
\eqref{eq:Heston_affine_link} forces $\eta =1-\alpha _{\rho }(\gamma -1)\in
(0,1)$, which requires $\alpha _{\rho }(\gamma -1)<1$. Moreover, $\widehat{c%
}<0$, since $\gamma >1$ and $q\neq 0$.

\medskip \noindent This affine restriction also has a transparent preference
interpretation. Under Assumption~\ref{asm:4-17},
\begin{equation*}
\eta =1-\alpha _{\rho }(\gamma -1)
=2-\gamma +\frac{(\gamma -1)^{2}}{\gamma }|\rho |^{2},\qquad
\eta -\frac{1}{\gamma }
=-\frac{(\gamma -1)^{2}}{\gamma }(1-|\rho |^{2}).
\end{equation*}%
Consequently, $0<\eta \leq 1/\gamma <1$ and $0<\lambda \leq 1$. Equality
$\eta =1/\gamma $ occurs when $|\rho |=1$, in which case the variance shock,
that is, the Brownian noise driving the variance process $\Gamma $ in
\eqref{eq:Heston_CIR}, is fully spanned by the traded assets, and the
coefficient multiplying the unspanned Brownian motion $W^{\perp }$ in the
state price density \eqref{eq:Heston_SPD} vanishes. If $|\rho |<1$, the model contains unspanned volatility risk and
the affine restriction requires $\eta <1/\gamma $, or, equivalently, $\lambda
<1$. This connection between spanning and preferences is a property of the
explicit affine solvability regime, and not a general restriction of the
forward Epstein--Zin recursive preferences.

\medskip \noindent In particular, when $|\rho|<1$ then
$\gamma\eta<1$, so the forward Epstein--Zin aggregator is concave, rather than
convex, in its continuation value; see Appendix~\ref{app:C}. Therefore, the convex
variational results of Section~\ref{sec:3-4} do not apply in this incomplete affine
setting. The state price density in Theorem~\ref{thm:4-19} is obtained directly from the discounted marginal utility identity of
Theorem~\ref{thm:3-12}, which, we recall, does not require either convexity or concavity
in the continuation value. At the limiting case $|\rho|=1$, the variance shock is fully spanned, and one has
$\gamma\eta=1$. The aggregator becomes affine in its continuation value.

\begin{proposition}[Stationary Markovian Heston solution]\label{prop:4-18}
Under Assumption~\ref{asm:4-17}, the stationary ODE \eqref{eq:Heston_ODE_phi} admits the following explicit
solution.

\smallskip

\noindent \textit{(i) Linear equation.} The transformed equation for
$\bar h$ becomes
\begin{equation}
\begin{aligned}
&\frac{a^{2}x}{2}\, \bar{h}^{\prime \prime }(x)+\Bigl(b(\bar{\kappa}-x)+\tfrac{%
(1-\gamma )\,a\,q^{\top }\rho }{\gamma }\,x\Bigr)\bar{h}^{\prime }(x)+\alpha
_{\rho }\Bigl(\tfrac{(1-\gamma )|q|^{2}}{2\gamma }\,x-\lambda \bar{\Psi}%
\Bigr)\bar{h}(x) =\frac{\alpha _{\rho }(1-\gamma )}{1-\eta }\, \bar{\Psi}^{\eta }.
\end{aligned}  \label{eq:Heston_ODE_h_linear}
\end{equation}%
\smallskip

\noindent \textit{(ii) Resolvent.} One solution is given by
\begin{equation}
\bar{h}(x)=k\int_{0}^{\infty }e^{A(s)-\mathcal R(s)x}\,ds,\qquad k=\frac{\alpha
_{\rho }\,(\gamma -1)}{1-\eta }\, \bar{\Psi}^{\eta }>0,
\label{eq:Heston_resolvent}
\end{equation}%
where
\begin{equation}
\dot{\mathcal R}(s)=-\widehat{\kappa }\,\mathcal R(s)-\tfrac{1}{2}a^{2}\mathcal R(s)^{2}-\widehat{c}%
,\qquad \mathcal R(0)=0,  \label{eq:Heston_Riccati}
\end{equation}%
and
\begin{equation*}
\dot{A}(s)=-b\bar{\kappa}\,\mathcal R(s)-\eta \bar{\Psi},\qquad A(0)=0.
\end{equation*}%
\smallskip

\noindent \textit{(iii) Properties.} The function $\bar h$ is positive and
twice continuously differentiable on $(0,\infty)$, with
\begin{equation}
-\mathcal R_{\infty }\leq \frac{\bar{h}^{\prime }(x)}{\bar{h}(x)}\leq 0\qquad \text{and}\qquad
\mathcal R_{\infty }=\frac{-\widehat{\kappa }+\sqrt{\widehat{\kappa }^{2}-2a^{2}%
\widehat{c}}}{a^{2}}>0.  \label{eq:Heston_logderiv_bound}
\end{equation}%
Furthermore, the function $\bar\varphi=\bar h^{1/\alpha_\rho}$ is a positive and
twice continuously differentiable solution of \eqref{eq:Heston_ODE_phi}, and
$\bar\varphi^{-\eta/\lambda}=1/\bar h$ is locally bounded.
\end{proposition}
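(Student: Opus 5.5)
The plan is to use the affine restriction \eqref{eq:Heston_affine_link} to kill the nonlinearity, and then to verify the resolvent formula by a direct exponential-affine computation rather than by appealing to a Feynman--Kac theorem.

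For (i), I would start from the semilinear ODE \eqref{eq:Heston_ODE_h}. This is the form obtained from \eqref{eq:Heston_ODE_phi} via $\bar h=\bar\varphi^{\alpha_\rho}$: the choice of $\alpha_\rho$ makes the $\bar\varphi'^2/\bar\varphi$ term cancel against the Itô correction from the power. Under \eqref{eq:Heston_affine_link} the exponent on the right-hand side is $1-\eta/(\alpha_\rho\lambda)=0$, so that side becomes the constant $\alpha_\rho(1-\gamma)\bar\Psi^\eta/(1-\eta)$. This gives \eqref{eq:Heston_ODE_h_linear}. I would also record that $\alpha_\rho\lambda\bar\Psi=\eta\bar\Psi$ and that the zeroth-order $x$-coefficient equals $\widehat c$.

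For (ii), denote by $\mathcal L$ the operator on the left of \eqref{eq:Heston_ODE_h_linear} and set $e_s(x)=e^{A(s)-\mathcal R(s)x}$. Applying $\mathcal L$ to $e_s$ should give
\begin{equation*}
\mathcal L e_s(x)=e_s(x)\Bigl[x\bigl(\tfrac12a^2\mathcal R^2+\widehat\kappa\mathcal R+\widehat c\bigr)-b\bar\kappa\mathcal R-\eta\bar\Psi\Bigr]=e_s(x)\bigl[-x\dot{\mathcal R}(s)+\dot A(s)\bigr]=\partial_s e_s(x).
\end{equation*}
This uses the Riccati equation \eqref{eq:Heston_Riccati} and the equation for $A$. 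Integrating over $s\in[0,\infty)$ then yields
\begin{equation*}
\mathcal L\bar h=k\bigl(\lim_{s\to\infty}e_s-e_0\bigr)=-k=\frac{\alpha_\rho(1-\gamma)}{1-\eta}\bar\Psi^\eta,
\end{equation*}
provided $e_s(x)\to0$ and $\mathcal L$ can be exchanged with the integral.

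The main technical step is to justify these limits and exchanges through an analysis of the Riccati equation. Since $\widehat c<0$, the right-hand side of \eqref{eq:Heston_Riccati} at $\mathcal R=0$ equals $-\widehat c>0$. The quadratic $-\tfrac12a^2\mathcal R^2-\widehat\kappa\mathcal R-\widehat c$ has exactly one positive root, namely $\mathcal R_\infty$ in \eqref{eq:Heston_logderiv_bound}. A phase-line argument then shows that $\mathcal R$ increases monotonically from $0$ to $\mathcal R_\infty$, so $0\le\mathcal R(s)\le\mathcal R_\infty$ for all $s$. Because $\mathcal R\ge0$, we get $A(s)\le-\eta\bar\Psi s$. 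Hence for $x$ in compacts of $(0,\infty)$ we have
\begin{equation*}
0<e_s(x)\le e^{-\eta\bar\Psi s},\qquad |\partial_x e_s|\le\mathcal R_\infty e^{-\eta\bar\Psi s},\qquad |\partial_{xx}e_s|\le\mathcal R_\infty^2e^{-\eta\bar\Psi s}.
\end{equation*}
By dominated convergence, $\bar h$ is $C^2$, differentiation passes under the integral, and $e_s\to0$ as $s\to\infty$. I expect this step to be the main obstacle; in particular the sign bookkeeping leading to $k>0$ (via $\gamma>1$ and $\eta<1$) and to the identification of $\widehat\kappa,\widehat c$ needs care.

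For (iii), positivity of $\bar h$ is immediate. Writing $\bar h'(x)/\bar h(x)=-\int_0^\infty\mathcal R(s)e_s(x)\,ds\big/\int_0^\infty e_s(x)\,ds$ shows that it is minus a weighted average of values in $[0,\mathcal R_\infty]$, which gives \eqref{eq:Heston_logderiv_bound}. Next, $\bar\varphi=\bar h^{1/\alpha_\rho}$ is positive and $C^2$. Reversing the substitution, with the same power computation as in (i), shows that $\bar\varphi$ solves \eqref{eq:Heston_ODE_phi}. Finally, $\bar\varphi^{-\eta/\lambda}=\bar h^{-1}$ because $\eta/\lambda=\alpha_\rho$, and $\bar h^{-1}$ is locally bounded since $\bar h$ is continuous and strictly positive. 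Indeed, $\bar h(x)\ge k\int_0^\infty e^{A(s)-\mathcal R_\infty x}\,ds>0$.
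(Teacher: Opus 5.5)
Your proposal is correct and follows essentially the same route as the paper: the affine link \eqref{eq:Heston_affine_link} makes the right-hand side constant, and a phase-line analysis of the Riccati equation gives $0\le\mathcal R(s)<\mathcal R_\infty$ and $A(s)\le-\eta\bar\Psi s$, which justifies differentiating under the integral; the identity $\mathcal L e_s=\partial_s e_s$, integrated over $s\in[0,\infty)$, then yields $\mathcal L\bar h=-k$, and the logarithmic-derivative bound comes from the same weighted-average argument. Your explicit check that $\alpha_\rho$ is exactly the exponent cancelling the $\bar\varphi'^2/\bar\varphi$ term (needed to pass between the two ODEs) is a small detail the paper leaves implicit, but it does not change the argument.
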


\begin{proof}
We first study the two deterministic functions
$\mathcal R$ and $A$ that enter the resolvent representation and, in turn, verify directly
that the resolvent solves the required differential equation.

\smallskip

\noindent \textit{Step 1. Properties of $\mathcal R$ and $A$.}
By \eqref{eq:Heston_affine_link}, we have $
\frac{\eta}{\alpha _{\rho }\lambda}=1.
$
Consequently, the right-hand side of \eqref{eq:Heston_ODE_h} is the constant
$
\frac{\alpha _{\rho }(1-\gamma )}{1-\eta}\bar{\Psi}^{\eta}=-k,
$
which is negative since $\gamma>1$ and $\eta\in(0,1)$. Thus,
\eqref{eq:Heston_ODE_h} reduces to the linear equation
\eqref{eq:Heston_ODE_h_linear}.

Set
\begin{equation*}
Q(r)=\frac{a^{2}}{2}r^{2}+\widehat{\kappa}r+\widehat{c}.
\end{equation*}
The assumptions $\widehat{c}<0$ and $\widehat{\kappa}>0$ imply that $Q$ has
one negative and one positive root. Its positive root is given by
\begin{equation*}
\mathcal R_{\infty}
=\frac{-\widehat{\kappa}
+\sqrt{\widehat{\kappa}^{2}-2a^{2}\widehat{c}}}{a^{2}}.
\end{equation*}
Equation \eqref{eq:Heston_Riccati} can be written as follows; it holds that
\begin{equation*}
\dot{\mathcal R}(s)=-Q(\mathcal R(s)),\qquad \mathcal R(0)=0,\qquad s\geq0.
\end{equation*}
For every $r\in[0,\mathcal R_{\infty})$, $Q(r)<0$. It follows that
$\mathcal R$ is increasing as long as
$\mathcal R(s)<\mathcal R_{\infty}$. Uniqueness for the Riccati equation
prevents $\mathcal R$ from crossing the stationary solution
$s\mapsto \mathcal R_{\infty}$.
Therefore,
\begin{equation}
0\leq \mathcal R(s)<\mathcal R_{\infty}\quad\text{for each }s\geq0,
\qquad\text{and}\qquad
\lim_{s\to\infty}\mathcal R(s)=\mathcal R_{\infty}.
\label{eq:proof_Riccati_bounds}
\end{equation}

The definition of $A$ gives
\begin{equation*}
A(s)=-\int_{0}^{s}
\bigl(b\bar{\kappa}\mathcal R(r)+\eta\bar{\Psi}\bigr)\,dr,\qquad s\geq0.
\end{equation*}
Since $\mathcal R(r)\geq0$, we have
\begin{equation*}
A(s)\leq-\eta\bar{\Psi}s,\qquad s\geq0.
\end{equation*}
Hence, for every $x>0$ and $s\geq0$,
\begin{equation*}
0<e^{A(s)-\mathcal R(s)x}\leq e^{-\eta\bar{\Psi}s},\qquad s\geq0.
\end{equation*}
The function on the right-hand side above is integrable on $(0,\infty)$. The
resolvent in \eqref{eq:Heston_resolvent} is therefore finite and strictly
positive.

\smallskip

\noindent \textit{Step 2. Regularity and logarithmic derivative.}
The bound in \eqref{eq:proof_Riccati_bounds} and the preceding exponential estimate
allow us to differentiate twice under the integral sign, obtaining, for each $x>0$,
\begin{align*}
\bar h'(x)
&=-k\int_{0}^{\infty}\mathcal R(s)e^{A(s)-\mathcal R(s)x}\,ds
\qquad \text{and}\qquad\\
\bar h''(x)
&=k\int_{0}^{\infty}\mathcal R(s)^{2}e^{A(s)-\mathcal R(s)x}\,ds.
\end{align*}
Thus, $\bar h$ is twice continuously differentiable. Moreover,
\begin{equation*}
\frac{\bar h'(x)}{\bar h(x)}
=-\frac{\int_{0}^{\infty}\mathcal R(s)e^{A(s)-\mathcal R(s)x}\,ds}
{\int_{0}^{\infty}e^{A(s)-\mathcal R(s)x}\,ds},\qquad x>0.
\end{equation*}
The quotient on the right-hand side is the negative of a weighted average
of the values $\mathcal R(s)$. Since
$0\leq \mathcal R(s)\leq \mathcal R_{\infty}$, $s\geq0$, we obtain
\begin{equation*}
-\mathcal R_{\infty}\leq\frac{\bar h'(x)}{\bar h(x)}\leq0, \qquad x>0,
\end{equation*}
and \eqref{eq:Heston_logderiv_bound} follows.

\smallskip

\noindent \textit{Step 3. Derivation of the differential equation.}
Let
\begin{equation*}
K_s(x)=e^{A(s)-\mathcal R(s)x},\qquad s\geq0,\ x>0.
\end{equation*}
Then,
\begin{equation*}
\partial_xK_s(x)=-\mathcal R(s)K_s(x),
\qquad
\partial_{xx}K_s(x)=\mathcal R(s)^{2}K_s(x),\qquad s \geq 0, ~ x>0.
\end{equation*}
Substitution into the differential operator on the left-hand side of
\eqref{eq:Heston_ODE_h_linear} yields
\begin{align*}
&\frac{a^{2}x}{2}\partial_{xx}K_s(x)
+\bigl(b\bar{\kappa}-\widehat{\kappa}x\bigr)\partial_xK_s(x)
+\bigl(\widehat{c}x-\eta\bar{\Psi}\bigr)K_s(x)\\
&\quad=\Bigl[-b\bar{\kappa}\mathcal R(s)-\eta\bar{\Psi}
+x\bigl(\tfrac{a^{2}}{2}\mathcal R(s)^{2}
+\widehat{\kappa}\mathcal R(s)+\widehat{c}\bigr)\Bigr]K_s(x)\\
&\quad=\bigl(\dot A(s)-\dot{\mathcal R}(s)x\bigr)K_s(x)\\
&\quad=\partial_sK_s(x), \qquad s \geq 0, ~ x >0,
\end{align*}
where, in the second equality, we used the defining equations for $A$ and
$\mathcal R$.
Since $\bar h(x)=k\int_{0}^{\infty}K_s(x)\,ds$, the same differential
operator applied to $\bar h$ is equal to
\begin{align*}
k\int_{0}^{\infty}\partial_sK_s(x)\,ds
&=k\left(\lim_{s\to\infty}K_s(x)-K_0(x)\right)\\
&=-k\\
&=\frac{\alpha _{\rho }(1-\gamma )}{1-\eta}\bar{\Psi}^{\eta}.
\end{align*}
Here, $K_0(x)=1$, while the exponential estimate in Step~1 gives
$\lim_{s\to\infty}K_s(x)=0$. This proves that $\bar h$ solves
\eqref{eq:Heston_ODE_h_linear}.

Finally, if we define $\bar\varphi=\bar h^{1/\alpha _{\rho}}$, which inverts the
transformation $\bar h=\bar\varphi^{\alpha _{\rho}}$ which was used to
derive \eqref{eq:Heston_ODE_h}, then $\bar\varphi$ is a positive,
twice continuously differentiable solution of
\eqref{eq:Heston_ODE_phi}. Moreover, \eqref{eq:Heston_affine_link} gives
\begin{equation*}
\bar\varphi^{-\eta/\lambda}=\bar h^{-1},\qquad x>0.
\end{equation*}
The right-hand side is locally bounded because $\bar h$ is continuous and
strictly positive on $(0,\infty)$.
\end{proof}

\begin{theorem}[Forward recursive utility under Heston volatility]\label{thm:4-19}
Assume
the Heston market specification \eqref{eq:Heston_CIR}--%
\eqref{eq:Heston_theta} and recall Assumption~\ref{asm:4-17}. Let $\bar{\varphi}$ be the
positive stationary Markovian solution in Proposition~\ref{prop:4-18}. Then, the pair $(U,F)$ in %
\eqref{eq:Heston_separable_stationary} is a consistent forward aggregator
system on $\mathcal{U}_{\gamma }$. The optimal control policies are
\begin{equation}
\pi _{t}^{\star }=\frac{1}{\gamma }(\sigma _{0}\sigma _{0}^{\top
})^{-1}\sigma _{0}\Bigl(q+a\rho \, \frac{\bar{\varphi}^{\prime }(\Gamma _{t})%
}{\bar{\varphi}(\Gamma _{t})}\Bigr),\qquad \check{c}_{t}^{\star }=\frac{%
\bar{\Psi}^{\eta }}{\bar{\varphi}(\Gamma _{t})^{\eta /\lambda }},
\qquad t\geq0.
\label{eq:Heston_feedback}
\end{equation}%
The state price density $Y^{\star }$ satisfies 
\begin{equation}
\frac{dY_{t}^{\star }}{Y_{t}^{\star }}=-\sqrt{\Gamma _{t}}\,q^{\top
}dW_{t}^{S}+\sqrt{1-|\rho |^{2}}\,a\sqrt{\Gamma _{t}}\, \frac{\bar{\varphi}%
^{\prime }(\Gamma _{t})}{\bar{\varphi}(\Gamma _{t})}\,dW_{t}^{\perp },\qquad
Y_{0}^{\star }=U_{x}(0,x),\qquad t\geq0.  \label{eq:Heston_SPD}
\end{equation}%
Equivalently,
\begin{equation*}
Y_t^\star=\bar\varphi(\Gamma_t)(X_t^\star)^{-\gamma}\kappa_t^\star
\qquad \text{and}\qquad
\kappa_t^\star
=\exp\left(-\int_0^t
\bigl((1-\lambda)\check c_s^\star+\lambda\bar\Psi\bigr)\,ds\right),\qquad t\geq0.
\end{equation*}
\end{theorem}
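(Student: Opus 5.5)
The plan is to reduce everything to the separable characterization, Theorem~\ref{thm:4-4}, with the choices $\Phi_t=\bar\varphi(\Gamma_t)$ and $\Psi_t\equiv\bar\Psi$. Here $\bar\varphi=\bar h^{1/\alpha_\rho}$ is the positive $C^2$ solution supplied by Proposition~\ref{prop:4-18}.

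\textbf{Step 1: drift condition.} Under the Feller condition \eqref{Feller}, $\Gamma$ stays in $(0,\infty)$ for all times almost surely. Since $\bar\varphi\in C^2(0,\infty)$, It\^{o}'s formula applies to $\bar\varphi(\Gamma_t)$ without localization issues at the boundary. It gives exactly the characteristics \eqref{eq:Heston_bphi_deltaphi}. The stationary ODE \eqref{eq:Heston_ODE_phi} was obtained from \eqref{eq:EZ_bphi} by reversible algebra: substitute \eqref{eq:Heston_theta}, use $|\sigma_t^+\sigma_t\delta_t^\Phi|^2=(a\bar\varphi'/\bar\varphi)^2\Gamma_t|\rho|^2$ and the cross term $\theta_t^\top\delta^\Phi_t$, then divide by $\bar\varphi>0$. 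Hence the fact that $\bar\varphi$ solves it means the drift of $\Phi$ equals \eqref{eq:EZ_bphi}. By Theorem~\ref{thm:4-4}(i), $U$ then solves \eqref{eq:SPDE-EZ}.

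\textbf{Step 2: conditions (V1)--(V3).}
\begin{itemize}
\item[(V1)] This is trivial, since $\Psi$ is constant.
\item[(V2)] Write $\bar\varphi'/\bar\varphi=\alpha_\rho^{-1}\bar h'/\bar h$. By \eqref{eq:Heston_logderiv_bound}, $|\delta_t^\Phi|\le a\sqrt{\Gamma_t}\,\mathcal R_\infty/\alpha_\rho$. This is square integrable on compacts because $\Gamma$ is continuous.
\item[(V3)] Set $\Xi_t=\Phi_t^{\eta/\lambda}$. Since $\eta/\lambda=\alpha_\rho$, this gives $\Xi_t=\bar h(\Gamma_t)>0$ for all $t$. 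By Proposition~\ref{prop:4-2}(i), $\Xi$ solves the linear equation \eqref{eq:EZ_linear_Y}. That linear SDE has the unique solution from Step~2 of that proof, so $\mathcal E_t^{-1}\Xi_t=\varphi^{\eta/\lambda}-\int_0^t\bar\Psi^\eta\mathcal E_u^{-1}du$. The left side is strictly positive for every $t$, which is precisely (V3).
\end{itemize}
Theorem~\ref{thm:4-4}(ii), through Corollary~\ref{cor:4-8}, then yields consistency and admissibility. The feedback controls \eqref{eq:Heston_feedback} follow by inserting into \eqref{eq:EZ_optimal_controls} the expressions $\mu_t=\sqrt{\Gamma_t}\,\sigma_t\bar\theta$ and $\sigma_t\delta_t^\Phi=\Gamma_t\sigma_0 a\rho\,\bar\varphi'/\bar\varphi$, and noting that the $\sqrt{\Gamma_t}$ factors cancel against $(\sigma_t\sigma_t^\top)^{-1}=\Gamma_t^{-1}(\sigma_0\sigma_0^\top)^{-1}$. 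The consumption ratio is read off directly from \eqref{eq:EZ_optimal_controls}.

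\textbf{Step 3: state price density.} Apply Theorem~\ref{thm:3-12}(i). This step does not need convexity of $F$ in $u$, which matters because $\gamma\eta\le1$ here. The dual control is $\nu_t^\star=(I-\sigma_t^+\sigma_t)\delta_x/U_x=(I-\sigma_t^+\sigma_t)\delta_t^\Phi$. With the projection $\mathrm{diag}(I_n,0)$ this equals $\bigl(0,\sqrt{1-|\rho|^2}\,a\sqrt{\Gamma_t}\,\bar\varphi'/\bar\varphi\bigr)$. Together with $-\theta_t=-\sqrt{\Gamma_t}(q,0)$, this gives \eqref{eq:Heston_SPD}. The representation $Y^\star=\kappa^\star\bar\varphi(\Gamma)(X^\star)^{-\gamma}$ and the formula for $\kappa^\star$ come from \eqref{eq:alpha_kappa_optimal} combined with Corollary~\ref{cor:4-10}.

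\textbf{Main obstacle.} The delicate point is Step~2(V3): turning global positivity of $\bar h(\Gamma)$ into the pathwise condition. This requires uniqueness of solutions to the linear SDE \eqref{eq:EZ_linear_Y}, so that the stationary $\Xi=\bar h(\Gamma)$ coincides with the explicit variation-of-constants formula. It also uses the identification $\varphi=\bar\varphi(\Gamma_0)$ for the initial multiplier. I would handle the uniqueness by the integrating-factor computation already carried out in Proposition~\ref{prop:4-2}.
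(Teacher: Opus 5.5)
Your proposal is correct and follows the paper's proof essentially step for step. The common chain is: It\^{o}'s formula for $\bar\varphi(\Gamma_t)$, reduction of the drift condition \eqref{eq:EZ_bphi} to the stationary ODE, (V1)--(V2) from the log-derivative bound, admissibility via Corollary~\ref{cor:4-8} and Theorem~\ref{thm:3-14} (which is what Theorem~\ref{thm:4-4}(ii) packages), and the state price density from Theorem~\ref{thm:3-12}(i) with no convexity in $u$. The only difference is cosmetic and lies in (V3): the paper argues by contradiction at the lifetime $\tau$ of \eqref{eq:EZ_positive_lifetime}, whereas you apply the integrating-factor identity $d(\mathcal E_t^{-1}\Xi_t)=-\bar\Psi^\eta\mathcal E_t^{-1}\,dt$ directly to the globally positive $\Xi=\bar h(\Gamma)$, which is equivalent. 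One small slip in your portfolio computation: the factors that cancel are $\Gamma_t$, not $\sqrt{\Gamma_t}$.
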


\begin{proof}
We verify the drift equation, the viability
conditions, the feedback controls, and the state price density separately.

\smallskip

\noindent \textit{Step 1. Local characteristics of $\Phi$.}
Set
\begin{equation*}
\Phi_t=\bar\varphi(\Gamma_t),
\qquad \varphi=\Phi_0=\bar\varphi(\Gamma_0),\qquad t\geq0.
\end{equation*}
Applying It\^{o}'s formula and using
\eqref{eq:Heston_CIR} gives
\begin{align*}
d\Phi_t
&=\left(
b(\bar\kappa-\Gamma_t)\bar\varphi'(\Gamma_t)
+\frac{a^{2}\Gamma_t}{2}\bar\varphi''(\Gamma_t)
\right)dt\\
&\quad
+a\sqrt{\Gamma_t}\bar\varphi'(\Gamma_t)
\left(\rho^{\top}dW_t^S
+\sqrt{1-|\rho|^{2}}\,dW_t^{\perp}\right),\qquad t\geq0.
\end{align*}
Dividing the drift and the Brownian coefficient by
$\Phi_t=\bar\varphi(\Gamma_t)$ yields
\begin{align*}
b_t^{\Phi}
&=\frac{
b(\bar\kappa-\Gamma_t)\bar\varphi'(\Gamma_t)
+\tfrac{a^{2}\Gamma_t}{2}\bar\varphi''(\Gamma_t)}
{\bar\varphi(\Gamma_t)},\qquad t\geq0,\qquad \text{and}\\
\delta_t^{\Phi}
&=\frac{a\sqrt{\Gamma_t}\bar\varphi'(\Gamma_t)}
{\bar\varphi(\Gamma_t)}
\left(\rho,\sqrt{1-|\rho|^{2}}\right)^{\top},\qquad t\geq0,
\end{align*}
and
\eqref{eq:Heston_bphi_deltaphi} follows.

\smallskip

\noindent \textit{Step 2. Verification of the scalar drift equation.}
The projection onto the traded Brownian directions removes the last
component of $\delta_t^{\Phi}$. Hence
\begin{equation*}
\sigma_t^{+}\sigma_t\delta_t^{\Phi}
=\frac{a\sqrt{\Gamma_t}\bar\varphi'(\Gamma_t)}
{\bar\varphi(\Gamma_t)}(\rho,0)^{\top},\qquad t\geq0.
\end{equation*}
Since $\theta_t=\sqrt{\Gamma_t}(q,0)^{\top}$, we obtain
\begin{equation*}
\theta_t+\sigma_t^{+}\sigma_t\delta_t^{\Phi}
=\sqrt{\Gamma_t}\left(
q+a\rho\frac{\bar\varphi'(\Gamma_t)}
{\bar\varphi(\Gamma_t)},0\right)^{\top},\qquad t\geq0.
\end{equation*}
Therefore,
\begin{align*}
\left|\theta_t+\sigma_t^{+}\sigma_t\delta_t^{\Phi}\right|^{2}
&=\Gamma_t\left|
q+a\rho\frac{\bar\varphi'(\Gamma_t)}
{\bar\varphi(\Gamma_t)}\right|^{2}\\
&=\Gamma_t|q|^{2}
+2a\Gamma_t q^{\top}\rho
\frac{\bar\varphi'(\Gamma_t)}{\bar\varphi(\Gamma_t)}
+a^{2}\Gamma_t|\rho|^{2}
\frac{\bar\varphi'(\Gamma_t)^{2}}
{\bar\varphi(\Gamma_t)^{2}}.
\end{align*}

We now substitute the above expression and the drift from Step~1 into the scalar
drift equation \eqref{eq:EZ_bphi}. After multiplying by
$\bar\varphi(\Gamma_t)$, the drift condition is equivalent to
\begin{align*}
&b(\bar\kappa-\Gamma_t)\bar\varphi'(\Gamma_t)
+\frac{a^{2}\Gamma_t}{2}\bar\varphi''(\Gamma_t)
=\frac{1-\gamma}{1-\eta}\bar\Psi^{\eta}
\bar\varphi(\Gamma_t)^{1-\eta/\lambda} -\frac{1-\gamma}{2\gamma}\Gamma_t
\bar\varphi(\Gamma_t)
\left|q+a\rho\frac{\bar\varphi'(\Gamma_t)}
{\bar\varphi(\Gamma_t)}\right|^{2}
+\lambda\bar\Psi\bar\varphi(\Gamma_t),\qquad t\geq0.
\end{align*}
Rearranging terms and expanding the squared norm show that this identity is
precisely \eqref{eq:Heston_ODE_phi} evaluated at $x=\Gamma_t$, where
\begin{align*}
&\frac{a^{2}x}{2}\bar\varphi''(x)
+\left(b(\bar\kappa-x)
+\frac{(1-\gamma)a q^{\top}\rho}{\gamma}x\right)
\bar\varphi'(x)
+\frac{(1-\gamma)a^{2}|\rho|^{2}x}{2\gamma}
\frac{\bar\varphi'(x)^{2}}{\bar\varphi(x)} \\
&\quad+\left(\frac{(1-\gamma)|q|^{2}}{2\gamma}x
-\lambda\bar\Psi\right)\bar\varphi(x)
=\frac{1-\gamma}{1-\eta}\bar\Psi^{\eta}
\bar\varphi(x)^{1-\eta/\lambda},
\qquad x>0.
\end{align*}
By Proposition~\ref{prop:4-18}, $\bar\varphi$ solves this equation on $(0,\infty)$.
Since $\Gamma_t>0$, $t\geq0$, the scalar drift condition \eqref{eq:EZ_bphi} follows.
Therefore, Theorem~\ref{thm:4-4}~(i) implies that
$U(t,x)=\bar\varphi(\Gamma_t)u(x)$ solves the forward Epstein--Zin recursive
SPDE \eqref{eq:SPDE-EZ}.
\smallskip

\noindent \textit{Step 3. Viability and admissibility.}
The process $\Psi_t=\bar\Psi$ is strictly positive and constant, and, hence, it
satisfies (V1) and the part of (V2) that concerns $\Psi$. Proposition~\ref{prop:4-18}
gives
\begin{equation*}
\frac{\bar\varphi'(x)}{\bar\varphi(x)}
=\frac{1}{\alpha_\rho}\frac{\bar h'(x)}{\bar h(x)},
\qquad x>0.
\end{equation*}
The logarithmic derivative on the right-hand side is bounded by
\eqref{eq:Heston_logderiv_bound}. Under the Feller condition, $\Gamma_t$ is
strictly positive and has continuous paths. Thus, on every finite time
interval, $\Gamma$ has a finite pathwise maximum and a strictly positive
pathwise minimum. Then, the formula for $\delta_t^{\Phi}$ in Step~1 shows
that $\delta^{\Phi}$ is pathwise bounded on every finite time interval, and
the remaining part of (V2) follows.

Using that $\Phi_t=\bar\varphi(\Gamma_t)$ solves
\eqref{eq:EZ_SDE_Phi}, Proposition~\ref{prop:4-2} gives, for $0\leq t<\tau$,
\begin{equation*}
\bar\varphi(\Gamma_0)^{\eta/\lambda}
-\int_0^t\bar\Psi^{\eta}\mathcal E_u^{-1}\,du
=\Phi_t^{\eta/\lambda}\mathcal E_t^{-1},\qquad 0\leq t<\tau,
\end{equation*}
where $\tau $ is the positivity lifetime of $\Phi $ introduced in
\eqref{eq:EZ_positive_lifetime}. Suppose that $\tau<\infty$. By continuity and the local finiteness and strict
positivity of $\mathcal E$, letting $t\uparrow\tau$ yields
\begin{equation*}
0=\Phi_\tau^{\eta/\lambda}\mathcal E_\tau^{-1}
=\bar\varphi(\Gamma_\tau)^{\eta/\lambda}\mathcal E_\tau^{-1},
\end{equation*}
which contradicts the strict positivity of $\bar\varphi(\Gamma_\tau)$ and
$\mathcal E_\tau$. Thus, $\tau=\infty$, and (V3) follows.

Proposition~\ref{prop:4-18} also gives
\begin{equation*}
\bar\varphi(x)^{-\eta/\lambda}=\bar h(x)^{-1},\qquad x>0.
\end{equation*}
This function is locally bounded on $(0,\infty)$. Since each path of
$\Gamma$ remains in a compact subset of $(0,\infty)$ on a finite time
interval, the optimal relative consumption
\begin{equation*}
\check c_t^{\star}
=\bar\Psi^{\eta}\bar\varphi(\Gamma_t)^{-\eta/\lambda},\qquad t\geq0,
\end{equation*}
is pathwise bounded on any finite time interval. The same is true for the
discount rate
\begin{equation*}
\alpha_t^{\star}
=(1-\lambda)\check c_t^{\star}+\lambda\bar\Psi,\qquad t\geq0.
\end{equation*}
Together with the pathwise bounds for $\Gamma$, $\delta^{\Phi}$, and
$\theta$, these estimates verify the input bounds of Assumption~\ref{asm:3-13} with
the constants specified in Corollary~\ref{cor:4-8}. In turn, Proposition~\ref{prop:4-3} and the homothetic
relations in the proof of Theorem~\ref{thm:4-4} verify Assumptions~\ref{asm:2-9} and~\ref{asm:3-2}, while
Step~2 verifies the HJB SPDE. Theorem~\ref{thm:3-14} implies that the
feedback control pair is admissible and optimal.

\smallskip

\noindent \textit{Step 4. Feedback controls and state price density.} Substituting the Heston model parameters in \eqref{eq:EZ_optimal_controls} gives
\begin{align*}
\pi_t^{\star}
&=\frac{1}{\gamma}
\bigl(\Gamma_t\sigma_0\sigma_0^{\top}\bigr)^{-1}
\left(\Gamma_t\sigma_0q
+a\Gamma_t\sigma_0\rho
\frac{\bar\varphi'(\Gamma_t)}{\bar\varphi(\Gamma_t)}\right)\\
&=\frac{1}{\gamma}
(\sigma_0\sigma_0^{\top})^{-1}\sigma_0
\left(q+a\rho
\frac{\bar\varphi'(\Gamma_t)}{\bar\varphi(\Gamma_t)}\right),\qquad t\geq0.
\end{align*}
Similarly, we obtain
\begin{equation*}
\check c_t^{\star}
=\frac{\bar\Psi^{\eta}}
{\bar\varphi(\Gamma_t)^{\eta/\lambda}},\qquad t\geq0,
\end{equation*}
and \eqref{eq:Heston_feedback} follows.

For the dual control process, \eqref{eq:nu_star_marginal} and the homothetic identity
$\delta_x(t,x)/U_x(t,x)=\delta_t^{\Phi}$ imply
\begin{align*}
\nu_t^{\star}
&=(I_{n+1}-\sigma_t^{+}\sigma_t)\delta_t^{\Phi}\\
&=\left(0,
\sqrt{1-|\rho|^{2}}\,a\sqrt{\Gamma_t}
\frac{\bar\varphi'(\Gamma_t)}{\bar\varphi(\Gamma_t)}\right)^{\top},\qquad t\geq0.
\end{align*}
Therefore, \eqref{eq:optimal_SPD} gives
\begin{align*}
\frac{dY_t^{\star}}{Y_t^{\star}}
&=(-\theta_t+\nu_t^{\star})^{\top}dW_t\\
&=-\sqrt{\Gamma_t}\,q^{\top}dW_t^{S}
+\sqrt{1-|\rho|^{2}}\,a\sqrt{\Gamma_t}
\frac{\bar\varphi'(\Gamma_t)}{\bar\varphi(\Gamma_t)}\,dW_t^{\perp},\qquad t\geq0,
\end{align*}
and \eqref{eq:Heston_SPD} follows. Finally,
\begin{equation*}
U_x(t,X_t^{\star})
=\bar\varphi(\Gamma_t)(X_t^{\star})^{-\gamma},\qquad t\geq0.
\end{equation*}
Combining this identity with Theorem~\ref{thm:3-12} and Corollary~\ref{cor:4-10} gives
\begin{equation*}
Y_t^{\star}
=\bar\varphi(\Gamma_t)(X_t^{\star})^{-\gamma}\kappa_t^{\star}
\qquad \text{and}\qquad
\kappa_t^{\star}
=\exp\left(-\int_0^t
\bigl((1-\lambda)\check c_s^{\star}+\lambda\bar\Psi\bigr)\,ds\right),\qquad t\geq0,
\end{equation*}
and we easily conclude.
\end{proof}

\begin{remark}\label{rem:4-20}
The optimal portfolio process in %
\eqref{eq:Heston_feedback} decomposes into the Merton component $\tfrac{1}{%
\gamma}(\sigma_0\sigma_0^\top)^{-1}\sigma_0 q$ and an intertemporal hedging
component proportional to $\rho \, \bar \varphi^{\prime }(\Gamma_t)/\bar
\varphi(\Gamma_t)$. The consumption-to-wealth ratio depends on $\Gamma_t$
through $\bar \varphi(\Gamma_t)^{-\eta/\lambda}$; and the orthogonal part 
$\sqrt{1-|\rho|^2}\,a\sqrt{\Gamma_t}\, \bar \varphi^{\prime }/\bar \varphi$
of $Y^\star$ in \eqref{eq:Heston_SPD} is the unspanned Heston component
generated by discounted marginal utility (Theorem~\ref{thm:3-12}). The
myopic demand is scaled directly by $1/\gamma $ and does not depend on $\eta $.
The EIS parameter $\eta$ nevertheless affects the stationary solution $\bar{\varphi}$ through
\eqref{eq:Heston_ODE_phi}. It therefore enters the portfolio indirectly
through the intertemporal hedging term, and affects the optimal consumption directly
through $\bar{\Psi}^{\eta }/\bar{\varphi}^{\eta /\lambda }$. Thus the risk
aversion parameter $\gamma$ governs the immediate market price of risk exposure, whereas the EIS parameter $\eta$
operates through consumption substitution and the hedge against changes in
the forward preferences.
\end{remark}

\begin{remark}\label{rem:4-21}
The nonlinear term in %
\eqref{eq:Heston_ODE_phi} contains $\bar{\varphi}^{-\eta /\lambda }$, and the
logarithmic transform $g=\log \bar{\varphi}$ is not invariant under additive
shifts $g\mapsto g+C$. Therefore, the usual ergodic BSDE reduction is not
natural here. An infinite-horizon BSDE with a level-dependent monotone driver is a
more plausible representation away from the affine locus. Indeed, let
\begin{equation*}
\mathcal L^{\widehat\kappa}v(x)
=\frac{a^{2}x}{2}v''(x)
+\bigl(b\bar\kappa-\widehat\kappa x\bigr)v'(x),\qquad x>0,
\end{equation*}
and set $z=a\sqrt{x}\,g'(x)$. Dividing
\eqref{eq:Heston_ODE_phi} by $\bar\varphi$, we formally deduce that $g$ solves
\begin{equation*}
\mathcal L^{\widehat\kappa}g(x)
+\mathfrak f\bigl(x,g(x),a\sqrt{x}\,g'(x)\bigr)=0,\qquad x>0,
\end{equation*}
where
\begin{equation}
\mathfrak f(x,y,z)
=\frac{\alpha_\rho}{2}|z|^{2}
+\frac{(1-\gamma)|q|^{2}}{2\gamma}x
-\lambda\bar\Psi
-\frac{1-\gamma}{1-\eta}\bar\Psi^{\eta}
e^{-(\eta/\lambda)y}.
\label{eq:Heston_log_BSDE_driver}
\end{equation}
The nonlinear term has the exact monotonicity property, in that
\begin{equation}
\partial_y\mathfrak f(x,y,z)
=-\bar\Psi^{\eta}e^{-(\eta/\lambda)y}<0,
\label{eq:Heston_log_BSDE_monotonicity}
\end{equation}
since $\eta/\lambda=(\eta-1)/(1-\gamma)$. Thus, the recursive term entirely
determines the level that the standard ergodic formulation would not determine. However, the
monotonicity is not uniform since its modulus degenerates in one
tail of $y$. Moreover, the driver has quadratic growth in $z$ and an
unbounded linear dependence on the CIR state. Therefore, establishing an
infinite horizon representation requires a suitable Lyapunov or
properness class. The existing infinite horizon BSDE representation of
homothetic forward criteria in \citet{Liang-Volume1} does not directly cover
this combination, and we leave this problem for future research.

The constant coefficient case makes the associated selection mechanism
transparent. When
\begin{equation*}
\dot\Xi_t=-\bar\Psi^{\eta}+\bar\chi\Xi_t,
\qquad \bar\chi>0,\qquad t\geq0,
\end{equation*}
the critical point $\Xi^{\mathrm{crit}}=\bar\Psi^{\eta}/\bar\chi$ is an
unstable equilibrium of this forward equation, in the sense that any
trajectory started away from it diverges as $t\uparrow \infty $. If, however,
the same deterministic equation is considered
backward from infinity, then boundedness selects this critical solution;
indeed, it is the only solution that remains bounded on $[0,\infty)$. The other
viable forward trajectories
have the form
\begin{equation*}
\Xi_t=\Xi^{\mathrm{crit}}+C e^{\bar\chi t},
\qquad C>0,\qquad t\geq0,\ \bar\chi>0,
\end{equation*}
and carry a residual preference component. Hence the bounded backward
construction selects the critical solution $B_\infty=0$, whereas the forward
formulation also retains the residual solutions $B_\infty>0$. In the
nonlinear Heston case this ``stable--unstable'' interpretation may serve as a prospective
selection principle rather than a direct sum decomposition, since both $Z$ and
$\delta^\Phi$ depend endogenously on the solution.
\end{remark}

The dual stationary equation for $\widetilde{\Phi }_{t}=\bar{%
\varphi}(\Gamma _{t})^{1/\gamma }$ is obtained from %
\eqref{eq:dual_factor_SDE}--\eqref{eq:dual_factor_drift} by setting $%
\widetilde{\Phi }_{t}=\bar{\varphi}(\Gamma _{t})^{1/\gamma }$.

\bigskip

\section*{Conflicts of Interest}

The authors declare no conflicts of interest.

\section*{Data Availability Statement}

Data sharing is not applicable to this article, as no new data were created or
analyzed in this study.

\bibliographystyle{plainnat}
\bibliography{refs}

\appendix

\section{Regular random fields and detailed proofs for the forward recursive utilities}\label{app:A}

We first recall the regularity classes used in Section~\ref{sec:2} and then summarize
the stochastic calculus details underlying the verification and
normalization results. We keep the time and state arguments visible
throughout. For example, when a random field is evaluated along a wealth process, we
write, $U_x(t,X_t)$.

\subsection*{Regular random fields}

We follow \citet{Kunita-1990} and the formulation in
\citet[Appendix~A]{LSZ25}. Let $G(t,x)$, $t\geq0$, $x>0$, be a progressive
random field, $m\in\mathbb N_{0}$ and $\varepsilon\in(0,1]$. We use
the convention $\partial_x^0G=G$.

The field $G$ is called $\mathcal C^{m}$-regular if, $\mathbb P$-almost
surely, the map $x\mapsto G(t,x)$ is $m$-times continuously
differentiable for each $t\geq0$. It is called
$\mathcal C^{m,\varepsilon}$-regular if, on the same almost-sure event,
$\partial_x^mG(t,\cdot)$ is locally $\varepsilon$-H\"{o}lder continuous for
each $t\geq0$. For every compact set $K\subset(0,\infty)$ and $t\geq0$,
define
\begin{equation*}
\begin{aligned}
\|G\|_{m,\varepsilon:K}(t)
={}&\sup_{x\in K}\frac{|G(t,x)|}{1+|x|}
+\sum_{j=1}^{m}\sup_{x\in K}|\partial_x^jG(t,x)| +\sup_{\substack{x,y\in K\\x\neq y}}
\frac{|\partial_x^mG(t,x)-\partial_x^mG(t,y)|}{|x-y|^{\varepsilon}}.
\end{aligned}
\end{equation*}
A $\mathcal C^{m,\varepsilon}$-regular field $G$ belongs to
$\mathcal K_{\mathrm{loc}}^{m,\varepsilon}$ if, for every compact
$K\subset(0,\infty)$ and each $T>0$,
\begin{equation*}
\int_0^T\|G\|_{m,\varepsilon:K}(t)\,dt<\infty
\qquad\text{$\mathbb P$-a.s.},
\end{equation*}
and it belongs to $\overline{\mathcal K}_{\mathrm{loc}}^{m,\varepsilon}$
if
\begin{equation*}
\int_0^T\|G\|_{m,\varepsilon:K}(t)^2\,dt<\infty .
\end{equation*}
For vector-valued random fields, these requirements are understood
componentwise.

Let now $U(t,x)$, $t\geq0$, $x>0$, be a progressively measurable It\^{o}
semimartingale random field with decomposition
\begin{equation*}
dU(t,x)=b(t,x)\,dt+\delta(t,x)^{\top}dW_t,\qquad t\geq0, ~x>0.
\end{equation*}
The field $U$ is called a
$\mathcal K_{\mathrm{loc}}^{m,\varepsilon}$-semimartingale random field if
$U(0,\cdot)$ is $\mathcal C^{m,\varepsilon}$-regular and
\begin{equation*}
\int_0^{\cdot}b(s,\cdot)\,ds
\in\mathcal K_{\mathrm{loc}}^{m,\varepsilon}
\qquad \text{and}\qquad
\int_0^{\cdot}\delta(s,\cdot)^{\top}dW_s
\in\overline{\mathcal K}_{\mathrm{loc}}^{m,\varepsilon}.
\end{equation*}
If $U$ is a
$\mathcal K_{\mathrm{loc}}^{m,\varepsilon}$-semimartingale random field,
then, for every $\varepsilon'<\varepsilon$,
\begin{equation*}
(b,\delta)\in
\mathcal K_{\mathrm{loc}}^{m,\varepsilon'}
\times\overline{\mathcal K}_{\mathrm{loc}}^{m,\varepsilon'}.
\end{equation*}
Conversely, if $(b,\delta)$ is in
$\mathcal K_{\mathrm{loc}}^{m,\varepsilon}
\times\overline{\mathcal K}_{\mathrm{loc}}^{m,\varepsilon}$ then
$U$ is a
$\mathcal K_{\mathrm{loc}}^{m,\varepsilon'}$-semimartingale random field
for every $\varepsilon'<\varepsilon$.

In particular, if $U$ is $\mathcal C^2$-regular and a
$\mathcal K_{\mathrm{loc}}^{1,\varepsilon}$-semimartingale random field, then,
the It\^{o}--Ventzel formula applies along every continuous It\^{o}
process $X$ with values in $(0,\infty)$ and gives, for $t\geq0$,
\begin{equation*}
dU(t,X_t)=b(t,X_t)\,dt+\delta(t,X_t)^{\top}dW_t+U_x(t,X_t)\,dX_t
+\frac12U_{xx}(t,X_t)\,d\langle X\rangle_t
+\delta_x(t,X_t)^{\top}d\langle W,X\rangle_t.
\end{equation*}

\begin{proof}[Proof of Theorem~\ref{thm:2-6}]
Fix an admissible control pair
$(\pi,c)\in\mathcal A$ and write $X_t=X_t^{\pi,c}$. We first compute the
dynamics of the forward recursive criterion process. We, then, consider a
specific family of stopping times to verify the local consistency.

\smallskip

\noindent \textit{Step 1. Dynamics along an admissible wealth process.}
The It\^{o}--Ventzel formula applied to the random field $U$ along $X$
gives
\begin{align*}
dU(t,X_t)
&=b(t,X_t)\,dt+\delta(t,X_t)^{\top}dW_t\\
&\quad+U_x(t,X_t)\,dX_t
+\frac12U_{xx}(t,X_t)\,d\langle X\rangle_t
+\delta_x(t,X_t)^{\top}d\langle W,X\rangle_t,\qquad t\geq0.
\end{align*}
This yields
\begin{align*}
dU(t,X_t)
&=\Bigl(
b(t,X_t)
+U_x(t,X_t)\bigl(X_t\pi_t^{\top}\sigma_t\theta_t-c_t\bigr)\\
&\qquad
+\frac12U_{xx}(t,X_t)
\bigl|X_t\sigma_t^{\top}\pi_t\bigr|^2
+X_t\pi_t^{\top}\sigma_t\delta_x(t,X_t)
\Bigr)dt\\
&\quad+\Bigl(
\delta(t,X_t)
+U_x(t,X_t)X_t\sigma_t^{\top}\pi_t
\Bigr)^{\top}dW_t,\qquad t\geq0.
\end{align*}
After adding the aggregator term, we obtain
\begin{equation}
dV_t^{\pi,c}
=H(t,X_t,\pi_t,c_t)\,dt
+(Z_t^{\pi,c})^{\top}dW_t,
\qquad t\geq0,
\label{eq:proof_primal_criterion_dynamics}
\end{equation}
where
\begin{align*}
H(t,X_t,\pi_t,c_t)
&=b(t,X_t)
+U_x(t,X_t)\bigl(X_t\pi_t^{\top}\sigma_t\theta_t-c_t\bigr)\\
&\quad
+\frac12U_{xx}(t,X_t)
\bigl|X_t\sigma_t^{\top}\pi_t\bigr|^2
+X_t\pi_t^{\top}\sigma_t\delta_x(t,X_t)\\
&\quad
+F\bigl(t,c_t,U(t,X_t),Z(t,X_t,\pi_t)\bigr),\qquad t\geq0.
\end{align*}
Here $Z(t,x,\pi )$ is the pointwise map introduced in Section~\ref{sec:2-2}, so that its
value along the control pair is $Z(t,X_t,\pi _t)=Z_t^{\pi ,c}$, with
$Z^{\pi ,c}$ as in \eqref{eq:Z}.

\smallskip

\noindent \textit{Step 2. Local integrability.}
Fix $T>0$. Since $X$ is continuous and strictly positive, its range on
$[0,T]$ is a compact subset of $(0,\infty)$, $\mathbb P$-a.s. The local
regularity of $U$ and the bounds on the local characteristics established in
Appendix~\ref{app:A} therefore imply that
$\sup_{0\leq t\leq T}\bigl(|U_x(t,X_t)|+|U_{xx}(t,X_t)|\bigr)<\infty$ and
$\int_0^T\bigl(|b(t,X_t)|+|\delta(t,X_t)|^2+
|\delta_x(t,X_t)|^2\bigr)\,dt<\infty$, $\mathbb P$-a.s.

The admissibility of the control pair $(\pi,c)$ gives
\begin{equation*}
\int_0^T
\left(c_t+|\pi_t^{\top}\sigma_t|^2\right)\,dt<\infty.
\end{equation*}
In addition, the standing market assumption gives
\begin{equation*}
\int_0^T|\theta_t|^2\,dt<\infty.
\end{equation*}
Since
$|\pi_t^{\top}\sigma_t\theta_t|
\leq|\pi_t^{\top}\sigma_t|\,|\theta_t|$, $t\geq0$, the Cauchy--Schwarz inequality shows that the corresponding product is
integrable on $[0,T]$. Thus, all terms in the Hamiltonian other than the
aggregator are pathwise integrable on $[0,T]$.

\smallskip

\noindent \textit{Step 3. The local supermartingale property.}
We assume that $(\pi,c)$ also satisfies \eqref{eq:F-integrability}. For $n\geq1$, define
\begin{align*}
\tau_n=\inf\Biggl\{t\geq0\ \Bigg|\ {}
&X_t\notin\left(\frac1n,n\right)\ \text{or}\ {}
\int_0^t|Z_s^{\pi,c}|^2\,ds\geq n\, \ {}
\text{or}\ {}
\int_0^t|H(s,X_s,\pi_s,c_s)|\,ds\geq n
\Biggr\}.
\end{align*}
The local integrability established in Step~2 and the form of
$Z_t^{\pi,c}$ imply
\begin{equation*}
\int_0^T|Z_s^{\pi,c}|^2\,ds<\infty
\qquad \text{and}\qquad
\int_0^T|H(s,X_s,\pi_s,c_s)|\,ds<\infty,
\end{equation*}
$\mathbb P$-a.s.\ for every $T>0$. Since $X$ is continuous and strictly
positive, it follows that $\tau_n$ increases to infinity a.s.

On $[0,\tau_n]$, the stochastic integral in
\eqref{eq:proof_primal_criterion_dynamics} is a square integrable
martingale. The stopped finite variation term has total variation bounded
by $n$, and $V_{\cdot\wedge\tau_n}^{\pi,c}$ is integrable. Moreover,
\eqref{eq:HJB-ineq} holds and, hence,
\begin{equation*}
H(t,X_t,\pi_t,c_t)\leq0,\qquad t\geq0,
\end{equation*}
for $dt\otimes d\mathbb P$-a.e.\ $(t,\omega)$. Therefore,
$V_{\cdot\wedge\tau_n}^{\pi,c}$ is a supermartingale. Since
$\tau_n\uparrow\infty$, the process $V^{\pi,c}$ is a local
supermartingale. This proves condition i) of Definition~\ref{defn:2-3}.

\smallskip

\noindent \textit{Step 4. Equality along the feedback control pair.}
Let $X_t^{\star}$ be the wealth generated by the candidate optimal feedback control pair. Assumption
ii) of the theorem and \eqref{eq:HJB-eq} give
\begin{equation}
H\left(
t,X_t^{\star},
\pi^{\star}(t,X_t^{\star}),
c^{\star}(t,X_t^{\star})
\right)=0,\qquad t\geq0,
\label{eq:proof_H_zero_feedback}
\end{equation}
for $dt\otimes d\mathbb P$-a.e.\ $(t,\omega)$.

All other terms in $H$ are locally integrable by Step~2. Hence, the vanishing
of the Hamiltonian in \eqref{eq:proof_H_zero_feedback} also gives
\eqref{eq:F-integrability} along the feedback control pair.

We now see that equation \eqref{eq:proof_primal_criterion_dynamics} has zero
drift along the feedback control pair. The same stopping times make the stopped criterion process a
true martingale. Therefore,
$V^{\pi^{\star},c^{\star}}$ is a local martingale. This proves condition
ii) of Definition~\ref{defn:2-3}. Both consistency conditions hold, and the feedback
control pair is optimal.
\end{proof}

\begin{proof}[Proof of Proposition~\ref{prop:2-8}]
We first note that the control class $\mathcal{A}$ is unchanged, since Definition~\ref{defn:2-1} involves only the market coefficients and the
wealth equation, and makes no reference to the pair $(U,F)$. Next, we prove
that the transformation preserves consistency in the forward direction, which would then imply (i) and (ii). In turn, we apply the inverse transformation, and verify the spatial shape of the transformed utility.

\smallskip

\noindent \textit{Step 1. Transformation of the criterion dynamics.}
Fix $(\pi,c)\in\mathcal A$ satisfying \eqref{eq:F-integrability}, and set
\begin{equation*}
X_t=X_t^{\pi,c},
\qquad
u_t=U(t,X_t),
\qquad
z_t=Z_t^{\pi,c},\qquad t\geq0.
\end{equation*}
Since $V^{\pi,c}$ is a continuous local supermartingale, there exist a
continuous local martingale $M$ and a continuous non-increasing adapted
process $D$, with $D_0=0$, such that
\begin{equation*}
V_t^{\pi,c}=V_0^{\pi,c}+M_t+D_t,\qquad t\geq0.
\end{equation*}
The computation in Theorem~\ref{thm:2-6} identifies that
\begin{equation*}
M_t=\int_0^t z_s^{\top}dW_s,\qquad t\geq0,
\end{equation*}
and, thus, we have that
\begin{equation}
du_t
=z_t^{\top}dW_t
-F(t,c_t,u_t,z_t)\,dt
+dD_t,\qquad t\geq0.
\label{eq:proof_normalization_u}
\end{equation}
Along an optimal control pair, $D\equiv 0$.

By definition, $\bar U(t,X_t)=\varphi(u_t)$. It\^{o}'s formula gives
\begin{equation}
d\bar U(t,X_t)
=\varphi'(u_t)\,du_t
+\frac12\varphi''(u_t)|z_t|^2\,dt,\qquad t\geq0.
\label{eq:normalization_ito}
\end{equation}
Next, we substitute \eqref{eq:proof_normalization_u} and use the equality
\begin{equation*}
F(t,c_t,u_t,z_t)
=f(t,c_t,u_t)+\frac12A(u_t)|z_t|^2,\qquad t\geq0,
\end{equation*}
into \eqref{eq:normalization_ito}. Then, the coefficient of $|z_t|^2$ becomes $
\frac12\left(
\varphi''(u_t)-A(u_t)\varphi'(u_t)
\right)
$ and vanishes since $\varphi''(u)=A(u)\varphi'(u)$. Therefore, we have
\begin{align*}
d\bar U(t,X_t)
&=\varphi'(u_t)z_t^{\top}dW_t
-\varphi'(u_t)f(t,c_t,u_t)\,dt
+\varphi'(u_t)\,dD_t\\
&=\varphi'(u_t)z_t^{\top}dW_t
-\bar F\bigl(t,c_t,\bar U(t,X_t)\bigr)\,dt
+\varphi'(u_t)\,dD_t.
\end{align*}
The stochastic integral is a continuous local martingale. Since
$\varphi'(u_t)>0$ and $D$ is non-increasing, the process
\begin{equation*}
\int_0^t\varphi'(u_s)\,dD_s,\qquad t\geq0,
\end{equation*}
is non-increasing. It, then, follows that the transformed criterion is a local
supermartingale, $t\geq0$, and we easily conclude.

\smallskip

\noindent \textit{Step 2. Local integrability.}
On a finite time interval, the continuous process $u_t=U(t,X_t)$, has compact
range. Since, on the other hand, $\varphi'$ is continuous and strictly positive, both
$\varphi'(u_t)$ and $1/\varphi'(u_t)$ are bounded on that interval. The
continuity of $A$ also makes $A(u_t)$ bounded on a finite time interval.
Moreover, $
\int_0^T|z_t|^2\,dt<\infty.$ Therefore, the identity
\begin{equation*}
f(t,c_t,u_t)
=F(t,c_t,u_t,z_t)-\frac12A(u_t)|z_t|^2,\qquad t\geq0,
\end{equation*}
implies
\begin{align*}
\int_0^T|f(t,c_t,u_t)|\,dt
&\leq
\int_0^T|F(t,c_t,u_t,z_t)|\,dt -\frac12\int_0^T A(u_t)|z_t|^2\,dt
<\infty,\qquad t\geq0.
\end{align*}
The same identity proves that the local
integrability of $f(t,c_t,u_t)$ implies the local integrability of $F$. Hence,
\begin{equation*}
\int_0^T|F(t,c_t,u_t,z_t)|\,dt<\infty
\quad\Longleftrightarrow\quad
\int_0^T|f(t,c_t,u_t)|\,dt<\infty.
\end{equation*}
On the other hand,
\begin{equation*}
\bar F\bigl(t,c_t,\varphi(u_t)\bigr)
=\varphi'(u_t)f(t,c_t,u_t),\qquad t\geq0.
\end{equation*}
Therefore, the upper and lower pathwise bounds for $\varphi'(u_t)$ yield that
\begin{equation*}
\int_0^T|\bar F(t,c_t,\bar U(t,X_t))|\,dt<\infty
\quad\Longleftrightarrow\quad
\int_0^T|f(t,c_t,u_t)|\,dt<\infty,\qquad \text{for each }T>0.
\end{equation*}
Combining the two equivalences proves that the required local
integrability can be obtained in both directions between the original and the
normalized systems.

\smallskip

\noindent \textit{Step 3. The inverse transformation.}
Let $\psi=\varphi^{-1}$. Differentiating
$\varphi(\psi(\bar u))=\bar u$ gives
\begin{align*}
\psi'(\bar u)
&=\frac{1}{\varphi'(\psi(\bar u))},\\
\psi''(\bar u)
&=-\frac{\varphi''(\psi(\bar u))}
{\varphi'(\psi(\bar u))^3}\\
&=-A(\psi(\bar u))\psi'(\bar u)^2.
\end{align*}
The martingale coefficient of $\bar U(t,X_t)=\varphi(u_t)$ is
$\bar z_t=\varphi'(u_t)z_t$, $t\geq0$.
Applying It\^{o}'s formula to
$U(t,X_t)=\psi(\bar U(t,X_t))$, and using the above formula for
$\psi''$, produces the term $-\frac12A(u_t)|z_t|^2$.
This term eliminates the quadratic part of $F$, while $\psi'>0$ preserves the
sign of the non-increasing finite variation term. Together with Step~2, this
proves consistency in the inverse direction and preserves the equality case.

\smallskip

\noindent \textit{Step 4. Monotonicity and concavity.}
Fix $t\geq0$. Since $\varphi$ is strictly increasing, the strict
monotonicity of $U(t,\cdot)$ implies that of $\bar U(t,\cdot)$. For
$x_1\ne x_2$ and $a\in(0,1)$, strict concavity of $U(t,\cdot)$ and
monotonicity and concavity of $\varphi$ give
\begin{align*}
\bar U\bigl(t,ax_1+(1-a)x_2\bigr)
&>\varphi\bigl(aU(t,x_1)+(1-a)U(t,x_2)\bigr)\\
&\geq a\bar U(t,x_1)+(1-a)\bar U(t,x_2),
\end{align*}
and, thus, $\bar U(t,\cdot)$ is strictly concave. We note that this argument does not require
the pointwise condition $U_{xx}<0$.
\end{proof}

\begin{proof}[Proof of Theorem~\ref{thm:2-10}]
We verify the two Hamiltonian inequalities in
Theorem~\ref{thm:2-6} and determine when they hold as equalities.

To this end, let $x>0$, $\pi\in\mathbb R^n$, $c>0$, and for
$dt\otimes d\mathbb P$-a.e.\ $(t,\omega)$, define
\begin{equation*}
v=x\sigma_t^{\top}\pi
\in\operatorname{Im}\sigma_t^{\top},\qquad t\geq0,
\end{equation*}
and
\begin{equation*}
\Lambda(t,x)
=U_x(t,x)\theta_t
+\sigma_t^{+}\sigma_t\delta_x(t,x),\qquad t\geq0.
\end{equation*}
Since $v$ belongs to $\operatorname{Im}\sigma_t^{\top}$, we have
\begin{equation*}
v^{\top}\delta_x(t,x)
=v^{\top}\sigma_t^{+}\sigma_t\delta_x(t,x),\qquad t\geq0,\ x>0.
\end{equation*}
Therefore, the Hamiltonian can be written as
\begin{equation} \label{eq:Hamiltonian}
H(t,\omega,x,\pi,c)
=b(t,x)
+\frac12U_{xx}(t,x)|v|^2
+v^{\top}\Lambda(t,x)
+F(t,c,U(t,x))
-U_x(t,x)c,\qquad t\geq0,\ x>0.
\end{equation}

\smallskip

\noindent \textit{Step 1. The investment term.}
The equation \eqref{eq:SPDE-norm} gives that the drift satisfies
\begin{equation*}
b(t,x)
=\frac{|\Lambda(t,x)|^2}{2U_{xx}(t,x)}
-\widetilde F\bigl(t,U_x(t,x),U(t,x)\bigr),\qquad t\geq0,\ x>0.
\end{equation*}
Substituting the above into the Hamiltonian \eqref{eq:Hamiltonian} separates the portfolio
term from the consumption one. Specifically, the former
becomes
\begin{align*}
&\frac{|\Lambda(t,x)|^2}{2U_{xx}(t,x)}
+\frac12U_{xx}(t,x)|v|^2
+v^{\top}\Lambda(t,x) 
=\frac12U_{xx}(t,x)
\left|v+\frac{\Lambda(t,x)}{U_{xx}(t,x)}\right|^2,\qquad t\geq0,\ x>0.
\end{align*}
Since $U_{xx}(t,x)<0$, this expression is non-positive and becomes equal to zero
if and only if
\begin{equation}
v=-\frac{\Lambda(t,x)}{U_{xx}(t,x)},\qquad t\geq0,\ x>0.
\label{eq:proof_portfolio_equality}
\end{equation}
The vector $\Lambda(t,x)$ belongs to
$\operatorname{Im}\sigma_t^{\top}$. Because $\sigma_t$ is of full row rank,
\eqref{eq:proof_portfolio_equality} determines a unique portfolio. In turn, solving
$x\sigma_t^{\top}\pi=v$ gives
\begin{align*}
\pi
&=-\frac{1}{xU_{xx}(t,x)}
(\sigma_t^{\top})^{+}\Lambda(t,x)\\
&=-\frac{U_x(t,x)}{xU_{xx}(t,x)}
(\sigma_t^{\top})^{+}
\left(\theta_t+\frac{\delta_x(t,x)}{U_x(t,x)}\right),\qquad t\geq0,
\end{align*}
and \eqref{eq:feedback} follows.

\smallskip

\noindent \textit{Step 2. The consumption term.}
Set $d=U_x(t,x)$, $t\geq0$, $x>0$. Then, $d>0$ by strict monotonicity and concavity. The definition of
Fenchel transform gives, for every $c>0$,
\begin{equation*}
F(t,c,U(t,x))-dc
\leq\widetilde F(t,d,U(t,x)), \qquad t \geq 0, ~ x >0.
\end{equation*}
Then,
\begin{equation*}
F(t,c,U(t,x))-U_x(t,x)c
-\widetilde F\bigl(t,U_x(t,x),U(t,x)\bigr)\leq0, \qquad t \geq 0, ~ x >0.
\end{equation*}
Assumption~\ref{asm:2-9} and
\eqref{eq:F-envelope} show that equality holds if and only if
\begin{equation*}
c=F_c^{-1}\bigl(t,U_x(t,x),U(t,x)\bigr)
=c^{\star}(t,x), \qquad t \geq 0, ~ x >0.
\end{equation*}

\smallskip

\noindent \textit{Step 3. Application of Theorem~\ref{thm:2-6}.}
Steps~1 and~2 prove $H(t,\omega,x,\pi,c)\leq0$ for every $\pi\in\mathbb R^n$ and $c>0$. Equality holds at $\bigl(\pi,c\bigr)
=\bigl(\pi^{\star}(t,x),c^{\star}(t,x)\bigr)$ and, thus, these two statements are
exactly \eqref{eq:HJB-ineq} and \eqref{eq:HJB-eq}.

The optimal feedback portfolio is progressively measurable because it is constructed
from the progressively measurable fields
$U_x(t,x)$, $U_{xx}(t,x)$, and $\delta_x(t,x)$ together with the predictable
processes $\sigma_t$ and $\theta_t$. The optimal feedback consumption control is
progressively measurable because $F_c^{-1}(t,d,u)$ is measurable in
$(t,\omega)$ and continuous in $(d,u)$, while $U_x(t,x)$ and $U(t,x)$ are
progressively measurable.

The theorem assumes that the wealth equation with the candidate optimal feedback controls has a
unique strictly positive strong solution and that the resulting feedback
control pair is admissible. All the hypotheses of Theorem~\ref{thm:2-6} are therefore
satisfied. Its conclusion proves consistency and optimality.
\end{proof}

\section{Technical lemmas and detailed proofs for the duality theory}\label{app:B}

We present the technical lemmas and stochastic calculus computations
underlying Proposition~\ref{prop:3-4}, Theorem~\ref{thm:3-7}, Proposition~\ref{prop:3-10}, Theorem~\ref{thm:3-12}, and
Theorem~\ref{thm:3-14}. Throughout, we work systematically with the dual domain, using the convex conjugate $\widetilde U$, its consumption analogue $%
\widetilde F$, and the inverse marginal $I(t,y)$. For transparency, we retain all time, state,
and dual arguments in the calculations.

\begin{lemma}[Conjugacy identities and characteristics]\label{lem:3-3}
Under Assumption~\ref{asm:3-2}, the convex dual $\widetilde U$
defined in \eqref{eq:fenchel_U} is a semimartingale random field and
satisfies \eqref{eq:conjugacy_relations}--\eqref{eq:delta_x_via_dual}.
\end{lemma}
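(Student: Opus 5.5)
The plan is to derive everything from the identity $\widetilde U(t,y)=U(t,I(t,y))-yI(t,y)$ in \eqref{eq:fenchel_U}. It is the composition of the $\mathcal K^{2,\varepsilon}_{\mathrm{loc}}$-semimartingale field $U$ with the $\mathcal K^{2,\varepsilon}_{\mathrm{loc}}$-semimartingale field $I$ of Assumption~\ref{asm:3-2}(iii), minus a product of $y$ with $I$. The argument has three parts: first the deterministic (pathwise, fixed $t$) conjugacy identities, then the It\^o--Ventzel computation for the composition, and finally the simplification of the characteristics.

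\emph{Pathwise identities.} Fix $t$ on the full-probability event where $U(t,\cdot)$ is $C^3$, strictly concave and Inada, and where \eqref{eq:nondegeneracy} holds.
\begin{itemize}
\item The map $U_x(t,\cdot)$ is a $C^2$ bijection of $(0,\infty)$ with nonvanishing derivative. By the inverse function theorem, $I(t,\cdot)$ is $C^2$ with $I_y=1/U_{xx}(t,I)$.
\item Differentiating \eqref{eq:fenchel_U} gives $\widetilde U_y=U_x(t,I)I_y-I-yI_y=-I$, since $U_x(t,I)=y$.
\item Differentiating once more gives $\widetilde U_{yy}=-I_y=-1/U_{xx}(t,I)$.
\end{itemize}
These are exactly \eqref{eq:conjugacy_relations}.

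\emph{Dynamics of $\widetilde U$.} Write $dI(t,y)=b^I(t,y)\,dt+\delta^I(t,y)^\top dW_t$. Apply the It\^o--Ventzel formula (in the version recalled in Appendix~\ref{app:A}) to $U$ along the continuous process $t\mapsto I(t,y)$, for fixed $y$:
\[
dU(t,I(t,y))=\Bigl(b+U_x b^I+\tfrac12U_{xx}|\delta^I|^2+\delta_x^\top\delta^I\Bigr)(t,I(t,y))\,dt+\bigl(\delta+U_x\delta^I\bigr)^\top(t,I(t,y))\,dW_t .
\]
Subtracting $y\,dI(t,y)$ and using $U_x(t,I)=y$ cancels the $b^I$ and $\delta^I$ first-order terms. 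This yields $\widetilde\delta(t,y)=\delta(t,I(t,y))$ and
\[
\widetilde b=b(t,I)+\tfrac12U_{xx}(t,I)|\delta^I|^2+\delta_x(t,I)^\top\delta^I .
\]

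\emph{Identifying $\delta^I$.} Apply It\^o--Ventzel to the field $U_x$, whose characteristics are $(b_x,\delta_x)$ by Assumption~\ref{asm:3-2}(i), along $I(t,y)$. The identity $U_x(t,I(t,y))\equiv y$ has zero martingale part, so
\[
\delta_x(t,I)+U_{xx}(t,I)\,\delta^I=0, \qquad\text{that is,}\qquad \delta^I=-\delta_x(t,I)/U_{xx}(t,I).
\]
Substituting gives $\widetilde b=b(t,I)-\tfrac12|\delta_x(t,I)|^2/U_{xx}(t,I)$, which is \eqref{eq:fenchel_diff_rule}. Finally, differentiating $\widetilde\delta(t,y)=\delta(t,I(t,y))$ in $y$ gives $\widetilde\delta_y=\delta_x(t,I)I_y=-\delta_x(t,I)\,\widetilde U_{yy}$. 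This is \eqref{eq:delta_x_via_dual} as written, i.e. $\delta_x(t,I(t,y))=-\widetilde\delta_y(t,y)/\widetilde U_{yy}(t,y)$; note that $-\widetilde U_{yy}$ is negative.

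\emph{Main obstacle.} The delicate step is justifying It\^o--Ventzel for the composition and the semimartingale random-field property of $\widetilde U$, including the interchange of $y$-differentiation with the stochastic integral in the last step.
\begin{itemize}
\item The composition requires $U$ to be a $\mathcal K^{1,\varepsilon}_{\mathrm{loc}}$-semimartingale with $C^2$ paths, and $U_x$ a $\mathcal K^{1,\varepsilon}_{\mathrm{loc}}$-semimartingale. Both follow from Assumption~\ref{asm:3-2}(i), possibly after lowering $\varepsilon$ as in the remark in Appendix~\ref{app:A}.
\item Because $I(t,y)$ stays in a compact subset of $(0,\infty)$ on $[0,T]$, the local norms control the integrands.
\item The differentiability of $\widetilde\delta$ in $y$ then follows from the $C^1$ regularity of $\delta$ in $x$ (given by the $\overline{\mathcal K}^{2,\varepsilon'}_{\mathrm{loc}}$ class) composed with the $C^2$ map $I$. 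Kunita's theory then identifies $\widetilde U$ as a semimartingale field with characteristics $(\widetilde b,\widetilde\delta)$.
\end{itemize}
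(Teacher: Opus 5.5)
Your proposal is correct and follows essentially the same route as the paper. It obtains $\widetilde U_y=-I$ and $\widetilde U_{yy}=-1/U_{xx}(t,I)$ by differentiating $U(t,I)-yI$, reads the volatility $-\delta_x/U_{xx}$ of $I$ off the vanishing martingale part of $U_x(t,I(t,y))=y$ via It\^{o}--Ventzel, applies It\^{o}--Ventzel to $U(t,I(t,y))-yI(t,y)$, and uses the chain rule on $\widetilde\delta(t,y)=\delta(t,I(t,y))$. The differences are cosmetic: you compute the composition before identifying the volatility of $I$, and you get $I\in C^2$ from the inverse function theorem and nondegeneracy rather than from Assumption~\ref{asm:3-2}(iii); also, the last step is a pointwise chain rule, so no interchange of differentiation with a stochastic integral is needed there.
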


\begin{proof}
We first establish the deterministic conjugacy
identities and, in turn, compute the semimartingale characteristics.

\smallskip

\noindent \textit{Step 1. The Legendre point and its derivatives.}
Fix $(t,\omega)$ in the a.s. event on which Assumption~\ref{asm:3-2} holds,
let $y>0$.
The strictly concave map $x\mapsto U(t,x)-xy$ has the unique maximizer
$x=I(t,y)$, characterized by
\begin{equation}
U_x(t,I(t,y))=y,\qquad y>0.
\label{eq:proof_inverse_marginal_identity}
\end{equation}
Thus,
\begin{equation}
\widetilde U(t,y)
=U(t,I(t,y))-yI(t,y),\qquad y>0,\ t\geq0.
\label{eq:proof_conjugate_at_I}
\end{equation}
Differentiating \eqref{eq:proof_conjugate_at_I} and using
\eqref{eq:proof_inverse_marginal_identity} gives
\begin{equation*}
\widetilde U_y(t,y)
=U_x(t,I(t,y))I_y(t,y)-I(t,y)-yI_y(t,y)=-I(t,y),\qquad y>0,\ t\geq0.
\end{equation*}
Differentiating \eqref{eq:proof_inverse_marginal_identity} yields
\begin{equation*}
U_{xx}(t,I(t,y))I_y(t,y)=1.
\end{equation*}
\begin{equation*}
I_y(t,y)=\frac{1}{U_{xx}(t,I(t,y))},\qquad
\widetilde U_{yy}(t,y)=-I_y(t,y)
=-\frac{1}{U_{xx}(t,I(t,y))},
\end{equation*}
which are the identities in \eqref{eq:conjugacy_relations}.

\smallskip

\noindent \textit{Step 2. The diffusion coefficient of $I(t,y)$.}
For fixed $y>0$, we write the semimartingale decomposition
\begin{equation*}
dI(t,y)
=\mu^I(t,y)\,dt+\rho^I(t,y)^{\top}dW_t,\qquad t\geq0,\ y>0.
\end{equation*}
Differentiating \eqref{eq:U-decomp} with respect to $x$ gives
\begin{equation*}
dU_x(t,x)
=b_x(t,x)\,dt+\delta_x(t,x)^{\top}dW_t,\qquad t\geq0,\ x>0.
\end{equation*}
Recalling \eqref{eq:proof_inverse_marginal_identity} and using the
It\^{o}--Ventzel formula therefore gives
\begin{align*}
0
&=b_x(t,I(t,y))\,dt
+\delta_x(t,I(t,y))^{\top}dW_t\\
&\quad
+U_{xx}(t,I(t,y))\,dI(t,y)
+\frac12U_{xxx}(t,I(t,y))\,d\langle I(\cdot,y)\rangle_t\\
&\quad
+\delta_{xx}(t,I(t,y))^{\top}
d\langle W,I(\cdot,y)\rangle_t,\qquad t\geq0,\ y>0.
\end{align*}
Its Brownian coefficient vanishes, and hence
\begin{equation*}
\delta_x(t,I(t,y))
+U_{xx}(t,I(t,y))\rho^I(t,y)=0,\qquad t\geq0,\ y>0,
\end{equation*}
which implies
\begin{equation}
\begin{aligned}
\rho^I(t,y)
&=-\frac{\delta_x(t,I(t,y))}
{U_{xx}(t,I(t,y))},\\
d\langle I(\cdot,y)\rangle_t
&=\frac{|\delta_x(t,I(t,y))|^2}
{U_{xx}(t,I(t,y))^2}\,dt,\\
d\langle W,I(\cdot,y)\rangle_t
&=-\frac{\delta_x(t,I(t,y))}
{U_{xx}(t,I(t,y))}\,dt.
\end{aligned}
\label{eq:I_diffusion_aux}
\end{equation}

\smallskip

\noindent \textit{Step 3. Characteristics of $\widetilde U(t,y)$.}
From \eqref{eq:proof_conjugate_at_I} and the It\^{o}--Ventzel formula, we obtain
\begin{align*}
d\widetilde U(t,y)
&=dU(t,I(t,y))-y\,dI(t,y)\\
&=\left(b(t,I(t,y))
+\frac12U_{xx}(t,I(t,y))|\rho^I(t,y)|^2
+\delta_x(t,I(t,y))^\top\rho^I(t,y)\right)dt\\
&\quad+\delta(t,I(t,y))^\top dW_t.
\end{align*}
Substituting $\rho^I(t,y)=-\delta_x(t,I(t,y))/U_{xx}(t,I(t,y))$ yields
\begin{equation*}
d\widetilde U(t,y)
=\left(b(t,I(t,y))
-\frac{|\delta_x(t,I(t,y))|^2}{2U_{xx}(t,I(t,y))}\right)dt
+\delta(t,I(t,y))^\top dW_t,\qquad t\geq0,\ y>0,
\end{equation*}
and \eqref{eq:fenchel_diff_rule} follows.

\smallskip

\noindent \textit{Step 4. Relation between the primal and dual
volatilities.}
Differentiating $\widetilde\delta(t,y)=\delta(t,I(t,y))$ with respect to
$y$ gives
\begin{equation*}
\widetilde\delta_y(t,y)
=\delta_x(t,I(t,y))I_y(t,y)=\frac{\delta_x(t,I(t,y))}
{U_{xx}(t,I(t,y))},\qquad t\geq0,\ y>0.
\end{equation*}
Using that
$U_{xx}(t,I(t,y))=-1/\widetilde U_{yy}(t,y)$ gives
\begin{equation*}
\delta_x(t,I(t,y))
=-\frac{\widetilde\delta_y(t,y)}
{\widetilde U_{yy}(t,y)},
\end{equation*}
and \eqref{eq:delta_x_via_dual} follows. The identities in Step~1 hold
pathwise, while \eqref{eq:fenchel_diff_rule} and
\eqref{eq:delta_x_via_dual}, obtained in Steps~2--4, hold for $dt\otimes d\mathbb P$-a.e.\ $(t,\omega )$ and
every $y>0$, as stated above.
\end{proof}

\begin{proof}[Proof of Proposition~\ref{prop:3-4}]
Fix $(t,\omega)$ in a set of full
$dt\otimes d\mathbb P$-measure on which \eqref{eq:fenchel_diff_rule} and
\eqref{eq:delta_x_via_dual} hold. Let $y>0$, and set $P_t=\sigma_t^+\sigma_t$, $Q_t=I_d-P_t$, and
$x=I(t,y)$.
At $x=I(t,y)$, the conjugacy identities give that
\begin{equation*}
U_x(t,x)=y
\qquad \text{and}\qquad
U(t,x)=\hat u(t,y),\qquad t\geq0,\ y>0.
\end{equation*}
Using \eqref{eq:SPDE-norm}, \eqref{eq:fenchel_diff_rule},
$P_t\theta_t=\theta_t$, and
$|\delta_x|^2=|P_t\delta_x|^2+|Q_t\delta_x|^2$, we obtain
\begin{align*}
\widetilde b(t,y)
&=b(t,x)-\frac{|\delta_x(t,x)|^2}{2U_{xx}(t,x)}\\
&=-\widetilde F\bigl(t,y,\hat u(t,y)\bigr)
+\frac{y^2|\theta_t|^2+2y\theta_t^\top\delta_x(t,x)
-|Q_t\delta_x(t,x)|^2}{2U_{xx}(t,x)}\\
&=-\widetilde F\bigl(t,y,\hat u(t,y)\bigr)
-\frac12y^2\widetilde U_{yy}(t,y)|\theta_t|^2
+y\theta_t^\top\widetilde\delta_y(t,y)
+\frac{|Q_t\widetilde\delta_y(t,y)|^2}
{2\widetilde U_{yy}(t,y)},\qquad t\geq0,\ y>0,
\end{align*}
and \eqref{eq:dual_drift} follows. The volatility identity is the first identity
in \eqref{eq:fenchel_diff_rule}.
\end{proof}

\begin{proof}[Proof of Theorem~\ref{thm:3-7}]
We compute the drift of the dual criterion, verify
its sign, and identify the case where equality is satisfied.

\smallskip

\noindent \textit{Step 1. Characteristics of the state price density.}
Fix $\nu\in\mathcal A^{\mathrm{dual}}$, write $Y=Y^\nu$, and set
$P_t=\sigma_t^+\sigma_t$ and $Q_t=I_d-P_t$. From \eqref{eq:SPD},
\begin{equation*}
d\langle Y\rangle_t
=Y_t^2\bigl(|\theta_t|^2+|\nu_t|^2\bigr)dt
\qquad \text{and}\qquad
d\langle W,Y\rangle_t=Y_t(-\theta_t+\nu_t)dt,\qquad t\geq0,
\end{equation*}
since $\theta_t^\top\nu_t=0$, for $dt\otimes d\mathbb P$-a.e.\ $(t,\omega )$.

\smallskip

\noindent \textit{Step 2. Dynamics of the dual criterion.}
Under Assumption~\ref{asm:3-2}, the local characteristics $(%
\widetilde{b},\widetilde{\delta })$ of $\widetilde{U}$ computed in %
\eqref{eq:fenchel_diff_rule} are continuously differentiable in $y$ with
locally H\"{o}lder derivatives, which are compositions of $(b,\delta )$ and
their spatial derivatives with the field $I$. Therefore, the It\^{o}--Ventzel formula
gives
\begin{equation*}
\begin{aligned}
d\widetilde{U}(t,Y_{t})=\Bigl(&\widetilde{b}(t,Y_{t})+\tfrac{1}{2}\widetilde{%
U}_{yy}(t,Y_{t})\,Y_{t}^{2}\bigl(|\theta _{t}|^{2}+|\nu _{t}|^{2}\bigr)\\
&+Y_{t}\, \widetilde{\delta }_{y}(t,Y_{t})^{\top }\bigl(-\theta _{t}+\nu
_{t}\bigr)\Bigr)dt+\bigl(\widetilde{Z}_{t}^{\nu }\bigr)^{\top }dW_{t},\qquad t\geq0.
\end{aligned}
\end{equation*}%
The volatility process in the above expansion yields
\begin{equation*}
\widetilde{Z}_{t}^{\nu }
=\widetilde{\delta }(t,Y_{t})
+\widetilde{U}_{y}(t,Y_{t})Y_{t}
\bigl(-\theta _{t}+\nu _{t}\bigr),\qquad t\geq0.
\end{equation*}
Together with \eqref{eq:Zcal_def} and
\eqref{eq:dual_drift}, we have
\begin{align}
d\mathcal Z_t^\nu
&=(\widetilde Z_t^\nu)^\top dW_t
+\left(
\frac{|Q_t\widetilde\delta_y(t,Y_t)|^2}
{2\widetilde U_{yy}(t,Y_t)}
+\frac12Y_t^2\widetilde U_{yy}(t,Y_t)|\nu_t|^2
+Y_t\widetilde\delta_y(t,Y_t)^\top\nu_t
\right)dt\notag\\
&=(\widetilde Z_t^\nu)^\top dW_t
+\frac{|Q_t\widetilde\delta_y(t,Y_t)
+Y_t\widetilde U_{yy}(t,Y_t)\nu_t|^2}
{2\widetilde U_{yy}(t,Y_t)}\,dt,\qquad t\geq0.
\label{eq:proof_dual_square}
\end{align}
Here $Q_t\nu_t=\nu_t$. Moreover,
\begin{equation*}
\widetilde{U}_{yy}(t,y)
=-\frac{1}{U_{xx}(t,I(t,y))}>0,\qquad t\geq0,\ y>0.
\end{equation*}
Hence the drift in \eqref{eq:proof_dual_square} is nonnegative and vanishes
if and only if
\begin{equation*}
\nu
=-\frac{(I_d-\sigma_t^{+}\sigma_t)\widetilde\delta_y(t,y)}
{y\widetilde U_{yy}(t,y)},
\end{equation*}
and \eqref{eq:nu_dual_def} follows.

For later use, we denote the nonnegative drift in
\eqref{eq:proof_dual_square} by
\begin{equation*}
g_t^\nu
=\frac{|Q_t\widetilde\delta_y(t,Y_t)
+Y_t\widetilde U_{yy}(t,Y_t)\nu_t|^2}
{2\widetilde U_{yy}(t,Y_t)},\qquad t\geq0.
\end{equation*}

\smallskip

\noindent \textit{Step 3. Integrability of the running term.}
We write
\begin{equation*}
d\widetilde U(t,Y_t)=a_t^\nu\,dt+(\widetilde Z_t^\nu)^\top dW_t,\qquad t\geq0.
\end{equation*}
Then, equation \eqref{eq:proof_dual_square} gives, for
$dt\otimes d\mathbb P$-a.e.\ $(t,\omega)$,
\begin{equation}
a_t^\nu+\widetilde F(t,Y_t,\hat u(t,Y_t))=g_t^\nu\geq0
\qquad \text{and}\qquad
\widetilde F(t,Y_t,\hat u(t,Y_t))^-
\leq (a_t^\nu)^+,\qquad t\geq0.
\label{eq:proof_dual_negative_part}
\end{equation}
For every $T>0$, localization to the compact range of the positive
continuous process $Y$ and the random field bounds in Appendix~\ref{app:A} give
$\int_0^T|a_t^\nu|dt<\infty$ $\mathbb P$-a.s. Thus
\eqref{eq:proof_dual_negative_part} yields that the negative part is
integrable on every finite horizon. Therefore, condition
\eqref{eq:Ftilde-integrability} may fail only through the positive part.

\smallskip

\noindent \textit{Step 4. Localization.}
Consider $\nu\in\mathcal A^{\mathrm{dual}}$ satisfying
\eqref{eq:Ftilde-integrability}, and set, for $n\geq1$,
\begin{equation*}
\tau _{n}=\inf \Bigl \{t\geq 0:Y_{t}\notin \bigl(\tfrac{1}{n},n\bigr)\ \text{%
or}\ \int_{0}^{t}\bigl|\widetilde{Z}_{s}^{\nu }\bigr|^{2}\,ds\geq n\ \text{or%
}\ \int_{0}^{t}g_s^\nu\,ds\geq n\Bigr \},\qquad t>0.
\end{equation*}%

Fix $T>0$. Since $Y$ is continuous and strictly positive, its range on
$[0,T]$ is a compact subset of $(0,\infty)$, $\mathbb P$-a.s.

The local regularity of $\widetilde U_y$ and the integrated bound on
$\widetilde\delta=\delta(\cdot,I(\cdot,\cdot))$ along this range, together
with the pathwise square-integrability of $\theta$ and $\nu$, yield
$\int_0^T|\widetilde Z_s^\nu|^2\,ds<\infty$, $\mathbb P$-a.s.

Moreover, Step~3 gives $\int_0^T|a_s^\nu|\,ds<\infty$,
$\mathbb P$-a.s. Therefore, \eqref{eq:proof_dual_negative_part} and
\eqref{eq:Ftilde-integrability} imply that
$\int_0^Tg_s^\nu\,ds<\infty$, $\mathbb P$-a.s.

Hence, we must have that none of the three conditions defining $\tau_n$ is
reached before $T$ for sufficiently large $n$. Since $T>0$ is arbitrary,
$\tau_n\uparrow\infty$, $\mathbb P$-a.s.

On each $[0,\tau _{n}]$, the
stopped stochastic integral $\int_{0}^{\cdot \wedge \tau _{n}}(\widetilde{Z}%
_{s}^{\nu })^{\top }dW_{s}$ is a square integrable martingale, while
the stopped finite variation part is bounded by $n$ in total variation, and thus
$\mathcal{Z}_{\cdot \wedge \tau _{n}}^{\nu }$ is integrable. Since
$g^\nu\geq0$, the process $\mathcal{Z}^{\nu }$ has nonnegative drift.
Thus, $\mathcal{Z}_{\cdot \wedge \tau _{n}}^{\nu }$ is a submartingale for
every $n$. Letting $n\uparrow \infty $ shows that $\mathcal{Z}^{\nu }$ is a
local submartingale. This proves the first requirement of the consistency condition in Definition~\ref{defn:3-5}(i).

\smallskip

\noindent \textit{Step 5. Equality at the dual optimal density.}
Let $Y^{\nu^{\mathrm{dual}}}$ be the solution assumed in the theorem. By
\eqref{eq:nu_dual_def}, the completed square vanishes along this density,
$g_t^{\nu^{\mathrm{dual}}}=0$, for $dt\otimes d\mathbb P$-a.e.\ $(t,\omega )$. Then, from equation \eqref{eq:proof_dual_negative_part}, we deduce that
\begin{equation*}
\widetilde F\bigl(t,Y_t^{\nu^{\mathrm{dual}}},
\hat u(t,Y_t^{\nu^{\mathrm{dual}}})\bigr)
=-a_t^{\nu^{\mathrm{dual}}},\qquad t\geq0,
\end{equation*}
and thus \eqref{eq:Ftilde-integrability} holds. Using the same
localization as in Step~4, $\mathcal Z^{\nu^{\mathrm{dual}}}$ is a local
martingale. This proves the second requirement for the dual consistency condition in Definition~\ref{defn:3-5}(ii), so the
conjugate pair is dual consistent, with dual optimal control
$\nu^{\mathrm{dual}}$.
\end{proof}

\begin{proof}[Proof of Proposition~\ref{prop:3-10}]
We first prove the pointwise Fenchel equality and then
compute the complete dynamics of $\mathcal{Z}^{\alpha ^{\star },\nu }$.

\smallskip

\noindent \textit{Step 1. The pointwise Fenchel equality.}
Fix $(t,\omega)$ in a set of full $dt\otimes d\mathbb P$-measure on
which Assumptions~\ref{asm:2-9} and~\ref{asm:3-8} hold. Let $D>0$ and write $\hat u=\hat
u(t,D)$, $\alpha ^{\star }=\alpha ^{\star }(t,D)$ and $\hat c%
=F_{c}^{-1}(t,D,\hat u)$. Then,
\begin{equation*}
\alpha^\star=-F_u(t,\hat c,\hat u)\qquad \text{and}\qquad
F_c(t,\hat c,\hat u)=D.
\end{equation*}
Furthermore, the convexity in $u$ and concavity in $c$ give, for each $c>0$,
\begin{align*}
\widehat F(t,\hat c,\alpha^\star)
&=F(t,\hat c,\hat u)+\alpha^\star\hat u,\\
\widehat F(t,c,\alpha^\star)-Dc
&\leq F(t,c,\hat u)+\alpha^\star\hat u-Dc\\
&\leq \widetilde F(t,D,\hat u)+\alpha^\star\hat u.
\end{align*}
Both inequalities become equalities at $c=\hat c$. Taking the supremum over
$c>0$ proves \eqref{eq:G_saddle}.

\smallskip

\noindent \textit{Step 2. Characteristics of the deflated state price
density.}
Write $D_t=Y_t^\nu/\kappa_{0,t}^{\alpha^\star}$ and
$\kappa_t=\kappa_{0,t}^{\alpha^\star}$. Since
$d\kappa_t=-\alpha_t^\star\kappa_tdt$ and
$dY_t^\nu=Y_t^\nu(-\theta_t+\nu_t)^\top dW_t$,
\begin{equation*}
dD_t=\alpha_t^\star D_tdt+D_t(-\theta_t+\nu_t)^\top dW_t,
\qquad
d\langle D\rangle_t=D_t^2(|\theta_t|^2+|\nu_t|^2)dt,
\end{equation*}
and $d\langle W,D\rangle_t=D_t(-\theta_t+\nu_t)dt$.

\smallskip

\noindent \textit{Step 3. Dynamics of the envelope.}
Set $Q_t=I_d-\sigma_t^+\sigma_t$ and
$\hat u_t=\hat u(t,D_t)$. By \eqref{eq:G-integrability}, the
finite variation term of \eqref{eq:dual_envelope_def} converges absolutely on
every finite horizon and, thus, $\mathcal{Z}^{\alpha ^{\star },\nu }$ is a
well defined continuous process for $t\geq0$. The It\^{o}--Ventzel formula and the product
rule give
\begin{equation*}
\begin{aligned}
d\mathcal Z_t^{\alpha^\star,\nu}
&=dM_t^\nu+\kappa_t\Bigl(
\widetilde b(t,D_t)+\alpha_t^\star D_t\widetilde U_y(t,D_t)
-\alpha_t^\star\widetilde U(t,D_t)\\
&\hspace{19mm}
+\tfrac12D_t^2\widetilde U_{yy}(t,D_t)
\bigl(|\theta_t|^2+|\nu_t|^2\bigr)
+D_t\widetilde\delta_y(t,D_t)^\top(-\theta_t+\nu_t)
+G(t,D_t,\alpha_t^\star)\Bigr)dt\\
&=dM_t^\nu+\kappa_t\Bigl(
\widetilde b(t,D_t)+\widetilde F(t,D_t,\hat u_t)
+\tfrac12D_t^2\widetilde U_{yy}(t,D_t)
\bigl(|\theta_t|^2+|\nu_t|^2\bigr)\\
&\hspace{48mm}
+D_t\widetilde\delta_y(t,D_t)^\top(-\theta_t+\nu_t)
\Bigr)dt\\
&=dM_t^\nu+\kappa_t\Bigl(
\frac{|Q_t\widetilde\delta_y(t,D_t)|^2}
{2\widetilde U_{yy}(t,D_t)}
+\tfrac12D_t^2\widetilde U_{yy}(t,D_t)|\nu_t|^2
+D_t\widetilde\delta_y(t,D_t)^\top\nu_t\Bigr)dt\\
&=dM_t^\nu+\kappa_t
\frac{|Q_t\widetilde\delta_y(t,D_t)
+D_t\widetilde U_{yy}(t,D_t)\nu_t|^2}
{2\widetilde U_{yy}(t,D_t)}\,dt,
\end{aligned}
\end{equation*}
where we use \eqref{eq:G_saddle} in the second equality,
\eqref{eq:dual_drift} in the third, and the fact that
\begin{equation*}
dM_t^\nu=\kappa_t\bigl(\widetilde\delta(t,D_t)
+D_t\widetilde U_y(t,D_t)(-\theta_t+\nu_t)\bigr)^\top dW_t,\qquad t\geq0.
\end{equation*}
On every finite horizon, localization to the compact range of the positive
process $D$, together with the local random field bounds and the integrability
of $\alpha^\star$, $\theta$, and $\nu$, makes the stopped process $M^\nu$
square integrable and the displayed finite variation term finite.
By \eqref{eq:nondegeneracy} and \eqref{eq:conjugacy_relations},
$\widetilde U_{yy}(t,D_t)=-1/U_{xx}(t,I(t,D_t))>0$, so the finite variation
term in \eqref{eq:deflated_envelope_dynamics} is nonnegative and
$\mathcal{Z}^{\alpha ^{\star },\nu }$ is a local submartingale. If it is a
local martingale, however, its finite variation term is a continuous local martingale of finite
variation vanishing at the origin and, hence, identically zero. Since $\widetilde
U_{yy}>0$, this occurs exactly when \eqref{eq:nu_equality_case} holds.
Conversely, \eqref{eq:nu_equality_case} makes the term vanish. This proves
\eqref{eq:deflated_envelope_dynamics} and the stated equivalence.
\end{proof}

\begin{lemma}[Marginal utility dynamics]\label{lem:3-11}
Under Assumption~\ref{asm:3-2} and the hypotheses of Theorem~\ref{thm:2-10}, we include the
associated wealth process $X^{\star }$ and set
$U_t^\star=U(t,X_t^\star)$ and
$c_t^\star=c^\star(t,X_t^\star)$, $t\geq0$. Then,
\eqref{eq:Ux_SDE} and \eqref{eq:marginal_coeff_integrability} hold, with the
orthogonal control $\nu^\star$ defined in
\eqref{eq:nu_star_marginal}.
\end{lemma}

\begin{proof}
Set
\begin{equation*}
P_t=\sigma_t^+\sigma_t,\qquad Q_t=I_d-P_t,\qquad
\Lambda(t,x)=U_x(t,x)\theta_t+\delta_x(t,x),\qquad t\geq0.
\end{equation*}
In the formulas below, all fields without an explicit state argument are
evaluated at $(t,X_t^\star)$. We set
\begin{equation*}
p_t=P_t\Lambda(t,X_t^\star),\qquad
\sigma_t^\star=-\frac{p_t}{U_{xx}},\qquad
dX_t^\star=\bigl((\sigma_t^\star)^\top\theta_t-c_t^\star\bigr)dt
+(\sigma_t^\star)^\top dW_t,\qquad t\geq0.
\end{equation*}
The envelope identities \eqref{eq:F-envelope} and
$F_c(t,c_t^\star,U_t^\star)=U_x$ yield
\begin{equation}  \label{eq:bx_at_Xstar}
b_x
=\Lambda^\top\theta_t+\frac{p_t^\top\delta_{xx}}{U_{xx}}
-\frac{|p_t|^2U_{xxx}}{2U_{xx}^2}
+c_t^\star U_{xx}-F_u(t,c_t^\star,U_t^\star)U_x,\qquad t\geq0.
\end{equation}
The It\^{o}--Ventzel formula gives
\begin{equation}  \label{eq:itoventzel_Ux}
\begin{aligned}
dU_x(t,X_t^\star)
&=\Bigl(b_x+U_{xx}\bigl((\sigma_t^\star)^\top\theta_t-c_t^\star\bigr)
+\tfrac12U_{xxx}|\sigma_t^\star|^2
+\delta_{xx}^\top\sigma_t^\star\Bigr)dt\\
&\quad+\bigl(\delta_x+U_{xx}\sigma_t^\star\bigr)^\top dW_t,\qquad t\geq0.
\end{aligned}
\end{equation}
For the drift in \eqref{eq:itoventzel_Ux}, \eqref{eq:bx_at_Xstar} implies
\begin{align*}
&b_x+U_{xx}\bigl((\sigma_t^\star)^\top\theta_t-c_t^\star\bigr)
+\tfrac12U_{xxx}|\sigma_t^\star|^2
+\delta_{xx}^\top\sigma_t^\star\\
&=\Lambda^\top\theta_t+\frac{p_t^\top\delta_{xx}}{U_{xx}}
-\frac{|p_t|^2U_{xxx}}{2U_{xx}^2}
+c_t^\star U_{xx}-F_u(t,c_t^\star,U_t^\star)U_x\\
&\quad-\Lambda^\top\theta_t-c_t^\star U_{xx}
+\frac{|p_t|^2U_{xxx}}{2U_{xx}^2}
-\frac{p_t^\top\delta_{xx}}{U_{xx}}\\
&=-F_u(t,c_t^\star,U_t^\star)U_x,\qquad t\geq0.
\end{align*}
Its diffusion coefficient is given by
\begin{equation*}
\delta_x+U_{xx}\sigma_t^\star
=\delta_x-p_t=-U_x\theta_t+Q_t\delta_x
=U_x(-\theta_t+\nu_t^\star),
\qquad
\nu_t^\star=\frac{Q_t\delta_x}{U_x},\qquad t\geq0.
\end{equation*}
Thus, the proof of \eqref{eq:Ux_SDE} is finished, and
$\nu_t^\star\in(\operatorname{Im}\sigma_t^\top)^\perp$ for
$dt\otimes d\mathbb P$-a.e.\ $(t,\omega)$.

For $T>0$, the paths of $X^\star$ remain in a compact subset of
$(0,\infty)$ on $[0,T]$, and, furthermore, the continuous process
$U_x(t,X_t^\star)$ achieves a strictly positive minimum there. Therefore, the
standing bounds imply
\begin{equation*}
\int_0^T|\nu_t^\star|^2dt<\infty,
\qquad
\int_0^T\bigl|F_u(t,c_t^\star,U_t^\star)U_x(t,X_t^\star)\bigr|dt<\infty
\qquad\text{$\mathbb P$-a.s.}
\end{equation*}
Dividing the second integrand by the pathwise lower bound for
$U_x(t,X_t^\star)$ proves
\eqref{eq:marginal_coeff_integrability}.
\end{proof}

\begin{proof}[Proof of Theorem~\ref{thm:3-12}]
We apply the product rule to the discounted marginal
utility and, then, verify the optimal feedback relation.

\smallskip

\noindent \textit{Step 1. Dynamics of the discounted marginal utility.}
Set $L_t=U_x(t,X_t^\star)$ and
$a_t=F_u(t,c_t^\star,U_t^\star)$, $t\geq0$. By
\eqref{eq:alpha_kappa_optimal} and Lemma~\ref{lem:3-11},
\begin{equation*}
d\kappa_t^\star=a_t\kappa_t^\star dt
\qquad \text{and}\qquad
dL_t=-a_tL_tdt+L_t(-\theta_t+\nu_t^\star)^\top dW_t,\qquad t\geq0.
\end{equation*}
Since $\kappa^\star$ has finite variation,
\begin{align*}
dY_t^\star
&=d(\kappa_t^\star L_t)
=\kappa_t^\star dL_t+L_t d\kappa_t^\star\\
&=\kappa_t^\star L_t(-\theta_t+\nu_t^\star)^\top dW_t
=Y_t^\star(-\theta_t+\nu_t^\star)^\top dW_t,\qquad t\geq0,
\qquad Y_0^\star=U_x(0,x),
\end{align*}
and \eqref{eq:optimal_SPD} follows. The integrability properties in
\eqref{eq:marginal_coeff_integrability} show simultaneously that
$\kappa^\star$ is finite and strictly positive on finite horizons and that
$\nu^\star$ is an admissible dual control. Since
$\nu _{t}^{\star }\in (\mathrm{Im}\,\sigma _{t}^{\top })^{\perp }$, the
process $Y^{\star }$ has the state price density form \eqref{eq:SPD} and,
thus, $Y^{\star }\in \mathcal{Y}$.

\smallskip

\noindent \textit{Step 2. The feedback relation along the optimum.}
Suppose that Assumption~\ref{asm:3-8} holds and that $\alpha^\star$ is admissible.
Since $D_t^\star=U_x(t,X_t^\star)$, $t\geq0$, we obtain
\begin{equation*}
\hat u(t,D_t^\star)=U_t^\star,\qquad
F_c^{-1}(t,D_t^\star,\hat u(t,D_t^\star))=c_t^\star,\qquad
\alpha^\star(t,D_t^\star)=-F_u(t,c_t^\star,U_t^\star)=\alpha_t^\star,\qquad t\geq0,
\end{equation*}
and the feedback relation \eqref{eq:coupled_discount_system} follows. Next, we show
that \eqref{eq:G-integrability} also holds, so that $Y^{\star }$ admits
$\alpha ^{\star }$ as a feedback discounting map. To this end, evaluating \eqref{eq:G_saddle}
at $(D_{t}^{\star },\alpha _{t}^{\star })$, and using that $c_{t}^{\star }$
is the maximizer of the Fenchel transform
$\widetilde{F}(t,D_{t}^{\star },U_{t}^{\star })$, we deduce that
\begin{equation*}
G(t,D_{t}^{\star },\alpha _{t}^{\star })
=F(t,c_{t}^{\star },U_{t}^{\star })-D_{t}^{\star }c_{t}^{\star }
+\alpha _{t}^{\star }U_{t}^{\star },\qquad t\geq 0.
\end{equation*}%
Fix $T>0$. The first term is integrable on $[0,T]$ by
\eqref{eq:F-integrability}, which holds automatically along the optimum, as
noted in Remark~\ref{rem:2-5}. For the second term,
$\int_{0}^{T}c_{t}^{\star }\,dt<\infty $ by Definition~\ref{defn:2-1} and $D^{\star }$ is
pathwise bounded on $[0,T]$ by continuity. For the third term,
$\int_{0}^{T}|\alpha _{t}^{\star }|\,dt<\infty $ by
\eqref{eq:marginal_coeff_integrability} and the fact that $U^{\star }$ is pathwise bounded
on $[0,T]$. Since $\kappa ^{\star }$ is continuous and strictly positive, we
have that
\begin{equation*}
\int_{0}^{T}\kappa _{t}^{\star }\bigl|G(t,D_{t}^{\star },\alpha _{t}^{\star
})\bigr|\,dt<\infty \qquad \text{$\mathbb{P}$-a.s.}
\end{equation*}%
Using
\eqref{eq:delta_x_via_dual}, evaluated at
$D_t^\star=U_x(t,X_t^\star)$,
\begin{equation*}
-\frac{(I_d-\sigma_t^+\sigma_t)\widetilde\delta_y(t,D_t^\star)}
{D_t^\star\widetilde U_{yy}(t,D_t^\star)}
=\frac{(I_d-\sigma_t^+\sigma_t)\delta_x(t,X_t^\star)}
{U_x(t,X_t^\star)}
=\nu_t^\star,\qquad t\geq0,
\end{equation*}
and \eqref{eq:nu_equality_case} follows. In turn, Proposition~\ref{prop:3-10} shows that
$\mathcal{Z}^{\alpha ^{\star },\nu ^{\star }}$ is a local martingale.
\end{proof}

\begin{proof}[Proof of Theorem~\ref{thm:3-14}]
We establish the assertions in several steps. To ease the presentation, we occasionally rewrite some formulae.

\smallskip

\noindent \textit{Step 1. Well-posedness of the deflated dual SDE.}
By Assumption~\ref{asm:3-2},
\begin{equation*}
U_x(t,I(t,y))=y\qquad \text{and}\qquad
I_y(t,y)=\frac{1}{U_{xx}(t,I(t,y))},\qquad t\geq0,\ y>0.
\end{equation*}
The bounds in \eqref{eq:input_bounds_delta} imply
\begin{equation*}
\begin{aligned}
|\sigma^D(t,y)|&\le|y|\bigl(|\theta_t|+K_t^1\bigr), \qquad
|\sigma^D_y(t,y)|\le|\theta_t|+K_t^2,\\
\sigma^D_y(t,y)&=-\theta_t+\frac{(I_{d}-\sigma_{t}^{+}\sigma_{t})\,
\delta_{xx}(t,I(t,y))}{U_{xx}(t,I(t,y))},\qquad t\geq0,\ y>0,
\end{aligned}
\end{equation*}
and \eqref{eq:input_bounds_alpha} gives 
\begin{equation*}
\begin{aligned}
|\mu^D(t,y)|&\le K_t^0|y|, \qquad |\mu^D_y(t,y)|\le K_t^0,\\
\mu^D_y(t,y)&=\alpha^\star(t,I(t,y)) +\frac{U_x(t,I(t,y))}{U_{xx}(t,I(t,y))}%
\, \partial_x\alpha^\star(t,I(t,y)),\qquad t\geq0,\ y>0,\ \text{respectively}.
\end{aligned}
\end{equation*}
The derivative bounds hold for every $y>0$, and, thus, the mean value theorem yields that
the coefficients of \eqref{eq:deflated_dual_SDE} are Lipschitz in $y$ on $%
(0,\infty )$ and of linear growth, with adapted constants that are pathwise
$L^{1}$- and $L^{2}$-integrable on $[0,T]$. This is the exact analogue of the
uniform Lipschitz property in \citet[Theorem~4.2(i)]{EKM13}. Thus, standard
localization gives a unique maximal strong solution of
\eqref{eq:deflated_dual_SDE} with values in $(0,\infty)$. The linear growth
bounds rule out explosion to $\infty$ on a finite time interval.

It remains to exclude exit from zero. To this end, let $\tau_0$ be the first
time at which $D_t^{\star}(y)$ reaches zero. On $[0,\tau_0)$, It\^{o}'s formula gives
\begin{align*}
d\log D_t^{\star}(y)
&=\left(
\frac{\mu^D(t,D_t^{\star}(y))}{D_t^{\star}(y)}
-\frac12
\frac{|\sigma^D(t,D_t^{\star}(y))|^2}
{D_t^{\star}(y)^2}
\right)dt\\
&\quad
+\frac{\sigma^D(t,D_t^{\star}(y))}
{D_t^{\star}(y)}\,dW_t,\qquad t\geq0.
\end{align*}
On $[0,\tau_0)$,
\begin{equation*}
\left|\frac{\mu^D(t,D_t^\star(y))}{D_t^\star(y)}\right|\leq K_t^0
\qquad \text{and}\qquad
\left|\frac{\sigma^D(t,D_t^\star(y))}{D_t^\star(y)}\right|
\leq|\theta_t|+K_t^1,\qquad t\geq0.
\end{equation*}
Thus, the finite variation part of $\log D^\star(y)$ has finite total
variation and its martingale part has finite quadratic variation on every
finite horizon, $\mathbb P$-a.s. Hence, $\log D_t^\star(y)$ cannot diverge to $-\infty$ at a
finite time, and thus $\tau_0=\infty$ $\mathbb P$-a.s. Continuity and pathwise uniqueness
imply that two solutions driven by the same Brownian motion cannot cross.
We easily deduce that $y\mapsto D_t^\star(y)$ is nondecreasing.

\smallskip

\noindent \textit{Step 2. Construction of the primal wealth by inverse
marginality.}
For $x>0$, define
\begin{equation}
X_{t}^{\star }(x)=I\bigl(t,D_{t}^{\star }(U_{x}(0,x))\bigr),\qquad t\geq 0.
\label{eq:Xstar_inverse_marginal}
\end{equation}%
By Assumption~\ref{asm:3-2}(ii), the Inada limits make $I(t,\cdot )$ map $(0,\infty )$
onto $(0,\infty )$, so $X^{\star }(x)$, $x>0$, is strictly positive. Since
$D^{\star }(U_{x}(0,x))$ is continuous and $I$ is jointly continuous, the
paths of $X^{\star }(x)$ are continuous and therefore remain in a compact
subset of $(0,\infty )$ on every finite horizon.

We now use the inverse marginal identity \eqref{eq:flow_identity} rather than a separate
Lipschitz theorem for the primal coefficients. We give the inverse marginal calculation
explicitly. For fixed $y>0$, we write
\begin{equation*}
dI(t,y)=\mu^I(t,y)\,dt+\rho^I(t,y)^\top dW_t,\qquad t\geq0.
\end{equation*}
Applying the It\^{o}--Ventzel formula to the identity
$U_x(t,I(t,y))=y$, as in \eqref{eq:I_diffusion_aux}, gives
\begin{align}
\rho^I(t,y)
&=-\frac{\delta_x}{U_{xx}},\notag\\
\mu^I(t,y)
&=-\frac{b_x}{U_{xx}}
-\frac{U_{xxx}|\delta_x|^2}{2U_{xx}^3}
+\frac{\delta_{xx}^\top\delta_x}{U_{xx}^2}.
\label{eq:inverse_marginal_characteristics}
\end{align}
Here each derivative on the right is evaluated at $(t,I(t,y))$.
The spatial derivatives needed below are
\begin{equation}  \label{eq:inverse_marginal_derivatives}
I_y=\frac1{U_{xx}},\qquad
I_{yy}=-\frac{U_{xxx}}{U_{xx}^3},\qquad
\rho_y^I=-\frac{\delta_{xx}}{U_{xx}^2}
+\frac{\delta_xU_{xxx}}{U_{xx}^3}.
\end{equation}

Set $D_t=D_t^\star(U_x(0,x))$ and $X_t=I(t,D_t)$, $t\geq0$. At the point
$(t,X_t)$, let
\begin{equation*}
P_t=\sigma_t^+\sigma_t,\qquad Q_t=I_d-P_t,\qquad
p_t=U_x(t,X_t)\theta_t+P_t\delta_x(t,X_t),\qquad t\geq0.
\end{equation*}
Since $D_t=U_x(t,X_t)$, the diffusion coefficient in
\eqref{eq:deflated_dual_SDE} is
\begin{equation*}
\sigma^D(t,D_t)=-U_x(t,X_t)\theta_t+Q_t\delta_x(t,X_t),\qquad t\geq0.
\end{equation*}
Applying the It\^{o}--Ventzel formula to $I(t,D_t)$, and using
\eqref{eq:inverse_marginal_characteristics} and
\eqref{eq:inverse_marginal_derivatives}, its volatility coefficient becomes
\begin{align*}
\rho^I(t,D_t)+I_y(t,D_t)\sigma^D(t,D_t)
&=-\frac{\delta_x}{U_{xx}}
+\frac{-U_x\theta_t+Q_t\delta_x}{U_{xx}}\\
&=-\frac{p_t}{U_{xx}}
=\sigma^\star(t,X_t).
\end{align*}

The spatial derivative of the Bellman drift, computed in Lemma~\ref{lem:3-11}, is given
by
\begin{equation*}
b_x
=(U_x\theta_t+\delta_x)^\top\theta_t
+\frac{p_t^\top\delta_{xx}}{U_{xx}}
-\frac{|p_t|^2U_{xxx}}{2U_{xx}^2}
+c^\star U_{xx}-F_uU_x,\qquad t\geq0.
\end{equation*}
On the other hand,
\begin{equation*}
\mu^D(t,D_t)=-F_uU_x,
\qquad
\delta_x-\sigma^D(t,D_t)=p_t,
\qquad
p_t^\top\theta_t=(U_x\theta_t+\delta_x)^\top\theta_t,\qquad t\geq0,
\end{equation*}
and, thus, the It\^{o}--Ventzel drift of $I(t,D_t)$ yields
\begin{align*}
&\mu^I+I_y\mu^D+\tfrac12I_{yy}|\sigma^D|^2
+(\rho_y^I)^\top\sigma^D\\
&=\frac{-b_x+\mu^D}{U_{xx}}
+\frac{\delta_{xx}^\top(\delta_x-\sigma^D)}{U_{xx}^2}
+\frac{U_{xxx}}{2U_{xx}^3}
\bigl(-|\delta_x|^2-|\sigma^D|^2
+2\delta_x^\top\sigma^D\bigr)\\
&=\frac{-b_x-F_uU_x}{U_{xx}}
+\frac{p_t^\top\delta_{xx}}{U_{xx}^2}
-\frac{|p_t|^2U_{xxx}}{2U_{xx}^3}\\
&=-\frac{(U_x\theta_t+\delta_x)^\top\theta_t}{U_{xx}}-c^\star
=\sigma^\star(t,X_t)^\top\theta_t-c^\star(t,X_t),\qquad t\geq0.
\end{align*}
Consequently,
\begin{equation*}
dX_t^\star
=\bigl(\sigma^\star(t,X_t^\star)^\top\theta_t
-c^\star(t,X_t^\star)\bigr)dt
+\sigma^\star(t,X_t^\star)^\top dW_t,\qquad t\geq0.
\end{equation*}
Therefore, \eqref{eq:Xstar_inverse_marginal} is a strong solution of
\eqref{eq:feedback_wealth_SDE} on $[0,\infty)$. Since $U_x(0,\cdot)$ and $I(t,\cdot)$ are
strictly decreasing and $D_t^\star(\cdot)$ is nondecreasing, the map
$x\mapsto X_t^\star(x)$ is nondecreasing.

\smallskip

\noindent \textit{Step 3. Pathwise uniqueness and the inverse marginal identity.}
From \eqref{eq:Xstar_inverse_marginal} we easily deduce that
\begin{equation}  \label{eq:flow_identity_proof}
U_x\bigl(t,X_t^\star(x)\bigr)
=D_t^\star\bigl(U_x(0,x)\bigr),
\qquad t\geq0.
\end{equation}

Next, we prove uniqueness without assuming any local Lipschitz continuity of
$\sigma^\star$ or $c^\star$. To this end, let $\bar X$ be any strictly positive solution
of \eqref{eq:feedback_wealth_SDE} on the stochastic interval $[0,\zeta)$,
where $\zeta$ is its exit time from $(0,\infty)$. The calculation in the
proof of Lemma~\ref{lem:3-11} is local and therefore applies before global
admissibility has been established. Together with the fact that $\alpha^\star=-F_u$, we
deduce that the process $U_x(t,\bar X_t)$ satisfies, on $[0,\zeta)$,
\begin{equation*}
\begin{aligned}
dU_x(t,\bar X_t)
&=\alpha^\star(t,\bar X_t)U_x(t,\bar X_t)dt\\
&\quad+U_x(t,\bar X_t)
\Bigl(-\theta_t
+\frac{(I_d-\sigma_t^+\sigma_t)\delta_x(t,\bar X_t)}
{U_x(t,\bar X_t)}\Bigr)^\top dW_t,\qquad t\geq0.
\end{aligned}
\end{equation*}
We now define the process $D$ by $D_t=U_x(t,\bar X_t)$, $t\in[0,\zeta)$. The
above equation is then precisely equation \eqref{eq:deflated_dual_SDE}, with
initial value $D_0=U_x(0,x)$. Its coefficients were shown in Step~1 to be
locally Lipschitz and of linear growth, and, therefore, this equation has a
unique strong solution. Since $D^\star(U_x(0,x))$ is, by construction, the
solution of \eqref{eq:deflated_dual_SDE} started at $U_x(0,x)$, we conclude
that
\begin{equation*}
U_x(t,\bar X_t)=D_t^\star(U_x(0,x)),\qquad t\in[0,\zeta),
\end{equation*}
and applying $I(t,\cdot)$ yields $\bar X_t=X_t^\star(x)$ on $[0,\zeta)$.
Since $D^\star(U_x(0,x))$ is continuous and strictly positive and $I$ is
jointly continuous, $X^\star(x)$ remains in a compact subset of $(0,\infty)$
on every finite horizon. Hence $\bar X$ cannot exit $(0,\infty)$ in finite
time, so $\zeta=\infty$ $\mathbb P$-a.s.\ and $\bar X=X^\star(x)$. This
completes the proof of~(1).

\smallskip

\noindent \textit{Step 4. Admissibility in the sense of Definition~\ref{defn:2-1}.}
From \eqref{eq:sigma_star_def}, the contraction property of the orthogonal
projection $\sigma _{t}^{+}\sigma _{t}$, and
\eqref{eq:input_bounds_delta} and \eqref{eq:input_bounds_c}, we obtain
\begin{equation*}
|\sigma^{\pi,\star}_t(X^\star_t)| =\frac{|\sigma^\star(t,X^\star_t)|}{%
X^\star_t} \le \bigl(|\theta_t|+K_t^1\bigr)\, \left|\frac{U_x(t,X^\star_t)}{%
X^\star_t\,U_{xx}(t,X^\star_t)}\right| \le K_t^4\bigl(|\theta_t|+K_t^1\bigr),\qquad t\geq0.
\end{equation*}
Hence,
\begin{equation*}
\int_0^T|\sigma^{\pi,\star}_t(X^\star_t)|^2\,dt<\infty,
\end{equation*}
by the integrability requirement on $K^4$ in Assumption~\ref{asm:3-13}. Furthermore,
\begin{equation*}
\int_0^T c^\star(t,X^\star_t)\,dt
\leq\int_0^T K_t^3X^\star_t\,dt<\infty,
\end{equation*}
since $X^\star$ is continuous on $[0,T]$ and $K^3$ is integrable. It, then,
follows that
$\bigl(\pi^\star(t,X^\star_t),c^\star(t,X^\star_t)\bigr)_{t\geq0}\in
\mathcal A$.

\smallskip

\noindent \textit{Step 5. Consistency.}
Multiplying \eqref{eq:flow_identity_proof} with $%
\kappa^\star$ yields $Y^\star=\kappa^\star \,U_x(\cdot,X^\star)\in \mathcal{Y%
}$ as in Theorem~\ref{thm:3-12}. All hypotheses of Theorem~\ref{thm:2-10} are then satisfied, and we deduce
that $(U,F)$ is a normalized forward recursive utility and that the feedback control pair
is optimal. This proves~(2).

\smallskip

\noindent \textit{Step 6. Dual verification.}
From Proposition~\ref{prop:3-4}, $\widetilde U$ satisfies the dual forward recursive
HJB SPDE. The
conjugacy identities give
\begin{equation*}
\nu_t^{\mathrm{dual}}(y)
=\frac{(I_d-\sigma_t^+\sigma_t)\delta_x(t,I(t,y))}{y},\qquad t\geq0,\ y>0.
\end{equation*}
Hence, the state price density equation controlled by $\nu^{\mathrm{dual}}$
has the same diffusion coefficient as in \eqref{eq:deflated_dual_SDE}, but with
zero drift. The Lipschitz, growth, and positivity arguments of Step~1
then imply a unique strictly positive strong solution for every initial
value $y>0$. Moreover, \eqref{eq:input_bounds_delta} yields
$|\nu_t^{\mathrm{dual}}(y)|\leq K_t^1$, $t\geq0$, from which the
admissibility of $\nu^{\mathrm{dual}}$ follows. Theorem~\ref{thm:3-7}
now gives dual consistency on $\mathcal Y$. This proves~(3).

\smallskip

\noindent \textit{Step 7. The two Fenchel equalities along the optimum.}
Equation \eqref{eq:primal_dual_optimum} holds by definition, and
$Y^{\star }\in \mathcal{Y}$ by Step~5.
Under Assumption~\ref{asm:3-8},
Theorem~\ref{thm:3-12} gives the feedback relation, which is the definition
\eqref{eq:alpha_kappa_optimal}. Equation
\eqref{eq:fenchel_eq_wealth} follows from the fact that
$D_t^\star=U_x(t,X_t^\star)$, $t\geq0$. Equation
\eqref{eq:fenchel_eq_consumption} also holds, since $c_t^\star$ is the
maximizer in the supremum that defines $G$, and \eqref{eq:G_saddle} holds at
$(D_t^\star,\alpha_t^\star)$.

We have $\int_{0}^{T}|\alpha_t^\star|dt
\leq\int_{0}^{T}K_t^0dt<\infty$ by \eqref{eq:input_bounds_alpha}. Thus, if
$\widehat F(t,c,\alpha_t^\star)>-\infty$ for every $c>0$, the discount
$\alpha^\star$ belongs to $\mathcal{A}^{\mathrm{disc}}$, and Step~2 of the
proof of Theorem~\ref{thm:3-12} shows that $Y^{\star }$ admits $\alpha ^{\star }$ as a
feedback discount, so that \eqref{eq:G-integrability} holds along the
optimum. Along the optimal control pair, equality in the
continuation utility variable gives
\begin{equation*}
\widehat F(t,c_t^\star,\alpha_t^\star)
=F(t,c_t^\star,U(t,X_t^\star))
+\alpha_t^\star U(t,X_t^\star),\qquad t\geq0,
\end{equation*}
and hence
\begin{equation*}
\int_0^T\kappa_t^\star
\bigl|\widehat F(t,c_t^\star,\alpha_t^\star)\bigr|dt<\infty
\qquad\text{$\mathbb P$-a.s. for every }T>0.
\end{equation*}
Thus, the triple $\bigl(\alpha ^{\star },(\pi ^{\star },c^{\star
}),Y^{\star }\bigr)$ is integrable in the sense of
\eqref{eq:envelope-integrability}.
Then, Proposition~\ref{prop:3-10} yields the local martingale property, while
Proposition~\ref{prop:3-9} gives \eqref{eq:discounted_fenchel_bound}. The two Fenchel
equalities show that this bound is attained along
$\bigl((\pi^\star,c^\star),Y^\star\bigr)$, and (4) follows.
\end{proof}

\section{Properties of the forward Epstein--Zin recursive aggregator}\label{app:C}

We collect here the analytic properties of the aggregator $f$ introduced in %
\eqref{eq:EZ_aggregator}, which are used in Proposition~\ref{prop:4-3} and in the proof of
Theorem~\ref{thm:4-4}.

\subsection*{Partial derivatives and convexity}

Set
\begin{equation*}
z=(1-\gamma)v,
\qquad
p=1-\frac1\lambda
=\frac{1/\eta-\gamma}{1-\gamma},\qquad v\in\mathcal U_\gamma.
\end{equation*}
Then, \eqref{eq:EZ_aggregator} becomes
\begin{equation*}
f(c,v)
=\frac{1}{1-1/\eta}
\left(c^{1-1/\eta}z^p-z\right),\qquad c>0,\ v\in\mathcal U_\gamma,
\end{equation*}
and, in turn,
\begin{equation*}
f_c(c,v)=c^{-1/\eta}z^p\qquad \text{and}\qquad f_{cc}(c,v)=-\frac1\eta c^{-1/\eta-1}z^p,\qquad c>0,\ v\in\mathcal U_\gamma.
\end{equation*}

For the derivative with respect to $v$, we use
\begin{equation*}
\frac{(1-\gamma)p}{1-1/\eta}
=\lambda-1,
\qquad
\frac{1-\gamma}{1-1/\eta}
=\lambda,
\qquad
p-1=-\frac1\lambda.
\end{equation*}
It follows that
\begin{equation*}
f_u(c,v)
=\frac{1}{1-1/\eta}
\left(
c^{1-1/\eta}p z^{p-1}(1-\gamma)
-(1-\gamma)
\right)=(\lambda-1)c^{1-1/\eta}z^{-1/\lambda}-\lambda,\qquad c>0,\ v\in\mathcal U_\gamma.
\end{equation*}
Differentiating this identity once more with respect to $v$, and using that
\begin{equation*}
(\lambda-1)\left(-\frac1\lambda\right)(1-\gamma)
=\gamma-\frac1\eta,
\end{equation*}
gives
\begin{equation*}
f_{uu}(c,v)
=\left(\gamma-\frac1\eta\right)
c^{1-1/\eta}z^{-(\lambda+1)/\lambda},\qquad c>0,\ v\in\mathcal U_\gamma.
\end{equation*}
Finally, differentiating $f_c(c,v)=c^{-1/\eta}z^p$ with respect to $v$
gives
\begin{equation*}
f_{cu}(c,v)
=c^{-1/\eta}p(1-\gamma)z^{p-1}=\left(\frac1\eta-\gamma\right)
c^{-1/\eta}z^{-1/\lambda},\qquad c>0,\ v\in\mathcal U_\gamma.
\end{equation*}
Thus, for all $c>0$ and $(1-\gamma)v>0$, we have
\begin{align*}
f_c(c,v) &= c^{-1/\eta}\big((1-\gamma)v\big)^{\frac{1/\eta-\gamma}{1-\gamma}}, \\
f_{cc}(c,v) &= -\tfrac{1}{\eta}\,c^{-1/\eta-1} \big((1-\gamma)v\big)^{\frac{%
1/\eta-\gamma}{1-\gamma}}, \\
f_u(c,v) &= (\lambda-1)\,c^{1-1/\eta} \big((1-\gamma)v\big)^{\frac{1/\eta-1}{%
1-\gamma}}-\lambda, \\
f_{uu}(c,v) &= \big(\gamma-\tfrac{1}{\eta}\big)\,c^{1-1/\eta} \big(%
(1-\gamma)v\big)^{-(\lambda+1)/\lambda}, \\
f_{cu}(c,v) &= \big(\tfrac{1}{\eta}-\gamma \big)\,c^{-1/\eta} \big(%
(1-\gamma)v\big)^{-1/\lambda}.
\end{align*}
Because $c>0$ and $z>0$, we deduce
\begin{equation*}
f_c(c,v)>0,
\qquad
f_{cc}(c,v)<0.
\end{equation*}
Moreover,
\begin{equation*}
\operatorname{sign}(f_{cu}(c,v))
=\operatorname{sign}\left(\frac1\eta-\gamma\right),
\end{equation*}
and
\begin{equation*}
\operatorname{sign}(f_{uu}(c,v))
=\operatorname{sign}\left(\gamma-\frac1\eta\right).
\end{equation*}
Hence, $f$ is strictly increasing and strictly concave in $c$. It is convex
in $v$ when $\gamma\eta\geq1$ and concave if
$\gamma\eta\leq1$.

For completeness, we provide the determinant of the Hessian,
\begin{equation*}
f_{cc}(c,v)f_{uu}(c,v)-f_{cu}(c,v)^2
=\gamma\left(\frac1\eta-\gamma\right)c^{-2/\eta}z^{-2/\lambda},\qquad c>0,\ v\in\mathcal U_\gamma.
\end{equation*}
Since $f_{cc}(c,v)<0$, the Hessian is negative semidefinite if and only if
$1/\eta-\gamma\geq0$. Therefore, $f$ is jointly concave if and only if
$\gamma\eta\leq1$.

\end{document}